\documentclass[a4paper,12pt]{article}
\usepackage{graphicx} 
\usepackage[a4paper]{geometry}
\geometry{verbose,tmargin=3cm,bmargin=3cm,lmargin=2.5cm,rmargin=2.5cm}


\usepackage{fancyhdr}
\usepackage{amsmath}
\usepackage{amssymb}
\usepackage{amsthm}
\usepackage{xcolor}
\usepackage{graphicx}
\usepackage{pgfplots}
\usepackage{nameref}
\usepackage{subcaption}
\usepackage{enumerate}
\pgfplotsset{compat=1.11}
\usepackage{dsfont}
\usetikzlibrary{decorations.markings}
\usepackage{bm}
\usepackage{tikz}
\usepackage{float}
\usepackage{tkz-euclide}
\usetikzlibrary{angles}
\usetikzlibrary{quotes}
\usepackage{cite}
\usepackage{multirow}

\usepackage[hidelinks]{hyperref}
\usepackage{hyperref}
\usepackage{csquotes}
\usepackage{caption}

\numberwithin{equation}{section}


\newcommand{\R}{\mathbb{R}}

\newcommand{\N}{\mathbb{N}}

\newcommand{\F}{\mathcal{F}}

\newcommand{\LM}{\mathcal{L}}

\newcommand{\RH}{Rankine-Hugoniot }
\newcommand{\1}{\mathds{1}}
\newcommand{\sgn}{\operatorname{sign}}

\newtheorem{teo}{Theorem}[section]
\newtheorem{prop}[teo]{Proposition}
\newtheorem{cor}[teo]{Corollary}
\newtheorem{lem}[teo]{Lemma}
\theoremstyle{definition}
\newtheorem{defi}[teo]{Definition}
\newtheorem{rmk}[teo]{Remark}

%


\usepackage{ulem}
\renewcommand{\L}[1]{\mathbf{L^#1}}
\newcommand{\Lloc}[1]{\mathbf{L^{#1}_{loc}}}
\newcommand{\C}[1]{\mathbf{C^{#1}}}
\newcommand{\Cc}[1]{\mathbf{C_c^{#1}}}
\newcommand{\W}[1]{\mathbf{W^{#1}}}
\newcommand{\BV}{\mathrm{BV}}

\newcommand{\B}{\mathbf{B}}


\title{Well-posedness {and numerical approximation} \\ of nonlinear conservation laws with hysteresis} 
\author{Paola Goatin
\thanks{Universit\'e C\^ote d'Azur, Inria, CNRS, LJAD, 2004 route des
  Lucioles - BP 93, 06902 Sophia Antipolis Cedex, France. E-mail:
  \texttt{paola.goatin@inria.fr}}
\and Stefan Moreti\thanks{Department of Mathematics, University of Trento, Italy. E-mail:
  \texttt{stefan.moreti@unitn.it}}}
\date{}


\begin{document}
\maketitle
\begin{abstract}
\noindent This article studies the Cauchy problem for the scalar conservation law 
\[ 
\partial_t u + \partial_t w + \partial_x f(u) = 0, 
\]
where $w(x,t) = [\mathcal{F}(u)(x,t)]$ is the output of a specific hysteresis operator, namely the Play hysteresis operator, and $f$ is a $\C2$ convex flux function. The hysteresis operator models a rate-independent memory effect, introducing a specific non-local feature into the partial differential equation. We define a suitable notion of  entropy weak solution and analyse in detail the Riemann problem. Furthermore, a Godunov-type finite volume numerical scheme is developed to compute approximate solutions.
The convergence of the scheme for $\BV$ initial data provides the existence of an entropy weak solution. Finally, a stability estimate is established, implying the uniqueness and overall well-posedness of the entropy weak solution.
    
\end{abstract}


\section{Introduction}\label{S0}

In this work, we deal with the Cauchy problem for nonlinear conservation laws with hysteresis as follows:

\begin{equation}\label{eq: intro2}
    \begin{cases}
         \partial_t u+ \partial_t w+\partial_x f(u)=0 & \text{in } \R \times [0,T[,\\
        w=[\F(u,w_0)] & \text{in } \R \times [0,T[, \\
        u(x,0)=u_0(x) & \text{in } \R ,\\
        w(x,0)=w_0(x) & \text{in } \R,
    \end{cases}
\end{equation}
where $\cal F$ is the so-called Play hysteresis operator, $u_0\in \BV(\R)\cap \L1(\R)$ is the initial datum for the solution $u$, $w_0\in \BV(\R)\cap \L1(\R)$ is a suitable space-dependent function for the initial values of the output $w$, $f\in \C2(\R;\R)$ is a nonlinear strictly convex function and $T>0$ is fixed. As for classical conservation laws, the results for $f$ concave can be deduced from the convex case, so we will not describe it in detail.

Hysteresis is a phenomenon commonly observed in various natural and engineered systems, typically characterized by a lag or delay in the system’s response to changes in the input. For comprehensive accounts of mathematical models for hysteresis and their use in connection with PDEs, we refer the reader to \cite{CORR1} and \cite{AVH}. Among the various mathematical models used to describe such behaviour, one classical choice is the Play hysteresis operator $\F$
\[
\F : \B([0,T]) \times \R \mapsto \B([0,T])
\]
which represents a memory dependent input-output relationship between a pair of time-dependent scalar functions belonging to a certain functional space $\B([0,T])$. As the unknown $u$ in the PDE is a function both depending on a space and on a time variable, we consider this relationship between the pair $(t\mapsto u(x,t), t \mapsto w(x,t))$ for every $x$, so formally we define
\begin{equation}
     w(x,t):= [\F(u(x,\cdot),w_0(x))](t), \quad \text{a.e.}\ (x,t).
\end{equation} 
For each fixed $x\in \R$, we briefly describe the input-output relationship of the Play operator referring to Figure~\ref{fig: linplay} (for a more detailed description see \cite[Section III.1]{AVH}). Given an amplitude $a>0$, we denote 
\[
{\cal L}:=\left\{ (u,w)\in\R^2,\,|u-w|\le a\right\}
\]
the strip of the feasible states of the system. If  the pair $(u(x,t),w(x,t))$ satisfies $|u(x,t)-w(x,t)|<a$ for some $t>0$, that is if it belongs in the interior of $\cal L$, and if the input $u(x,\cdot)$ changes in time, then the output $w(x,\cdot)$ will not change until the pair $(u,w)$ will possibly reach one of the two boundary lines of $\cal L$. If $w(x,t)=u(x,t)-a$, that is the pair $(u,w)$ is on the lower boundary of $\cal L$ and if the input $u$ increases, then the output $w$ will increase together with $u$; if instead $u$ decreases then $w$ stays constant and the pair $(u,w)$ enters the interior of $\cal L$. If $(u,w)$ belongs to the upper boundary of $\cal L$, then the behaviour is symmetric, reversing the role of the monotonicity of $u$.
\begin{figure}
    \centering
    \begin{tikzpicture}[scale=1]
         \draw[->, gray] (-3, 0) -- (3, 0) node[right] {$u$}; 
    \draw[->, gray] (0, -2.75) -- (0, 2.75) node[right] {$w$};
    \draw[-, thick] (-3, -2) -- (1.5, 2.5);
    \draw[->] (-0.2,1)--(-0.8,0.4);
    \draw[-, thick] (-1.5, -2.5) -- (3, 2); 
    \draw[-, dashed] (0, 1) -- (2, 1);
    \draw[-, dashed] (0,-1)--(-2,-1);
    \draw[->] (0.2,-1)--(0.8,-0.4);
    \draw[<->] (0.5,1.2)--(1.5,1.2);
    \draw[<->] (-0.5,-1.2)--(-1.5,-1.2);
    \node at (-1.4,0.2) {$-a$} ;
    \node at (1.2,-0.2) {$a$} ;
    \end{tikzpicture}
    \caption{The Play hysteresis operator}
    \label{fig: linplay}
\end{figure}
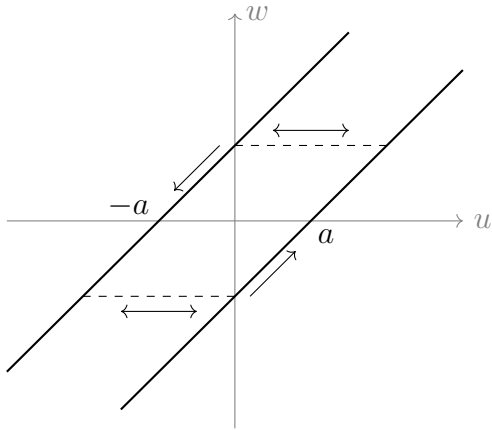
 
Given an initial state $(u_0(x),w_0(x))\in \mathcal{L}$ and the evolution of $u(x,\cdot)$, we can then trace the evolution of $w(x,\cdot)$. We can notice that the value of $w(x,t)$ is not determined pointwisely by $u(x,t)$, indeed for any fixed value of $u$ we have more than one possible value $w$ in the strip $\LM$. Hence $w(x,t)$, besides its initial value $w(x,0)=w_0(x)$, is determined by the whole history $u(x,\cdot)|_{[0,t]}$, i.e. there is a memory effect involved. \par 
In particular, this memory effect is rate-independent, which means that the relation between $u$ and $w$ does not depend on the time-derivative of $u$. Notice that this requirement is essential if we want to draw hysteresis relations as in Figure \ref{fig: linplay} and it is a general feature of hysteresis phenomena. 

As shown in \cite{AVH} (see also \cite{CORR1}), if we assume $\W{1,1}$ regularity in time for the input $u$, this heuristic description of the relationship $w = [\F(u,w_0)]$
 can be rigorously characterized by the following variational inequality:
\begin{equation}\label{eq: defplay}
    |u - w| \leq a, \quad (u - w - v) \partial_t w \geq 0 \quad \forall v \text{ such that } |v| \leq a.
\end{equation}
In such case also $w(x,\cdot)\in \W{1,1}([0,T])$ and its time-derivative is described for almost every $t$ as follows
\begin{equation}
    \partial_t w(x,t)= \begin{cases}
        \partial_t u(x,t) &\quad \text{if} \quad w(x,t)=u(x,t)-a \quad \text{and} \quad \partial_t u(x,t) \geq 0,\\
        \partial_t u(x,t) &\quad  \text{if} \quad w(x,t)=u(x,t)+a \quad \text{and} \quad \partial_t u(x,t) \leq 0,\\
        0 &\quad \text{otherwise}.  
    \end{cases}
\end{equation}
In the same work \cite{AVH}, it is also shown how the Play operator can be uniquely extended to an operator that maps continuous inputs to continuous outputs, preserving the same heuristic description.\par 

In this paper, we consider $\F$ as applied to the entropy weak solution of a conservation law, which in general is neither in $\W{1,1}([0,T])$ or in $\C{0}([0,T])$ with respect to $t$,  for any fixed $x$. The extension of hysteresis operators to non-regular inputs is a well studied problem, see \cite{RF2}, \cite{RF1} and \cite{RV}. For our intent, we only extend $\F$ to functions with jump discontinuities, performing the following approximation and limit procedure: if the input $u$ has a jump discontinuity at time $t$, we consider a sequence of continuous functions $u_\varepsilon$ that fill the discontinuity in a monotone way and such that $u_\varepsilon \to u$ pointwisely; then we define $w:=\lim_{\varepsilon \to 0} [\F(u_\varepsilon,w_0)]$ (see Figure~\ref{fig: esempioF} for a specific example). In particular, such a construction of the output $w$ as a function in $\L1([0,T])$ is independent of how we monotonically fill the jump of the input $u$, because of the rate-independence property of the Play operator (see \cite{BFMS} for all the details). We then have the following characterization, which is proven in \cite{BFMS} and will inspire \eqref{eq: genweakhis} in the definition of entropy weak solution. 

\begin{prop}\label{prop: whis}
    Fix $x \in \R$ and suppose $w_0(x) \in \R$, $u(x,\cdot),w(x,\cdot)\in \BV{([0,T[;\R)}$ with a finite number of jump discontinuities, $u(x,0)=u(x,0+)$ and $w(x,0)=w(x,0+)=w_0(x)$. Then the following are equivalent: 
    \begin{enumerate}
        \item $w(x,t) = \F[u(x,\cdot),w_0(x)](t)$ for almost every $t\in [0,T[$;
        \item for almost every $t\in [0,T[$, $|w(x,t+)-u(x,t+)|\leq a$  and 
        \begin{equation}\label{eq: weakhis}
            \int_0^t(u(x,s+)-w(x,s+))d(\partial_t w(x,s)) \geq a |\partial_t w|(]0,t[) ,
        \end{equation}
        where $\partial_t w$ is interpreted as the measure associated to the distributional time-derivative of $w$, $|\partial_t w|$ its total variation and $u(x,s+),w(x,s+)$ are the right-continuous in time representative of $u$ and $w$ for fixed $x\in\R$.
    \end{enumerate}
\end{prop}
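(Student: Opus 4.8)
The plan is to fix $x\in\R$ and suppress it from the notation, working with the right-continuous representatives $u(s),w(s)$ and the left limits $u(s-),w(s-)$. The first step is the measure-theoretic bookkeeping: since $u$ and $w$ have only finitely many jumps, let $0<t_1<\dots<t_N<T$ be the (finite) set of jump points of $u$ or of $w$, put $\nu_i:=w(t_i+)-w(t_i-)$, and decompose the distributional derivative as $\partial_t w=\mu_c+\sum_{i=1}^N\nu_i\,\delta_{t_i}$ into its diffuse part $\mu_c$ and its atoms; by the hypothesis $w(0)=w(0+)=w_0$ there is no atom at $0$, and on each open interval $]t_{i-1},t_i[$ both $u$ and $w$ are continuous. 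I would also record a preliminary upgrade of the feasibility constraint in~(2): from $|w(t+)-u(t+)|\le a$ for a.e.\ $t$ and the existence of one-sided limits of BV functions one gets $|w(t\pm)-u(t\pm)|\le a$ for \emph{every} $t\in[0,T[$, and it is this pointwise form that will actually be used, since $\mu_c$ may be singular with respect to Lebesgue measure.

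For $(1)\Rightarrow(2)$: feasibility holds because the construction of $\F$ keeps the pair $(u,w)$ in $\LM$, both along the continuous pieces and along the monotone fillings of the jumps of $u$. For the integral I would split $\int_0^t(u(s+)-w(s+))\,d(\partial_t w)$ into its diffuse and atomic contributions. On $\mu_c$ --- which charges neither the discontinuities of $u$ nor those of $w$ --- one has $u(s+)=u(s)$ and $w(s+)=w(s)$ $\mu_c$-a.e., and on each $]t_{i-1},t_i[$, where $u$ is continuous, $w$ agrees with the continuous-input Play operator (\cite{AVH}), which varies only where $(u,w)$ lies on $\partial\LM$ and the input pushes it outward, i.e.\ $u(s)-w(s)=a\,\frac{d\mu_c}{d|\mu_c|}(s)$ $\mu_c$-a.e., so this contribution equals $a|\mu_c|(]0,t[)$. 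At an atom $t_i$, analysing the monotone filling of the jump of $u$ starting from $w(t_i-)$ shows that either $w$ does not move ($\nu_i=0$) or it stops exactly on the boundary line it reaches ($w(t_i+)=u(t_i+)\mp a$ according to the sign of $\nu_i$); in every case $(u(t_i+)-w(t_i+))\nu_i=a|\nu_i|$. Summing, $\int_0^t(u(s+)-w(s+))\,d(\partial_t w)=a|\partial_t w|(]0,t[)$ for every $t\notin\{t_i\}$, hence for a.e.\ $t$, which in particular yields \eqref{eq: weakhis}.

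For $(2)\Rightarrow(1)$: with $\sigma:=\frac{d(\partial_t w)}{d|\partial_t w|}$ (so $|\sigma|=1$ $|\partial_t w|$-a.e.), the upgraded feasibility gives $(u(s+)-w(s+))\sigma(s)\le a$ $|\partial_t w|$-a.e., hence $\int_0^t(u(s+)-w(s+))\,d(\partial_t w)\le a|\partial_t w|(]0,t[)$ for a.e.\ $t$. Together with \eqref{eq: weakhis} this forces equality for a.e.\ $t$, and --- both sides being left-continuous in $t$ --- for all $t$; by a monotone-class argument the signed measure $(u(\cdot+)-w(\cdot+))\,\partial_t w$ then coincides with the positive measure $a|\partial_t w|$, so $(u(s+)-w(s+))\sigma(s)=a$ $|\partial_t w|$-a.e., which combined with $|u(s+)-w(s+)|\le a$ forces $u(s+)-w(s+)=a\,\sigma(s)$ $|\partial_t w|$-a.e. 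Explicitly, $u(t_i+)-w(t_i+)=a\,\sgn\nu_i$ whenever $\nu_i\ne0$, and $u(s)-w(s)=a\,\frac{d\mu_c}{d|\mu_c|}(s)$ $\mu_c$-a.e. I would then reconstruct $w$ by induction over $[0,t_1[,\{t_1\},]t_1,t_2[,\dots$: on each open interval $u$ and $w$ are continuous and satisfy the pointwise description just extracted, with the initial value carried over from the previous step, so uniqueness of the continuous Play operator (\cite{AVH}) identifies $w$ with $\F$ there; at $t_i$, combining $|w(t_i-)-u(t_i-)|\le a$, $|w(t_i+)-u(t_i+)|\le a$ and $u(t_i+)-w(t_i+)=a\,\sgn\nu_i$ in a short case distinction shows that $w$ can jump only in the direction of a sufficiently large jump of $u$ and then exactly onto $\partial\LM$, which is precisely the value produced by the monotone filling of $\F$ starting from $w(t_i-)$. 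Since there are finitely many jumps, iterating --- and using causality and concatenation of $\F$ on the continuous pieces --- yields $w=\F[u,w_0]$ on $[0,T[$.

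I expect the main obstacle to be the treatment of the jumps. In $(1)\Rightarrow(2)$ it is the verification that the atomic contribution of the integral reproduces exactly $a|\nu_i|$, which rests on the explicit monotone-filling construction of $\F$ for discontinuous inputs (as in \cite{BFMS}) and not on the variational inequality \eqref{eq: defplay}, valid only for $\W{1,1}$ inputs. In $(2)\Rightarrow(1)$ it is the jump-point case distinction showing that feasibility plus the polar identity $u(t_i+)-w(t_i+)=a\,\sgn\nu_i$ leaves no freedom in how $w$ is reassigned across a jump, and then threading this local rigidity through the finitely many jumps with the correct matching of one-sided limits. Two further points deserving care are the upgrade of feasibility from Lebesgue-a.e.\ to $|\partial_t w|$-a.e.\ (needed because $\mu_c$ may be singular) and the monotone-class passage from equality of interval masses to the $|\partial_t w|$-a.e.\ identity of Radon--Nikodym derivatives.
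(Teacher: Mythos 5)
The paper does not actually prove Proposition~\ref{prop: whis}: it is imported from \cite{BFMS} (``which is proven in \cite{BFMS}''), so there is no internal proof to compare yours against. Judged on its own, your argument is correct and follows the natural route. The decomposition of $\partial_t w$ into its diffuse part and the finitely many atoms, the verification that under (1) the inequality \eqref{eq: weakhis} is in fact an \emph{equality} (the diffuse part via the measure-theoretic characterization of the continuous Play, the atoms via the monotone filling landing exactly on $\partial\LM$), and the converse via the trivial upper bound $\int(u(s+)-w(s+))\,d(\partial_t w)\le a|\partial_t w|$ forcing the polar identity $u(s+)-w(s+)=a\,\tfrac{d(\partial_t w)}{d|\partial_t w|}$, are all sound; your case distinction at a jump (including ruling out a jump of $w$ where $u$ is continuous) does pin down $w(t_i+)$ uniquely and it coincides with the filled value of $\F$. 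Two points you flag correctly and should spell out in a full write-up: the upgrade of $|u(t+)-w(t+)|\le a$ from Lebesgue-a.e.\ to all one-sided limits (needed because $\mu_c$ may be Lebesgue-singular), and the fact that on the continuity intervals what you invoke is not merely ``uniqueness of the Play operator'' but uniqueness of solutions of the characterizing variational inequality (the extension of \eqref{eq: defplay} to continuous $\BV$ data) with prescribed initial value --- this is standard (a $(w_1-w_2)^2$ monotonicity argument) and is in \cite{AVH}, but it is the load-bearing step and deserves an explicit citation or a line of proof.
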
 

\begin{figure}
    \centering
    \begin{tikzpicture}[scale=0.8]
         \draw[->, gray, yshift=1cm] (-0.5, 0) -- (4, 0) node[right] {$t$}; 
        \draw[->, gray,yshift=1cm] (0, -0.5) -- (0, 2.5) node[right] {$u$};
        
        \draw[-,thick,yshift=1cm] (0,0)--(1.5,0);
        \draw[-,thick,yshift=1cm] (1.5,2)--(3,2);

        \draw[-,yshift=1cm](3,0.1)--(3,-0.1);
        \draw[-,dashed,yshift=1cm](1.5,0)--(1.5,2);

         \node[text = black, below] (r) at (1.5,1) {\scriptsize{$t^*$}};
        \node[text = black, below] (r) at (3,1) {\scriptsize{$T$}};

        \draw[->, gray, yshift=-3cm] (-0.5, 0) -- (4, 0) node[right] {$t$}; 
        \draw[->, gray,yshift=-3cm] (0, -0.5) -- (0, 2.5) node[right] {$w$};
        
        \draw[-,thick,yshift=-3cm] (0,0)--(1.5,0);
        \draw[-,thick,yshift=-3cm] (1.5,1)--(3,1);

        \draw[-,yshift=-3cm](3,0.1)--(3,-0.1);
        \draw[-,dashed,yshift=-3cm](1.5,0)--(1.5,1);

         \node[text = black, below] (r) at (1.5,-3) {\scriptsize{$t^*$}};
        \node[text = black, below] (r) at (3,-3) {\scriptsize{$T$}};

    \draw[->, gray,xshift=10cm] (-3.75, 0) -- (3.75, 0) node[right] {$u$}; 
    \draw[->, gray,xshift=10cm] (0, -3.75) -- (0, 3.75) node[right] {$w$};
    \draw[-, thick,xshift=10cm] (-3.5, -2) -- (2, 3.5);
    \draw[-, thick,xshift=10cm] (-2, -3.5) -- (3.5, 2); 

    \draw[->,red, thick,xshift=10cm] (0,0)--(1.5,0)--(2.95,1.45);
    
    \filldraw[xshift =10cm] (0,0) circle (1.3pt) node at (0,0)[above]{\scriptsize{$(u_-,w_0)$}};
    \filldraw[xshift =10cm] (3,1.5) circle (1.3pt) node at (3,1.7)[left]{\scriptsize{$(u_+,w^*)$}};
    \filldraw[xshift =10cm,red] (3,1.5) circle (0pt) node at (2.3,0.5)[right]{\scriptsize{$(u_\varepsilon(\cdot),w_\varepsilon(\cdot))$}};

    \end{tikzpicture}
    \caption{An explicit example of the operator $\F$ applied to $u$ with a jump discontinuity. Here $a=1$, $u=u_-=0$ for $t<t^*$, $u=u_+=2$ for $t>t^*$ and $w_0=0$; consequently $w=w_0$ for $t<t^*$ and $w=w^*=1$ for $t>t^*$. In red we highlight the path followed by the couple $(u_\varepsilon,w_\varepsilon)$ which, after the limiting procedure, collapses to $(u_-,w_0)$ for $t<t^*$ and $(u_+,w^*)$ for $t>t^*.$}
    \label{fig: esempioF}
\end{figure}
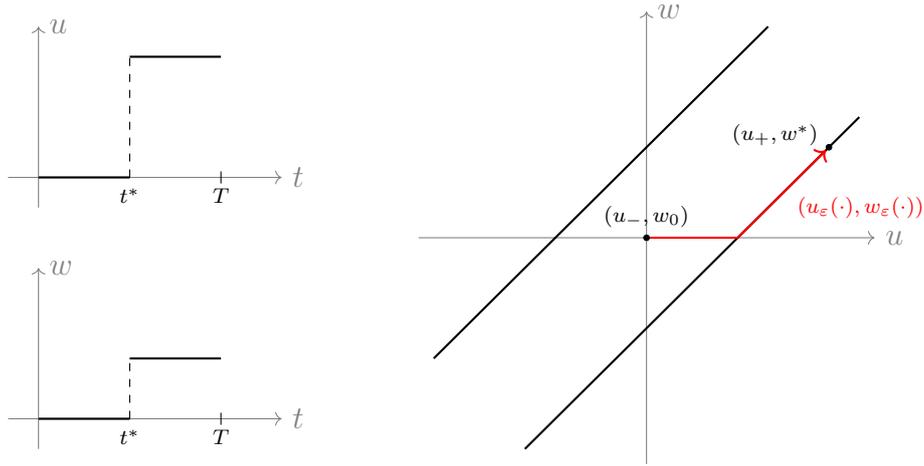

Given the description of the Play operator $\F$, we introduce the definition of entropy weak  solution, which also includes a weaker notion of the relationship $w = [\F(u,w_0)]$.

\begin{defi} \label{def: hweaksol}
    A couple of functions $u,w\in\C0 ([0,T[;\Lloc1(\R;\R)])$] is an entropy weak solution to \eqref{eq: intro2} if: \begin{enumerate}[i)]
    \item it holds 
    \begin{multline}
    \label{eq: hweaksol}
        \int\limits_0^{T} \int\limits_{-\infty }^{+\infty}\left[\left(|u-k|+|w-\hat{k}|\right) \partial_t \phi+\sgn(u-k)\left(f(u)-f(k)\right) \partial_x\phi\right] ~dx~ dt \\
        +\int\limits_{-\infty}^{+\infty} (|u_0(x)-k| +|w_0(x)-\hat{k}|)\phi(x,0) ~dx\geq 0,
    \end{multline} for every $\phi \in \Cc1(\R \times [0,T[;\R^+)$ and for every couple $(k,\hat{k})\in \LM;$
    \item for almost every $(x,t)\in \R \times [0,T[$ it holds \begin{equation}\label{eq: dishis}
        |u(x,t)-w(x,t)|\leq a;
    \end{equation} 
    \item for almost every $x$, $u(x,\cdot),w(x,\cdot)\in \L{2}(\R;\R)$ and the distributional derivative $\partial_t w$ is a measure on $\R \times [0,T[$ that satisfies 
    \begin{equation}
        \label{eq: genweakhis}
        \frac{1}{2}\int_\R (u(x,t)^2-u_0(x)^2)~dx+\frac{1}{2}\int_\R (w(x,t)^2-w_0(x)^2)~dx \leq - a \left| \partial_t w \right|(\R\times\, ]0,t[),
    \end{equation}  
    for almost every $t\in\, ]0,T[$. 
\end{enumerate}
\end{defi}

Condition \eqref{eq: hweaksol} is the adaptation to the conservation equation in~\eqref{eq: intro2} of the classical Kru\v zkov entropy condition~\cite{Krukov} and it is inspired by the one introduced in \cite{AVH1}. Also \eqref{eq: genweakhis} was introduced in \cite{AVH1} and it can be interpreted as a weak hysteresis relationship since, by supposing more regularity on $u$ and $w$, such as space-time Sobolev regularity, and by using the strong form of the PDE: $\partial_t u+\partial_t w +\partial_x f(u)=0$, \eqref{eq: genweakhis} 
is equivalent to
\begin{equation}\label{eq: hismis}
    \int_0^t\int_\R (u(x,s)-w(x,s)) \partial_t w (x,s) \, dx\,ds \geq a \int_0^t \int_\R \left|\partial_t w(x,s)\right| \, dx\,ds,
\end{equation}
(see \cite{BFMS, AVH1} for details). Note that \eqref{eq: hismis} can be interpreted as the extension to space-time dependent functions of the hysteresis relationship \eqref{eq: weakhis}, when the input and the output are $\W{1,1}$ in time functions.

Besides \cite{BFMS, AVH1}, scalar hyperbolic conservation laws with hysteresis have been investigated in various applied contexts such as \cite{ADBA1, ADBA2, BFMS2, CF1, CF2, CF3, F1, KOP1, KOR1, MR, Simile, marchesin, ZHANG}. In \cite{MR, Simile} the authors study an initial-boundary value problem where the PDE is similar to ours, with either linear or monotone flux, motivated by application to transport in porous media, where hysteresis is a common feature. There, the hysteresis operator is respectively either a rather general convex-sided Play model or a finite sum of $K-$nonlinear Play operators. They prove well-posedness for their initial-boundary value problem in the class of functions $C^0([0,T]; L^2]0,l[)$, where $[0,T]$ is the time interval and $]0,l[$, $l>0$, is the spatial domain, and also existence of time-differentiable solutions in the case with linear flux, differentiable boundary data and $0$ initial condition. Their approach relies on the abstract theory of Cauchy problems involving multivalued operators defined on Banach spaces. \cite{AVH1} provides the well-posedness of a Cauchy problem similar to \eqref{eq: intro2}, but with the presence of a different hysteresis operator, namely the completed delayed Relay. These results can be extended to the Preisach operator~\cite{AVH}. The proof is based on the time-discretization of the partial differential equation, the construction of the corresponding approximate solutions, and, thanks to {\it a priori} estimates, their convergence to an exact  solution as the discretisation is refined. We also refer the reader to \cite{KOR1}, where the previously cited results are well summarized, and to \cite{KOP1} which focuses  on entropy conditions for such and similar equations. 

Unlike \cite{Simile, AVH1} and inspired by the recent works \cite{BFMS, BFMS2}, we adopt a more constructive approach focused on the study of characteristics via the resolution of Riemann problems. Further contributions in this direction include \cite{ADBA1,ADBA2,CF1,CF2,CF3,F1},  which study wave propagation in hyperbolic models with hysteresis in the context of traffic flow models (see also \cite{ZHANG}). 
The Cauchy problem \eqref{eq: intro2} was studied in the case of linear flux $f(u)=u$ with the Play operator in \cite{BFMS}, providing a full analysis of the associated Riemann problems. The same full analysis was also done in \cite{BFMS2}, where the case of linear flux is still studied but with a more general, versatile and complex hysteretic relationship, namely the one given by the Preisach operator (see \cite[Section IV.1]{AVH} for the description of such hysteresis operator). In particular, the present paper can be seen as an extension of \cite{BFMS} to the case of a general nonlinear convex flux $f$. While in \cite{BFMS,BFMS2} the well-posedness of the Cauchy problem is based on the wave-front tracking approximations, which is a standard tool for proving existence and stability of hyperbolic systems of conservation laws \cite{AB3}, here we develop a finite volume numerical scheme, which is shown to converge to the entropy weak  solution. Such scheme can be directly applied to the case of linear flux treated in \cite{BFMS}, see Remark \ref{rmk: linear}.

Actually, none of the above references uses a Godunov-type numerical approximation and its limit procedure to prove the existence of solutions. This indeed seems to be a novelty of our analysis, in addition to the treatment of the passage to the limit in the hysteresis relationship (see Section 3.4), similarly to what is done in \cite{BFMS,BFMS2}. To the best of the authors' knowledge, the most closely related work remains \cite{MR}, where also an explicit upwind finite volume numerical scheme is analysed.

We refer the reader to the books  \cite{AB3, EV, HH} for general theory on scalar conservation laws and to \cite{FVM,LEVEQUE,TORO} for more specific results about finite volume numerical schemes.

This article is then structured as follows: in Section 2 we deal with the Riemann problem, in which the solutions are combinations of shock and rarefaction waves as the flux is nonlinear. In Section 3 the Godunov-type numerical scheme is developed; thanks to $\BV$ estimates, it is shown that the approximate solutions generated by the scheme converge as the mesh size tends to $0$ to the entropy weak solution of the Cauchy problem \eqref{eq: intro2} establishing existence; some numerical examples and simulations are highlighted at the end of this section; finally, in Section 4 it is shown that the solutions to the Riemann problem constructed in Section 2 are entropy admissible, and a stability theorem is stated, which implies uniqueness of the entropy solution.


\section{The Riemann problem}\label{S2}
The Riemann problem associated to \eqref{eq: intro2} is the Cauchy problem with the following initial data \begin{equation}\label{eq: datiRP}
    u_0(x) = \begin{cases}
        u_l \quad &x<0,\\
        u_r \quad & x\geq0,
    \end{cases}\quad \text{and} \quad w_0(x) = \begin{cases}
        w_l \quad &x<0,\\
        w_r \quad & x\geq0,
    \end{cases}
\end{equation} such that $|u_0(x)-w_0(x)|\leq a. $\par 
By solutions of the Riemann problem, we intend a couple of functions $(u,w) \in \C0([0,T[ ;\Lloc1(\R;\R^2))$ satisfying the entropy weak formulation of the PDE \eqref{eq: hweaksol} and such that $w(x,t)=[\F(u(x,\cdot),w_0(x)](t)$ for almost every $x\in\R$ and $t\in [0,T[$. It will turn out that, $u$ being the solution of a Riemann problem,  $u(x,\cdot)$ is a piecewise continuous function with only jump discontinuities for almost every $x\in \R$. Hence $\F$ has to be interpreted as applied to functions with jump discontinuities, see Proposition~\ref{prop: whis} and the paragraph above. 

First of all, we notice that, if an entropy solution $(u,w)$ is discontinuous along a curve $(\sigma(t),t)$, with $u_\pm=u_\pm (t):= u(\sigma(t)\pm,t) $ and  $w_\pm=w_\pm (t):= w(\sigma(t)\pm,t)$, then  the \RH condition
\begin{equation}\label{eq: hrh}
    f(u_-)-f(u_+)=\sigma'(t) (u_--u_++w_--w_+).
\end{equation} 
follows from \eqref{eq: hweaksol}.
Indeed, it is enough to choose $k \geq \max(u_-,u_+)$ and $\hat{k} \geq \max(w_-,w_+)$ in \eqref{eq: hweaksol} and integrate it by parts to infer \begin{equation}\label{eq: dis1RH}
        \sigma'(t)[u_-+w_--u_+-w_+]-[f(u_-)-f(u_+)]\geq0;
\end{equation}
similarly, with $k \leq \min(u_-,u_+)$ and $\hat{k} \leq \min(w_-,w_+)$ we deduce 
\begin{equation}\label{eq: dis2RH}
        -\sigma'(t)[u_-+w_--u_+-w_+]+[f(u_-)-f(u_+)]\geq0,
\end{equation}
which, together with the previous inequality, implies \eqref{eq: hrh}. The inequalities \eqref{eq: dis1RH} and \eqref{eq: dis2RH} are derived as in the case of no hysteresis (see e.g. \cite[Theorem 4.3 and proof of Theorem 4.4]{AB3}).

For the explicit construction of solutions, we divide the analysis in the following cases: $u_l=u_r$, $u_l<u_r$ and $u_l>u_r$.

    \subsection{ \texorpdfstring{$u_l=u_r$:}{ul uguale ur}}\label{caso: ul=ur}
    In this case the weak solution is $(u(x,t),w(x,t))=(u_0(x),w_0(x))$ for each $(x,t)$. Notice that if $w_l\not=w_r$ then this solution has a stationary discontinuity for $w$, which satisfies \eqref{eq: hrh}. Moreover, $w= [\F(u,w_0)]$ holds trivially.
    
    \subsection{ \texorpdfstring{$u_l<u_r$:}{ul minore ur} rarefaction waves}\label{caso: ul<ur}
    If there was no hysteresis term in the equation, since the flux is convex, we would expect a rarefaction wave type solution for the unknown $u$. Based on the possible different directions of propagation of the waves, we consider the following subcases.  
    
    \subsubsection{\texorpdfstring{$f'(u_l) \geq 0$:}{zero minore ul minore ur}}\label{sottocaso: 0<ul<ur}
    By convexity, in this case $f'(u)\geq0$ for $u\in [u_l,u_r]$ and the rarefaction has positive speed. Therefore, $u(x,t) \equiv u_l$  and $w(x,t) \equiv w_l$ for $x<0$. For $x>0$ fixed, we expect $t\mapsto u(x,t)$ to decrease in a monotone way from $u_r$ to $u_l$, reaching the value $u_l$ in finite time. If we impose the relationship $w=[\F(u,w_0)]$, then it should hold that for each $t$ such that $w_r-a \leq u(x,t) \leq u_r,$ as $u(x,\cdot)$ is monotone decreasing, at least formally $w_t(x,t)=0$, hence $w(x,t)=w_r$. Whereas, if $u(x,t)\leq w_r-a$ and $u(x,\cdot)$ keeps decreasing, the couple $(u,w)$ should follow the upper boundary $u = w+a$ of the hysteresis region, so $w_t(x,t) = u_t(x,t)$. 
    \par By this analysis we deduce that, at least formally, $u$ 
    should satisfy the conservation law
    \begin{equation}\label{eq: eqftilde}
       \partial_t u+\partial_x\bar{f}_{w_r} (u) = 0,
    \end{equation} 
    where 
    \begin{equation}\label{eq: fbar}
        \bar{f}_{w_r} (u) := \begin{cases}
            \frac{1}{2} f(u) + \frac{1}{2} f(w_r-a) \quad& u\leq w_r-a,\\
            f(u) \quad &w_r-a\leq u \leq u_r,
        \end{cases}
    \end{equation}
    
is still a convex, piecewise $C^1$, flux, see Figure \ref{fig: fbarw}, left. Therefore, the classical Riemann problem for \eqref{eq: eqftilde} is solved in a standard way. 

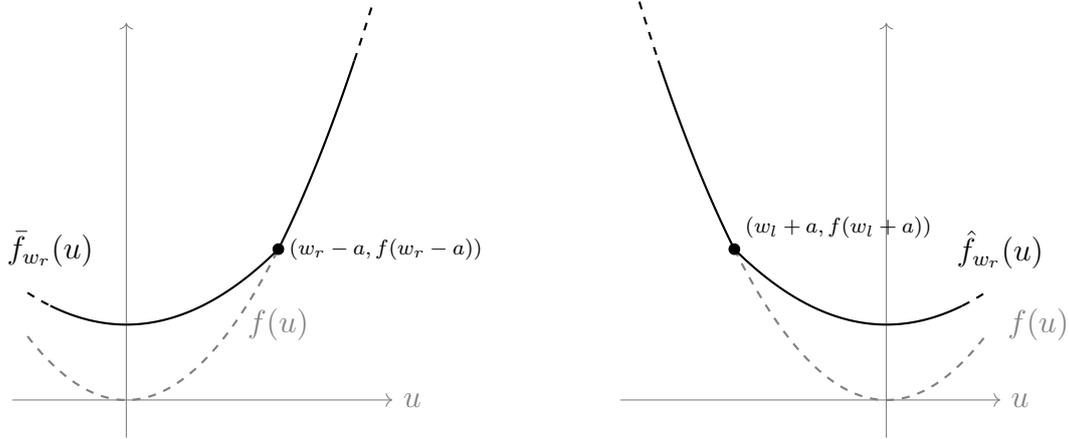
\begin{figure}
        \centering
        \begin{tikzpicture}
        \begin{scope}       
           \draw[->, gray] (-1.5,0) -- (3.5,0) node[right] {$u$};
           \draw[->,gray] (0,-0.5) -- (0,5);

           \draw[domain= -1.3:2, smooth, samples=200, variable=\x, thick, gray, dashed] plot ({\x}, {(1/2)*(\x)*(\x)});
           
           \draw[domain=-1:2, smooth, samples=200, variable=\x, thick]  plot ({\x}, {(1/4)*(\x)*(\x) + 1}) ;
           
           \draw[domain= 2:3, smooth, samples=200, variable=\x, thick] plot ({\x}, {(1/2)*(\x)*(\x)});
           \draw[domain=-1.3:-1, smooth, dashed, samples=100, variable=\x, thick] plot ({\x}, {(1/4)*(\x)*(\x) + 1});

           \draw[domain= 3:3.25, smooth, samples=100, variable=\x, thick, dashed] plot ({\x}, {(1/2)*(\x)*(\x)});

           \filldraw[fill=black] (2,2) circle (2pt) node[right] {\scriptsize $(w_r-a, f(w_r-a))$};

           \node at (-1,2) {$\bar{f}_{w_r}(u)$};

           \node[gray] at (2,1) {${f}(u)$};
        \end{scope}
        \begin{scope}[xshift =  10cm]
        \draw[->, gray] (-3.5,0) -- (1.5,0) node[right] {$u$};
           \draw[->,gray] (0,-0.5) -- (0,5);

           \draw[domain= -2:1.3, smooth, samples=200, variable=\x, thick, gray, dashed] plot ({\x}, {(1/2)*(\x)*(\x)});
           
           \draw[domain=-2:1, smooth, samples=200, variable=\x, thick]  plot ({\x}, {(1/4)*(\x)*(\x) + 1}) ;
           
           \draw[domain= -3:-2, smooth, samples=200, variable=\x, thick] plot ({\x}, {(1/2)*(\x)*(\x)});
           \draw[domain=1:1.3, smooth, dashed, samples=100, variable=\x, thick] plot ({\x}, {(1/4)*(\x)*(\x) + 1});

           \draw[domain= -3.25:-3, smooth, samples=100, variable=\x, thick, dashed] plot ({\x}, {(1/2)*(\x)*(\x)});

           \filldraw[fill=black] (-2,2) circle (2pt) node[above right] {\scriptsize $(w_l+a, f(w_l+a))$};

           \node at (1.5,2) {$\hat{f}_{w_r}(u)$};

           \node[gray] at (2,1) {${f}(u)$};
        \end{scope} 
        \end{tikzpicture}   
    \caption{An example of $\bar{f}_{w_r}$ (left) and $\hat{f}_{w_l}$ (right) compared with $f$ (gray); the additive constants $\tfrac{1}{2} f(w_r-a)$ in \eqref{eq: fbar} and $\tfrac{1}{2} f(w_r+a)$ in \eqref{eq: fhat} are needed for the continuity of $\bar{f}_{w_r}$ and $\hat{f}_{w_l}$ respectively.}
    \label{fig: fbarw}
\end{figure}

In particular, if $w_r-a$ is such that $u_l < w_r-a < u_r$, then $u$ consists of two rarefaction waves, the first from $u_r$ to $w_r-a$, with $\partial_t w=0$, and the second from $w_r-a$ to $u_l$, with $\partial_t u=\partial_t w,$ separated by the intermediate state $u \equiv w_l-a.$ Then also $w$ can be computed as $w=[\F(u,w_0)]$ and it will consist of a rarefaction wave that overlaps with the second rarefaction wave of $u$, connecting $w_r$ to the state $u_l+a$, see Figure \ref{fig: 0minoreulminoreur}.\\
    If instead we suppose  that $w_r-a \leq u_l <u_r$ then $\bar{f}_w(u)=f(u)$ for $u\in [u_l,u_r]$, so $u$ consists of only one rarefaction and $w$ remains constant as $\partial_t w=0$. \\
The opposite extreme is when $u_l < u_r = w_r-a$, which corresponds to the case when the couple $(u,w)$ at time $t=0$ for $x>0$ belongs already to the upper boundary of the hysteresis region. Then $\bar{f}_w(u)=\frac{1}{2}f(u)+\frac{1}{2} f(w_r-a)$ for $u\in [u_l,u_r]$, so we have a rarefaction wave for both $u$ and $w$ with $w(x,t)=u(x,t)+a$ for each $x>0$ and $t>0$. \par
    It is easy to see that by construction $(u,w)$ is a  weak solution of the PDE \eqref{eq: hweaksol}, as whenever $\partial_t w = 0$, $u$ solves $\partial_t u+\partial_x f(u)=0$, and when $\partial_t w=\partial_t u$, then $\partial_t u+\frac{1}{2} \partial_x f(u)=0$ and $\partial_t w + \frac{1}{2} \partial_x f(u)=0$. Moreover, the relationship $w= [\F (u),w_0]$ holds in the strong classical sense as $u(x,\cdot)\in \W{1,1}(0,T)$ for almost every $x$ and we may develop stationary shocks for $w$. 

    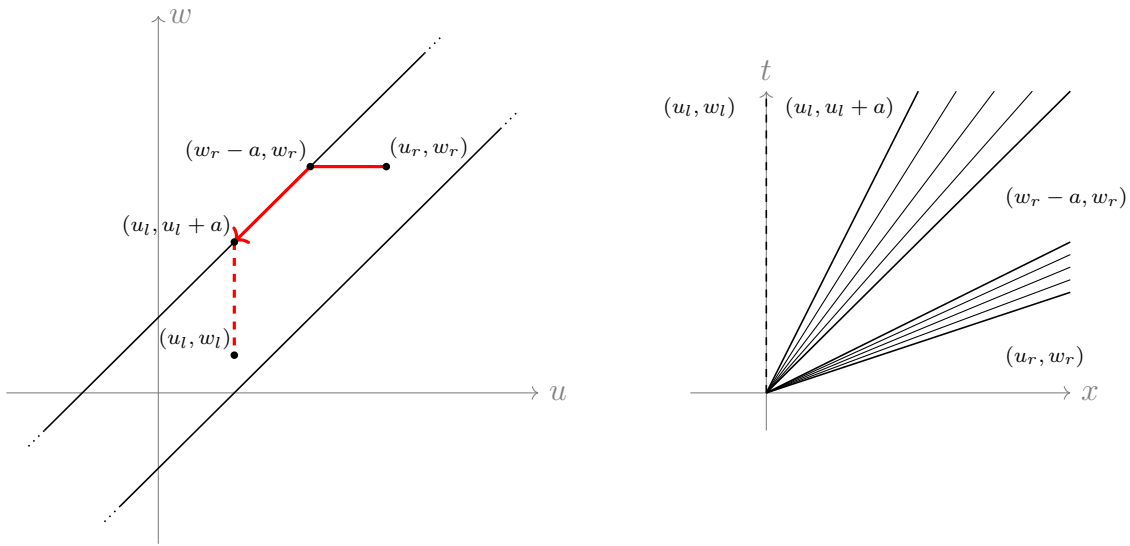
\begin{figure}
    \centering
    \begin{tikzpicture}
        \begin{scope}
            \draw[->, gray] (-2, 0) -- (5, 0) node[right] {$u$}; 
            \draw[->, gray] (0, -2) -- (0, 5) node[right] {$w$};
            \draw[-, semithick] (-0.5, -1.5) -- (4.5, 3.5);
            \draw[-, dotted, semithick] (-0.5, -1.5) -- (-0.75, -1.75);
            \draw[-, dotted, semithick] (4.5, 3.5) -- (4.75, 3.75);
            
            \draw[-, semithick] (-1.5, -0.5) -- (3.5, 4.5); 
            \draw[-, dotted, semithick] (-1.5, -0.5) -- (-1.75, -0.75); 
            \draw[-, dotted, semithick] (3.5, 4.5) -- (3.75, 4.75); 
            
            \draw[->, red, very thick] (3,3) -- (2,3)--(1,2);
            \draw[-, red, very thick, dashed] (1,2)--(1,0.5);

            \draw (3,3) node[circle, fill=black, inner sep=1pt] {} node[above right,xshift=-3pt, yshift=-1pt] {\scriptsize{$(u_r, w_r)$}};
            \draw (1,0.5) node[circle, fill=black, inner sep=1pt] {} node[above left, xshift=3pt, yshift=-2pt] {\scriptsize{$(u_l, w_l)$}};
            \draw (1,2) node[circle, fill=black, inner sep=1pt] {} node[above left,xshift=3pt, yshift=-2pt] {\scriptsize{$(u_l, u_l+a)$}};
            \draw (2,3) node[circle, fill=black, inner sep=1pt] {} node[above left,xshift=3pt, yshift=-2pt] {\scriptsize{$(w_r-a, w_r)$}};
        \end{scope}

        \begin{scope}[xshift=8cm]
            \draw[->, gray] (-1, 0) -- (4, 0) node[right] {$x$}; 
            \draw[->, gray] (0, -0.5) -- (0, 4) node[above] {$t$};
            
            \draw[semithick] (0,0) -- (4,1.333333);
            \draw[thin] (0,0) -- (4,1.5);
            \draw[thin] (0,0) -- (4,1.667);
            \draw[thin] (0,0) -- (4,1.8333);
            \draw[semithick] (0,0) -- (4,2);
        
            \draw[semithick] (0,0) -- (4,4);
            \draw[thin] (0,0) -- (3.5,4);
            \draw[thin] (0,0) -- (3,4);
            \draw[thin] (0,0) -- (2.5,4);
            \draw[semithick] (0,0) -- (2,4);

            \draw[semithick, dashed] (0,0)--(0,4);

            \node[above right] at (3,0.2) {\scriptsize{$(u_r, w_r)$}};
            \node[above right] at (3,2.3) {\scriptsize{$(w_r-a, w_r)$}};
            \node[above right] at (0.1,3.5) {\scriptsize{$(u_l, u_l+a)$}};
            \node[above right] at (-1.5,3.5) {\scriptsize{$(u_l, w_l)$}};
        \end{scope}
    \end{tikzpicture}
    \caption{On the left, the couple $(u,w)$ is shown in the hysteresis plane: the arrow represents the path followed by  $t\mapsto (u(x,t),w(x,t))$ for $x>0$; the dashed line instead represents the stationary shock. On the right, the solution $(u,w)$ in the $(x,t)$ plane: it consists of two rarefaction waves and a stationary shock for $w$ along $x = 0$. In this example, the flux function is $f(u) = \frac{1}{2} u^2$, the parameter $a = 1$, and the initial data $(u_l,w_l)=(1,0.5)$ and $(u_r,w_r)=(3,3).$}
    \label{fig: 0minoreulminoreur}
    \end{figure}

    \subsubsection{\texorpdfstring{$ f'(u_r) \leq 0$:}{ ul minore ur minore di 0}}\label{sottocaso: ul<ur<0}
    In this case we still deal with a rarefaction wave, but moving to the left. As a result, for $x>0$, $u(x,t) = u_r$ and $w(x,t) = w_r$ for each $t$. Instead, for $x<0$, $t\mapsto u(x,t)$ increases in a monotone way from $u_l$ to $u_r$. Reasoning as in Subcase \ref{sottocaso: 0<ul<ur}, we can check that $u$ formally solves 
    \[\partial_t u+\partial_x \hat{f}_{w_l}(u)=0,\]
    with
    \begin{equation} \label{eq: fhat}
        \hat{f}_{w_l} (u)= \begin{cases}
            f(u) \quad& w_l-a\leq u \leq w_l+a,\\
            \frac{1}{2}f(u)+ \frac{1}{2}f(w_l+a) \quad & w_l+a \leq u,
        \end{cases}
    \end{equation} 
    see Figure \ref{fig: fbarw}, right. 
    Hence, if $u_l<w_l+a<u_r$ the solution $(u,w)$ for $x<0$ consists of two rarefaction waves, one only for $u$ and the other for both $u$ and $w$, see Figure \ref{fig: ulminoreurminore0}; if instead $u_r\in [u_l,w_l+a]$, then $w(x,t) = w_0(x)$ for every $t$ and there is only one rarefaction wave for $u$; instead if $u_l=w_l+a$, then we have only one rarefaction wave for both $u$ and $w$ and $w(x,t) = u(x,t)-a$ for $x>0$ and for every $t$.

    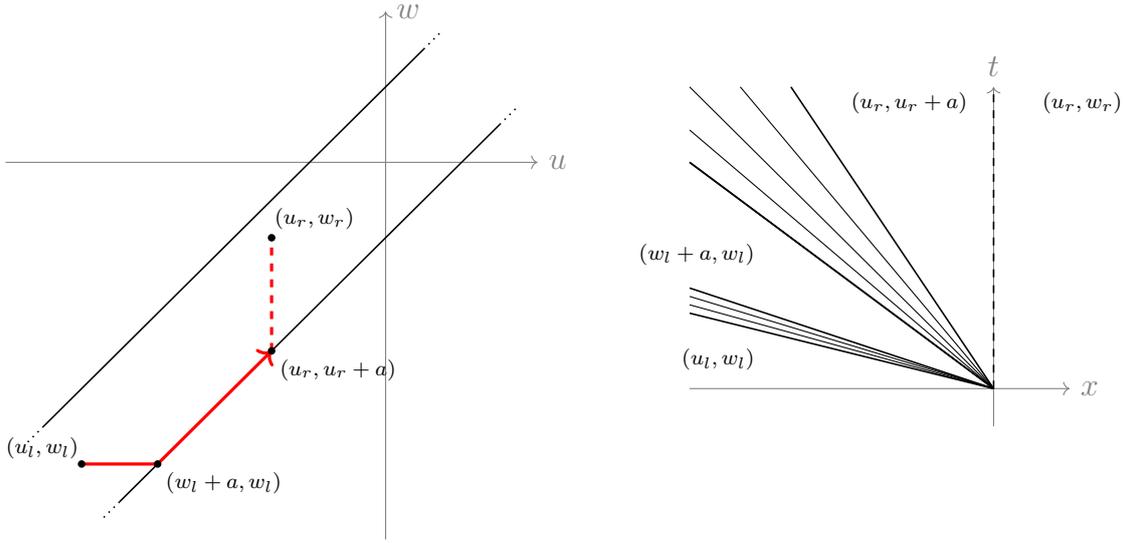
\begin{figure}
    \centering
    \begin{tikzpicture}
        \begin{scope}[yshift = 3cm]
            \draw[->, gray] (-5, 0) -- (2, 0) node[right] {$u$}; 
            \draw[->, gray] (0, -5) -- (0, 2) node[right] {$w$};
            
            \draw[-, semithick] (0.5, 1.5) -- (-4.5, -3.5);
            \draw[-, dotted, semithick] (0.5, 1.5) -- (0.75, 1.75);
            \draw[-, dotted, semithick] (-4.5, -3.5) -- (-4.75, -3.75);
            
            \draw[-, semithick] (1.5, 0.5) -- (-3.5, -4.5); 
            \draw[-, dotted, semithick] (1.5, 0.5) -- (1.75, 0.75); 
            \draw[-, dotted, semithick] (-3.5, -4.5) -- (-3.75, -4.75); 
            
            \draw[->, red, very thick] (-4,-4) -- (-3,-4)--(-1.5,-2.5);
            \draw[-, red, very thick, dashed] (-1.5,-2.5)--(-1.5,-1);

            \draw (-1.5,-1) node[circle, fill=black, inner sep=1pt] {} node[above right,xshift=-3pt, yshift=-1pt] {\scriptsize{$(u_r, w_r)$}};
            \draw (-4,-4) node[circle, fill=black, inner sep=1pt] {} node[above left, xshift=3pt, yshift=-2pt] {\scriptsize{$(u_l, w_l)$}};
            \draw (-3,-4) node[circle, fill=black, inner sep=1pt] {} node[below right, xshift=-1pt, yshift=1pt] {\scriptsize{$(w_l+a, w_l)$}};
            \draw (-1.5,-2.5) node[circle, fill=black, inner sep=1pt] {} node[below right,xshift=-1pt, yshift=1pt] {\scriptsize{$(u_r, u_r+a)$}};
        \end{scope}

        \begin{scope}[xshift=8cm]
            \draw[->, gray] (-4, 0) -- (1, 0) node[right] {$x$}; 
            \draw[->, gray] (0, -0.5) -- (0, 4) node[above] {$t$};
            
            \draw[semithick] (0,0) -- (-4,1.333333);
            \draw[thin] (0,0) -- (-4,1.111111);
            \draw[thin] (0,0) -- (-4,1.222222);
            \draw[semithick] (0,0) -- (-4,1);
        
            \draw[semithick] (0,0) -- (-2.6667,4);
            \draw[thin] (0,0) -- (-4,3.4285714);
            \draw[thin] (0,0) -- (-4,4);
            \draw[thin] (0,0) -- (-3.333333,4);
            \draw[semithick] (0,0) -- (-4,3);
            
            \draw[semithick] (0,0) -- (-4,3);

            \draw[semithick, dashed] (0,0)--(0,4);

            \node[above right] at (0.5,3.5) {\scriptsize{$(u_r, w_r)$}};
            \node[above left] at (-0.2,3.5) {\scriptsize{$(u_r, u_r+a)$}};
            \node[above left] at (-3,1.5) {\scriptsize{$(w_l+a, w_l)$}};
            \node[above left] at (-3,0.1) {\scriptsize{$(u_l, w_l)$}};
               
        \end{scope}
    \end{tikzpicture}
    \caption{An example of the case when $f'(u_r) \leq 0$. In this example, the flux function is $f(u) = \frac{1}{2} u^2$, the parameter $a = 1$, and the initial data $(u_l,w_l)=(-3,-3)$ and $(u_r,w_r)=(-1.5,-1)$.}
    \label{fig: ulminoreurminore0}
    \end{figure}

    \subsubsection{\texorpdfstring{$ f'(u_l)\leq 0 \leq  f'(u_r) $:}{ ul minore di zero minore di ur}}\label{sottocaso: ul<0<ur}
    This subcase is the combination of the previous two Subcases \ref{sottocaso: 0<ul<ur} and \ref{sottocaso: ul<ur<0}. In particular, looking at the direction of the waves, for $x>0$ we expect $u(x,\cdot)$ to decrease from $u_r$ to $u_*$ where $u_*\in[u_l,u_r]$ is the point of minimum of $f$ on $[u_l,u_r]$. Instead, for $x<0$, $t\mapsto u(x, t)$ increases from $u_l$ to $u_*$. The idea is then to consider the two Riemann problems with data \[u_0^1(x)=\begin{cases}
        u_* \quad& x<0,\\
        u_r \quad& x\geq0,\end{cases} \quad \text{and} \quad u_0^2(x)= \begin{cases}
        u_l \quad& x<0,\\
        u_* \quad& x\geq0,
    \end{cases} \]
    solve the first one as in Subcase \ref{sottocaso: 0<ul<ur} and consider its solution $(u_1,w_1)$ restricted to $x>0$, solve the second one as in Subcase \ref{sottocaso: ul<ur<0} and consider the solution $(u_2,w_2)$ on $x<0$. We finally obtain the solution $(u,w)$ to the original Riemann problem by gluing together these two restrictions. \par
    Notice that, since $f'(u_*)=0$, then $u(0-,t)=u_*=u(0+,t)$ so $u(x,t)$ is continuous at $x=0$ for each $t>0$. Our solution may have a discontinuity in $w$ at $x=0$, however, as we already pointed out, stationary discontinuities for $w$ are admissible as long as $u$ is continuous, see again \eqref{eq: hrh} and Figure \ref{fig: sottocaso3}. 

    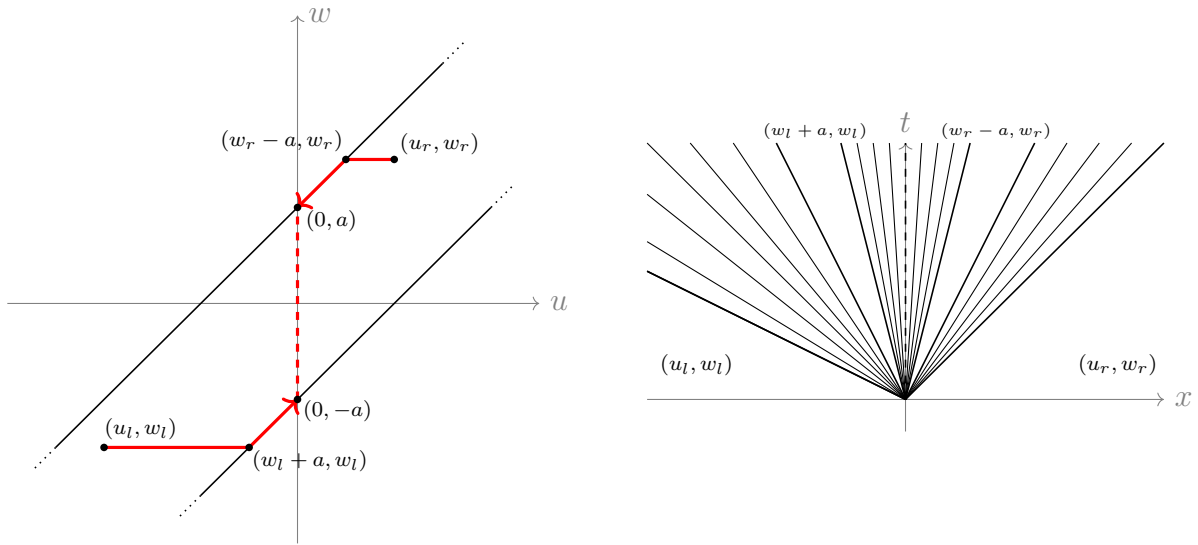
\begin{figure}
    \centering
    \begin{tikzpicture}
        \begin{scope}[scale=1.2727, yshift = 1cm]
            \draw[->, gray] (-3, 0) -- (2.5, 0) node[right] {$u$}; 
            \draw[->, gray] (0, -2.5) -- (0, 3) node[right] {$w$};
            
            \draw[-, semithick] (-2.5, -1.5) -- (1.5, 2.5);
            \draw[-, dotted, semithick] (-2.5, -1.5) -- (-2.75, -1.75);
            \draw[-, dotted, semithick] (1.5, 2.5) -- (1.75, 2.75);
            
            \draw[-, semithick] (-1, -2) -- (2, 1); 
            \draw[-, dotted, semithick] (-1, -2) -- (-1.25, -2.25); 
            \draw[-, dotted, semithick] (2, 1) -- (2.25, 1.25); 
            
            \draw[->, red, very thick] (1,1.5) -- (0.5,1.5)--(0,1);
            \draw[->, red, very thick] (-2,-1.5) -- (-0.5,-1.5)--(0,-1);
            \draw[-, red, very thick, dashed] (0,-1)--(0,1);

            \draw (1,1.5) node[circle, fill=black, inner sep=1pt] {} node[above right,xshift=-2pt, yshift=-2pt] {\scriptsize{$(u_r, w_r)$}};
            
            \draw (0.5,1.5) node[circle, fill=black, inner sep=1pt] {} node[above left,xshift=3pt, yshift=-1pt] {\scriptsize{$(w_r-a, w_r)$}};

            \draw (0,1) node[circle, fill=black, inner sep=1pt] {} node[below right,xshift=-1pt, yshift=3pt] {\scriptsize{$(0, a)$}};

            \draw (0,-1) node[circle, fill=black, inner sep=1pt] {} node[below right,xshift=-2pt, yshift=4pt] {\scriptsize{$(0, -a)$}};

            \draw (-0.5,-1.5) node[circle, fill=black, inner sep=1pt] {} node[below right,xshift=-3pt, yshift=3pt] {\scriptsize{$(w_l+a, w_l)$}};

            \draw (-2,-1.5) node[circle, fill=black, inner sep=1pt] {} node[above right,xshift=-4pt, yshift=-1pt] {\scriptsize{$(u_l, w_l)$}};

        \end{scope}

        \begin{scope}[xshift=8cm, scale = 0.85]
            \draw[->, gray] (-4, 0) -- (4, 0) node[right] {$x$}; 
            \draw[->, gray] (0, -0.5) -- (0, 4) node[above] {$t$};
            
            \draw[semithick] (0,0) -- (4,4);
            \draw[thin] (0,0) -- (3.5,4);
            \draw[thin] (0,0) -- (3,4);
            \draw[thin] (0,0) -- (2.5,4);
            \draw[semithick] (0,0) -- (2,4);

            \draw[semithick] (0,0) -- (1,4);
            \draw[thin] (0,0) -- (0.25,4);
            \draw[thin] (0,0) -- (0.5,4);
            \draw[thin] (0,0) -- (0.75,4);
            
            \draw[semithick] (0,0) -- (-1,4);
            \draw[thin] (0,0) -- (-0.75,4);
            \draw[thin] (0,0) -- (-0.5,4);
            \draw[thin] (0,0) -- (-0.25,4);

            \draw[semithick] (0,0) -- (-2,4);
            \draw[thin] (0,0) -- (-2.666667,4);
            \draw[thin] (0,0) -- (-3.333333,4);
            \draw[thin] (0,0) -- (-4,4);
            \draw[thin] (0,0) -- (-4,3.2);
            \draw[thin] (0,0) -- (-4,2.46153);
            \draw[semithick] (0,0) -- (-4,2);
            
            \draw[semithick] (0,0) -- (-4,2);

            \draw[semithick, dashed] (0,0)--(0,4);

            \node[above right] at (2.5,0.2) {\scriptsize{$(u_r, w_r)$}};
            \node[above] at (1.4,3.9) {\tiny{$(w_r-a, w_r)$}};
            \node[above left] at (-2.5,0.2) {\scriptsize{$(u_l, w_l)$}};
            \node[above] at (-1.4,3.9) {\tiny{$(w_l+a, w_l)$}};

        \end{scope}
    \end{tikzpicture}
    \caption{ This is an example of solution when $f'(u_l)\leq 0 \leq f'(u_r)$; in this example, the flux function is $f(u) = \frac{1}{2} u^2$, the parameters $a = 1$, and the initial data $(u_l,w_l)=(-2,-1.5)$ and $(u_r,w_r)=(1,1.5).$}
    \label{fig: sottocaso3}
    \end{figure}

    \subsection{ \texorpdfstring{$u_l>u_r$:}{ul maggiore ur} shock waves}\label{caso: ul>ur}
    Now we expect $u$ to develop shock discontinuities. Again, in order to study $u(x,\cdot)$, and hence understand $w(x,\cdot)$, we need to distinguish the cases when the shock waves propagate with null, positive or negative speed.
    
    \subsubsection{\texorpdfstring{$f(u_l)=f(u_r)$:}{f(ul) uguale f(ur)}}\label{sottocaso: ful=fur}
    We expect the formation of a shock for $u$ with $0$ speed, so, for each $x$, $u(x,\cdot)\equiv u_0(x)$, and, by imposing $w= [\F(u,w_0)]$, $w(x,\cdot)\equiv w_0(x)$. Then the couple $(u(x,t),w(x,t))(x,t)= (u_0(x),w_0(x))$, satisfying the \RH condition \eqref{eq: hrh}, is also an entropy weak solution to our problem. 
    
    \begin{rmk}\label{rmk: entropyshock1}
    Even if the case $f(u_l)=f(u_r)$ might seem trivial, we wanted to highlight it. Indeed, it may happen that $(u_l-u_r+w_l-w_r)=0$, with $u_l\not=u_r$, so the \RH condition \eqref{eq: hrh} does not give any information about the speed of the shock. Of course,  we must choose $\sigma'(t)\equiv0$  to have $w=[\F(u,w_0)]$, but, even without imposing $w=[\F(u,w_0)]$, we will show that the only entropy solution is the one with $\sigma'(t)\equiv 0$, see Proposition \ref{prop: entropia}.
    \end{rmk}
    
    \subsubsection{ \texorpdfstring{$f(u_l)>f(u_r)$:}{f(ul) maggiore f(ur)}}\label{sottocaso: ful>fur}
    In this subcase, we expect the discontinuity to propagate with positive speed hence for $x<0$, $u(x,\cdot)\equiv u_l$ so also $w(x,\cdot)\equiv w_l$. Instead, for each $x>0$, we expect $t\mapsto u(x,t)$ to increase from $u_r$ to $u_l.$ Similarly to Subcase \ref{sottocaso: 0<ul<ur} we consider the following flux 
    \[\hat{f}_{w_r} (u)= \begin{cases}
        f(u) \quad& u_r \leq u \leq w_r+a,\\
        \frac{1}{2}f(u) + \frac{1}{2} f(w_r+a) \quad &w_r+a \leq u,
    \end{cases}\] 
and again, at least formally, $u$ should satisfy 
\[
\partial_t u+\partial_x \hat{f}_{w_r}(u)=0.
\]
    When $u_r\leq u_l\leq w_r+a$, then $\hat{f}_{w_r} = f(u)$ in $[u_r,u_l]$ and the solution for $u$ consists of only one shock and again $w_t=0$. The other extreme case, that is when $u_r= w_r+a$, generates a solution with a single shock for both $u,w,$ connecting $(u_r,u_r-a)$ and $(u_l,u_l-a).$ In all these cases, the speed of the shock is given by the \RH condition \eqref{eq: hrh}. \par
    Instead, when $u_r<w_r+a<u_l$, in order to solve the Riemann problem we analyse the upper concave envelope of $\hat{f}_{w_r}$ between $u_r$ and $u_l$, since $u_l>u_r$ and $\hat{f}_{w_r}$ is certainly not concave. We then denote the following two quantities 
    \[\mu_r:= \frac{f(w_r+a)-f(u_r)}{(w_r+a)-u_r}, \quad \text{and} \quad \mu_l := \frac{1}{2}\frac{f(u_l)-f(w_r+a)}{u_l -(w_r+a)},\] 
    used to construct the upper concave envelope. Notice first of all that $\mu_l>0,$ as $f$ is convex, $u_r\leq w_r+a\leq u_l$ and $f(u_l)>f(u_r)$.
    
    \par  $\boldsymbol{\mu_r\geq \mu_l:}$
    Under this assumption, the upper concave envelope between $[u_r,u_l]$ of the graph of $\hat{f}_{w_r}$ consists of the two segments connecting the points $(u_r, \hat{f}_{w_r}(u_r)),(w_r+a, \hat{f}_{w_r}(w_r+a))$ and $(w_r+a, \hat{f}_{w_r}(w_r+a)), (u_l, \hat{f}_{w_r}(u_l))$, the first segment with slope $\mu_r$ and the second with slope $\mu_l$, see Figure \ref{fig: convessof}, left. As a result, when solving the Riemann problem we end up with two shock waves for $u$, one with velocity $\mu_r$ and the other $\mu_l$. By imposing the relationship $w=[\F(u,w_0)]$, we then observe that $w$ has only one discontinuity with speed $\mu_l$ connecting the state $w_r$ to $u_l-a$, see Figure \ref{fig: nofastshock}. Notice that the \RH condition \eqref{eq: hrh} holds for both shocks, consequently \eqref{eq: hweaksol} also holds. In fact, for the fastest one, only $u$ is discontinuous, and \RH gives $\mu_r$. For the second, instead, we have a discontinuity with the same amplitude for both $u$ and $w$, and \RH gives $\mu_l.$
    \par $\boldsymbol{\mu_r< \mu_l:} $ In this case, the upper concave envelope of $\hat{f}_{w_r}$ consist in just one segment with slope \[ \frac{\frac{1}{2} f(u_l)+\frac{1}{2}f(w_r+a)-f(u_r)}{u_l-u_r}= \frac{\mu_l I_l+\mu_r I_r}{I_r+I_l}=\tilde\mu,\] where we denoted $I_r=(w_r+a)-u_r$ and $I_l = u_l-(w_r+a)$, see Figure \ref{fig: convessof}, right. So the solution $u$ should consists of one jump discontinuity connecting $u_r$ and $u_l$ with speed given by the slope of that segment. By imposing the relationship $w= \F (u)$, it should hold that $w$ has a shock discontinuity connecting the states $w_r$ and $u_l-a$ with the same speed. However, the couple $(u,w)$ constructed in such a way is not a weak solution to the PDE. Indeed the slope $\tilde{\mu}$ does not satisfy the \RH condition which gives 
    \begin{equation}\label{eq: mu}
        \mu= \frac{f(u_l)-f(u_r)}{u_l-u_r+(u_l-a)-w_r}=\frac{2I_l\mu_l+I_r\mu_r}{I_r+2I_l}.
    \end{equation}
    Necessarily, since we only have one discontinuity both in $u$ and $w$, we must impose its velocity to be equal to $\mu$, so that $(u,w)$ is indeed an weak solution, see Figure \ref{fig: sottocaso4fs}. Notice that, by doing so, we still ensure that $w= [\F(u,w_0)]$. We will call this kind of shock a ``\textbf{fast shock}". 

    \begin{figure}
        \centering
        \begin{tikzpicture}
        \begin{scope}[scale = 3]   
           \draw[->, gray] (-0.15,0) -- (1.7,0) node[right] {$u$};
           \draw[->,gray] (0,-0.15) -- (0,1.7);

           \draw[domain= -0.15:1, smooth, samples=200, variable=\x, gray] plot ({\x}, {(\x)*(\x)});
           
           \draw[domain= 1:1.65, smooth, samples=200, variable=\x, gray]  plot ({\x}, {(1/2)*(\x)*(\x) + 0.5});

           \draw (0.5,0.25)-- (1,1)--(1.5,1.625);

           \draw[thin, dashed] (0.5,0.25)--(0.5,0) node[below] {\scriptsize{$u_r$}};
           \draw[thin, dashed] (1,1)--(1,0) node[below] {\scriptsize{$w_r+a$}};
           \draw[thin, dashed] (1.5,1.625)--(1.5,0)node[below] {\scriptsize{$u_l$}};

           \node[gray] at (1.7, 1.5) {$\hat{f}_{w_r}$};

        \end{scope}
        \begin{scope}[xshift =  10cm, scale= 3]
        \draw[->, gray] (-1.15,0) -- (1.65,0) node[right] {$u$};
           \draw[->,gray] (0,-0.15) -- (0,1.7);

           \draw[domain= -1.15:0, smooth, samples=200, variable=\x,  gray] plot ({\x}, {(\x)*(\x)});
           
           \draw[domain= 0:1.65, smooth, samples=200, variable=\x, gray]  plot ({\x}, {(1/2)*(\x)*(\x)});

           \draw[dash dot] (-1,1)-- (0,0)--(1.5,1.125);
           \draw (-1,1)--(1.5,1.125);

           \draw[thin, dashed] (-1,1)--(-1,0) node[below] {\scriptsize{$u_r$}};
           \draw[thin, dashed] (0,0)--(0,0) node[below] {\scriptsize{$w_r+a$}};
           \draw[thin, dashed] (1.5,1.125)--(1.5,0) node[below] {\scriptsize{$u_l$}};
           
           \node[gray] at (1.6, 0.9) {$\hat{f}_{w_r}$};
        \end{scope} 
        \end{tikzpicture}   
    \caption{Examples of $\hat{f}_{w_r}$. Left: if $\mu_r \geq \mu_l$, the upper concave envelope between $[u_r,u_l]$ of the graph of $\hat{f}_{w_r}$ consists of the two segments drawn in the picture. Right: if $\mu_r < \mu_l$,  the upper concave envelope consists only of one segment.}
    \label{fig: convessof}
\end{figure}
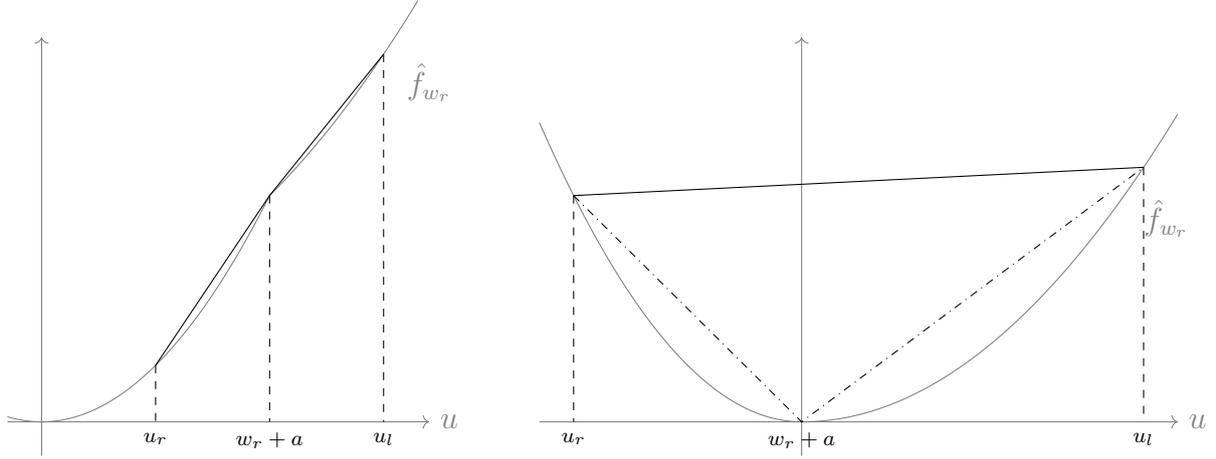

     \begin{rmk}\label{rmk: entropyshock2}
          In the case $\mu_r > \mu_l$ one could also considered the couple consisting of one ``fast shock" with slope given by $\mu$. Such couple $(u,w)$ still satisfies the \RH condition with also $w= [\F(u,w_0)]$ but, as we  will see in Section \ref{S6}, it does not satisfy the entropy weak formulation \eqref{eq: hweaksol}.
     \end{rmk}
      \begin{rmk}
         In the case when $\mu_r < \mu_l$, the heuristic idea is that, since the shock wave is too fast, the couple $(u,w)$ has no time to reach the intermediate state $(u_r,w_r+a)$ but it directly jumps to the state $(u_l,u_l-a)$. Moreover, the slope $\mu$, as defined in \eqref{eq: mu}, can be interpreted as a weighted average of the velocities of the two shocks, $\mu_l$ and $\mu_r$, with respective weights $I_r$ and $2 I_l$. The weight $I_r$ corresponds to the length of the interval in $u$ where $w$ remains constant, while $I_l$ represents the length of the interval in $u$ where both $u$ and $w$ change values, which is equivalent to the length of the corresponding interval in $w$. This justifies the factor $2$ in front of $I_l$.
     \end{rmk}
     \begin{rmk}\label{rmk: mu1=mu2}
         It can be easily seen that if $\mu_r=\mu_l$ then also $\mu=\mu_r=\mu_l$. Hence the two approaches coincide. 
     \end{rmk}
         
     \begin{figure}
    \centering
    \begin{tikzpicture}
        \begin{scope}[scale=1.2727, yshift = 1cm]
            \draw[->, gray] (-3, 0) -- (2.5, 0) node[right] {$u$}; 
            \draw[->, gray] (0, -2.5) -- (0, 3) node[right] {$w$};
            
            \draw[-, semithick] (-2.5, -1.5) -- (1.5, 2.5);
            \draw[-, dotted, semithick] (-2.5, -1.5) -- (-2.75, -1.75);
            \draw[-, dotted, semithick] (1.5, 2.5) -- (1.75, 2.75);
            
            \draw[-, semithick] (-1, -2) -- (2, 1); 
            \draw[-, dotted, semithick] (-1, -2) -- (-1.25, -2.25); 
            \draw[-, dotted, semithick] (2, 1) -- (2.25, 1.25); 
            
            \draw[->, red, very thick] (0.5,0) -- (1,0)--(1.5,0.5);
            \draw[-, red, very thick, dashed] (1.5,0.5)--(1.5,2);

            \draw (0.5,0) node[circle, fill=black, inner sep=1pt] {} node[above left,xshift=1pt, yshift=-2pt] {\scriptsize{$(u_r, w_r)$}};
            
            \draw (1,0) node[circle, fill=black, inner sep=1pt] {} node[below right,xshift=-3pt, yshift=2pt] {\scriptsize{$(w_r+a, w_r)$}};

            \draw (1.5,0.5) node[circle, fill=black, inner sep=1pt] {} node[below right,xshift=-1pt, yshift=3pt] {\scriptsize{$(u_l, u_l-a)$}};

            \draw (1.5,2) node[circle, fill=black, inner sep=1pt] {} node[below right,xshift=-1pt, yshift=3pt] {\scriptsize{$(u_l, u_l)$}};

        \end{scope}

        \begin{scope}[xshift=6.5cm]
            \draw[->, gray] (-1, 0) -- (4, 0) node[right] {$x$}; 
            \draw[->, gray] (0, -0.5) -- (0, 4) node[above] {$t$};
            
            \draw[semithick, dashed] (0,0) -- (3,4);
            \draw[semithick, dashed] (0,0) -- (2.5,4);
            \draw[semithick, dashed] (0,0)--(0,4);

            \node[above right] at (2.5,0.2) {\scriptsize{$(u_r, w_r)$}};
            
            \node[above] at (2.7,3.9) {\tiny{$(w_r+a, w_r)$}};
            
            \node[above] at (1,3.4) {\scriptsize{$(u_l, u_l-a)$}};
            
            \node[above] at (-1,3.4) {\scriptsize{$(u_l, w_l)$}};

        \end{scope}
    \end{tikzpicture}
    \caption{Example of solution for $u_l>u_r$ and $\mu_r> \mu_l$ with $f(u) = \frac{1}{2} u^2$, $a = 1$, and initial data $(u_l,w_l)=(1.5,2)$ and $(u_r,w_r)=(0.5,0).$}
    \label{fig: nofastshock}
    
    \end{figure}
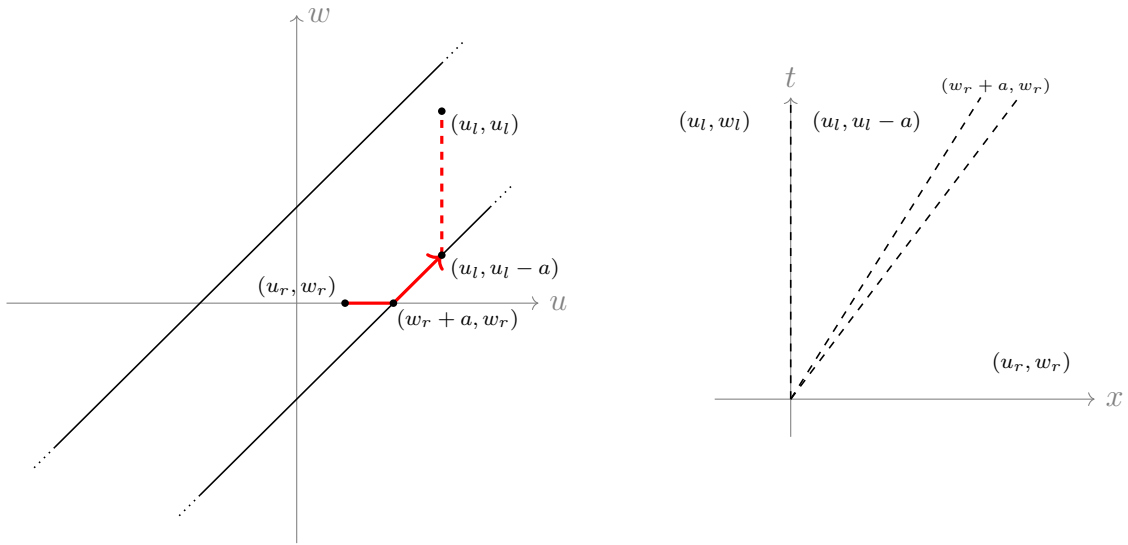

      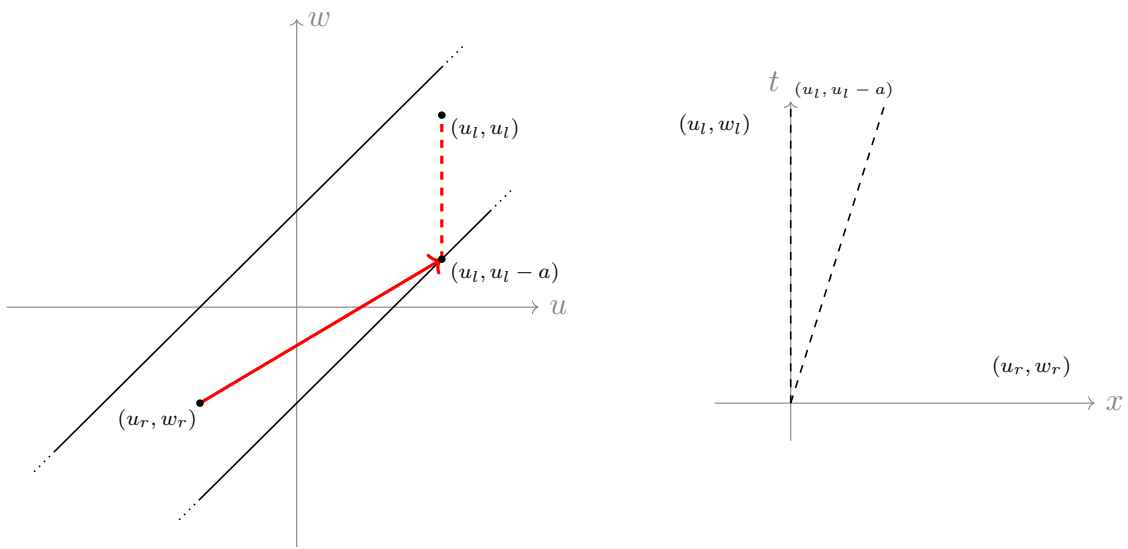
\begin{figure}
    \centering
    \begin{tikzpicture}
        \begin{scope}[scale=1.2727, yshift = 1cm]
            \draw[->, gray] (-3, 0) -- (2.5, 0) node[right] {$u$}; 
            \draw[->, gray] (0, -2.5) -- (0, 3) node[right] {$w$};
            
            \draw[-, semithick] (-2.5, -1.5) -- (1.5, 2.5);
            \draw[-, dotted, semithick] (-2.5, -1.5) -- (-2.75, -1.75);
            \draw[-, dotted, semithick] (1.5, 2.5) -- (1.75, 2.75);
            
            \draw[-, semithick] (-1, -2) -- (2, 1); 
            \draw[-, dotted, semithick] (-1, -2) -- (-1.25, -2.25); 
            \draw[-, dotted, semithick] (2, 1) -- (2.25, 1.25); 
            
            \draw[->, red, very thick] (-1,-1) --(1.5,0.5);
            \draw[-, red, very thick, dashed] (1.5,0.5)--(1.5,2);

            \draw (-1,-1) node[circle, fill=black, inner sep=1pt] {} node[below left,xshift=3pt, yshift=2pt] {\scriptsize{$(u_r, w_r)$}};

            \draw (1.5,0.5) node[circle, fill=black, inner sep=1pt] {} node[below right,xshift=-1pt, yshift=3pt] {\scriptsize{$(u_l, u_l-a)$}};

            \draw (1.5,2) node[circle, fill=black, inner sep=1pt] {} node[below right,xshift=-1pt, yshift=3pt] {\scriptsize{$(u_l, u_l)$}};
        \end{scope}

        \begin{scope}[xshift=6.5cm]
            \draw[->, gray] (-1, 0) -- (4, 0) node[right] {$x$}; 
            \draw[->, gray] (0, -0.5) -- (0, 4) node[above left] {$t$};
            
            \draw[semithick, dashed] (0,0) -- (1.25,4);
            \draw[semithick, dashed] (0,0)--(0,4);

            \node[above right] at (2.5,0.2) {\scriptsize{$(u_r, w_r)$}};
            
            \node[above] at (0.7,3.9) {\tiny{$(u_l, u_l-a)$}};
            
            \node[above] at (-1,3.4) {\scriptsize{$(u_l, w_l)$}};

        \end{scope}
    \end{tikzpicture}
    \caption{Example of solution for $u_l>u_r$ and formation of a ``fast shock" with $f(u) = \frac{1}{2} u^2$, $a = 1$ and  initial data $(u_l,w_l)=(3.5,2)$ and $(u_r,w_r)=(-1,-1).$}  
    \label{fig: sottocaso4fs}
    \end{figure}
    
     \subsubsection{ \texorpdfstring{$f(u_l)<f(u_r)$:}{f(ul) minore f(ur)}}\label{sottocaso: ful<fur}
     In this subcase instead we expect the discontinuity waves to propagate to the left. We then consider the flux 
     \[
     \bar{f}_{w_l}(u)= \begin{cases}
         \frac{1}{2} f(u) + \frac{1}{2} f(w_l-a) \quad& u<w_l-a,\\
         f(u) \quad & w_l-a \leq u \leq u_l,
     \end{cases}\]
     and define \[\nu_l:= \frac{f(u_l)-f(w_l-a)}{u_l -(w_l-a)}, \quad \text{and} \quad \nu_r:= \frac{1}{2} \frac{f(w_l-a)-f(u_r)}{(w_l-a)-u_r}.\] Reasoning as in Subcase \ref{sottocaso: ful>fur}, if $u_l<w_l+a<u_r$ and $\nu_l<\nu_r$ then we have two jump discontinuities, one only for $u$ with velocity $\nu_l$ and one for both $u$ and $w$ with velocity $\nu_r$. If instead $u_l<w_l+a<u_r$ and $\nu_l>\nu_r$, then the weak solution develops a ``fast shock" for both  $u$ and $w$ with slope 
     \[
     \nu  = \frac{f(u_l)-f(u_r)}{u_l-u_r+w_l-(u_r+a)} = \frac{J_l \nu_l+2J_r\nu_r}{J_l+2J_r},
     \]
     given by the \RH condition, where $J_l := u_l-(w_l-a)$ and $J_r = (w_l-a)-u_r$. Notice that this time necessary $\nu_r < 0,$ and the same previous  remarks, referring to this subcase, hold. We can also reason as before to deal with the cases when $u_r \leq w_l+a$ or $u_l=w_l+a$.

     \begin{rmk}\label{rmk: nofastshock}
        In this section, when constructing the solution, we dealt with all the possible cases to highlight the influence of the hysteresis term in the equation. Nevertheless, defining the flux 
         \begin{equation}\label{eq: ftildaw}
             \tilde{f}_w (u)=
             \begin{cases}
                 \frac{1}{2}f(u) + \frac{1}{2} f(w-a) \quad & u< w-a,\\
                 f(u) \quad & w-a \leq u \leq w+a,\\
                 \frac{1}{2} f(u) + \frac{1}{2} f(w+a) \quad & u \geq w + a,
             \end{cases}
         \end{equation}
         we can characterize the solution for $u$ in a more general way. Indeed, except for ``fast shocks",  the restriction of  $u$ on $x>0$ satisfies the PDE 
         \[
         \partial_t u + \partial_x \tilde{f}_{w_r}(u) =0,
         \]
         and its restriction on $x<0$ satisfies 
         \[
         \partial_t u + \partial_x \tilde{f}_{w_l}(u) =0.
         \]
         This general property will be exploited in the next section to define the numerical scheme by only distinguishing whether a ``fast shock" is formed or not, and not by going through all the above cases.
     \end{rmk}


\section{Existence of solutions}
\subsection{A Godunov-type finite volume scheme} \label{sec:godunov}
Let us fix $T>0$ and consider a mesh $\mathcal T$ of $\R$ given by an increasing sequence of points $(x_{i+1/2})_{i\in \mathbb Z}$, such that $\R = \cup_{i\in \mathbb{Z}}[x_{i-1/2},x_{i+1/2}[\,=: \cup_{i\in \mathbb{Z}} K_i $. For simplicity we suppose that $|x_{i-1/2} - x_{i+1/2}|=\Delta x$, for some $\Delta x>0,$ for all $i\in \mathbb{Z}$, and we denote by $x_i$ the middle point of $K_i$. We introduce the time step $\Delta t>0$ and we denote by $t^n:= n \Delta t$ and $K_i^n:=K_i \times [t^n,t^{n+1}[$. Given an initial datum $u_0,w_0$ we set $U_m := \inf_{\R} u_0$ and $U_M:=\sup_\R u_0$, so that $u_0(x)\in [U_m,U_M]$
for each $x$, and we also suppose the following Courant-Friedrichs-Lewy (CFL) condition, see e.g. \cite[Section 6.2]{TORO},  to hold 
\begin{equation}\label{eq: CFL}
    \Delta t \leq \frac{\Delta x}{2 L},
\end{equation}
where $L:=\max_{u\in[U_m,U_M]}|f'(u)|$ is the Lipschitz constant of $f$ on the interval $[U_m,U_M]$. Given piecewise constant data at time $t^n$, 
\[u(x, t^n)= \sum_{i\in \mathbb Z} u_i^n \1_{[x_{i-1/2},x_{i+1/2}[}\quad \text{and}\quad w(x, t^n)= \sum_{i\in \mathbb Z} w_i^n \1_{[x_{i-1/2},x_{i+1/2}[},\] 
we also denote by $\tilde{u}^n(x,t)$ and $\tilde{w}^n(x,t)$ the exact solution to \eqref{eq: intro2} defined on $\R \times [t^n,t^{n+1}[$ via the resolution of the Riemann problems centred at $x_{i+1/2}$, $i\in\mathbb{Z}$. 
Notice that  $\tilde{u}^n(x,t)$ and $\tilde{w}^n(x,t)$ are well defined on the whole time interval as the CFL condition \eqref{eq: CFL} ensures not only that no waves cross the cell interfaces $\{x_{i+1/2}\}\times [t^n, t^{n+1}[$, but also that there are no wave interactions in $K_i^n$ for any $n$ and $i$.

A finite volume scheme for the approximation of problem \eqref{eq: intro2}, with mesh $\mathcal T$ and time step $\Delta t$, writes as follows
\begin{equation} \label{eq: schemaTot}
\begin{cases}
    u_i^{n+1}+w_i^{n+1} = u_i^{n}+w_i^{n}-\frac{\Delta t}{\Delta x} \left(f^n_{i+1/2}-f^n_{i-1/2}\right), \quad &\forall i\in \mathbb{Z},\\[5pt]
    u_i^0+w_i^0 = \frac{1}{\Delta x} \int_{x_{i-1/2}}^{x_{i+1/2}} \left(u_0(x)+w_0(x)\right) \, dx, \quad&\forall i\in \mathbb{Z},
\end{cases}
\end{equation}
where $u_i^n,w_i^n$ are the averages of $u$ and $w$ respectively on $K_i \times \{t^n\}$ and $f^n_{i+1/2}$ denotes the numerical flux at the interface $x_{i+1/2}$, given by the Godunov flux  $f^n_{i+1/2}=f(\tilde{u}^n(x_{i+1/2},t))$, see e.g. \cite[Section 6.2]{TORO}. Notice that $\tilde{u}^n(x_{i+1/2},t)$ is constant in time as it is the solution to a Riemann problem centred at $x_{i+1/2}$, and that if $\tilde{u}^n(x_{i-1/2}-,t)\not=\tilde{u}^n(x_{i-1/2}+,t)$ then  $f(\tilde{u}^n(x_{i-1/2}-,t))=f(\tilde{u}^n(x_{i-1/2}+,t))$ by the \RH condition, so $f^n_{i+1/2}$ is well defined for every $i\in \mathbb{Z}$. Such numerical flux depends only on the Riemann data of $u_i^n,u_{i+1}^n$ (see Section \ref{S2}) and, as noticed in \cite[Section 13.5]{LEVEQUE}, it can be written as $f_{i+1/2}^n=g(u_{i}^n,u_{i+1}^n)$ where
\begin{equation}\label{eq: gGodunov}
    g(u_l,u_r) = 
    \begin{cases}
        \min\limits_{u\in[u_l,u_r]} f(u) \quad & u_l\leq u_r,\\
        \max\limits_{u\in[u_r,u_l]} f(u) \quad & u_l\geq u_r.
    \end{cases}
\end{equation}

The recursive scheme \eqref{eq: schemaTot} allows us to compute the approximate sum of the two variables $u+w$, but not the single values of the unknowns. To complete the algorithm, we need to compute $u_i^n$, $u_{i+1}^n$.
To this end, we analyse what happens to the exact solutions $\tilde{u}^n$ in a cell  $K_i^n$. 
This solution is given by the union of (the restrictions to the corresponding quarters of plane of) the solution to the Riemann problem centered at $(x_{i-1/2},t^n)$ with data $(u_{i-1}^n,w_{i-1}^n)$ and $(u_i^n,w_i^n)$, and the solution to the Riemann problem centred at $(x_{i+1/2},t^n)$ with data $(u_{i}^n,w_{i}^n)$ and $(u_{i+1}^n,w_{i+1}^n)$.

\begin{figure}
    \centering
    \begin{tikzpicture}[scale = 1.5]

      \draw[thick] (0,0) rectangle (4,2);
      \node at (2,-0.7) {\scriptsize{$(u_i^n, w_i^n)$}};
    
      \draw[-,thick] (-2,0) -- (0,0);
      \draw[dashed,thick] (-2.5,0) -- (-2,0);

      \draw[-,thick] (-2,2) -- (0,2);
      \draw[dashed,thick] (-2.5,2) -- (-2,2);
     
      \node at (-1.2,-0.3) {\scriptsize{$(u_{i-1}^n, w_{i-1}^n)$}};
    
      \draw[-,thick] (4,0) -- (6,0);
      \draw[dashed,thick] (6,0) -- (6.5,0);
      
      \draw[-,thick] (4,2) -- (6,2);
      \draw[dashed,thick] (6,2) -- (6.5,2);

      \node at (5.2,-0.3) {\scriptsize{$(u_{i+1}^n, w_{i+1}^n)$}};

      \draw[-,thick] (0,2) -- (0,2.5);
      \draw[dashed,thick] (0,2.5) -- (0,3);
      
      \draw[-,thick] (4,2) -- (4,2.5);
      \draw[dashed,thick] (4,2.5) -- (4,3);

      \draw[-,thick] (0,0) -- (0,-0.25);
      \draw[dashed,thick] (0,-0.25) -- (0,-0.5);
      
      \draw[-,thick] (4,0) -- (4,-0.25);
      \draw[dashed,thick] (4,-0.25) -- (4,-0.5);


      \draw[thin] (0,0) -- (1.5,2);
      \draw[thin] (0,0) -- (1.4,2);
      \draw[thin] (0,0) -- (1.3,2);
      \draw[thin] (0,0) -- (1.2,2);

      \draw[thin] (0,0) -- (0.1,2);
      \draw[thin] (0,0) -- (0.2,2);
      \draw[thin] (0,0) -- (0.3,2);
      \draw[thin] (0,0) -- (-0.1,2);
      \draw[thin] (0,0) -- (-0.2,2);

      \draw[thin] (0,0) -- (-1.1,2);
      \draw[thin] (0,0) -- (-1.2,2);
      \draw[thin] (0,0) -- (-1.3,2);
      \draw[thin] (0,0) -- (-1.4,2);

      \draw[thin, dashed] (4,0) -- (3.3,2);
      \draw[thin, dashed] (4,0) -- (2.4,2);
      
      \draw[dash dot] (2,-0.2)--(2,2.7);

      \node at (-2.7,0.2) {\scriptsize{$\{t=t^n\}$}};
      \node at (-2.7,2.2) {\scriptsize{$\{t=t^{n+1}\}$}};

      \node at (2,-0.37) {\scriptsize{$\{x=x_{i}\}$}};
      \node at (0,-0.7) {\scriptsize{$\{x=x_{i-1/2}\}$}};
      \node at (4,-0.7) {\scriptsize{$\{x=x_{i+1/2}\}$}};
      
      \node at (1,0.2) {\scriptsize{$K_{i}^{n,l}$}};
      \node at (3,0.2) {\scriptsize{$K_{i}^{n,r}$}};
    
    \end{tikzpicture}
    
    \caption{The cell $K_i^n$ split into $K_i^{n,l}$ and $K_i^{n,r}$}
    \label{fig: cella}
\end{figure}
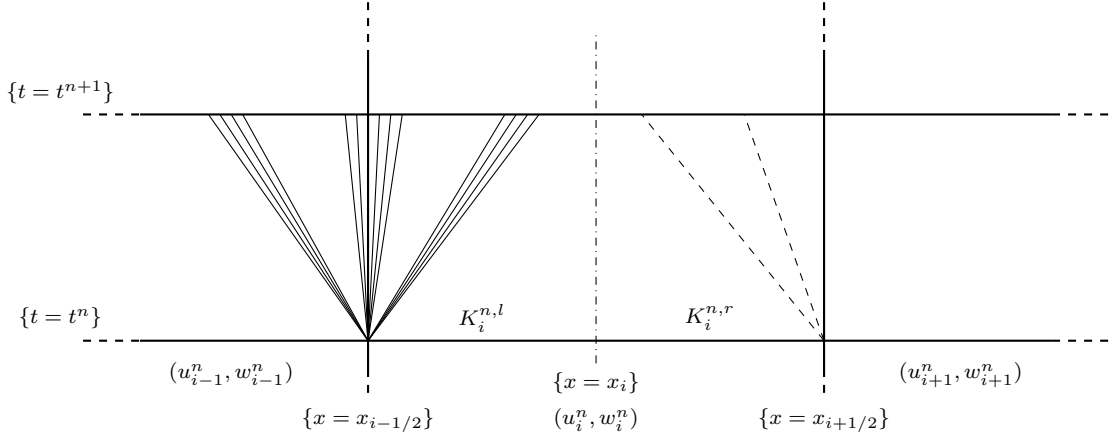

Suppose for now that in $K_i^n$ there are no ``fast shocks". Then, by Remark \ref{rmk: nofastshock}, $\tilde{u}^n$ satisfies weakly the following conservation law
\begin{equation}\label{eq: ftildewPDE}
    \partial_t u+ \partial_x \tilde{f}_{w^n_i}(u)=0,
\end{equation}
in the whole cell $K_i^n$, where $\tilde{f}_{w^n_i}$ is defined by \eqref{eq: ftildaw}. Hence, using the integral formulation of \eqref{eq: ftildewPDE}, we get the scheme for $u$ \begin{equation}\label{eq: nofastshock}
    u_i^{n+1}= u_i^n -\frac{\Delta t}{\Delta x } (\tilde{f}_{w_i^n}(\tilde{u}^n(x_{i+1/2}-,t))-\tilde{f}_{w_i^n}(\tilde{u}^n(x_{i-1/2}+,t))).
\end{equation}
If instead there is a ``fast shock" in $K_i^n$, then the identification of the flux is not straightforward in general .
We then subdivide $K_i^n$ as 
\[K_i^n = [x_{i-1/2},x_i[\,\times [t^n,t^{n+1}[\,\cup [x_i, x_{i+1/2}[\,\times [t^n,t^{n+1}[\,=:K_i^{n,l}\cup K_i^{n,r},\]
and we consider separately the averages of $\tilde{u}^n(x,t^{n+1})$ 
on $[x_{i-1/2},x_i[$ and $[x_i, x_{i+1/2}[$, which we denote respectively by ${u_{i}^{n+1,l}}$ and ${u_{i}^{n+1,r}}$. Indeed, thanks to the CFL condition \eqref{eq: CFL}, ${u_{i}^{n+1,l}}$ and ${u_{i}^{n+1,r}}$ depend respectively only on the solution to the Riemann problems centred at $(x_{i-1/2},t^n)$ and $(x_{i+1/2},t^n)$. 

If there is no ``fast shock" in $K_i^{n,l}$, then \eqref{eq: ftildewPDE} still holds in $K_i^{n,l}$. Hence, applying the discrete formulation of \eqref{eq: ftildewPDE} in $K_i^{n,l}$,
\[\begin{split}
 {u_{i}^{n+1,l}}&= u_i^n -2\frac{\Delta t}{\Delta x } \left(\tilde{f}_{w_i^n}(\tilde{u}^n(x_{i}^n,t))-\tilde{f}_{w_i^n}(\tilde{u}^n(x_{i-1/2}+,t))\right)\\
 &=  u_i^n -2\frac{\Delta t}{\Delta x } \left(f(u_i^n)-\tilde{f}_{w_i^n}(\tilde{u}^n(x_{i-1/2}+,t))\right).
 \end{split}\]
 Notice that the space interval here has length $\Delta x /2$ and that  $\tilde{u}^n(x_{i}^n,t)= u_i^n$ as ensured by the CFL condition \eqref{eq: CFL}.\\ Otherwise, if a ``fast shock" is generated, then we compute explicitly 
 \begin{equation}\label{eq: media1fastshock}
 {u_{i}^{n+1,l}} = u_i^n + 2\frac{\Delta t}{\Delta x} \mu ( u_{i-1}^n-u_{i}^n)
  \end{equation}
as the jump between $u_i^n$ and $u_{i-1}^n$ travels with speed $\mu$ given by (recall Subcase \ref{sottocaso: ful>fur})
\[
\mu=\mu(u_{i-1}^n,u_i^n,w_i^n) = \frac{f(u_{i-1}^n)-f(u_{i}^n)}{u_{i-1}^n-u_{i}^n+(u_{i-1}^n-a)-w_{i}^n}.
\]
Since $\tilde{u}^n(x_i,t)\equiv u_i^n$, then at least formally \eqref{eq: media1fastshock} means that the total flux crossing the interface $x_{i-1/2}\times [t^n,t^{n+1}[$ is $\mu (u_{i-1}^n-u_i^n) +f(u_i^n)$.

Similarly, if there is no ``fast shock" in $K_i^{n,r}$, then
\[
{u_{i}^{n+1,r}} = u_i^n - 2 \frac{\Delta t}{ \Delta x} \left(\tilde{f}_{w_i^n}(\tilde{u}^n(x_{i+1/2}-,t))-f(u_i^n)\right),
\]
while, if a ``fast shock" is present, then
\[
{u_{i}^{n+1,r}} = u_i^n - 2 \frac{\Delta x }{\Delta t} \nu (u_i^n-u_{i-1}^n),
\]
where (recall Subcase \ref{sottocaso: ful<fur})
\[
\nu=\nu(u_{i}^n,u_{i+1}^n,w_i^n) = \frac{f(u_{i}^n)-f(u_{i+1}^n)}{u_{i}^n-u_{i+1}^n+ w_{i}^n-(u_{i+1}^n+a)}.
\]
We then finally deduce that in general 
\begin{equation}\label{eq: h1piumenofinale}
u_i^{n+1}= u_i^n - \frac{\Delta t}{\Delta x} \left(h_1^-(u_{i}^n,u_{i+1}^n,w_i^n)-h_1^+(u_{i-1}^n,u_i^n,w_{i}^n)\right),
\end{equation}
where 
\begin{equation}\label{eq: h1meno}
    h_1^+(u_{i-1}^n,u_i^n,w_{i}^n) =
    \begin{cases}
        \tilde{f}_{w_i^n}(\tilde{u}^n(x_{i-1/2}+,t))  &\quad \text{if no ``fast shock" in $K_i^{n,l}$,}\\
        \mu (u_{i-1}^n-u_i^n) +f(u_i^n) &\quad \text{if ``fast shock" in $K_i^{n,l}$,}
    \end{cases}
\end{equation}
and
\begin{equation}\label{eq: h1piu}
     h_1^-(u_{i-1}^n,u_i^n,w_{i}^n) =
    \begin{cases}
        \tilde{f}_{w_i^n}(\tilde{u}^n(x_{i+1/2}-,t))  &\quad \text{if no ``fast shock" in $K_i^{n,r}$,}\\
        \nu (u_{i+1}^n-u_i^n) +f(u_i^n)&\quad \text{if ``fast shock" in $K_i^{n,r}$}.
    \end{cases}
\end{equation}
Once $u_i^{n+1}$ is computed by \eqref{eq: h1piumenofinale}, we use \eqref{eq: schemaTot} to also compute $w_i^{n+1}$. 

Finally, the complete scheme reads as
\begin{equation}\label{eq: schema}
    \begin{cases}
         u_i^{n+1} =  u_i^n- \frac{\Delta t}{\Delta x} \left(h_1^-(u_{i}^n,u_{i+1}^n,w_i^n)-h_1^+(u_{i-1}^n,u_i^n,w_{i}^n)\right),
         \quad& \forall i\in \mathbb Z,\\[5pt]
         w_i^{n+1} = u_i^{n}+w_i^{n}-\frac{\Delta t}{\Delta x} \left(f^n_{i+1/2}-f^n_{i-1/2}\right)-u_i^{n+1}, \quad &\forall i\in \mathbb{Z},\\[5pt]
         u_i^0= \frac{1}{\Delta x} \int_{x_{i-1/2}}^{x_{i+1/2}} u_0(x) \, dx, \quad w_i^0 = \frac{1}{\Delta x} \int_{x_{i-1/2}}^{x_{i+1/2}} w_0(x) \, dx,\quad&\forall i\in \mathbb{Z},\\[5pt]
         |u_0(x)-w_0(x)|\leq a, \quad & \forall x\in \R .
    \end{cases}
\end{equation}
We point out that the scheme is well-posed. Indeed, in order to be able to solve the Riemann problems to define $\tilde{u}^n(x,t)$ and $\tilde{w}^n(x,t)$, due to the hysteresis relationship, we always supposed that $|u_i^n-w_i^n|\leq a$. However, starting with compatible initial data, that is $|u_0(x)-w_0(x)| \leq a$, then 
\[|u_i^0-w_i^0| \leq \frac{1}{\Delta x}\int_{x_{i-1/2}}^{x_{i+1/2}} |u_0(x)-w_0(x)| \,dx \leq a, \quad \forall i\in \mathbb{Z}.\] 
Consequently, since the solution to the Riemann problem preserves the inequality $|u-w|\leq a$ for every $x$ and $t>0$, (see Section~\ref{S2}), it holds
\begin{equation}\label{eq: comp}
|u^{n+1}_i-w_i^{n+1}| \leq \frac{1}{\Delta x} \int_{x_{i-1/2}}^{x_{i+1/2}}|\tilde{u}^n(x,t^{n+1})-\tilde{w}^n(x,t^{n+1})| \, dx \leq a,\quad \forall i\in \mathbb{Z}, \forall n\in \mathbb{N}.
\end{equation}

To study the properties of the scheme, it will be convenient to rewrite \eqref{eq: schema} as 
\begin{equation}\label{eq: schema2}
    \begin{cases}
         u_i^{n+1} =  u_i^n- \frac{\Delta t}{\Delta x} (h_1^-(u_{i}^n,u_{i+1}^n,w_i^n)-h_1^+(u_{i-1}^n,u_i^n,w_{i}^n)),
         \quad& \forall i\in \mathbb Z,\\
         w_i^{n+1} = w_i^{n}-\frac{\Delta t}{\Delta x} (h_2^-(u_{i}^n,u_{i+1}^n,w_i^n)-h_2^+(u_{i-1}^n,u_i^n,w_{i}^n)), \quad &\forall i\in \mathbb{Z},\\
         u_i^0= \frac{1}{\Delta x} \int_{x_{i-1/2}}^{x_{i+1/2}} u_0(x) \, dx, \quad w_i^0 = \frac{1}{\Delta x} \int_{x_{i-1/2}}^{x_{i+1/2}} w_0(x) \, dx,\quad&\forall i\in \mathbb{Z},\\
         |u_0(x)-w_0(x)|\leq a, \quad & \forall x\in \R .
    \end{cases}
\end{equation}
where 
\[
 h_2^{\pm} =f_{i\pm 1/2}^n - h^\pm_1.
\]

We have the following monotonicity properties for $h_j^{\pm}$, $j=1,2.$
\begin{prop}\label{prop: monotonia}
    Given $\alpha, \beta, \gamma \in\R$, the following monotonicity properties hold: 
    \begin{itemize}
        \item if $|\beta - \gamma| \leq a$:
        \begin{itemize}
            \item $h_1^+(\alpha,\beta,\gamma)$ is non-decreasing in $\alpha$ and $\gamma$ and non-increasing in $\beta$; 
            \item $h_2^+(\alpha,\beta,\gamma)$ is non-decreasing in $\alpha$ and $\beta$ and non-increasing in $\gamma$;
        \end{itemize}
        \item if $|\alpha - \gamma| \leq a$:
        \begin{itemize}
            \item $h_1^-(\alpha,\beta,\gamma)$ is non-decreasing in $\alpha$ and non-increasing in $\beta$ and $\gamma$;
            \item $h_2^-(\alpha,\beta,\gamma)$ is non-decreasing in $\gamma$ and non-increasing in $\alpha$ and $\beta$.
        \end{itemize}
    \end{itemize}
\end{prop}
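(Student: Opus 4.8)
The plan is a \emph{case-by-case verification} following the Riemann solver of Section~\ref{S2}, separating the configurations that produce a ``fast shock'' from those that do not, and reducing the assertions on $h_2^{\pm}$ to those on $h_1^{\pm}$. For the reduction, observe that by construction $h_1^{\pm}(\alpha,\beta,\gamma)+h_2^{\pm}(\alpha,\beta,\gamma)=g(\alpha,\beta)$, where $g$ is the classical Godunov flux \eqref{eq: gGodunov} of the convex function $f$; $g$ is independent of $\gamma$, non-decreasing in $\alpha$ and non-increasing in $\beta$. Hence the $\gamma$-monotonicity of $h_2^{\pm}$ is opposite to that of $h_1^{\pm}$ and follows from it; the $\alpha$- and $\beta$-monotonicity of $h_2^{\pm}$ requires in addition comparing the rates of variation of $h_1^{\pm}$ and $g$, which is carried out along the same analysis. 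It therefore suffices to treat $h_1^{\pm}$.

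\emph{Regular regime (no ``fast shock'').} By Remark~\ref{rmk: nofastshock} and \eqref{eq: h1meno}--\eqref{eq: nofastshock}, here $h_1^{\pm}(\alpha,\beta,\gamma)=\tilde f_\gamma(u^{*})$, where $u^{*}$ is the interface trace selected by the scalar Riemann problem $\partial_t v+\partial_x\tilde f_\gamma(v)=0$ with left datum $\alpha$ and right datum $\beta$, and $\tilde f_\gamma$ is the flux \eqref{eq: ftildaw}. Running through Subcases~\ref{sottocaso: 0<ul<ur}--\ref{sottocaso: ful<fur}, the auxiliary flux used there ($\bar f$, $\hat f$) agrees with $\tilde f_\gamma$ on the whole interval spanned by $\alpha$ and $\beta$: the compatibility bound \eqref{eq: comp} — $|\beta-\gamma|\le a$ for the ``$+$'' fluxes, $|\alpha-\gamma|\le a$ for the ``$-$'' ones — places the endpoint of that interval belonging to cell $i$ inside the flat middle branch $[\gamma-a,\gamma+a]$ of $\tilde f_\gamma$, so that at most one of the kinks $\gamma\pm a$ can lie inside. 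Consequently $h_1^{\pm}(\alpha,\beta,\gamma)$ is the Godunov numerical flux of $\tilde f_\gamma$, i.e.\ $\min_{[\alpha,\beta]}\tilde f_\gamma$ if $\alpha\le\beta$ and $\max_{[\beta,\alpha]}\tilde f_\gamma$ if $\alpha>\beta$ (this holds because $\tilde f_\gamma$ is continuous, piecewise $C^1$ and, on that interval, convex, the only possibly non-convex kink being excluded by convexity of $f$ and the bound). Monotonicity in $\alpha$ (non-decreasing) and $\beta$ (non-increasing) is then the classical monotonicity of the Godunov flux. For the monotonicity in $\gamma$: the value of $\gamma$ enters $h_1^{\pm}$ only through the additive constant $\tfrac12 f(\gamma-a)$ or $\tfrac12 f(\gamma+a)$ of whichever branch the interval protrudes into, and the selected trace/extremum depends on $\gamma$ only through the monotonically moving threshold $\gamma\pm a$; the wave-direction hypothesis of the active Subcase, together with convexity of $f$, forces $f'\ge0$ at the kink active for a ``$+$'' flux and $f'\le0$ at the kink active for a ``$-$'' flux (any violation would push the configuration into the ``fast shock'' regime instead), so $\gamma\mapsto\tfrac12 f(\gamma\pm a)$ enters monotonically, giving that $h_1^{+}$ is non-decreasing and $h_1^{-}$ non-increasing in $\gamma$.

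\emph{``Fast shock'' regime.} Here $h_1^{+}=\mu(\alpha-\beta)+f(\beta)$ and $h_1^{-}=\nu(\beta-\alpha)+f(\alpha)$, with $\mu=\frac{f(\alpha)-f(\beta)}{\alpha-\beta+(\alpha-a)-\gamma}$ and $\nu=\frac{f(\alpha)-f(\beta)}{\alpha-\beta+\gamma-(\beta+a)}$ as in Subcases~\ref{sottocaso: ful>fur} and \ref{sottocaso: ful<fur} (cf.\ \eqref{eq: mu}). One differentiates these directly: the inequalities defining the ``fast shock'' region keep the denominators of $\mu$ and $\nu$ of a fixed sign, and, combined with the strict convexity of $f$ (used through a mean-value bound on $f(\alpha)-f(\beta)$) and the compatibility bound, they yield the claimed signs of the partial derivatives of $h_1^{\pm}$ in $\alpha$, $\beta$ and $\gamma$ on this region. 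Finally, fixing any two of $\alpha,\beta,\gamma$, the ``fast shock'' condition splits the admissible range of the third into intervals, and on the separating set the two formulas for $h_1^{\pm}$ coincide by Remark~\ref{rmk: mu1=mu2} and its analogue for Subcase~\ref{sottocaso: ful<fur}; hence $h_1^{\pm}$ is continuous there, and continuity together with monotonicity on each piece yields the global monotonicity of $h_1^{\pm}$, and therefore of $h_2^{\pm}$.

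The main obstacle is the monotonicity in $\gamma$: unlike the $\alpha$- and $\beta$-statements it does not follow from classical scalar theory and genuinely uses the compatibility bound $|\beta-\gamma|\le a$ (resp.\ $|\alpha-\gamma|\le a$), both to confine the cell-$i$ endpoint to the middle branch of $\tilde f_\gamma$ and to fix the sign of $f'$ at the active kink. A secondary, bookkeeping-type difficulty is to check, case by case, that matching the ``fast shock'' and ``regular'' formulas — and comparing the rates of variation of $h_1^{\pm}$ and $g$ needed to pass to $h_2^{\pm}$ — does not destroy monotonicity.
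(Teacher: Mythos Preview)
Your overall plan is the paper's: a case-by-case verification split into the ``regular'' Riemann configurations versus the ``fast shock'' regime, with explicit differentiation in the latter. Two points, however, need repair.

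First, your claim that $\tilde f_\gamma$ is convex on the relevant interval is false precisely in the interesting shock subcase. For $h_1^+$ with $\alpha>\beta$ and the kink $\gamma+a\in[\beta,\alpha]$, the left and right slopes at $\gamma+a$ are $f'(\gamma+a)$ and $\tfrac12 f'(\gamma+a)$; when $\mu_r\ge\mu_l$ one has $\mu_r>0$ and hence $f'(\gamma+a)>0$, so the kink is \emph{concave}. The identification $h_1^{\pm}=\min/\max_{[\alpha\wedge\beta,\alpha\vee\beta]}\tilde f_\gamma$ is nevertheless correct, but the justification is Osher's formula for the Godunov flux of a general (not necessarily convex) scalar flux, not convexity of $\tilde f_\gamma$. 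The paper sidesteps this by writing out the four shock subcases of $h_1^+$ explicitly (see \eqref{eq: h1menoshock}) and checking monotonicity in each, which also makes the $\gamma$-argument transparent (the additive constant $\tfrac12 f(\gamma+a)$ appears only in the subcase where $\mu_r\ge\mu_l$ forces $f'(\gamma+a)>0$).

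Second, your reduction for $h_2^{\pm}$ works only for $\gamma$. For $\alpha$ and $\beta$ you need $\partial_\alpha h_1^{+}\le\partial_\alpha g$ and $\partial_\beta h_1^{+}\le\partial_\beta g$, and this is not ``bookkeeping'': it fails to follow from the monotonicity of $h_1^{+}$ alone and requires structural facts that your abstract Godunov-flux argument does not supply. The paper writes $h_2^{+}$ out in the same cases (see \eqref{eq: h2menorare}--\eqref{eq: h2menoshock}) and uses, for instance, that in the rarefaction subcase $\gamma-a\ge\alpha$, $f'(\gamma-a)\ge0$ the minimum is attained in $[\alpha,\gamma-a]$, so $h_2^{+}$ is \emph{constant} in $\beta$; and in the fast-shock subcase computes $\partial_\beta(I_l\mu)=-\tfrac{I_l}{I_r+2I_l}(f'(\beta)-\mu)\ge0$ directly. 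Without these explicit checks the $\alpha,\beta$-monotonicity of $h_2^{\pm}$ is unproven.
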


\begin{proof}
    We first suppose $|\beta - \gamma| \leq a$ and focus on $h_1^+$. If $\alpha \leq \beta$, see Section \ref{caso: ul<ur}, then no shock waves are formed so, recalling its definition \eqref{eq: h1meno}, $h_1^+(\alpha,\beta,\gamma) $ is the minimum in $[\alpha,  \beta]$ of $\tilde{f}_\gamma$ as for the standard Godunov flux. More explicitly,
    \begin{equation}\label{eq: h1menorare}
        h_1^+ (\alpha, \beta,\gamma)=
        \begin{cases}
            \min\limits_{u\in[\alpha,\beta]} f(u) \quad & \gamma - a \leq \alpha,\\
             \min\limits_{u\in[\alpha,\beta]} f(u) \quad &\gamma - a \geq \alpha \text{ and } f'(\gamma - a ) \leq 0,\\
             \min\limits_{u\in[\alpha,\beta]} \frac{1}{2}f(u) + \frac{1}{2} f(\gamma -a ) \quad &\gamma - a \geq \alpha \text{ and } f'(\gamma - a ) \geq 0.
        \end{cases}
    \end{equation}
    Indeed, if in the semi-cell $K_i^{n,l}$, $t\mapsto \tilde{u}^n(x,t)$ possibly decreases from $\beta$ to $\alpha$, hence when $\gamma - a \leq \alpha$ the couple $(\tilde{u}^n(x,\cdot),\tilde{w}^n(x,\cdot))$ will always be in the interior of the hysteresis strip, so $\partial_t\tilde{w}^n=0$ and the problem is solved as in the classical way; instead, when  $\gamma - a \geq \alpha \text{ and } f'(\gamma - a ) \leq 0$, we consider $\tilde{f}_\gamma(u)$ and its minimum is in $[\gamma -a,\beta]$, hence when $\tilde{f}_\gamma(u)=f(u)$; otherwise, if $\gamma - a \geq \alpha \text{ and } f'(\gamma - a ) \geq 0$, then the minimum is in $[\alpha, \gamma -a].$ \\
    By looking directly at \eqref{eq: h1menorare}, it can be easily checked that $h_1^+$ is continuous in every variable when the other two are fixed, hence to prove the monotonicity we just need to prove it in every different possible case of \eqref{eq: h1menorare}, ignoring the conditions on the right-hand side of \eqref{eq: h1menorare}. It is straightforward 
    then that $\alpha \mapsto h_1^+(\alpha, \beta, \gamma)$ is non-decreasing and that $\beta \mapsto h_1^+(\alpha, \beta ,\gamma)$ is non-increasing. Also $\gamma \mapsto h_1^+(\alpha, \beta, \gamma)$ is non-decreasing as either $h_1^+$ is constant in $\gamma$ or non-decreasing under the condition $f(\gamma -a )'\geq0$.

    When $\alpha \geq \beta$, see Section \ref{caso: ul>ur}, paying attention to the case of ``fast shocks", we have 
    \begin{equation}\label{eq: h1menoshock}
        h_1^+ (\alpha, \beta,\gamma)=
        \begin{cases}
            \max\limits_{u\in[\beta,\alpha]} f(u) \quad & \gamma + a \geq \alpha,\\
             \max\limits_{u\in[\beta,\alpha]} f(u)  \quad &\gamma + a \leq \alpha \text{ and } f(\alpha) \leq f(\beta) ,\\
             \mu (\alpha- \beta) + f(\beta) \quad & \gamma + a \leq \alpha, f(\alpha) \geq f(\beta) \text{ and } \mu_r \leq \mu_l, \\
             \max\limits_{u\in[\beta,\alpha]} \frac{1}{2}f(u) + \frac{1}{2} f(\gamma +a ) \quad & \gamma + a \leq \alpha, f(\alpha) > f(\beta) \text{ and } \mu_r \geq \mu_l,
        \end{cases}
    \end{equation}
    where $\mu = \mu(\alpha, \beta, \gamma)$. Again it can be checked that $ h_1^+ $ is continuous in each variable when we fix the another two. In particular, notice that, when $f(\alpha)=f(\beta)$, then second and third definitions coincide and, when $\mu_r=\mu_l$, then the last two coincide (see Remark~\ref{rmk: mu1=mu2}). 
    Then again we check the monotonicity for every possible case in \eqref{eq: h1menoshock}. 
    The first two are constant in $\gamma$  and trivially non-decreasing in $\alpha$ and non-increasing in $\beta$. For the third one instead, using the notation introduced in Subcase \ref{sottocaso: ful>fur} with $u_l=\alpha, u_r = \beta$ and $w_r = \gamma,$ we have that since $\partial_\alpha I_r = 0,\partial_\alpha I_l= 1,\partial_\beta I_r = -1,\partial_\beta I_l= 0, \partial_\gamma I_r = 1, \partial_\gamma I_l= -1,$ and $\alpha-\beta = I_r+I_l$ then 
    \begin{align}
         \begin{split}
            \frac{\partial\left(\mu (\alpha- \beta) + f(\beta)\right)}{\partial \alpha}&=  \frac{f'(\alpha)(I_r+2I_l)-2(f(\alpha)-f(\beta))}{(I_r+2I_l)^2} (\alpha-\beta) + \mu \\
             &= \frac{I_r+I_l}{I_r+2I_l} ( f'(\alpha)-2\mu) + \mu, 
            \end{split}
            \label{eq: dh1piudalpha}\\
            \begin{split}
         \frac{\partial\left(\mu (\alpha- \beta) + f(\beta)\right)}{\partial \beta} &= \frac{-f'(\beta)(I_r+2I_l)+(f(\alpha)-f(\beta))}{(I_r+2I_l)^2}(\alpha-\beta)-\mu + f'(\beta) \\
         &= \frac{I_r+I_l}{I_r+2I_l} (\mu - f'(\beta)) - (\mu - f'(\beta))\\
         &= \frac{I_l}{I_r+2I_l} ( f'(\beta)-\mu), 
         \end{split}
         \label{eq: dh1piubeta}\\
         \begin{split}
        \frac{\partial\left(\mu (\alpha- \beta) + f(\beta)\right)}{\partial \gamma} &= \frac{f(\alpha)-f(\beta)}{(I_r+2I_l)^2}(\alpha-\beta)\\
        &= \frac{I_r+I_l}{I_r+2I_l} \mu. 
        \end{split}\label{eq: dh1piudgamma}
    \end{align}
    We observe that \eqref{eq: dh1piudalpha} is positive because $f'(\alpha)-2\mu \geq f'(\alpha)-2\mu_l \geq 0 $ by $\mu_r \leq \mu \leq \mu_l$ and the convexity of $f$, and because $\mu \geq 0 $ since $f(\alpha) \geq f(\beta)$; \eqref{eq: dh1piubeta} is negative since for the same previous reasons $ f'(\beta)-\mu\leq f'(\beta)-\mu_r \leq 0$; \eqref{eq: dh1piudgamma} positive since again $\mu\geq 0$. Regarding the fourth case in \eqref{eq: h1menoshock}, it is non-decreasing in $\alpha$ and non-increasing in $\beta$; it is also non-decreasing in $\gamma$ since $\mu_r \geq \mu_l$ implies $\mu_r \geq0$, hence $f'(\gamma +a)\geq0$ by convexity of $f$.

    Regarding, $h_2^+$, as $h_2^+=f_{i-1/2}^n-h_1^+$ and by
    recalling that $f_{i-1/2}^n=g(\alpha,\beta)$, see \eqref{eq: gGodunov}, we have that when $\alpha \leq \beta$
     \begin{equation}\label{eq: h2menorare}
        h_2^+ (\alpha, \beta,\gamma)=
        \begin{cases}
             0 \quad & \gamma - a \leq \alpha,\\
             0 \quad &\gamma - a \geq \alpha \text{ and } f'(\gamma - a ) \leq 0,\\
             \min\limits_{u\in[\alpha,\beta]} \frac{1}{2}f(u) - \frac{1}{2} f(\gamma -a ) \quad &\gamma - a \geq \alpha \text{ and } f'(\gamma - a ) \geq 0.
        \end{cases}
    \end{equation}
    which, when the other two variables are fixed, is again non-decreasing in $\alpha$, non-increasing in $\gamma$ and constant in $\beta$. Indeed, notice that when $\gamma - a \geq \alpha \text{ and } f'(\gamma - a ) \geq 0$, then the minimum of $1/2 f $ is obtained in $[\alpha, \gamma-a]$ since $f$ is convex and $\beta \geq \gamma-a$, so in this case $h_2^+$ is constant in $\beta.$ Similarly, when $\alpha \geq \beta$,
    \begin{equation}\label{eq: h2menoshock}
        h_2^+ (\alpha, \beta,\gamma)=
        \begin{cases}
             0 \quad & \gamma + a \geq \alpha,\\
             0 \quad &\gamma + a \leq \alpha \text{ and } f(\alpha) \leq f(\beta) ,\\
             I_l\mu \quad & \gamma + a \leq \alpha, f(\alpha) \geq f(\beta) \text{ and } \mu_r \leq \mu_l, \\
             \max\limits_{u\in[\beta,\alpha]} \frac{1}{2}f(u) - \frac{1}{2} f(\gamma + a ) \quad & \gamma + a \leq \alpha, f(\alpha) > f(\beta) \text{ and } \mu_r \geq \mu_l,
        \end{cases}
    \end{equation}
    as for the case $\gamma + a \leq \alpha, f(\alpha) \geq f(\beta) \text{ and } \mu_r \leq \mu_l$ we have $f_{i-1/2}^n=f(\alpha)$, hence
    \[
    f(\alpha)-(\mu(\alpha-\beta)+f(\beta)) = f(\alpha)-f(\beta)-\frac{I_r+I_l}{I_r+2I_l}(f(\alpha)-f(\beta))=I_l\mu.
    \]
    We infer the same conclusion as 
    \begin{align}
       \begin{split}
       \frac{\partial\left( I_l \mu\right)}{\partial \alpha} & = \mu + \frac{f'(\alpha)(I_r+2I_l)-2(f(\alpha)-f(\beta))}{(I_r+2I_l)^2} I_l\\
       &= \mu+\frac{I_l}{I_r+2I_l} (f'(\alpha)-2\mu) \ge 0,
       \end{split}
       \\
       \begin{split}    
       \frac{\partial\left(I_l \mu\right)}{\partial \beta} &= \frac{-f'(\beta)(I_r+2I_l)+(f(\alpha)-f(\beta))}{(I_r+2I_l)^2}I_l\\
       &= -\frac{I_l}{I_r+2I_l} (f'(\beta)-\mu)  \geq 0 ,
       \end{split}
       \\
       \begin{split}
       \frac{\partial\left(I_l \mu\right)}{\partial \gamma} &= -\mu + \frac{f(\alpha)-f(\beta)}{(I_r+2I_l)^2}I_l=-\mu +\frac{I_l}{I_r+2I_l} \mu\\ 
       &=-\frac{I_r+I_l}{I_r+2I_l} \mu \leq 0,
        \end{split}
    \end{align}
    and in the case when $\gamma + a \leq \alpha, f(\alpha) > f(\beta) \text{ and } \mu_r \geq \mu_l$, the maximum of $1/2 f(u)$ equals to $1/2 f(\alpha)$ which is constant in $\beta.$ \par 
    
    Briefly, if we now consider $h_1^-(\alpha, \beta, \gamma)$ with $|\alpha - \gamma| \leq a $ then if $\alpha \leq \beta$
    \begin{equation}\label{eq: h1piurare}
        h_1^- (\alpha, \beta,\gamma)=
        \begin{cases}
            \min\limits_{u\in[\alpha,\beta]} f(u) \quad & \gamma + a \geq \beta,\\
             \min\limits_{u\in[\alpha,\beta]} f(u) \quad &\gamma + a \geq \beta \text{ and } f'(\gamma + a ) \geq 0,\\
             \min\limits_{u\in[\alpha,\beta]} \frac{1}{2}f(u) + \frac{1}{2} f(\gamma +a ) \quad &\gamma + a \geq \beta \text{ and } f'(\gamma + a ) \leq 0.
        \end{cases}
    \end{equation}
    which again is non-decreasing in $\alpha$, non-increasing in $\beta$ and this time also non-increasing in $\gamma.$ The same conclusion holds, for $\alpha \geq \beta $ when
    \begin{equation}\label{eq: h1piushock}
        h_1^+ (\alpha, \beta,\gamma)=
        \begin{cases}
            \max\limits_{u\in[\beta,\alpha]} f(u) \quad & \gamma - a \leq \beta,\\
             \max\limits_{u\in[\beta,\alpha]} f(u) \quad &\gamma - a \geq \beta \text{ and } f(\alpha) \geq f(\beta) ,\\
             -\nu (\alpha- \beta) + f(\alpha) \quad & \gamma - a \geq \beta, f(\alpha) \leq f(\beta) \text{ and } \nu_l \geq \nu_r, \\
             \max\limits_{u\in[\beta,\alpha]} \frac{1}{2}f(u) + \frac{1}{2} f(\gamma -a ) \quad & \gamma - a \geq \beta, f(\alpha) < f(\beta) \text{ and } \nu_l \leq \nu_r.
        \end{cases}
    \end{equation}
    We just point out that 
    \begin{align}
       &\frac{\partial\left( -\nu (\alpha- \beta) + f(\alpha)\right)}{\partial \alpha} = \frac{J_r}{J_l+2J_r} (f'(\alpha)-\nu) \ge 0,\\
       &\frac{\partial\left( -\nu (\alpha- \beta) + f(\alpha)\right)}{\partial \beta} = \frac{J_l+J_r}{J_l+2J_r} (f'(\beta)-2\nu) +\nu \le 0,\\
       &\frac{\partial\left( -\nu (\alpha- \beta) + f(\alpha)\right)}{\partial \gamma} = \frac{J_l+J_r}{J_l+2J_r} \nu \le 0.
    \end{align}
    Regarding $h_2^-$ a similar analysis as for $h_2^+$ can be done, implying the desired conclusions.
\end{proof}

\begin{prop}\label{prop: lip}
     $h_j^{\pm}$ are $L-$Lipschitz continuous with respect to the first two arguments for $j=1,2$. That is 
     \[
     |h_j^{\pm}(\alpha_1,\beta,\gamma)-h_j^{\pm}(\alpha_1,\beta,\gamma)| \leq L | \alpha_1 - \alpha_2|
     \]
     and 
     \[
     |h_j^{\pm}(\alpha,\beta_1,\gamma)-h_j^{\pm}(\alpha,\beta_2,\gamma)| \leq L | \beta_1 - \beta_2|
     \] 
     for $j=1,2$ and for all $\alpha, \alpha_1,\alpha_2$ and $\beta, \beta_1,\beta_2$ compatible with the fixed $\gamma.$
\end{prop}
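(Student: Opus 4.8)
The plan is to reuse the explicit, case-by-case formulas for $h_1^{\pm}$ and $h_2^{\pm}$ that were already produced in the proof of Proposition~\ref{prop: monotonia} (namely \eqref{eq: h1menorare}, \eqref{eq: h1menoshock}, \eqref{eq: h2menorare}, \eqref{eq: h2menoshock}, \eqref{eq: h1piurare}, \eqref{eq: h1piushock} and the analogous formulas for $h_2^-$). On each region of $(\alpha,\beta,\gamma)$-space, every $h_j^{\pm}$ is given either by a Godunov-type extremum of $f$ or of $\tfrac12 f(\cdot)$ over the interval with endpoints $\alpha,\beta$ (up to an additive constant), or by one of the ``fast shock'' expressions $\mu(\alpha-\beta)+f(\beta)$, $I_l\mu$, $-\nu(\alpha-\beta)+f(\alpha)$ and the corresponding $h_2^-$ term. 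The idea is: (i) show each such piece is $L$-Lipschitz in $\alpha$ and in $\beta$ separately; (ii) recall that $h_j^{\pm}$ is continuous in each variable (checked in the proof of Proposition~\ref{prop: monotonia}); (iii) conclude global $L$-Lipschitz continuity by subdividing the segment joining the two values of the varying argument at the finitely many breakpoints between regions and summing the piecewise estimates.

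For the Godunov-type pieces I will invoke the elementary fact that if $\phi$ is $\Lambda$-Lipschitz on an interval $I$, then $a\mapsto\min_{[a,b]}\phi$ and $a\mapsto\max_{[b,a]}\phi$ (for subintervals of $I$) are $\Lambda$-Lipschitz, and likewise in the other endpoint; additive constants are irrelevant. Since $f$ is $L$-Lipschitz and $\tfrac12 f$ is $\tfrac{L}{2}$-Lipschitz on $[U_m,U_M]$, each such piece is $L$-Lipschitz in $\alpha$ and in $\beta$. Here one must be careful with $h_2^{\pm}$: one should argue directly from \eqref{eq: h2menorare}, \eqref{eq: h2menoshock} and their analogues, \emph{not} from the identity $h_2^{\pm}=f^n_{i\pm1/2}-h_1^{\pm}$, since subtracting two $L$-Lipschitz functions would only yield the constant $2L$. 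The explicit formulas instead show that on each region $h_2^{\pm}$ is $0$, or $\pm\tfrac12 f$ over an interval plus a constant, or the fast-shock term $I_l\mu$ (resp.\ its $h_2^-$ counterpart), all of which are $L$-Lipschitz in $\alpha$ and $\beta$.

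For the fast-shock pieces I will differentiate with respect to $\alpha$ and $\beta$; these partials were already computed in the proof of Proposition~\ref{prop: monotonia} (see \eqref{eq: dh1piudalpha}--\eqref{eq: dh1piudgamma}, the displays for $\partial(I_l\mu)$, and the displays following \eqref{eq: h1piushock}). What remains is to bound them in absolute value by $L$, using the same ingredients exploited there for the signs: the interlacing $\mu_r\le\mu\le\mu_l$ (resp.\ $\nu_r\le\nu\le\nu_l$); the convexity one-sided bounds $0\le\mu_l\le\tfrac12 f'(\alpha)\le\tfrac{L}{2}$ and $f'(\beta)\le\mu_r$ (resp.\ $\nu_l\le f'(\alpha)$ and $\tfrac12 f'(\beta)\le\nu_r$), together with $\mu\ge0$, $\nu<0$ (recall Subcase~\ref{sottocaso: ful<fur}), $f'(\alpha)\ge0$ in the $h_1^+$-regime, and $|f'|\le L$ on the relevant range; and the weight bounds $\frac{I_r+I_l}{I_r+2I_l},\ \frac{J_l+J_r}{J_l+2J_r}\in(\tfrac12,1)$ while $\frac{I_l}{I_r+2I_l},\ \frac{J_r}{J_l+2J_r}\in(0,\tfrac12)$. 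For instance, with $\lambda=\frac{I_r+I_l}{I_r+2I_l}$, \eqref{eq: dh1piudalpha} reads $\lambda f'(\alpha)+(1-2\lambda)\mu$ with $1-2\lambda<0$ and $\mu\ge0$, hence lies in $[0,f'(\alpha)]\subseteq[0,L]$; \eqref{eq: dh1piubeta} equals $\frac{I_l}{I_r+2I_l}(f'(\beta)-\mu)\in[-\tfrac34 L,0]\subseteq[-L,0]$; and for the $h_1^-$ term the sign $\nu<0$ forces $(1-2\lambda')\nu\ge0$, so $\partial_\beta(-\nu(\alpha-\beta)+f(\alpha))\ge\lambda' f'(\beta)\ge-L$, and symmetrically for $\partial_\alpha$.

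Finally I will glue: fixing $\beta,\gamma$ (compatible) and letting $\alpha$ vary, $\alpha\mapsto h_j^{\pm}(\alpha,\beta,\gamma)$ is continuous and on each of finitely many subintervals coincides with an $L$-Lipschitz function, so for $\alpha_1<\alpha_2$, inserting the breakpoints $\alpha_1=s_0<\dots<s_m=\alpha_2$,
$|h_j^{\pm}(\alpha_1,\beta,\gamma)-h_j^{\pm}(\alpha_2,\beta,\gamma)|\le\sum_{k}|h_j^{\pm}(s_{k-1},\beta,\gamma)-h_j^{\pm}(s_k,\beta,\gamma)|\le L|\alpha_1-\alpha_2|$,
and the same argument in $\beta$ finishes the proof. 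The main obstacle is precisely keeping the constant equal to $L$ (rather than $2L$) on the fast-shock terms and on $h_2^{\pm}$: this is why one cannot use the identity $h_2^{\pm}=f^n_{i\pm1/2}-h_1^{\pm}$ or crude triangle inequalities, but must instead use the sign of $\mu$ (resp.\ $\nu$) and the fact that exactly one of the two weight factors exceeds $\tfrac12$ while the other is below $\tfrac12$.
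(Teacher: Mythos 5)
Your proposal is correct and follows essentially the same route as the paper's proof: reduce to a case-by-case check using continuity of $h_j^{\pm}$, observe that the Godunov-type extrema of $f$ (or $\tfrac12 f$) are $L$-Lipschitz in each endpoint, and bound the partial derivatives of the fast-shock expressions already computed for Proposition~\ref{prop: monotonia} using $\mu_r\le\mu\le\mu_l\le\tfrac12 f'(\alpha)$, $f'(\beta)\le\mu_r$ and the weight factors. You are in fact more explicit than the paper on two points it leaves implicit, namely that $h_2^{\pm}$ must be estimated from its own explicit formulas rather than from $h_2^{\pm}=f^n_{i\pm1/2}-h_1^{\pm}$ (which would only give $2L$), and the final gluing across breakpoints.
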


\begin{proof}
    Since $h_{j}^{\pm}(\alpha,\beta,\gamma)$ are continuous in $\alpha$ and $\beta$, then it is sufficient to check the Lipschitz continuity in each different case of definition. In particular, as $\gamma$ is fixed, it is straightforward to see that $\min_{[\alpha,\beta]} f(u)$ and $\max_{[\beta,\alpha]} f(u)$ are L-Lipschitz continuous with respect to $\alpha$ and $\beta$, so we just prove explicitly the statement in the case of ``fast shocks". Let us now focus on this case for $h_1^+$; then by looking at \eqref{eq: dh1piudalpha} and \eqref{eq: dh1piubeta} we have that 
    \begin{align*}
    &0\leq\frac{\partial\left(\mu (\alpha- \beta) + f(\beta)\right)}{\partial \alpha} \leq \sup\limits_{[U_m,U_M]} |f'(u)| = L,\\
    &0\geq\frac{\partial\left(\mu (\alpha- \beta) + f(\beta)\right)}{\partial \beta} \geq - \frac{1}{2}\left( \sup\limits_{[U_m,U_M]} |f'(u)|+\mu \right)\geq - L,
    \end{align*}
    showing the Lipschitz continuity. Similar estimates also holds in the other cases for all $h_j^{\pm}$. 

\end{proof}
A direct consequence of Proposition \ref{prop: monotonia} and Proposition \ref{prop: lip} is the following corollary.

\begin{cor}\label{cor: monotonia}
    Under the CFL condition \eqref{eq: CFL}, we can write 
    \begin{align}
        u_i^{n+1} = H_1 (u_{i-1}^n,u_i^n,u_{i+1}^n,w_i^n),\\
        w_i^{n+1} = H_2 (u_{i-1}^n,u_i^n,u_{i+1}^n,w_i^n),
    \end{align}
    with $H_1,H_2$ non-decreasing in each of their arguments and such that 
    \begin{equation}\label{eq: H1piuH2}
        \begin{split}
            H_1 (u_{i-1}^n,u_i^n,u_{i+1}^n,w_i^n) + H_2 (u_{i-1}^n,u_i^n,u_{i+1}^n,w_i^n)&= u_{i}^n+w_i^n - \frac{\Delta t}{ \Delta x} \left(f_{i+1/2}^n-f_{i-1/2}^n\right) \\ &= u_{i}^n+w_i^n - \frac{\Delta t}{ \Delta x} \left(g(u_i^n, u_{i+1}^n)-g(u_i^n,u_{i+1}^n)\right).
        \end{split}
    \end{equation}
\end{cor}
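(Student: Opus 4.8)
The plan is to extract $H_1,H_2$ directly from the reformulated scheme \eqref{eq: schema2}. Concretely I would set
\[
H_1(\alpha,\beta,\gamma,\delta) := \beta - \tfrac{\Delta t}{\Delta x}\bigl(h_1^-(\beta,\gamma,\delta) - h_1^+(\alpha,\beta,\delta)\bigr),\qquad
H_2(\alpha,\beta,\gamma,\delta) := \delta - \tfrac{\Delta t}{\Delta x}\bigl(h_2^-(\beta,\gamma,\delta) - h_2^+(\alpha,\beta,\delta)\bigr),
\]
for $(\alpha,\beta,\gamma,\delta)$ with $|\beta-\delta|\le a$, the region actually visited by the iteration by \eqref{eq: comp}. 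First I would check these are genuine (single-valued) functions: under the CFL condition no waves interact inside a cell and $\tilde u^n(x_i,t)\equiv u_i^n$, and whether a ``fast shock'' appears in $K_i^{n,l}$ or in $K_i^{n,r}$ --- hence which branch of \eqref{eq: h1meno}--\eqref{eq: h1piu} is in force --- is determined by the arguments, the branches matching on the overlap regions as was already checked in the proof of Proposition~\ref{prop: monotonia} (see also Remark~\ref{rmk: mu1=mu2}).

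Next I would establish monotonicity one slot at a time, using Proposition~\ref{prop: monotonia} purely for sign bookkeeping. In $\alpha$: only $h_j^+$ depends on $\alpha$, it enters $H_j$ with a $+\tfrac{\Delta t}{\Delta x}$ factor and is non-decreasing in its first slot, so both $H_1,H_2$ are non-decreasing in $\alpha$. In $\gamma$: only $h_j^-$ depends on $\gamma$, it enters with a $-\tfrac{\Delta t}{\Delta x}$ factor and is non-increasing in its second slot, so both are non-decreasing in $\gamma$. For the remaining ``off-diagonal'' slots, the sign pattern of $h_1^\pm$ in the third slot recorded in Proposition~\ref{prop: monotonia} makes $-\tfrac{\Delta t}{\Delta x}h_1^-(\beta,\gamma,\delta)+\tfrac{\Delta t}{\Delta x}h_1^+(\alpha,\beta,\delta)$ non-decreasing in $\delta$, hence $H_1$ is non-decreasing in $\delta$; likewise the sign pattern of $h_2^\pm$ in the second slot makes $H_2$ non-decreasing in $\beta$. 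None of these four steps uses the CFL condition.

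The one place that does --- and the step I expect to be the crux --- is monotonicity of $H_1$ in its ``diagonal'' variable $\beta$ (and, dually, of $H_2$ in $\delta$). Writing $H_1 = \beta - \tfrac{\Delta t}{\Delta x}h_1^-(\beta,\gamma,\delta) + \tfrac{\Delta t}{\Delta x}h_1^+(\alpha,\beta,\delta)$, the identity term contributes $+1$, while by Proposition~\ref{prop: monotonia} both $\beta\mapsto -\tfrac{\Delta t}{\Delta x}h_1^-(\beta,\gamma,\delta)$ and $\beta\mapsto +\tfrac{\Delta t}{\Delta x}h_1^+(\alpha,\beta,\delta)$ are non-increasing, each with Lipschitz constant at most $\tfrac{\Delta t}{\Delta x}L$ by Proposition~\ref{prop: lip}. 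Hence for $\beta_1\le\beta_2$,
\[
H_1(\alpha,\beta_2,\gamma,\delta) - H_1(\alpha,\beta_1,\gamma,\delta)\ \ge\ (\beta_2-\beta_1)\Bigl(1 - 2\tfrac{\Delta t}{\Delta x}L\Bigr)\ \ge\ 0
\]
precisely by \eqref{eq: CFL}. This is the classical monotonicity estimate for a Godunov scheme; the real content is that the Lipschitz bound stays $\le L$ after $f$ is replaced by the modified flux $\tilde f_w$ and, above all, through the ``fast shock'' expressions --- which is exactly what \eqref{eq: dh1piudalpha}--\eqref{eq: dh1piudgamma} and Proposition~\ref{prop: lip} were designed to certify. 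For $H_2$ in $\delta$ I would avoid repeating the computation: $\delta\mapsto H_1$ is non-decreasing by the third-slot step; it satisfies $H_1(\alpha,\beta,\gamma,\delta_2)-H_1(\alpha,\beta,\gamma,\delta_1)\le \delta_2-\delta_1$ for $\delta_1\le\delta_2$ by the same $\tfrac{\Delta t}{\Delta x}L$-Lipschitz bounds applied in the third slot (readable off the explicit formulas exactly as in Proposition~\ref{prop: lip}); and $H_1+H_2$ does not depend on $\delta$ (sum identity below). Combining the three, $\delta\mapsto H_2$ is non-decreasing.

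Finally, the identity \eqref{eq: H1piuH2} follows by adding the two lines of \eqref{eq: schema2}: since $h_1^\pm + h_2^\pm = f^n_{i\pm 1/2}$ by the definition of $h_2^\pm$, the fluxes $h_1^-,h_2^-$ combine at $x_{i+1/2}$ and $h_1^+,h_2^+$ at $x_{i-1/2}$, giving $H_1+H_2 = u_i^n + w_i^n - \tfrac{\Delta t}{\Delta x}(f^n_{i+1/2}-f^n_{i-1/2})$, and \eqref{eq: gGodunov} rewrites $f^n_{i+1/2}=g(u_i^n,u_{i+1}^n)$ and $f^n_{i-1/2}=g(u_{i-1}^n,u_i^n)$; in particular the right-hand side involves no $w_i^n$, as was used in the previous paragraph.
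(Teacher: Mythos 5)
Your construction of $H_1,H_2$, the slot-by-slot use of Proposition~\ref{prop: monotonia} for the off-diagonal arguments, and the ``identity term beats $2\tfrac{\Delta t}{\Delta x}L$'' estimate under \eqref{eq: CFL} for the diagonal argument of $H_1$ are exactly the paper's proof. The only place you deviate is the monotonicity of $H_2$ in $\delta=w_i^n$, and there you make a factual slip that, read literally, inverts the conclusion: you assert that ``$H_1+H_2$ does not depend on $\delta$'' and that ``the right-hand side involves no $w_i^n$.'' It does: the sum identity reads $H_1+H_2=\beta+\delta-\tfrac{\Delta t}{\Delta x}\left(g(\beta,\gamma)-g(\alpha,\beta)\right)$, with an additive $\delta$. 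If the sum really were independent of $\delta$, then your fact (a) ($\delta\mapsto H_1$ non-decreasing) would give $H_2(\delta_2)-H_2(\delta_1)=-(H_1(\delta_2)-H_1(\delta_1))\le 0$, i.e.\ $H_2$ non-\emph{increasing} in $\delta$ --- the opposite of what is claimed. The correct statement is that $H_1+H_2-\delta$ is independent of $\delta$, whence $H_2(\delta_2)-H_2(\delta_1)=(\delta_2-\delta_1)-(H_1(\delta_2)-H_1(\delta_1))\ge 0$ by your fact (b) alone; fact (a) is not needed for this step. The fix is one line, but as written the step fails.

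Two smaller remarks. First, your fact (b) (the bound $H_1(\ldots,\delta_2)-H_1(\ldots,\delta_1)\le\delta_2-\delta_1$) requires a Lipschitz bound on $h_1^{\pm}$ in the \emph{third} slot, which Proposition~\ref{prop: lip} does not formally state; you rightly note it is readable off the explicit derivative formulas such as \eqref{eq: dh1piudgamma} (the paper is equally informal here, arguing for $H_2$ in $w_i^n$ ``in the same way as for $u$''). Second, your final identity correctly gives $f^n_{i-1/2}=g(u^n_{i-1},u^n_i)$, whereas the displayed right-hand side of \eqref{eq: H1piuH2} in the statement contains a typo ($g(u_i^n,u_{i+1}^n)-g(u_i^n,u_{i+1}^n)$); your version is the intended one.
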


\begin{proof}
    By Proposition \ref{prop: lip}, it is clear that
    \[
     H_1 (u_{i-1}^n,u_i^n,u_{i+1}^n,w_i^n) = u_i^n - \frac{ \Delta t}{ \Delta x} \left ( h_1^-(u_{i}^n,u_{i+1}^n,w_i^n)-h_1^+(u_{i-1}^n,u_i^n,w_{i}^n)\right),
    \] 
    is non-decreasing in $u_{i-1}^n,u_{i+1}^n,w_i^n$. Fix now  $u_{i-1}^n,u_{i+1}^n,w_i^n$ and consider $\alpha_1 \leq \alpha_2$, then, by Proposition \ref{prop: lip} and under the CFL condition \eqref{eq: CFL}
    
    \[
    \begin{split}
        H_1 (u_{i-1}^n,\alpha_2,u_{i+1}^n,w_i^n)&-H_1 (u_{i-1}^n,\alpha_1,u_{i+1}^n,w_i^n) =\\
        &= \alpha_2-\alpha_1 - \frac{\Delta t}{\Delta x}\left ( h_1^-(\alpha_2,u_{i+1}^n,w_i^n)-h_1^-(\alpha_1,u_{i+1}^n,w_{i}^n)\right)
        \\&\quad+ \frac{\Delta t}{ \Delta x}\left ( h_1^+(u_{i-1}^n,\alpha_2,w_i^n)-h_1^+(u_{i-1}^n,\alpha_1,w_{i}^n)\right) \\
        & \geq \alpha_2-\alpha_1- 2\frac{\Delta t}{ \Delta x } L (\alpha_2-\alpha_1) \geq 0.
    \end{split}
    \]
    Similarly,
     \[
     H_2 (u_{i-1}^n,u_i^n,u_{i+1}^n,w_i^n) = w_i^n - \frac{ \Delta x}{ \Delta t} \left ( h_2^-(u_{i}^n,u_{i+1}^n,w_i^n)-h_2^+(u_{i-1}^n,u_i^n,w_{i}^n)\right),
    \] 
    is non-decreasing $u_{i-1}^n,u_i^n,u_{i+1}^n $ as a direct consequence of Proposition \ref{prop: monotonia}, and, reasoning in the same way as for $u$, it can be proved that it is non-decreasing also in $w_i^n$ by Proposition \ref{prop: lip} and the CFL condition \eqref{eq: CFL}.
\end{proof}

\subsection{Compactness estimates}
    Given $u_{i-1}^n,u_i^n,u_{i+1}^n$ and $w_i^n$ we define \begin{equation}
        a_{i-1/2}^n: = \begin{cases}
        \frac{\Delta t}{ \Delta x}\frac{h_1^+(u_{i-1}^n,u_i^n,w_i^n)-f(u_i^n)}{u_{i-1}^n-u_{i}^n} \quad& \text{if } u_i^n \not= u_{i-1}^n,\\
        0 \quad& \text{if } u_i^n = u_{i-1}^n,
        \end{cases}
    \end{equation}
    and 
    \begin{equation}
        b_{i+1/2}^n: = \begin{cases} 
        \frac{\Delta t }{\Delta x}\frac{h_1^-(u_{i}^n,u_{i+1}^n,w_i^n)-f(u_i^n)}{u_{i}^n-u_{i+1}^n} \quad& \text{if } u_i^n \not= u_{i+1}^n,\\
        0 \quad& \text{if } u_i^n = u_{i+1}^n.
        \end{cases}
    \end{equation}
 So, we can rewrite the scheme for $u_i^{n+1}$ in the two following ways
 \begin{subequations}\label{eq: umi}
     \begin{align}
          u_i^{n+1}&= u_i^n +b_{i+1/2}^n (u_{i+1}^n-u_i^n)+  a_{i-1/2}^n(u_{i-1}^n-u_{i}^n) \label{eq: umia}\\
            &=u_i^n \left(1-b_{i+1/2}^n-a_{i-1/2}^n\right)+ b_{i+1/2}^n u_{i+1}^n+  a_{i-1/2}^n u_{i-1}^n, \label{eq: umib}
     \end{align}
 \end{subequations}
as $h_1^+(u_{i-1}^n,u_i^n,w_i^n)=f(u_i^n)$ when $u_{i-1}^n=u_i^n$ and $h_1^-(u_{i}^n,u_{i+1}^n,w_i^n)=f(u_i^n)$ when $u_{i}^n=u_{i+1}^n$.
Similarly, define 
\begin{equation}
        c_{i-1/2}^n: = \begin{cases}
        \frac{\Delta t}{\Delta x}\frac{h_2^+(u_{i-1}^n,u_i^n,w_i^n)}{w_{i-1}^n-w_{i}^n} \quad& \text{if } w_i^n \not= w_{i-1}^n,\\
        0 \quad& \text{if } w_i^n = w_{i-1}^n,
        \end{cases}
    \end{equation}
    and 
    \begin{equation}
        d_{i+1/2}^{\,n}: = \begin{cases}
        \frac{\Delta t}{\Delta x}\frac{h_2^-(u_{i}^n,u_{i+1}^n,w_i^n)}{w_{i}^n-w_{i+1}^n} \quad& \text{if } w_i^n \not= w_{i+1}^n,\\
        0 \quad& \text{if } w_i^n = w_{i+1}^n.
        \end{cases}
    \end{equation}
    Notice now that if $w_{i-1}^n = w_{i}^n$ then $|w_i^n-u_{i-1}^n|<a$, so when solving the Riemann problem at $x_{i-1/2}$, it can be checked that $w$ remains constant in $K_i^{n,l}$, since the couple $(\tilde{u}^n,\tilde{w}^n)$ remains into the hysteresis region $\mathcal{L}$. This implies then that $h_1^+(u_{i-1}^n,u_i^n,w_i^n)=f_{i-1/2}^n$ and $h_2^+(u_{i-1}^n,u_i^n,w_i^n)=0$. Similarly, when $w_i^n=w_{i+1}^{n}$ then $h_2^-(u_{i}^n,u_{i+1}^n,w_i^n)=0$. With this in mind, the scheme for $w$ can be rewritten as 
    \begin{subequations}\label{eq: wmi}
         \begin{align}
          w_i^{n+1}&= w_i^n + d_{i+1/2}^n (w_{i+1}^n-w_i^n)+ c_{i-1/2}^n(w_{i-1}^n-w_{i}^n) \label{eq: wmia}\\
        &=w_i^n \left(1-d_{i+1/2}^n+c_{i-1/2}^n\right)+ d_{i+1/2}^n w_{i+1}^n+ c_{i-1/2}^n w_{i-1}^n. \label{eq: wmib}
         \end{align}
     \end{subequations}
\begin{lem}\label{lemma: miki}
    Under the CFL condition \eqref{eq: CFL}, for all $i\in \mathbb{Z}$ and $n\in \N,$ it holds that $0\leq a_{i-1/2}^n,b_{i+1/2}^n \leq 1/2$ and $0\leq c_{i-1/2}^n,d_{i+1/2}^n \leq 1/2$.
\end{lem}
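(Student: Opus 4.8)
The plan is to split the four coefficients into two groups, the $u$-coefficients $a_{i-1/2}^n,b_{i+1/2}^n$ being immediate and the $w$-coefficients $c_{i-1/2}^n,d_{i+1/2}^n$ requiring a short case analysis. For $a_{i-1/2}^n$, when $u_{i-1}^n\neq u_i^n$, recall from the lines after \eqref{eq: umi} that $h_1^+(u_i^n,u_i^n,w_i^n)=f(u_i^n)$, so the ratio defining $a_{i-1/2}^n$ is exactly $\frac{\Delta t}{\Delta x}$ times the difference quotient of $\alpha\mapsto h_1^+(\alpha,u_i^n,w_i^n)$ between $\alpha=u_{i-1}^n$ and $\alpha=u_i^n$. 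Since $|u_i^n-w_i^n|\le a$ by \eqref{eq: comp}, Proposition \ref{prop: monotonia} makes this quotient nonnegative (monotonicity in the first slot) and Proposition \ref{prop: lip} bounds it by $L$; multiplying by $\frac{\Delta t}{\Delta x}\le\frac1{2L}$ from \eqref{eq: CFL} gives $0\le a_{i-1/2}^n\le\frac12$. For $b_{i+1/2}^n$, using $h_1^-(u_i^n,u_i^n,w_i^n)=f(u_i^n)$ one writes it as $-\frac{\Delta t}{\Delta x}$ times the difference quotient of $\beta\mapsto h_1^-(u_i^n,\beta,w_i^n)$ between $\beta=u_{i+1}^n$ and $\beta=u_i^n$, which is $\le 0$ by monotonicity in the second slot and has absolute value $\le L$, so again $0\le b_{i+1/2}^n\le\frac12$.

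For $c_{i-1/2}^n$ and $d_{i+1/2}^n$ the same trick is unavailable, since $h_2^\pm$ is divided by a difference of $w$-values rather than of its own arguments, so I would argue directly from the explicit expressions \eqref{eq: h2menorare}--\eqref{eq: h2menoshock} for $h_2^+$ and their analogues for $h_2^-$. If $w_{i-1}^n=w_i^n$ then $c_{i-1/2}^n=0$ and there is nothing to prove; otherwise every branch with $h_2^+=0$ is trivial, and only three cases remain, in which (writing $\alpha=u_{i-1}^n$, $\beta=u_i^n$, $\gamma=w_i^n$) one has $h_2^+=\frac12(\min_{[\alpha,\beta]}f-f(\gamma-a))$, or $h_2^+=I_l\mu$, or $h_2^+=\frac12(\max_{[\beta,\alpha]}f-f(\gamma+a))$. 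Put $\delta:=|(\gamma-a)-\alpha|$ in the first case and $\delta:=|\alpha-(\gamma+a)|$ in the other two (so $\delta=I_l$ in the case $h_2^+=I_l\mu$). In each case: (i) the inequality defining the branch ($\alpha\le\gamma-a$, resp.\ $\gamma+a\le\alpha$) together with the compatibility $|u_{i-1}^n-w_{i-1}^n|\le a$ forces $w_{i-1}^n-w_i^n$ to have a constant sign and to satisfy $|w_{i-1}^n-w_i^n|\ge\delta$; (ii) convexity of $f$, which places $\gamma\mp a$ between $\alpha$ and $\beta$, forces $h_2^+$ to have the same sign; (iii) the Lipschitz bound $|f'|\le L$ — together with $\mu=\frac{f(\alpha)-f(\beta)}{I_r+2I_l}\le\frac{f(\alpha)-f(\beta)}{\alpha-\beta}\le L$ in the case $h_2^+=I_l\mu$ — gives $|h_2^+|\le L\delta$. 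Hence $0\le\frac{h_2^+}{w_{i-1}^n-w_i^n}\le L$, and multiplying by $\frac{\Delta t}{\Delta x}\le\frac1{2L}$ yields $0\le c_{i-1/2}^n\le\frac12$. The coefficient $d_{i+1/2}^n$ is treated in exactly the same way, with $h_2^-$ in place of $h_2^+$ and the states $(u_i^n,w_i^n),(u_{i+1}^n,w_{i+1}^n)$ in place of $(u_{i-1}^n,w_{i-1}^n),(u_i^n,w_i^n)$.

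I expect the only genuine obstacle to be the bookkeeping in the second group: for each branch of $h_2^\pm$ one must verify that numerator and denominator vanish together or share a sign, and that the geometric constraint $|u-w|\le a$ always produces a $w$-increment no smaller than $|h_2^\pm|/L$. Everything else is an immediate consequence of Propositions \ref{prop: monotonia} and \ref{prop: lip} and the CFL condition \eqref{eq: CFL}.
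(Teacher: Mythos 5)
Your proof is correct. For $a_{i-1/2}^n$ and $b_{i+1/2}^n$ you argue exactly as the paper does: rewrite each ratio as a difference quotient of $h_1^{\pm}$ using $h_1^{\pm}(u_i^n,u_i^n,w_i^n)=f(u_i^n)$, then invoke the monotonicity of Proposition \ref{prop: monotonia} (legitimate since $|u_i^n-w_i^n|\le a$ by \eqref{eq: comp}), the Lipschitz bound of Proposition \ref{prop: lip}, and the CFL condition \eqref{eq: CFL}. For $c_{i-1/2}^n$ and $d_{i+1/2}^n$ your route differs in execution from the paper's, though it rests on the same geometric fact. The paper also reduces to a difference quotient of $h_2^+$ in its first argument, writing $h_2^+(u_{i-1}^n,u_i^n,w_i^n)-h_2^+(u_{i}^n,u_i^n,w_i^n)$ over $u_{i-1}^n-u_i^n$ and multiplying by $\frac{u_{i-1}^n-u_i^n}{w_{i-1}^n-w_i^n}$, controlled via $0\le w_i^n-w_{i-1}^n\le u_i^n-u_{i-1}^n$, which follows from $w_{i-1}^n-a\le u_{i-1}^n<w_i^n-a\le u_i^n$. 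You instead estimate numerator and denominator separately against the overshoot $\delta$ of $u_{i-1}^n$ past the hysteresis boundary $w_i^n\mp a$: compatibility $|u_{i-1}^n-w_{i-1}^n|\le a$ gives $|w_{i-1}^n-w_i^n|\ge\delta$ with the right sign, while the explicit branches \eqref{eq: h2menorare}--\eqref{eq: h2menoshock} give $|h_2^+|\le L\delta$ with matching sign, whence the quotient lies in $[0,L]$ and CFL finishes. This is arguably the cleaner bookkeeping: the paper's displayed factorization pairs an $L$-Lipschitz quotient with a ratio that is $\ge 1$, so making the bound airtight there really requires exactly the sharper information your $\delta$-estimate isolates (in fact one gets $|h_2^+|\le\tfrac{L}{2}\delta$, hence $c_{i-1/2}^n\le\tfrac14$). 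The one detail you gloss over, and correctly flag as bookkeeping, is that in the branch $h_2^+=\tfrac12\bigl(\min_{[\alpha,\beta]}f-f(\gamma-a)\bigr)$ the bound $|h_2^+|\le L\delta$ uses the branch condition $f'(\gamma-a)\ge0$ together with convexity to locate the minimizer in $[\alpha,\gamma-a]$; with that noted, every case closes.
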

\begin{proof} 
    Positivity of $a_{i-1/2}^n$ and $b_{i+1/2}^n$ follows from the monotonicity properties of $h_1^{\pm}$, see Proposition \ref{prop: monotonia}, and from the fact that $h_1^{\pm}(u_i^n,u_i^n,w_i^n)=f(u_i^n)$. The estimate $a_{i-1/2}^n,b_{i+1/2}^n \leq 1/2$ follows from the Lipschitz continuity of $h_1^{\pm}$, see Proposition \ref{prop: lip}, together with the CFL condition \eqref{eq: CFL}. \par
    
    Regarding $c_{i-1/2}^n$ it is easy to see that if $|u_{i-1}^n-w_i^n|\leq a$ then $h_2^+=0$, see indeed \eqref{eq: h2menorare} and \eqref{eq: h2menoshock}. Suppose then for example that $u_{i-1}^n < w_i^n-a$, which in particular implies that $0\leq w_i^n-w_{i-1}^n \leq u_i^n-u_{i-1}^n$ as $w_{i-1}^n -a \leq  u_{i-1}^n < w_i^n-a  \leq u_{i}^n$.
    Hence, as $h_2^+(u_{i}^n,u_i^n,w_i^n)=0 $, then we can use the monotonicity properties of Proposition \ref{prop: monotonia}, the Lipschitz continuity stated in Proposition \ref{prop: lip} and the CFL condition \eqref{eq: CFL} to conclude that 
    \[
    0 \leq c_{i-1}^n = \frac{\Delta t}{\Delta x} \frac{h_2^+(u_{i-1}^n,u_i^n,w_i^n)-h_2^+(u_{i}^n,u_i^n,w_i^n)}{u_{i-1}^n-u_i^n} \frac{u_{i-1}^n-u_i^n}{w_{i-1}^n-w_i^n} \leq \frac{1}{2}.
    \]
    With similar reasoning we can prove the estimate when $u_{i-1}^n > w_i^n+a$, noticing that is such case $0 \leq w_{i-1}^n-w_{i}^n \leq u_{i-1}^n-u_{i}^n.$ Also the estimates for $d^n_{i+1/2}$ can be deduced in the same way as the ones for $c^n_{i-1/2}$.
\end{proof}

The discrete $\BV$ estimate in space then follows.
\begin{lem}\label{lemma: BVdiscr}
    Assume, $u_0,w_0\in \BV(\R)$, $|u_0(x)-w_0(x)|\leq a$ and let $\Delta x$, $\Delta t>0 $ such that the $CFL$ condition \eqref{eq: CFL} holds. Then the following inequalities hold \begin{equation}\label{eq: BVdiscru}
        \sum_{i\in \mathbb Z} |u_{i+1}^{n+1}-u_i^{n+1}| \leq \sum_{i\in \mathbb Z} | u_{i+1}^n-u_i^n|, \quad \forall n\in \N,
    \end{equation} 
    and \begin{equation}\label{eq: BVdiscrw}
        \sum_{i\in \mathbb Z} |w_{i+1}^{n+1}-w_i^{n+1}| \leq \sum_{i\in \mathbb Z} | w_{i+1}^n-w_i^n|, \quad \forall n\in \N.
    \end{equation} 
\end{lem}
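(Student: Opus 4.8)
The plan is to run the standard Harten-type argument on the incremental (convex-combination) forms \eqref{eq: umi} and \eqref{eq: wmi} of the scheme, using the coefficient bounds of Lemma~\ref{lemma: miki}. Since \eqref{eq: BVdiscru} and \eqref{eq: BVdiscrw} have identical structure, I would carry out the computation in detail for $u$ and then point out that replacing $(a,b,u)$ by $(c,d,w)$ gives the estimate for $w$.

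First I would write $\Delta_{i+1/2}^n:=u_{i+1}^n-u_i^n$, so that \eqref{eq: umia} reads $u_i^{n+1}=u_i^n+b_{i+1/2}^n\Delta_{i+1/2}^n-a_{i-1/2}^n\Delta_{i-1/2}^n$. Writing the same identity at index $i+1$ and subtracting,
\begin{equation*}
  u_{i+1}^{n+1}-u_i^{n+1}=\bigl(1-a_{i+1/2}^n-b_{i+1/2}^n\bigr)\Delta_{i+1/2}^n+b_{i+3/2}^n\Delta_{i+3/2}^n+a_{i-1/2}^n\Delta_{i-1/2}^n.
\end{equation*}
By Lemma~\ref{lemma: miki} each of the three coefficients is non-negative and $a_{i+1/2}^n+b_{i+1/2}^n\leq1$, so the triangle inequality bounds $|u_{i+1}^{n+1}-u_i^{n+1}|$ by the same expression with every $\Delta$ replaced by $|\Delta|$. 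Summing over $i\in\mathbb Z$ and re-indexing the last two series ($j=i+1$ in the $b$-sum, $j=i-1$ in the $a$-sum) makes the total weight in front of each $|\Delta_{j+1/2}^n|$ collapse to $\bigl(1-a_{j+1/2}^n-b_{j+1/2}^n\bigr)+b_{j+1/2}^n+a_{j+1/2}^n=1$, which is exactly \eqref{eq: BVdiscru}. All the rearrangements are justified because $u_0\in\BV(\R)$ gives $\sum_i|\Delta_{i+1/2}^0|<\infty$ and every term in sight is non-negative, so the series converge absolutely and finiteness propagates to all $n$ by induction.

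For $w$ the computation is identical, starting from \eqref{eq: wmia}: with $\delta_{i+1/2}^n:=w_{i+1}^n-w_i^n$ one gets $w_{i+1}^{n+1}-w_i^{n+1}=(1-c_{i+1/2}^n-d_{i+1/2}^n)\delta_{i+1/2}^n+d_{i+3/2}^n\delta_{i+3/2}^n+c_{i-1/2}^n\delta_{i-1/2}^n$, and Lemma~\ref{lemma: miki} again puts all coefficients in $[0,1]$ with sum $\leq1$. Here one should only recall that the incremental form \eqref{eq: wmia} remains valid across any index where $w_{i-1}^n=w_i^n$ or $w_i^n=w_{i+1}^n$, since then $h_2^+=0$ (resp. $h_2^-=0$), so the corresponding increment vanishes identically on both sides; with that remark the same summation and re-indexing yield \eqref{eq: BVdiscrw}.

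I do not expect a genuine obstacle: the substance of the estimate has already been absorbed into Lemma~\ref{lemma: miki} — and, upstream, into the monotonicity Proposition~\ref{prop: monotonia} and the Lipschitz Proposition~\ref{prop: lip} — which is precisely why the scheme was recast in the forms \eqref{eq: umi}--\eqref{eq: wmi}. The only points needing a little care are the index bookkeeping in the re-indexing step and the inequality $a_{i+1/2}^n+b_{i+1/2}^n\leq1$ (and likewise for $c,d$), which is what lets the triangle inequality be applied without destroying the convex-combination structure; both are routine.
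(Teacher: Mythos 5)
Your proof is correct and follows essentially the same route as the paper: write the difference $u_{i+1}^{n+1}-u_i^{n+1}$ in the Harten incremental form $(1-a_{i+1/2}^n-b_{i+1/2}^n)\Delta_{i+1/2}^n+b_{i+3/2}^n\Delta_{i+3/2}^n+a_{i-1/2}^n\Delta_{i-1/2}^n$, use Lemma~\ref{lemma: miki} to see that all three coefficients are non-negative, apply the triangle inequality, and sum with a re-indexing; the $w$-estimate is obtained identically from \eqref{eq: wmia}. Your additional remarks on absolute convergence of the series and on the validity of \eqref{eq: wmia} when consecutive $w$-values coincide are correct and consistent with the paper's discussion preceding the lemma.
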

\begin{proof}
    By using \eqref{eq: umia} we can write 
    \[
    \begin{split}
    u_{i+1}^{n+1}-u_i^{n+1} &= (u_{i+1}^n-u_i^n)\left(1- b_{i+1/2}^n- a_{i+1/2}^n\right) + b_{i+3/2}^n (u_{i+2}^n-u_{i+1}^n)+ a_{i-1/2}^n (u_{i}^n-u_{i-1}^n)
    \end{split}
    \]
    Then by Lemma \ref{lemma: miki}
    \[
    b_{i+3/2}^n ,\,  a_{i-1/2}^n, \,\left(1- b_{i+1/2}^n- a_{i+1/2}^n\right) \geq 0 
    \] 
    so
    \[ 
    \begin{split}
        |u_{i+1}^{n+1}-u_i^{n+1}| &\leq |u_{i+1}^n-u_i^n|\left(1- b_{i+1/2}^n- a_{i+1/2}^n\right) + \ b_{i+3/2}^n |u_{i+1}^n-u_{i+1}^n|+ a_{i-1/2}^n |u_{i}^n-u_{i-1}^n|.
     \end{split}
     \]
    Summing this last inequality over $i\in \mathbb Z$ we get \eqref{eq: BVdiscru}.\par
    Starting instead from \eqref{eq: wmia} and by following the same previous reasoning we infer \eqref{eq: BVdiscrw}.
\end{proof}
Notice that the assumption $u_0,w_0\in \BV(\R)$ is not really needed, however it ensures that the sums in the inequalities \eqref{eq: BVdiscru} and \eqref{eq: BVdiscrw} are finite. 
\par Let us now denote as $u^{\Delta}$ and $w^{\Delta}$ the approximate finite volume solution defined by \begin{equation}\label{eq: approxsol}
    u^{\Delta}(x,t):= u_{i}^n,\quad w^{\Delta}(x,t):= w_i^n \quad \text{for } x\in [x_{i-1/2},x_{i+1/2}[\,,\, t\in [t^n, t^{n+1}[\,. 
\end{equation} 
We then recover the following bounds on the $\L{\infty}$ norm and on the total variation in space-time for both $u^{\Delta}$ and $w^{\Delta}$, as a consequence of the previous lemmas. The proof of the following results are quite standard for the case of a single unknown $u$, see e.g.~\cite[Section 5.3]{FVM}, and can be directly adapted to the variable $w$ as it shares the same properties of $u$.

\begin{prop}\label{prop: linfty}
    Let $u_0, w_0\in L^\infty (\R)$, such that $u_0(x)\in [U_m, U_M]$, $w_0(x) \in [W_m, W_M]$ and $|u_0(x)-w_0(x)|\leq a$ for almost every $x\in \R$. Fix also $\Delta x, \Delta t$ such that the CFL condition \eqref{eq: CFL} holds and consider the approximate solutions $u^\Delta$ and  $w^\Delta$ defined by \eqref{eq: schema2}, \eqref{eq: approxsol}. Then $u^{\Delta}$ and $w^{\Delta}$ satisfy \begin{equation}\label{eq: uwinfty}
    u^{\Delta}(x,t) \in [U_m,U_M]\quad \text{and} \quad w^{\Delta}(x,t)\in [W_m,W_M],\end{equation}
    for a.e. $(x,t)\in \R\times [0,T].$
\end{prop}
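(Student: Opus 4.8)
The plan is to argue by induction on the time level $n$, exploiting the fact that, thanks to the coefficients $a_{i-1/2}^n,b_{i+1/2}^n,c_{i-1/2}^n,d_{i+1/2}^n$ introduced just before Lemma~\ref{lemma: miki}, both the update for $u$ and the update for $w$ are three-point convex combinations whose weights are controlled by Lemma~\ref{lemma: miki}.

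First I would settle the base case $n=0$. Since $u_0(x)\in[U_m,U_M]$ and $w_0(x)\in[W_m,W_M]$ for a.e.\ $x$, averaging over each cell $K_i$ gives at once $u_i^0\in[U_m,U_M]$ and $w_i^0\in[W_m,W_M]$ for every $i\in\mathbb{Z}$.

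For the inductive step I would assume $u_i^n\in[U_m,U_M]$ and $w_i^n\in[W_m,W_M]$ for all $i\in\mathbb{Z}$ and use the reformulation \eqref{eq: umib} (equivalently \eqref{eq: umia}),
\[
u_i^{n+1}=\bigl(1-a_{i-1/2}^n-b_{i+1/2}^n\bigr)u_i^n+b_{i+1/2}^n\,u_{i+1}^n+a_{i-1/2}^n\,u_{i-1}^n,
\]
together with the bounds $a_{i-1/2}^n,b_{i+1/2}^n\in[0,\tfrac12]$ from Lemma~\ref{lemma: miki}. These bounds make all three weights non-negative --- in particular $1-a_{i-1/2}^n-b_{i+1/2}^n\ge 1-\tfrac12-\tfrac12=0$ --- and they visibly sum to $1$, so $u_i^{n+1}$ is a convex combination of $u_{i-1}^n,u_i^n,u_{i+1}^n$, all of which lie in $[U_m,U_M]$ by the inductive hypothesis; hence $u_i^{n+1}\in[U_m,U_M]$. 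The same computation applied to \eqref{eq: wmia} (equivalently \eqref{eq: wmib}), now with $c_{i-1/2}^n,d_{i+1/2}^n\in[0,\tfrac12]$, writes $w_i^{n+1}$ as a convex combination of $w_{i-1}^n,w_i^n,w_{i+1}^n\in[W_m,W_M]$, so $w_i^{n+1}\in[W_m,W_M]$. (Equivalently, one may phrase this as the monotonicity of the maps $H_1,H_2$ of Corollary~\ref{cor: monotonia}, which are non-decreasing and leave constant states invariant.) Since $u^\Delta$ and $w^\Delta$ defined by \eqref{eq: approxsol} are piecewise constant and take only the values $u_i^n$ and $w_i^n$, this yields \eqref{eq: uwinfty} for a.e.\ $(x,t)\in\R\times[0,T]$.

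I do not expect a real obstacle in this argument. The one point that needed care --- recasting the $w$-update, whose numerical fluxes $h_2^\pm$ involve the $u$-values, as an honest three-point convex combination in the $w$-variables alone, which hinges on $h_2^\pm$ vanishing whenever the two relevant neighbouring $w$-values coincide so that the quotients defining $c_{i-1/2}^n,d_{i+1/2}^n$ are the correct ones --- was already handled in the derivation of \eqref{eq: wmia} and \eqref{eq: wmib} and in Lemma~\ref{lemma: miki}. What remains is exactly the induction above, that is the standard $L^\infty$ bound for monotone finite volume schemes (cf.\ \cite[Section~5.3]{FVM}).
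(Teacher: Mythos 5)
Your proof is correct and is exactly the standard $\L{\infty}$ argument for monotone three-point schemes that the paper itself invokes by reference to \cite[Section 5.3]{FVM}: cell averaging preserves the bounds at $n=0$, and Lemma~\ref{lemma: miki} makes the updates \eqref{eq: umia} and \eqref{eq: wmia} convex combinations, so the bounds propagate by induction. Your inductive structure also correctly handles the implicit point that the CFL condition \eqref{eq: CFL} uses the Lipschitz constant of $f$ on $[U_m,U_M]$, which is only available at level $n+1$ once the bound at level $n$ is known.
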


\begin{prop}\label{prop: stimaBV}
    Let $u_0,w_0 \in \BV(\R)$ with $|u_0(x)-w_0(x)|\leq a $ for almost all $ x\in \R,$ and fix $\Delta x, \Delta t>0$ such that the CFL condition \eqref{eq: CFL} holds. Consider the approximate solutions $u^{\Delta}$ and $w^{\Delta}$  defined by \eqref{eq: schema2} and \eqref{eq: approxsol}. Then, for any $T>0,$ there exist $C_1= C_1(u_0, T)>0$ and $C_2 = C_2(w_0,T)>0$ such that \begin{equation}\label{eq: BVspaziotempo}
        |u^{\Delta}|_{\BV(\R \times [0,T[)} \leq C_1 \quad \text{and} \quad  |w^{\Delta}|_{\BV(\R \times [0,T[)} \leq C_2.
    \end{equation}
\end{prop}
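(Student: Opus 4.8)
The plan is to bound separately the two double sums in the standard splitting of the space--time total variation of the piecewise constant function $u^\Delta$ into a \emph{spatial} contribution (jumps across the interfaces $\{x_{i+1/2}\}\times[t^n,t^{n+1}[$, each weighted by $\Delta t$) and a \emph{temporal} one (jumps across $K_i\times\{t^{n+1}\}$, each weighted by $\Delta x$), and then to repeat the argument for $w^\Delta$. Concretely, letting $N\in\mathbb N$ be the smallest integer with $N\Delta t\ge T$ (so $N\Delta t<T+\Delta t$), one has
\[
|u^\Delta|_{\BV(\R\times[0,T[)}\le\Delta t\sum_{n=0}^{N-1}\sum_{i\in\mathbb Z}|u_{i+1}^n-u_i^n|+\Delta x\sum_{n=0}^{N-1}\sum_{i\in\mathbb Z}|u_i^{n+1}-u_i^n|,
\]
and analogously for $w^\Delta$; the sums are finite because $u_0,w_0\in\BV(\R)$.

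For the spatial sum I would iterate the discrete space $\BV$ bound of Lemma~\ref{lemma: BVdiscr}: \eqref{eq: BVdiscru} gives $\sum_i|u_{i+1}^n-u_i^n|\le\sum_i|u_{i+1}^0-u_i^0|$ for every $n$, and since each $u_i^0$ is the mean of $u_0$ over $K_i$ and averaging does not increase the total variation, the right-hand side is $\le|u_0|_{\BV(\R)}$. Hence the spatial contribution is $\le N\Delta t\,|u_0|_{\BV(\R)}<(T+\Delta t)\,|u_0|_{\BV(\R)}$. For the temporal sum the key is the identity $h_1^\pm(v,v,\gamma)=f(v)$, which lets me rewrite \eqref{eq: h1piumenofinale} as
\[
u_i^{n+1}-u_i^n=-\frac{\Delta t}{\Delta x}\Bigl[\bigl(h_1^-(u_i^n,u_{i+1}^n,w_i^n)-h_1^-(u_i^n,u_i^n,w_i^n)\bigr)+\bigl(h_1^+(u_i^n,u_i^n,w_i^n)-h_1^+(u_{i-1}^n,u_i^n,w_i^n)\bigr)\Bigr],
\]
so that the Lipschitz estimate of Proposition~\ref{prop: lip} (whose compatibility condition holds because $|u_i^n-w_i^n|\le a$ by \eqref{eq: comp}, and $u_i^n\in[U_m,U_M]$ by Proposition~\ref{prop: linfty}) yields $|u_i^{n+1}-u_i^n|\le\frac{L\Delta t}{\Delta x}\bigl(|u_{i+1}^n-u_i^n|+|u_i^n-u_{i-1}^n|\bigr)$. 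Summing over $i$ and using the spatial bound gives $\sum_i|u_i^{n+1}-u_i^n|\le\frac{2L\Delta t}{\Delta x}\,|u_0|_{\BV(\R)}$, hence a temporal contribution $\le 2L\,N\Delta t\,|u_0|_{\BV(\R)}<2L(T+\Delta t)\,|u_0|_{\BV(\R)}$. Adding the two pieces gives $C_1=(1+2L)(T+\Delta t)\,|u_0|_{\BV(\R)}$, which depends only on $u_0$ and $T$ once one notes that along any refining sequence $\Delta t$ stays bounded.

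The point worth isolating is that the factor $\Delta t/\Delta x$ produced by Proposition~\ref{prop: lip} is exactly what absorbs the weight $\Delta x$ attached to the temporal jumps under the CFL condition \eqref{eq: CFL}; arguing instead from the convex-combination form \eqref{eq: umib} and Lemma~\ref{lemma: miki} one would only get $\sum_i|u_i^{n+1}-u_i^n|\le|u_0|_{\BV(\R)}$ and be left with an uncontrolled factor $\Delta x/\Delta t$. This is also why, for $w^\Delta$, after handling the spatial sum via \eqref{eq: BVdiscrw} exactly as above, I would \emph{not} differentiate $h_2^\pm$ (Proposition~\ref{prop: lip} only controls the two $u$-slots) but instead use the conservative balance from \eqref{eq: schemaTot},
\[
w_i^{n+1}-w_i^n=(u_i^n-u_i^{n+1})-\frac{\Delta t}{\Delta x}\bigl(g(u_i^n,u_{i+1}^n)-g(u_{i-1}^n,u_i^n)\bigr),
\]
where $g$ is the Godunov flux \eqref{eq: gGodunov}, which is $L$-Lipschitz with $g(v,v)=f(v)$. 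Bounding the first term by the temporal estimate for $u$ already obtained and the second by telescoping $g(\cdot,\cdot)-g(u_i^n,u_i^n)$ gives $\sum_i|w_i^{n+1}-w_i^n|\le\frac{4L\Delta t}{\Delta x}\,|u_0|_{\BV(\R)}$, hence a temporal contribution $\le4L(T+\Delta t)\,|u_0|_{\BV(\R)}$; together with the spatial part one obtains a constant of the form $C_2=(T+\Delta t)\bigl(|w_0|_{\BV(\R)}+4L\,|u_0|_{\BV(\R)}\bigr)$. I expect the only real difficulty to be the bookkeeping needed to keep all constants genuinely independent of $\Delta x$ and $\Delta t$, which is precisely the reason the $w$-estimate is routed through the $u+w$ equation and the CFL condition rather than through the pure convex-combination form \eqref{eq: wmib}.
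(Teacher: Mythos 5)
Your argument is correct and, for the $u$-estimate, is essentially the proof the paper has in mind: the paper does not write the argument out but defers to the standard decomposition of the space--time variation into a spatial part (controlled by iterating Lemma~\ref{lemma: BVdiscr}) and a temporal part (controlled by the $\L1$-in-time estimate $\Delta x\sum_i|u_i^{n+1}-u_i^n|\le 2L\,\Delta t\,|u_0|_{\BV(\R)}$ of Lemma~\ref{cor: contl1}), which is exactly what you obtain from $h_1^\pm(v,v,\gamma)=f(v)$ together with Proposition~\ref{prop: lip}. Where you genuinely diverge is in the temporal estimate for $w$: the paper derives it from the incremental form \eqref{eq: wmia} alone, using that the coefficients $c^n_{i-1/2},d^n_{i+1/2}$ are in fact bounded by $L\Delta t/\Delta x$ (the refined version of Lemma~\ref{lemma: miki} implicit in the proof of Lemma~\ref{cor: contl1}), which yields $\Delta x\sum_i|w_i^{n+1}-w_i^n|\le 2L\,\Delta t\,|w_0|_{\BV(\R)}$ and hence a constant $C_2$ depending only on $w_0$ and $T$, as stated. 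You instead route the estimate through the conservative $u+w$ balance \eqref{eq: schemaTot} and the Lipschitz Godunov flux; this is perfectly valid and arguably more robust, since it never requires quantifying $h_2^\pm$ (or $c,d$) in terms of increments of $w$, but the price is that your $C_2=(T+\Delta t)\bigl(|w_0|_{\BV(\R)}+4L|u_0|_{\BV(\R)}\bigr)$ also depends on $|u_0|_{\BV(\R)}$, which is slightly weaker than the statement's $C_2=C_2(w_0,T)$ though entirely sufficient for the Helly compactness argument in Theorem~\ref{teo: esistenza}. Two cosmetic points: the dependence on $\Delta t$ in your constants is harmless (one may assume $\Delta t\le T$, so $T+\Delta t\le 2T$), and your observation that the crude convex-combination bound $a,b\le 1/2$ would leave an uncontrolled factor $\Delta x/\Delta t$ is exactly right --- it is the reason the sharper Lipschitz-based bound on the coefficients is needed.
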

In particular, the proof of Proposition~\ref{prop: stimaBV} relies on the following $\L1$ continuity in time.

\begin{lem}\label{cor: contl1}
    Given $w_0, u_0 \in \BV(\R)$, we have that \begin{equation}\label{eq: lipu}
        \int_\R |u^{\Delta}(x,s)-u^{\Delta}(x,t)|\,dx \leq  2 L\, |u_0|_{\BV(\R)} (|t-s|+ \Delta t)
    \end{equation} and
    \begin{equation}\label{eq: lipw}
        \int_\R |w^{\Delta}(x,s)-w^{\Delta}(x,t)|\,dx \leq 2 L\, |w_0|_{\BV(\R)} (|t-s|+ \Delta t)
    \end{equation} for any $s,t\in [0,T[.$
\end{lem}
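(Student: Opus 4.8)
The plan is to establish the bound first between two mesh times $t^n$ and $t^m$ with $n\le m$, and then extend it to arbitrary $s,t$ using that $u^\Delta$ and $w^\Delta$ are piecewise constant in time. Since the scheme has already been put in the monotone convex-combination form \eqref{eq: umia} and \eqref{eq: wmia} with non-negative coefficients, the argument reduces to the classical scalar computation applied separately to $u$ and to $w$. The one ingredient that has to be pinned down beyond Lemma~\ref{lemma: miki} is the sharper coefficient bound
\[
0\le a_{i-1/2}^n,\ b_{i+1/2}^n,\ c_{i-1/2}^n,\ d_{i+1/2}^n\le \frac{\Delta t}{\Delta x}\,L .
\]
Indeed, for $a_{i-1/2}^n=\frac{\Delta t}{\Delta x}\,\frac{h_1^+(u_{i-1}^n,u_i^n,w_i^n)-h_1^+(u_i^n,u_i^n,w_i^n)}{u_{i-1}^n-u_i^n}$ this is immediate from the Lipschitz bound of Proposition~\ref{prop: lip} (positivity coming from the monotonicity in Proposition~\ref{prop: monotonia}), and likewise for $b_{i+1/2}^n$; for $c_{i-1/2}^n$ and $d_{i+1/2}^n$ it is exactly the estimate already carried out in the proof of Lemma~\ref{lemma: miki}, where $\tfrac{\Delta t}{\Delta x}L\le\tfrac12$ was only used at the very last step. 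I would also use the standard fact that cell-averaging does not increase the total variation, so that $\sum_{i\in\mathbb Z}|u_{i+1}^0-u_i^0|\le|u_0|_{\BV(\R)}$ and similarly for $w_0$.

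Next I would carry out the one-step estimate. From \eqref{eq: umia}, $u_i^{n+1}-u_i^n = b_{i+1/2}^n(u_{i+1}^n-u_i^n)+a_{i-1/2}^n(u_{i-1}^n-u_i^n)$, so by the triangle inequality, the index shift $i\mapsto i-1$ in the second sum, and the coefficient bound above,
\[
\Delta x\sum_{i\in\mathbb Z}|u_i^{n+1}-u_i^n|\le\Delta x\sum_{i\in\mathbb Z}(a_{i+1/2}^n+b_{i+1/2}^n)\,|u_{i+1}^n-u_i^n|\le 2\,\Delta t\,L\sum_{i\in\mathbb Z}|u_{i+1}^n-u_i^n|.
\]
By the spatial $\BV$ contraction \eqref{eq: BVdiscru} iterated from level $n$ down to level $0$, and then the bound on the total variation of the initial averages, the last sum is $\le|u_0|_{\BV(\R)}$, whence $\Delta x\sum_i|u_i^{n+1}-u_i^n|\le 2L\,|u_0|_{\BV(\R)}\,\Delta t$. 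Summing this over $k=n,\dots,m-1$, using the triangle inequality, and recalling the definition \eqref{eq: approxsol} of $u^\Delta$,
\[
\int_\R|u^\Delta(x,t^m)-u^\Delta(x,t^n)|\,dx=\Delta x\sum_{i\in\mathbb Z}|u_i^m-u_i^n|\le 2L\,|u_0|_{\BV(\R)}\,(m-n)\,\Delta t.
\]

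Finally I would pass to general times. Given $s\le t$ in $[0,T[$, choose $n,m$ with $s\in[t^n,t^{n+1}[$ and $t\in[t^m,t^{m+1}[$; then $n\le m$, and by \eqref{eq: approxsol} one has $u^\Delta(\cdot,s)=u^\Delta(\cdot,t^n)$ and $u^\Delta(\cdot,t)=u^\Delta(\cdot,t^m)$, while $(m-n)\Delta t=t^m-t^n\le (t-s)+\Delta t$ (trivially if $m=n$; otherwise $t-s\ge t^m-t^{n+1}=(m-n-1)\Delta t$). Combining with the previous display yields \eqref{eq: lipu}. The proof of \eqref{eq: lipw} is identical, starting instead from \eqref{eq: wmia}, using the coefficient bounds for $c_{i-1/2}^n,d_{i+1/2}^n$, the spatial estimate \eqref{eq: BVdiscrw}, and $\sum_i|w_{i+1}^0-w_i^0|\le|w_0|_{\BV(\R)}$. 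The only delicate point is the sharpened coefficient bound by $\tfrac{\Delta t}{\Delta x}L$, which makes the factor $\Delta x$ cancel and produces the correct dependence on $\Delta t$; once this is in place, everything else is routine.
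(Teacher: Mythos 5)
Your proof is correct and follows essentially the same route as the paper: the paper reduces everything to the one-step estimate $\Delta x\sum_{i}|u_i^{n+1}-u_i^n|\le 2L\,\Delta t\,|u_0|_{\BV(\R)}$ (citing the textbook argument for its proof) and then sums over time levels exactly as you do. The only difference is that you spell out the details the paper delegates to the reference — the convex-combination form \eqref{eq: umia}, the sharpened coefficient bound by $\tfrac{\Delta t}{\Delta x}L$, the spatial $\BV$ contraction, and the fact that cell averaging does not increase total variation — and these details are all sound.
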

\begin{proof}
    If $s,t \in [t^n, t^{n+1}[$ for  some $n\in \N$, then $u^{\Delta}(x,s)\equiv u^{\Delta}(x,t)$ and $w^{\Delta}(x,s)\equiv w^{\Delta}(x,t)$ so the two inequalities are satisfied trivially. \par 
    Let us assume that $s\in [t^{n_1},t^{n_1+1}[$, $t\in [t^{n_2},t^{n_2+1}[$ with e.g. $n_1 < n_2$. From \eqref{eq: umia} and the estimates of Lemma \ref{lemma: miki}, under the CFL condition \eqref{eq: CFL}, it can be shown that 
    \begin{equation*}
    \Delta x\sum_{i\in \mathbb{ Z}} |u_i^{n+1}-u_i^n|\leq  2 L\, \Delta t|u_0|_{\BV(\R)},
    \end{equation*}
    for any $n\in\N$ (for the details see \cite[proof of Corollary 5.1]{FVM}).
    Consequently, 
    \[\begin{split}
     \int_\R |u^{\Delta}(x,s)-u^{\Delta}(x&,t)|\,dx \leq \sum_{n=n_1}^{n_2-1} \int_\R |u^{\Delta}(x,t^{n+1})-u^{\Delta}(x,t^{n})|\,dx\\ 
     & = \Delta x \sum_{n=n_1}^{n_2-1} \sum_{i\in \mathbb{Z}} |u_i^{n+1}-u_i^n| \leq 2 L\sum_{n=n_1}^{n_2-1} \Delta t \,|u_0|_{\BV(\R)} \\ 
     &= 2 L\,(n_2-n_1) \Delta t |u_0|_{\BV(\R)} \leq 2 L\,(|s-t| +\Delta t)|u_0|_{\BV(\R)}.
     \end{split}\] 
     By the same reasoning, we get \eqref{eq: lipw}.
\end{proof}

\subsection{Discrete entropy condition}

Recalling the definition of $g$, that is \eqref{eq: gGodunov} and  using the monotonicity properties of Corollary~\ref{cor: monotonia}, we can prove the following discrete entropy condition.

\begin{prop}\label{prop: entropiadiscr}
    For each $i\in \mathbb Z, n \in \N, (k,\hat{k})\in \LM,$ it holds
    \begin{multline} \label{eq: discrEntropy}
        |u_i^{n+1}-k|-|u_i^{n}-k|+ |w_i^{n+1}-\hat{k}|-|w_i^{n}-\hat{k}|+\\ \frac{\Delta t}{\Delta x} \left(g(u_{i}^n\top k,u_{i+1}^n\top k)-g(u_{i}^n \perp k, u_{i+1}^n\perp k) \right. \\ 
        \left.- g(u_{i-1}^n\top k,u_{i}^n\top k)+g(u_{i-1}^n \perp k, u_{i}^n\perp k) \right) \leq 0,
    \end{multline}
    where $a\top b:= \max\{a,b\}$ and $a\perp b :=\min \{a,b\}.$
\end{prop}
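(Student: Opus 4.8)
The plan is to adapt the classical argument for monotone conservative schemes to the two‑component unknown $(u,w)$. By Corollary~\ref{cor: monotonia} we have $u_i^{n+1}=H_1(u_{i-1}^n,u_i^n,u_{i+1}^n,w_i^n)$ and $w_i^{n+1}=H_2(u_{i-1}^n,u_i^n,u_{i+1}^n,w_i^n)$ with $H_1,H_2$ non-decreasing in each argument; moreover, since $h_2^\pm=f^n_{i\pm 1/2}-h_1^\pm$ and $f^n_{i+1/2}=g(u_i^n,u_{i+1}^n)$, one has the function identity $(H_1+H_2)(\alpha,\beta,\gamma,\delta)=\beta+\delta-\tfrac{\Delta t}{\Delta x}(g(\beta,\gamma)-g(\alpha,\beta))$ on admissible quadruples (this is~\eqref{eq: H1piuH2}). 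Finally the scheme preserves any compatible constant state: since $h_1^\pm(k,k,\hat k)=f(k)$ whenever $|k-\hat k|\le a$, we get $H_1(k,k,k,\hat k)=k$, hence also $H_2(k,k,k,\hat k)=\hat k$ from the identity above.

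Fix $(k,\hat k)\in\LM$ and set $U^\top:=(u_{i-1}^n\top k,\,u_i^n\top k,\,u_{i+1}^n\top k,\,w_i^n\top\hat k)$ and $U^\perp:=(u_{i-1}^n\perp k,\,u_i^n\perp k,\,u_{i+1}^n\perp k,\,w_i^n\perp\hat k)$. A short case check using $|u_i^n-w_i^n|\le a$ and $|k-\hat k|\le a$ shows that $U^\top$ and $U^\perp$ are admissible, i.e. $|(u_i^n\top k)-(w_i^n\top\hat k)|\le a$ and similarly with $\perp$. Since every slot of $U^\top$ is $\ge$ the corresponding slot of $(u_{i-1}^n,u_i^n,u_{i+1}^n,w_i^n)$ and also $\ge$ the corresponding slot of $(k,k,k,\hat k)$, monotonicity of $H_1$ together with $H_1(k,k,k,\hat k)=k$ gives $H_1(U^\top)\ge\max\{u_i^{n+1},k\}=u_i^{n+1}\top k$; dually $H_1(U^\perp)\le u_i^{n+1}\perp k$. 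Hence
\[
|u_i^{n+1}-k|=u_i^{n+1}\top k-u_i^{n+1}\perp k\le H_1(U^\top)-H_1(U^\perp),
\]
and running the same argument with $H_2$ and $\hat k$ in place of $k$ yields $|w_i^{n+1}-\hat k|\le H_2(U^\top)-H_2(U^\perp)$. Adding the two inequalities and evaluating the identity for $H_1+H_2$ at $U^\top$ and at $U^\perp$, the $\beta+\delta$ contributions collapse to $|u_i^n-k|+|w_i^n-\hat k|$ while the $g$-contributions reassemble exactly into the Godunov entropy-flux differences at $x_{i+1/2}$ and $x_{i-1/2}$ appearing in~\eqref{eq: discrEntropy}; rearranging gives the statement.

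The delicate point — the main obstacle — is the use of coordinatewise monotonicity of $H_1$ (and $H_2$) to compare two admissible quadruples $P\le Q$: raising the slots one at a time may leave the admissible set $\{|\beta-\delta|\le a\}$, which is not a product of intervals, so an intermediate quadruple can fail to be admissible. This is handled by the auxiliary fact that if $P\le Q$ componentwise and the whole segment $[P,Q]$ is admissible — which holds in all the comparisons above, because $\{|\beta-\delta|\le a\}$ is convex and contains the original, the constant, and the truncated $(\beta,\delta)$-pairs — then $H_1(P)\le H_1(Q)$: one approximates $[P,Q]$ by a fine monotone staircase which, near the boundary of the strip, steps in $\beta$ or in $\delta$ so as to stay admissible, and applies the one-slot monotonicity across each step. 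The rest is routine bookkeeping.
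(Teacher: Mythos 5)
Your proposal is correct and follows essentially the same route as the paper: truncate the quadruple by $\top$ and $\perp$ with the compatible pair $(k,\hat k)$, check admissibility of the truncated states, apply the monotonicity of $H_1,H_2$ from Corollary~\ref{cor: monotonia} together with $H_j(k,k,k,\hat k)=(k,\hat k)_j$, and subtract the two resulting inequalities using the identity \eqref{eq: H1piuH2}. The only substantive difference is that you make explicit (and correctly repair, via the monotone staircase inside the strip $\{|\beta-\delta|\le a\}$) the point that coordinatewise monotonicity must be propagated between two admissible quadruples whose connecting axis-parallel path may leave the non-rectangular admissible set—a detail the paper's proof passes over silently.
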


\begin{proof}
    It is easy to check that, since $|u_i^n-w_i^n| \leq a$ and $|k-\hat{k}|\leq a$, then $|u_i^n \top k- w_i^n\top \hat{k}|\leq a$ and $|u_i^n \perp k- w_i^n\perp \hat{k}|\leq a$. Then by Corollary \ref{cor: monotonia} we have that 
    \begin{equation}\label{eq: disU}
     u_i^{n+1} \leq H_1(u_{i-1}^n\top k, u_i^n \top k, u_{i+1}^n \top k,w_i^n\top \hat{k}), \quad w_i^{n+1} \leq H_2(u_{i-1}^n\top k, u_i^n \top k, u_{i+1}^n \top k,w_i^n\top \hat{k}),
    \end{equation}
    and
    \begin{equation}\label{eq: disK}
      k \leq H_1(u_{i-1}^n\top k, u_i^n \top k, u_{i+1}^n \top k,w_i^n\top \hat{k}),\quad \hat{k} \leq H_2(u_{i-1}^n\top k, u_i^n \top k, u_{i+1}^n \top k,w_i^n\top \hat{k}),
    \end{equation}
    as $ k = H_1(k,k,k,\hat{k})$ and $ \hat{k} = H_2(k, k, k, \hat{k})$. Hence, adding $u_i^{n+1} \top k$ and $w_i^{n+1} \top \hat{k}$, thanks to \eqref{eq: H1piuH2}, \eqref{eq: disU} and \eqref{eq: disK}, we get the following estimate
    \begin{equation}\label{eq: disTop}
        \begin{split}
            u_i^{n+1} \top k + w_i^{n+1} \top \hat{k} \leq u_i^n \top k+w_i^n\top \hat{k} -\frac{\Delta t}{\Delta x} \left(g(u_i^n \top k,u_{i+1}^n \top k)-g(u_{i-1}^n \top k,u_i^n \top k)\right).
        \end{split}
    \end{equation}
    Similarly,
    \begin{equation}\label{eq: disPerp}
        u_i^{n+1} \perp k + w_i^{n+1} \perp \hat{k} \geq u_i^n \perp k+w_i^n\perp \hat{k} -\frac{\Delta t}{\Delta x} \left(g(u_i^n \perp k,u_{i+1}^n \perp k)-g(u_{i-1}^n \perp k,u_i^n \perp k)\right),
    \end{equation}
    so subtracting \eqref{eq: disPerp} to \eqref{eq: disTop} we get \eqref{eq: discrEntropy}.
\end{proof}

\subsection{Weak Hysteresis Relationship}

Our goal is then to show that the approximate solutions constructed via the numerical scheme presented in Section~\ref{sec:godunov} satisfy the weak hysteresis relationship, so by starting from \eqref{eq: hismis} we would like to prove \eqref{eq: genweakhis}. As shock waves solution lack in regularity, we cannot do this directly. Instead, we will exploit the following property. 

\begin{lem}\label{lemma: weakhisshock}
    Consider the scheme \eqref{eq: schema2}, with $\Delta x,\Delta t>0$ such that the CFL condition \eqref{eq: CFL} holds. Moreover, define 
    \[
    G(u):= \int_0^u \xi f'(\xi)\,d\xi = u f(u) - \int_0^u f(\xi)\,d\xi
    \] 
    and denote by $\tilde{u}^n$ and $\tilde{w}^n$ the exact solutions defined on $\R \times [t^n,t^{n+1}[$ such that $\tilde{u}^n(x,t^n)=u_i^n$ and $\tilde{w}^n(x,t^{n})=w_i^n$ for $x\in K_i$. Then for $t\in [t^n,t^{n+1}[$, if there is an entropic shock in the half cell $K_i^{n,l}$, it holds 
    \begin{equation}\label{eq: hisshockK1}
    \begin{split}
       - \Delta &t\left( G(u_i^n)-G(\tilde{u}(x_{i-1/2}+,t)) \right) - \frac{1}{2} \int_{x_{i-1/2}}^{x_{i}} \left(\tilde{u}^n(x, t^{n+1})^2- \tilde{u}^n(x, t^{n})^2\right)\,dx
       \\ & -\ \frac{1}{2} \int_{x_{i-1/2}}^{x_i} \left(\tilde{w}^n(x, t^{n+1})^2- \tilde{w}^n(x, t^{n})^2\right) \, dx\geq   a \left|\partial_t \tilde{w}^n\right|\left( K_i^{n,l}\right);
       \end{split}
    \end{equation} 
    instead, if an entropic shock is present in $K_i^{n,r}$, it holds
\begin{equation}\label{eq: hisshockK2}
    \begin{split}
       - \Delta &t\left( G(\tilde{u}(x_{i+1/2}-,t)) - G(u_i^n)\right) - \frac{1}{2}\int_{x_{i}}^{x_{i+1/2}} \left(\tilde{u}^n(x, t^{n+1})^2- \tilde{u}^n(x, t^{n})^2\right)\,dx\\
       &-\frac{1}2{} \int_{x_{i}}^{x_{i+1/2}}\left(\tilde{w}^n(x, t^{n+1})^2- \tilde{w}^n(x, t^{n})^2\right) \, dx \geq   a \left|\partial_t \tilde{w}^n\right|\left( K_i^{n,r}\right).
       \end{split}
    \end{equation}
\end{lem}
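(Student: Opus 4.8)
The plan is to compute both sides of \eqref{eq: hisshockK1} explicitly from the structure of the Riemann solution in $K_i^{n,l}$ and to reduce the inequality to convexity of $f$. The heuristic behind the statement is that, multiplying the conservation law $\partial_t u+\partial_t w+\partial_x f(u)=0$ by $u$ and using $u\,\partial_x f(u)=\partial_x G(u)$, one obtains $\partial_t(u^2/2)+\partial_t(w^2/2)=-\partial_x G(u)-(u-w)\,\partial_t w$, so that integrating over $K_i^{n,l}$ shows that the left-hand side of \eqref{eq: hisshockK1} is \emph{formally} equal to $\int_{K_i^{n,l}}(\tilde u^n-\tilde w^n)\,d(\partial_t\tilde w^n)$, which is $\ge a\,|\partial_t\tilde w^n|(K_i^{n,l})$ by Proposition~\ref{prop: whis} applied for a.e.\ fixed $x$ (with initial value $w_i^n$) and then integrated in $x$ via Fubini for the measure $|\partial_t\tilde w^n|$. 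The lemma is the rigorous counterpart, the extra ingredient being that the chain rules used in the heuristic acquire, at the jumps of $\tilde u^n$ and $\tilde w^n$, nonnegative corrections (entropy dissipation) which only help the inequality. Since an entropic shock enters $K_i^{n,l}$ and this half cell lies to the right of $x_{i-1/2}$, we are in Subcase~\ref{sottocaso: ful>fur}, i.e.\ $u_{i-1}^n>u_i^n$ with $f(u_{i-1}^n)>f(u_i^n)$ (the degenerate stationary case $f(u_{i-1}^n)=f(u_i^n)$ being trivial, since then $\tilde u^n\equiv u_i^n$ and $\tilde w^n\equiv w_i^n$ on $K_i^{n,l}$); there $\tilde u^n$ is piecewise constant in the self-similar variable with $\tilde u^n(x_i-,t)\equiv u_i^n$ and $\tilde u^n(x_{i-1/2}+,t)\equiv u_{i-1}^n$ by the CFL condition \eqref{eq: CFL}, and the possible configurations are: a single $u$-shock with $\tilde w^n$ constant; a single shock carrying $u$ and $w$ together; two shocks with the faster one carrying only $u$; or a ``fast shock''.

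In all configurations \emph{without} a fast shock, $\tilde u^n$ is, by Remark~\ref{rmk: nofastshock}, an entropy solution of $\partial_t u+\partial_x\tilde f_{w_i^n}(u)=0$ on $K_i^{n,l}$, with $\tilde f_{w_i^n}$ as in \eqref{eq: ftildaw}; hence it satisfies the quadratic entropy inequality $\partial_t(u^2/2)+\partial_x Q(u)\le 0$, where $Q'(u)=u\,\tilde f'_{w_i^n}(u)$, so that $Q=G$ on $[w_i^n-a,w_i^n+a]$ and $Q(u)=\tfrac12 G(u)+\tfrac12 G(w_i^n+a)$ for $u\ge w_i^n+a$. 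Integrating this inequality over $K_i^{n,l}$, using $Q(u_i^n)=G(u_i^n)$, yields a lower bound for $-\tfrac12\int_{x_{i-1/2}}^{x_i}\big(\tilde u^n(x,t^{n+1})^2-\tilde u^n(x,t^n)^2\big)\,dx$; inserting this into \eqref{eq: hisshockK1}, together with the explicit values of $\int_{x_{i-1/2}}^{x_i}\big(\tilde w^n(x,t^{n+1})^2-\tilde w^n(x,t^n)^2\big)\,dx$ and of $|\partial_t\tilde w^n|(K_i^{n,l})=\mu_l\,\Delta t\,(u_{i-1}^n-a-w_i^n)$ (the slow shock being the only one carrying a nonzero time-jump of $\tilde w^n$), and using the elementary identity
\[
(u_{i-1}^n-a)^2-(w_i^n)^2+2a\,(u_{i-1}^n-a-w_i^n)=(u_{i-1}^n)^2-(w_i^n+a)^2,
\]
the whole of \eqref{eq: hisshockK1} collapses to
\[
G(u_{i-1}^n)-G(w_i^n+a)\ \ge\ \tfrac12\,\big(f(u_{i-1}^n)-f(w_i^n+a)\big)\,\big(u_{i-1}^n+w_i^n+a\big),
\]
which, after the integration by parts $G(b)-G(a)=bf(b)-af(a)-\int_a^b f$, is exactly the trapezoidal inequality $\int_{w_i^n+a}^{u_{i-1}^n} f\le\tfrac12\big(u_{i-1}^n-w_i^n-a\big)\big(f(w_i^n+a)+f(u_{i-1}^n)\big)$ for the convex function $f$. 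When $\tilde w^n$ stays constant on $K_i^{n,l}$, the right-hand side of \eqref{eq: hisshockK1} is $0$ and only the $u$-entropy estimate is needed.

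The case which I expect to be the main obstacle is the ``fast shock'': then $\tilde u^n$ on $K_i^{n,l}$ is the two-valued function jumping from $u_{i-1}^n$ to $u_i^n$ across a line of slope $\mu$ and is \emph{not} a solution of $\partial_t u+\partial_x\tilde f_{w_i^n}(u)=0$, so the previous shortcut fails and all terms must be computed by hand: $\tilde u^n(\cdot,t^{n+1})$ equals $u_{i-1}^n$ on an interval of length $\mu\Delta t$ and $u_i^n$ elsewhere, $\tilde w^n(\cdot,t^{n+1})$ equals $u_{i-1}^n-a$, resp.\ $w_i^n$, there, and $|\partial_t\tilde w^n|(K_i^{n,l})=\mu\Delta t\,(u_{i-1}^n-a-w_i^n)$. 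Dividing \eqref{eq: hisshockK1} by $\Delta t$, using the same identity as above and the \RH value of $\mu$ from \eqref{eq: mu}, and --- this is the crux --- estimating the remaining flux integral $\int_{u_i^n}^{u_{i-1}^n} f$ by the trapezoidal inequality applied \emph{separately} on $[u_i^n,w_i^n+a]$ and on $[w_i^n+a,u_{i-1}^n]$ (the single-interval bound being too weak here), the inequality reduces to
\[
I_r\,(I_r+I_l)\,(\mu-\mu_r)\ \ge\ 0,\qquad I_r:=(w_i^n+a)-u_i^n,\quad I_l:=u_{i-1}^n-(w_i^n+a),
\]
which holds because $\mu=\dfrac{2I_l\mu_l+I_r\mu_r}{I_r+2I_l}$ is a convex combination of $\mu_l$ and $\mu_r$ with $\mu_r<\mu_l$, that is, precisely the condition defining the fast-shock regime. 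Once this case is settled, \eqref{eq: hisshockK1} holds throughout, and \eqref{eq: hisshockK2} follows by the mirror argument on $K_i^{n,r}$, replacing Subcase~\ref{sottocaso: ful>fur} by Subcase~\ref{sottocaso: ful<fur} and the upper branch of $\tilde f_{w_i^n}$ in \eqref{eq: ftildaw} by its lower branch.
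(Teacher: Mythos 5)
Your proposal is correct and follows the same overall strategy as the paper: a case analysis on the shock configurations in the half cell, explicit evaluation of the squared-jump integrals and of $a\left|\partial_t\tilde w^n\right|$, and reduction to trapezoidal inequalities $\int_p^q f\le\tfrac12(q-p)(f(p)+f(q))$ for the convex flux, with the fast-shock case closed by the residual sign term proportional to $\mu_l-\mu_r$ (your $I_r(I_r+I_l)(\mu-\mu_r)\ge 0$ is, via $\mu-\mu_r=\tfrac{2I_l}{I_r+2I_l}(\mu_l-\mu_r)$, the same quantity as the paper's $\mathcal I_3$ up to a positive factor). The one genuine difference is organizational: for the configurations without a fast shock you invoke the quadratic entropy pair $\bigl(u^2/2,\,Q\bigr)$ with $Q'=u\,\tilde f'_{w_i^n}$ for the auxiliary law $\partial_t u+\partial_x\tilde f_{w_i^n}(u)=0$, so that the dissipation of the faster ($u$-only) shock is absorbed automatically and only the trapezoidal inequality on $[w_i^n+a,u_{i-1}^n]$ survives after combining $G-Q$ with the $\tilde w^n$ terms and the identity $(u_{i-1}^n-a)^2-(w_i^n)^2+2a(u_{i-1}^n-a-w_i^n)=(u_{i-1}^n)^2-(w_i^n+a)^2$; the paper instead computes each shock's contribution by hand (its \eqref{eq: caso2eq1}--\eqref{eq: caso2eq3}) and ends with two explicit trapezoidal terms. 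Your shortcut is a little cleaner for those cases and makes the ``entropy dissipation only helps'' heuristic precise, while the paper's direct computation has the advantage of treating all three configurations, including the fast shock (where, as you correctly note, the auxiliary conservation law is not satisfied and no shortcut is available), in a uniform way. Both routes rest on the same two ingredients: convexity of $f$ and the fast-shock admissibility condition $\mu_r<\mu_l$.
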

\begin{proof}
    We detail the proof for \eqref{eq: hisshockK1},  the other case being similar. Three configuration can occur: a single shock in $\tilde{u}^n$ with $\tilde{w}^n$ constant; two shocks, one both in $\tilde{u}^n$ and $\tilde{w}^n$ and one only in $\tilde{u}^n$; a ``fast shock" in both $\tilde{u}^n$ and $\tilde{w}^n$; see Case \ref{caso: ul>ur} and in particular Subcase \ref{sottocaso: ful>fur}  in Section \ref{S2}.
\begin{itemize}
    \item If there is only a shock in $u$, then the right hand side of \eqref{eq: hisshockK1} is $0$.  Since the shock joins  the states $u_{i-1}^n$ and $u_i^n$ with speed given by the \RH condition, see Figure \ref{fig: shockcaso1}, the left hand side reads
    \begin{align} - \Delta &t\left( G(u_i^n)-G(\tilde{u}(x_{i-1/2}+,t)) \right) - \frac{1}{2} \int_{x_{i-1/2}}^{x_{i}} \left(\tilde{u}^n(x, t^{n+1})^2- \tilde{u}^n(x, t^{n})^2\right)\,dx
       \nonumber\\ & -\ \frac{1}{2} \int_{x_{i-1/2}}^{x_i} \left(\tilde{w}^n(x, t^{n+1})^2- \tilde{w}^n(x, t^{n})^2\right) \, dx\nonumber\\
      =\  &-\Delta t (G(u_i^n)-G(u_{i-1}^n))-\frac{1}{2}\frac{f(u_{i-1}^n)-f(u_i^n)}{u_{i-1}^n-u_i^n} \Delta t \, ((u_{i-1}^n)^2-(u_i^n)^2) \nonumber\\
    \label{eq: caso1eq1}
    =\ &-\Delta t \left[(G(u_i^n)-G(u_{i-1}^n))+\frac{1}{2}(f(u_{i-1}^n)-f(u_i^n)) \, (u_{i-1}^n + u_i^n)\right].
    \end{align}
    
    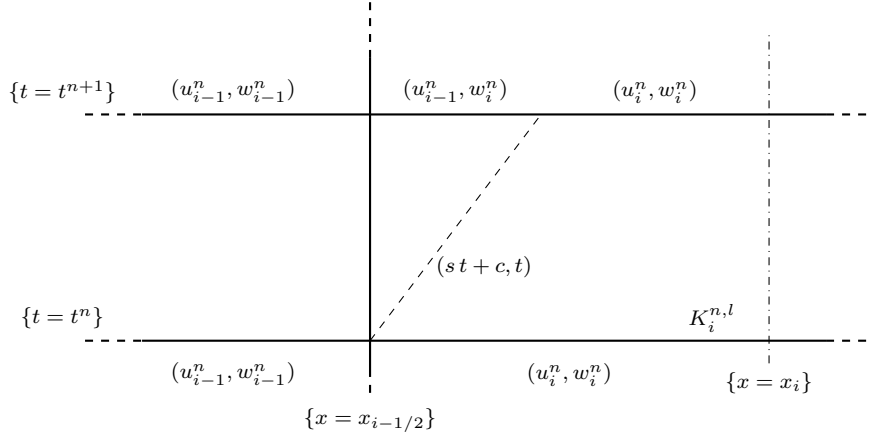
\begin{figure}
    \centering
    \begin{tikzpicture}[scale = 1.5]

      \draw[-,thick] (0,0) -- (4,0);
      \draw[dashed,thick] (4,0) -- (4.5,0);
      
      \draw[-,thick] (0,2) -- (4,2);
      \draw[dashed,thick] (4,2) -- (4.5,2);

      \draw[-,thick] (0,0)--(0,2);

      \draw[-,thick] (-2,0) -- (0,0);
      \draw[dashed,thick] (-2.5,0) -- (-2,0);

      \draw[-,thick] (-2,2) -- (0,2);
      \draw[dashed,thick] (-2.5,2) -- (-2,2);

     
      \node at (-1.2,-0.3) {\scriptsize{$(u_{i-1}^n, w_{i-1}^n)$}};
      \node at (1.75,-0.3) {\scriptsize{$(u_{i}^n, w_{i}^n)$}};

      \node at (-1.2,2.2) {\scriptsize{$(u_{i-1}^n, w_{i-1}^n)$}};
      \node at (0.75,2.2) {\scriptsize{$(u_{i-1}^n, w_{i}^n)$}};
      \node at (2.5,2.2) {\scriptsize{$(u_{i}^n, w_{i}^n)$}};

      \draw[-,thick] (0,2) -- (0,2.5);
      \draw[dashed,thick] (0,2.5) -- (0,3);

      \draw[-,thick] (0,0) -- (0,-0.25);
      \draw[dashed,thick] (0,-0.25) -- (0,-0.5);
      
      \draw[dash dot] (3.5,-0.2)--(3.5,2.7);

      \draw[dashed] (0,0)--(1.5,2);
      \node at (1,0.65) {\scriptsize{$(s\,t+c,t)$}};
      
      \node at (-2.7,0.2) {\scriptsize{$\{t=t^n\}$}};
      \node at (-2.7,2.2) {\scriptsize{$\{t=t^{n+1}\}$}};

      \node at (3.5,-0.37) {\scriptsize{$\{x=x_{i}\}$}};
      \node at (0,-0.7) {\scriptsize{$\{x=x_{i-1/2}\}$}};
      
      \node at (3,0.2) {\scriptsize{$K_{i}^{n,l}$}};
    
    \end{tikzpicture}
    
    \caption{The exact solution in the half-cell $K_i^{n,l}$ in the case only a shock in $\tilde{u}^n$ is present (here represented by the dashed line) and it has speed $s$ given by the  \RH condition.  }
    \label{fig: shockcaso1}
    \end{figure}

    Replacing $G$ by its definition and doing some computations we get 
    \begin{align*}
    [\eqref{eq: caso1eq1}] &=
     -\Delta t \left[ u_i^n f(u_i^n)-u_{i-1}^n f(u_{i-1}^n) + \int_{u_{i}^n}^{u_{i-1}^n} f(\xi)\,d\xi+\frac{1}{2}(f(u_{i-1}^n)-f(u_i^n)) \, (u_{i-1}^n + u_i^n)\right]\\
     &= \Delta t \left[\frac{1}{2}(f(u_{i-1}^n)+f(u_i^n)) \, (u_{i-1}^n - u_i^n) - \int_{u_{i}^n}^{u_{i-1}^n} f(\xi)\,d\xi\right]
     \geq 0
     \end{align*}
    since $u_{i-1}^n> u_{i}^n$ (entropic shock) and $f$ is convex. 
    
\item    If there are two shocks, one in $\tilde{u}^n$ only, connecting $u_i^n$ to $w_i^n+a$, and the other in both $\tilde{u}^n$ and $\tilde{w}^n$, which connects respectively $w_i^n+a$ to $u_i^n$ and $w_i^n$ to $u_{i-1}^n-a$, both with slope given by the \RH condition (see Figure \ref{fig: shockcaso2}), then
    \begin{equation}\label{eq: caso2eq1} 
    \begin{split}
     \int_{x_{i-1/2}}^{x_{i}} &\left(\tilde{u}^n(x, t^{n+1})^2- \tilde{u}^n(x, t^{n})^2\right)\,dx\\
    &= \Delta x_1 \left((u_{i-1}^n)^2-(u_i^n)^2\right) + (\Delta x_2-\Delta x_1) \left((w_i^n+a)^2-(u_i^n)^2\right) \\ 
    &=  \Delta x_1 \left((u_{i-1}^n)^2-(w_i^n+a)^2\right)+  \Delta x_2 \left((w_i^n+a)^2-(u_i^n)^2\right)\\
    &= \frac{1}{2} \Delta t \, \frac{f(u_{i-1}^n)-f(w_i^n+a)}{u_{i-1}^n-(w_i^n+a)} \left((u_{i-1}^n)^2-(w_i^n+a)^2\right) \\
    &~~~~~+\Delta t\, \frac{f(w_i^n+a)-f(u_i^n)}{(w_{i}^n+a)-u_i^n}  \left((w_i^n+a)^2-(u_i^n)^2\right)\\
    & =\frac{1}{2}\Delta t \left(f(u_{i-1}^n)-f(w_i^n+a)\right) (u_{i-1}^n+w_i^n+a) 
    \\&~~~~~+\Delta t \left(f(w_i^n+a)-f(u_i^n)\right) (w_i^n+a+u_i^n)
    \end{split}
    \end{equation}
    and 
    \begin{align}
     \int_{x_{i-1/2}}^{x_{i}} &\left(\tilde{w}^n(x, t^{n+1})^2- \tilde{w}^n(x, t^{n})^2\right)\,dx \notag \\
     &= \frac{1}{2} \Delta t \, \frac{f(u_{i-1}^n)-f(w_i^n+a)}{(u_{i-1}^n)-(w_i^n+a)}\left((u_{i-1}^n-a)^2-(w_i^n)^2\right) \notag\\
     &=  \frac{1}{2} \Delta t  \left(f(u_{i-1}^n)-f(w_i^n+a)\right)(u_{i-1}^n-a+w_i^n) \notag \\
     &=  \frac{1}{2}\Delta t \left(f(u_{i-1}^n)-f(w_i^n+a)\right)  (u_{i-1}^n+a+w_i^n) -a\Delta t\left(f(u_{i-1}^n)-f(w_i^n+a)\right) \label{eq: caso2eq2} 
    \end{align}
    and
    \begin{equation}\label{eq: caso2eq3}
    \begin{split}
    a \left|\partial_t \tilde{w}^n\right|\left( K_i^{n,l}\right) &= a\Delta t \,\frac{1}{2}\frac{f(u_{i-1}^n)-f(w_i^n+a)}{u_{i-1}^n-(w_i^n+a)}  \left|(u_{i-1}^n-a)-w_i^n\right|\\
    & =\, \frac{1}{2} a\Delta t \left(f(u_{i-1}^n)-f(w_i^n+a)\right) .
    \end{split}
    \end{equation}

    \begin{figure}
    \centering
    \begin{tikzpicture}[scale = 2]

      \draw[-,thick] (0,0) -- (4,0);
      \draw[dashed,thick] (4,0) -- (4.5,0);
      
      \draw[-,thick] (0,2) -- (4,2);
      \draw[dashed,thick] (4,2) -- (4.5,2);

      \draw[-,thick] (0,0)--(0,2);

      \draw[-,thick] (-2,0) -- (0,0);
      \draw[dashed,thick] (-2.5,0) -- (-2,0);

      \draw[-,thick] (-2,2) -- (0,2);
      \draw[dashed,thick] (-2.5,2) -- (-2,2);

     
      \node at (-1.2,-0.3) {\scriptsize{$(u_{i-1}^n, w_{i-1}^n)$}};
      \node at (1.75,-0.3) {\scriptsize{$(u_{i}^n, w_{i}^n)$}};

      \node at (-1.2,2.2) {\scriptsize{$(u_{i-1}^n, w_{i-1}^n)$}};
      \node at (0.6,2.2) {\scriptsize{$(u_{i-1}^n, u_{i-1}^n-a)$}};
      \node at (2,2.2) {\scriptsize{$(w_{i}^n+a, w_{i}^n)$}};
      \node at (3.1,2.2) {\scriptsize{$(u_{i}^n, w_{i}^n)$}};

      \draw[-,thick] (0,2) -- (0,2.5);
      \draw[dashed,thick] (0,2.5) -- (0,3);

      \draw[-,thick] (0,0) -- (0,-0.25);
      \draw[dashed,thick] (0,-0.25) -- (0,-0.5);
      
      \draw[dash dot] (3.5,-0.2)--(3.5,2.7);

      \draw[dashed] (0,0)--(1.4,2);
      
      \draw[dashed] (0,0)--(2.7,2);


      \draw[<->, thin] (0,1.9)--(1.4,1.9) node[below, midway] {\scriptsize{$\Delta x_1$}};
      \draw[<->, thin] (0,1.6)--(2.7,1.6) node[below, midway] {\scriptsize{$\Delta x_2$}};

      \draw[thin] (1.4,2)--(1.4,1.9);
      \draw[thin] (2.7,2)--(2.7,1.6);

      \node at (-2.7,0.2) {\scriptsize{$\{t=t^n\}$}};
      \node at (-2.7,2.2) {\scriptsize{$\{t=t^{n+1}\}$}};

      \node at (3.5,-0.37) {\scriptsize{$\{x=x_{i}\}$}};
      \node at (0,-0.7) {\scriptsize{$\{x=x_{i-1/2}\}$}};
      
      \node at (3,0.2) {\scriptsize{$K_{i}^{n,l}$}};
    
    \end{tikzpicture}
    
    \caption{The exact solution in the semi-cell $K_i^{n,l}$ in the case when we have two shocks.}
    \label{fig: shockcaso2}
    \end{figure}
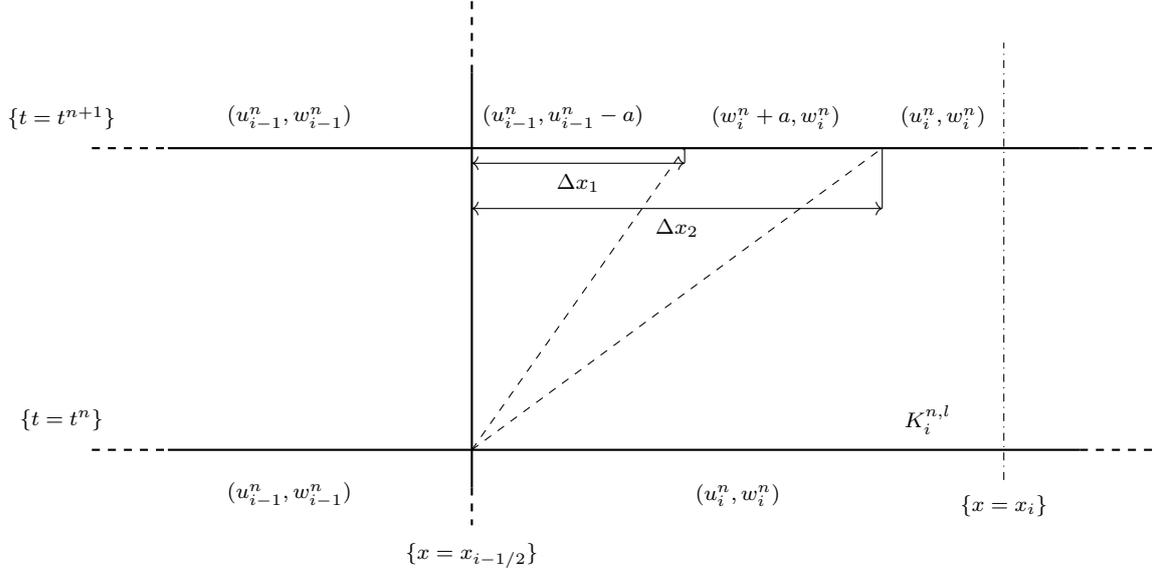

    Replacing \eqref{eq: caso2eq1}, \eqref{eq: caso2eq2} and \eqref{eq: caso2eq3} in \eqref{eq: hisshockK1}, it becomes 
    \[
    \begin{split}
    -\Delta t \bigg[&(G(u_i^n)-G(u_{i-1}^n))+\frac{1}{2}\left(f(u_{i-1}^n)-f(w_i^n+a)\right)  (u_{i-1}^n + w_i^n+a) \\  & +\frac{1}{2}\left(f(w_i^n+a)-f(u_i^n)\right) (w_i^n+a+u_i^n) \bigg]\geq 0,
    \end{split}
    \] 
    which can be rewritten as 
    \[
    \begin{split}
    -\Delta t \bigg[&\int_{w_i^n+a}^{u^n_{i-1}} f(\xi)\,d\xi-\frac{1}{2}\left(f(u_{i-1}^n)+f(w_i^n+a)\right) \left(u_{i-1}^n - (w_i^n+a)\right)\\ 
    &+ \int_{u_{i}^n}^{w_i^n+a} f(\xi)\,d\xi-\frac{1}{2}\left(f(w_i^n+a)+f(u_i^n)\right) (w_i^n+a - u_i^n)\bigg] \geq 0.
     \end{split}
    \]
    As before, since $u_{i-1}^n\geq w_i^n+a \geq u_i^n$ and $f$ is convex, this last inequality is satisfied, inferring the desired conclusion.

\item    If a ``fast shock" is present, connecting $(u_i^n,w_i^n)$ to $(u_{i-1}^n,u_{i-1}^n-a)$ with slope given by \RH condition, we get
    \begin{equation}\label{eq: fsu}
    \begin{split}
      \int_{x_{i-1/2}}^{x_{i}} &\left(\tilde{u}^n(x, t^{n+1})^2- \tilde{u}^n(x, t^{n})^2\right) \, dx\\
    & =\Delta t \,\frac{f(u_{i-1}^n)-f(u_i^n)}{u_{i-1}^n-u_i^n+(u_{i-1}^n-a)-w_i^{n}}  \left((u_{i-1}^n)^2-(u_i^n)^2\right)\\
     & = \Delta t \, \frac{f(u_{i-1}^n)-f(u_i^n)}{u_{i-1}^n-u_i^n+(u_{i-1}^n-a)-w_i^{n}}  \left((u_{i-1}^n)^2-(w_i^n+a)^2+(w_i^n+a)^2-(u_i^n)^2\right)
     \\
     & = \Delta t \left(f(u_{i-1}^n)-f(u_i^n)\right) (u_{i-1}^n+w_i^n+a) \frac{u_{i-1}^n-(w_i^n+a)}{u_{i-1}^n-u_i^n+(u_{i-1}^n-a)-w_i^{n}} 
     \\
     &~~~~~+\Delta t \left(f(u_{i-1}^n)-f(u_i^n)\right) (w_i^n+a+u_i^n)\frac{(w_i^n+a)-u_i^n}{u_{i-1}^n-u_i^n+(u_{i-1}^n-a)-w_i^{n}} 
    \\
     & = \Delta t \left(f(u_{i-1}^n)-f(u_i^n)\right) (u_{i-1}^n+w_i^n+a) \frac{I_l}{I_r+2I_l} 
     \\& ~~~~~+ \Delta t \left(f(u_{i-1}^n)-f(u_i^n)\right) (w_i^n+a+u_i^n) \frac{I_r}{I_r+2I_l} 
    \end{split}
    \end{equation} 
    and 
    \begin{equation}\label{eq: fsw}
    \begin{split}
    \int_{x_{i-1/2}}^{x_{i}} &\left(\tilde{w}^n(x, t^{n+1})^2- \tilde{w}^n(x, t^{n})^2\right)\,dx\\
     & =\Delta t \,\frac{f(u_{i-1}^n)-f(u_i^n)}{u_{i-1}^n-u_i^n+(u_{i-1}^n-a)-w_i^{n}}  \left((u_{i-1}^n-a)^2-(w_i^n)^2\right)\\
    & = \Delta t \, \left(f(u_{i-1}^n)-f(u_i^n)\right) (u_{i-1}^n-a+w_i^n) \frac{(u_{i-1}^n-a)-w_i^n}{u_{i-1}^n-u_i^n+(u_{i-1}^n-a)-w_i^{n}}\\
     &= \Delta t \left(f(u_{i-1}^n)-f(u_i^n)\right) (u_{i-1}^n+w_i^n+a) \frac{I_l}{I_r+2I_l} 
     \\ &~~~~~-a \Delta t \left(f(u_{i-1}^n)-f(u_i^n)\right)  \frac{(u_{i-1}^n-a)-w_i^n}{u_{i-1}^n-u_i^n+(u_{i-1}^n-a)-w_i^{n}}
    \end{split}
    \end{equation} 
    and
    \begin{equation}\label{eq: fswar}
    \begin{split}
    a \left|\partial_t \tilde{w}^n\right|\left( K_i^{n,l}\right) &= a\, \Delta t \,\frac{f(u_{i-1}^n)-f(u_i^n)}{u_{i-1}^n-u_i^n+(u_{i-1}^n-a)-w_i^{n}} \left|u_{i-1}^n-a-w_i^n\right|\\
    & =a\, \Delta t \left(f(u_{i-1}^n)-f(u_i^n)\right)  \frac{(u_{i-1}^n-a)-w_i^n}{u_{i-1}^n-u_i^n+(u_{i-1}^n-a)-w_i^{n}}.
    \end{split}
    \end{equation}
    Above, we use the same notation $I_r=(w_r+a)-u_r$, $I_l = u_l-(w_r+a)$ introduced in Subcase \ref{sottocaso: ful>fur}. We also define 
    \[
    c_1 = \frac{I_r}{I_r+2I_l} \quad c_2= \frac{2I_l}{I_r+2I_l}
    \] 
    and notice that $c_1+c_2=1.$\par
    By\eqref{eq: fsu}, \eqref{eq: fsw} and \eqref{eq: fswar}, we get
    \begin{align}
       &- \frac{1}{2} \int_{x_{i-1/2}}^{x_{i}} \left(\tilde{u}^n(x, t^{n+1})^2- \tilde{u}^n(x, t^{n})^2\right)\,dx \nonumber\\
        &-\ \frac{1}{2} \int_{x_{i-1/2}}^{x_i} \left(\tilde{w}^n(x, t^{n+1})^2- \tilde{w}^n(x, t^{n})^2\right) \, dx -  a \left|\partial_t \tilde{w}^n\right|\left( K_i^{n,l}\right) \nonumber\\ 
   =\,&- \frac{1}{2} \Delta t \left(f(u_{i-1}^n)-f(u_i^n)\right) (u_{i-1}^n+w_i^n+a) c_2 \nonumber\\&- \frac{1}{2} \Delta t \left(f(u_{i-1}^n)-f(u_i^n)\right) (w_i^n+a+u_{i}^n) c_1 \nonumber\\
   =\,& - \frac{1}{2} \Delta t \left(f(u_{i-1}^n)-f(w_i^n+a)\right) (u_{i-1}^n+w_i^n+a) c_2 \nonumber\\&- \frac{1}{2} \Delta t \left(f(w_i^n+a)-f(u_i^n)\right) (u_{i-1}^n+w_i^n+a) c_2
         \nonumber\\
         &- \frac{1}{2} \Delta t \left(f(u_{i-1}^n)-f(w_i^n+a)\right) (w_i^n+a+u_i^n) c_1 \nonumber\\&- \frac{1}{2} \Delta t \left(f(w_i^n+a)-f(u_i^n)\right) (w_i^n+a+u_i^n) c_1\,.
         \label{eq: c1c2}
    \end{align}
Besides, we can develop
    \begin{equation}\label{eq: GG} 
    \begin{split}   
     - &\Delta t \left[G(u_i^n)-G(u_{i-1}^n)\right] \\&= - \Delta t \left[f(u_{i}^n)u_i^n-f(u_{i-1}^n)u_{i-1}^n+\int_{u_i^n}^{u_{i-1}^n} f(\xi)\, d\xi\right] 
     \\ &= - \Delta t \left[c_1 f(u_{i}^n)u_i^n+ c_2f(u_{i}^n)u_i^n -c_1f(u_{i-1}^n)u_{i-1}^n-c_2f(u_{i-1}^n)u_{i-1}^n+\int_{u_i^n}^{u_{i-1}^n} f(\xi)\, d\xi\right]
    \end{split}
    \end{equation}
    Now, summing the terms involving $c_2$ in the expressions above, we get
    \[
    \begin{split}
        &- \frac{c_2}{2} \left(f(u_{i-1}^n)-f(w_i^n+a)\right) (u_{i-1}^n+w_i^n+a) \\&- \frac{c_2}{2} \left(f(w_i^n+a)-f(u_i^n)\right) (u_{i-1}^n+w_i^n+a) 
        \\&- c_2f(u_{i}^n)u_i^n +c_2f(u_{i-1}^n)u_{i-1}^n
        \\ =\ &  \frac{c_2}{2} \left(f(u_{i-1}^n)+f(w_i^n+a)\right)\left(u_{i-1}^n - (w_i^n+a)\right) \\&+ \frac{c_2}{2} \left(f(u_{i}^n)+f(w_i^n+a)\right)\left( (w_i^n+a)-u_{i}^n\right) 
        \\&- \frac{c_2}{2}\left(f(w_i^n+a)-f(u_i^n)\right) (u_{i-1}^n-u_i^n)
    \end{split}
    \]
    and,  summing the terms involving $c_1$,
    \[
    \begin{split}
        &- \frac{c_1}{2} \left(f(u_{i-1}^n)-f(w_i^n+a)\right) (w_i^n+a+u_i^n) \\&- \frac{c_1}{2} \left(f(w_i^n+a)-f(u_i^n)\right) (w_i^n+a+u_i^n)  \\& - c_1f(u_{i}^n)u_i^n +c_1f(u_{i-1}^n)u_{i-1}^n
        \\ =\ & \frac{c_1}{2} \left(f(u_{i-1}^n)+f(w_i^n+a)\right)\left(u_{i-1}^n - (w_i^n+a)\right) \\&+ \frac{c_1}{2} \left(f(u_{i}^n)+f(w_i^n+a)\right)\left( (w_i^n+a)-u_{i}^n\right) 
        \\ & + \frac{c_1}{2}\left(f(u_{i-1}^n)-f(w_i^n+a)\right) (u_{i-1}^n-u_i^n)\,.
    \end{split}
    \]
    Therefore, summing \eqref{eq: c1c2} with \eqref{eq: GG} and recalling that $c_1+c_2=1,$ we obtain $\mathcal{I}_1+\mathcal{I}_2+\mathcal{I}_3$ with
    \[\mathcal{I}_1 :=
   \Delta t \left[ \frac{1}{2} \left(f(u_{i-1}^n)+f(w_i^n+a)\right)\left(u_{i-1}^n - (w_i^n+a)\right) - \int_{w_i^n+a}^{u_{i-1}^n }f(\xi)\,d\xi\right] \geq 0
    \]
    and 
    \[ \mathcal{I}_2 :=
     \Delta t\left[\frac{1}{2} \left(f(u_{i}^n)+f(w_i^n+a)\right)\left( (w_i^n+a)-u_{i}^n\right)-\int^{w_i^n+a}_{u_{i}^n }f(\xi)\,d\xi\right] \geq 0
    \]
   by convexity of $f$. The third term is
    \begin{align*}
    \mathcal{I}_3 := \ &
        - \frac{c_2}{2}\left(f(w_i^n+a)-f(u_i^n)\right) (u_{i-1}^n-u_i^n)+ \frac{c_1}{2}\left(f(u_{i-1}^n)-f(w_i^n+a)\right) (u_{i-1}^n-u_i^n)\\
       =\ & \frac{I_rI_l}{I_r+2I_l} (u_{i-1}^n-u_i^n)(\mu_l-\mu_r)\geq 0
    \end{align*}
    since $u_{i-1}^n>u_i^n$ and $\mu_l>\mu_r$ as required by the entropy condition. 
    \end{itemize}
    This concludes the proof of \eqref{eq: hisshockK1}.
\end{proof}

The previous Lemma allows to prove the following result.
\begin{prop}\label{prop: whisdiscr}
    Let $u_0,w_0\in \L1(\R)\cap \BV(\R)$, such that $|u_0(x)-w_0(x)| \leq a$, and $\Delta x, \Delta t>0$ satisfying the CFL condition \eqref{eq: CFL}. Let $(u^\Delta,w^\Delta )$ be the approximate solutions defined by \eqref{eq: schema2}, \eqref{eq: approxsol}, then this couple satisfies the weak hysteresis relationship \eqref{eq: genweakhis} for almost every $t\in [0,T[$.
\end{prop}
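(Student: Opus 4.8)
The plan is to establish \eqref{eq: genweakhis} at the discrete level, cell by cell, and then let the grid do the telescoping. Fix $N\in\N$ and take $t=t^N$ (a general $t\in\,]0,T[$ is handled by the same computation stopped at the last grid time before $t$). The first step is a \emph{local} hysteresis inequality on each space--time cell $K_i^n$, obtained by treating separately the two half-cells $K_i^{n,l}$ and $K_i^{n,r}$. On a half-cell that contains an entropic shock, inequalities \eqref{eq: hisshockK1}--\eqref{eq: hisshockK2} of Lemma \ref{lemma: weakhisshock} apply directly. On a half-cell with no shock, the restriction of $(\tilde u^n,\tilde w^n)$ there is, by Section \ref{S2}, a (possibly trivial, possibly multiple) rarefaction for $u$ together with a $w$ that is continuous in $x$ away from the interface; hence $\tilde u^n(x,\cdot),\tilde w^n(x,\cdot)\in\W{1,1}$ in time for a.e.\ $x$, the strong hysteresis relation $(\tilde u^n-\tilde w^n)\,\partial_t\tilde w^n=a\,|\partial_t\tilde w^n|$ holds a.e., and integrating it over the half-cell, substituting $\partial_t\tilde w^n=-\partial_t\tilde u^n-\partial_x f(\tilde u^n)$ from the PDE and using $\partial_x G(\tilde u^n)=\tilde u^n\,\partial_x f(\tilde u^n)$, yields the very same inequality (here with equality). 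Adding the $K_i^{n,l}$ and $K_i^{n,r}$ contributions, the two $G(u_i^n)$-terms at $x_i$ cancel (recall $\tilde u^n(x_i,t)\equiv u_i^n$ by \eqref{eq: CFL}), and we are left with
\begin{multline*}
\tfrac12\int_{K_i}\!\big(\tilde u^n(x,t^{n+1})^2-(u_i^n)^2\big)\,dx+\tfrac12\int_{K_i}\!\big(\tilde w^n(x,t^{n+1})^2-(w_i^n)^2\big)\,dx\\
\le\;-\,a\,|\partial_t\tilde w^n|(K_i^n)-\Delta t\,\big(G(\tilde u^n(x_{i+1/2}-))-G(\tilde u^n(x_{i-1/2}+))\big).
\end{multline*}

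Next, convexity handles the cell averaging. Since $u_i^{n+1}$ and $w_i^{n+1}$ are the spatial means of $\tilde u^n(\cdot,t^{n+1})$ and $\tilde w^n(\cdot,t^{n+1})$ on $K_i$, Jensen's inequality gives $(u_i^{n+1})^2\Delta x\le\int_{K_i}\tilde u^n(x,t^{n+1})^2\,dx$ and the same for $w$; inserting this in the displayed local inequality and summing over $i\in\mathbb{Z}$ and $n=0,\dots,N-1$, the left-hand side telescopes to $\tfrac12\|u^\Delta(\cdot,t^N)\|_{\L2}^2+\tfrac12\|w^\Delta(\cdot,t^N)\|_{\L2}^2-\tfrac12\|u^\Delta(\cdot,0)\|_{\L2}^2-\tfrac12\|w^\Delta(\cdot,0)\|_{\L2}^2$. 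One more use of Jensen on the initial projection, $\|u^\Delta(\cdot,0)\|_{\L2}\le\|u_0\|_{\L2}$ and $\|w^\Delta(\cdot,0)\|_{\L2}\le\|w_0\|_{\L2}$ (all finite, since $\BV(\R)\cap\L1(\R)\subset\L2(\R)$), bounds this below by $\tfrac12\int_\R(u^\Delta(x,t^N)^2-u_0(x)^2)\,dx+\tfrac12\int_\R(w^\Delta(x,t^N)^2-w_0(x)^2)\,dx$, i.e.\ by the left-hand side of \eqref{eq: genweakhis}.

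It then remains to bound the summed right-hand side above by $-a\,|\partial_t w^\Delta|(\R\times\,]0,t^N[)$. For the hysteresis part, the time total variation over a cell dominates the jump of the cell average, $\Delta x\,|w_i^{n+1}-w_i^n|\le\int_{K_i}|\tilde w^n(x,t^{n+1})-w_i^n|\,dx\le|\partial_t\tilde w^n|(K_i^n)$, and since $w^\Delta$ is piecewise constant in time and jumps only at the grid times, $-a\sum_{n,i}|\partial_t\tilde w^n|(K_i^n)\le-a\sum_{n=0}^{N-1}\Delta x\sum_i|w_i^{n+1}-w_i^n|\le-a\,|\partial_t w^\Delta|(\R\times\,]0,t^N[)$. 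For the flux part, summing over $i$ telescopes $-\Delta t\sum_i\big(G(\tilde u^n(x_{i+1/2}-))-G(\tilde u^n(x_{i-1/2}+))\big)$ into $-\Delta t\sum_i\big(G(\tilde u^n(x_{i+1/2}-))-G(\tilde u^n(x_{i+1/2}+))\big)$, which vanishes at every interface except where $\tilde u^n$ has a standing shock, i.e.\ $u_i^n>u_{i+1}^n$ with $f(u_i^n)=f(u_{i+1}^n)$ (Subcase \ref{sottocaso: ful=fur}); there $G(u_i^n)-G(u_{i+1}^n)=f(u_i^n)(u_i^n-u_{i+1}^n)-\int_{u_{i+1}^n}^{u_i^n}f(\xi)\,d\xi\ge0$ by convexity of $f$, so the flux contribution is $\le0$. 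Chaining these estimates gives \eqref{eq: genweakhis} at $t=t^N$, and running the same computation up to the relevant step gives it for every $t\in\,]0,T[$.

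The main obstacle is the uniform validity of the local inequality: one must be certain that an open half-cell is either free of $u$-shocks (so the smooth computation applies and the hysteresis relation is classical) or contains exactly the shock configurations already analysed in Subcases \ref{sottocaso: ful>fur}--\ref{sottocaso: ful<fur}, so that Lemma \ref{lemma: weakhisshock} covers it; in particular that the only standing discontinuities of $\tilde u^n$ lie on the cell interfaces $x_{i\pm 1/2}$. The second delicate point, where convexity of $f$ is really used, is the sign of the flux remainder $G(\tilde u^n(x_{i+1/2}-))-G(\tilde u^n(x_{i+1/2}+))$: although the numerical flux is continuous across interfaces, in the sense that $f(\tilde u^n(x_{i+1/2}-))=f(\tilde u^n(x_{i+1/2}+))$ by the Rankine--Hugoniot condition, the quantity $G$ is not, and this ``entropy production at standing shocks'' must be shown to have the favourable sign --- which is exactly the convexity estimate above.
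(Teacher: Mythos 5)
Your proposal is correct and follows essentially the same route as the paper's proof: a local cell inequality obtained from the strong hysteresis relation plus the PDE on shock-free half-cells and from Lemma \ref{lemma: weakhisshock} on half-cells with entropic shocks, Jensen's inequality to pass to the cell averages, and telescoping in space and time with the residual $G$-flux terms at standing shocks controlled by convexity of $f$. The only (welcome) addition is your explicit Jensen step on the initial projection to compare $\|u^\Delta(\cdot,0)\|_{\L2}$ with $\|u_0\|_{\L2}$, which the paper leaves implicit.
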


\begin{proof}
    Consider a cell $K_i^n$ and the exact solutions $\tilde{u}^n(x,t), \tilde{w}^n(x,t)$ defined on $K_i^n$ with data at time $t^n$ equal to $u_i^n,w_i^n$. We know that, for each $x\in\, ]x_{i-1/2},x_{i+1/2}[$, the relationship $\tilde{w}^n(x,t)= \F[\tilde{u}(x,\cdot),w_i^n](t)$ holds, so, by integrating it on $K_i$, we get, by Proposition \ref{prop: whis}
    \begin{equation}\label{eq: hiscella}
        \int_{x_{i-1/2}}^{x_{i+1/2}}\int_{t^n}^{t^{n+1}} \left(\tilde{u}^n(x,t+)- \tilde{w}^n(x,t+)\right) \, d \left(\partial_t \tilde{w}^n(x,\cdot)\right)\, dx \geq a \left|\partial_t \tilde{w}^n\right|( K_i^n).
    \end{equation}
    As usual, we split $K_i^n$ into the two subcells $K_i^{n,l}$ and $K_i^{n,r}$. If in $K_i^{n,l}$ there are no jump discontinuities, then $\tilde{u}^n$ and $\tilde{w}^n$ are Lipschitz continuous and they satisfy the PDE strongly almost everywhere in $K_i^{n,l}$. In particular, $\partial_t \tilde{w}^n(x,\cdot)$ is not only a measure but a function for a. e. $x$, so using the strong formulation of the PDE, the left hand side of \eqref{eq: hiscella} restricted to $K_i^{n,l}$ writes as follow
    \begin{equation}\label{eq: rarek1}
    \begin{split}
       &\int_{x_{i-1/2}}^{x_{i}}\int_{t^n}^{t^{n+1}} \left(\tilde{u}^n(x,t+)- \tilde{w}^n(x,t+)\right) \, d \left(\partial_t \tilde{w}^n(x,\cdot)\right)\, dx \\
       &=  \int_{x_{i-1/2}}^{x_{i}}\int_{t^n}^{t^{n+1}} \left(\tilde{u}^n(x,t)- \tilde{w}^n(x,t)\right)  \, \partial_t \tilde{w}^n \, dt\, dx\\
        &= \int_{x_{i-1/2}}^{x_{i}}
        \int_{t^n}^{t^{n+1}} \left( -\tilde{u}^n \, \partial_t \tilde{u}^n -\tilde{u}^n \, \partial_x f(\tilde{u}^n) \right)\, dt \, dx - \frac{1}{2}\int_{x_{i-1/2}}^{x_{i}} \left(\tilde{w}^n(x, t^{n+1}-)^2- \tilde{w}^n(x, t^{n}+)^2 \right)\, dx \\ 
        &= - \Delta t\left( G(u_i^n)-G(\tilde{u}^n(x_{i-1/2}+,t)) \right) - \frac{1}{2}\int_{x_{i-1/2}}^{x_{i}} \left(\tilde{u}^n(x, t^{n+1}-)^2- \tilde{u}^n(x, t^{n}+)^2)\right)\,dx\,
        \\ & \hspace{1.5cm}- \frac{1}{2}\int_{x_{i-1/2}}^{x_{i}}\left((\tilde{w}^n(x, t^{n+1}-)^2- \tilde{w}^n(x, t^{n}+)^2\right) \, dx,
    \end{split}
    \end{equation}
    where we recall $G(u):=\int_0^u \xi f'(\xi)\,d\xi= u f(u)-\int_0^u f(\xi)\,d\xi$. \\
    Similarly, if there are no shocks in $K_i^{n,r}$, 
    \begin{equation}\label{eq: rarek2}
    \begin{split}
       &\int_{x_{i}}^{x_{i+1/2}}\int_{t^n}^{t^{n+1}} \left(\tilde{u}^n(x,t+)- \tilde{w}^n(x,t+)\right) \, d \left(\partial_t \tilde{w}^n(x,\cdot)\right)\, dx  \\
       &=- \Delta t\left( G(\tilde{u}^n(x_{i+1/2}-,t)) - G(u_i^n)\right) -    \frac{1}{2} \int_{x_{i}}^{x_{i+1/2}} \left(\tilde{u}^n(x, t^{n+1}-)^2- \tilde{u}^n(x, t^{n}+)^2\right)\,dx
       \\ & \hspace{1.5 cm} -\frac{1}{2} \int_{x_{i}}^{x_{i+1/2}}\left(\tilde{w}^n(x, t^{n+1}-)^2- \tilde{w}^n(x, t^{n}+)^2\right) \, dx.
    \end{split}
    \end{equation}
    Combining \eqref{eq: hiscella}, \eqref{eq: rarek1} and \eqref{eq: rarek2} to handle rarefactions, together with the results of Lemma \ref{lemma: weakhisshock} to account for shocks, we generally obtain in $K_i^n$ the following inequality
    \begin{equation}
        \begin{split}
         - \Delta & t\left( G(\tilde{u}^n(x_{i+1/2}-,t))-G(\tilde{u}^n(x_{i-1/2}+,t)) \right) -\frac{1}{2}\int_{x_{i-1/2}}^{x_{i+1/2}} \left(\tilde{u}^n(x, t^{n+1}-)^2- \tilde{u}^n(x, t^{n}+)^2\right)\, dx
         \\ & -\frac{1}{2} \int_{x_{i-1/2}}^{x_{i+1/2}} \left(\tilde{w}^n(x, t^{n+1}-)^2- \tilde{w}^n(x, t^{n}+)^2\right) \, dx
         \geq a \left| \partial_t \tilde{w}^n\right|(K_i^n)
         \end{split}
    \end{equation}
    As usual, recalling that $\tilde{u}^n(x, t^{n})\equiv u_i^n$, $\tilde{w}^n(x, t^{n})\equiv w_i^n$ and by Jensen's inequality, we can infer that 
    \begin{equation}\label{eq: discr1}
        \begin{split}
         - \Delta & t\left( G(\tilde{u}^n(x_{i+1/2}-,t))-G(\tilde{u}^n(x_{i-1/2}+,t)) \right)
         \\& -\frac{1}{2}\int_{x_{i-1/2}}^{x_{i+1/2}} \left((u_i^{n+1})^2- (u_i^n)^2+ (w_i^{n+1})^2- (w_i^n)^2 \right)\, dx
         \geq a \left| \partial_t \tilde{w}^n\right|(K_i^n).
         \end{split}
    \end{equation}
    Moreover, notice that $\tilde{w}^n_i(x,\cdot)$ is monotone in $t$ for $x \in K_i$ and it may only change monotonicity across $\{x_i\} \times [t^n, t^{n+1}[$, which has measure $0$ with respect to ${\partial_t \tilde{w^n}}$. So, by using Jensen's inequality once again, it holds that 
    \[
    \begin{split}
         a \left| \partial_t \tilde{w}^n\right|(K_i) &= a \int_{x_{i-1/2}}^{x_{i+1/2}} |\tilde{w}^n(x,t^{n+1})-\tilde{w}^n(x,t^{n})| \,dx
         \\ & = a \int_{x_{i-1/2}}^{x_{i+1/2}} |\tilde{w}^n(x,t^{n+1})-w_i^n| \,dx
         \\ & \geq  a \int_{x_{i-1/2}}^{x_{i+1/2}} |w_{i}^{n+1}-w_i^n| \,dx.
    \end{split}
    \]
    Using this last inequality and summing \eqref{eq: discr1} over $i\in \mathbb{Z}$ and recalling \eqref{eq: approxsol}, we finally get
    \begin{equation}\label{eq: discr2}
        \begin{split}
       - \Delta t & \sum_{i\in \mathbb{Z}}
         \left( G(\tilde{u}^n(x_{i+1/2}-,t))-G(\tilde{u}^n(x_{i-1/2}+,t)) \right) -\frac{1}{2}\int_{\R} \left(u^\Delta (x,t^{n+1})^2- u^\Delta (x,t^{n})^2\right)\,dx
         \\ &-\frac{1}{2} \int_{\R}\left(w^\Delta )(x,t^{n+1})^2- w^\Delta (x,t^{n})^2 \right)\, dx \geq a \int_\R |w^\Delta (x,t^{n+1})- w^\Delta (x,t^{n})|\,dx.
         \end{split}
    \end{equation}
    Now, if $\tilde{u}^n
    (x_{i+1/2}-,t) = \tilde{u}^n(x_{i+1/2}+,t)$ for some $i\in \mathbb{Z}$, then they cancel out in the sum. If instead $\tilde{u}^n
    (x_{i+1/2}-,t) \not= \tilde{u}^n(x_{i+1/2}+,t)$, then there is a stationary shock in $u$ at $x_{i+1/2}$, i.e.
    $\tilde{u}^n(x_{i+1/2}-,t) = u_i^n$ and $\tilde{u}^n(x_{i+1/2}+,t) = u_{i+1}^n$ with $f(u_i^n)=f(u_{i+1}^n)$ and $u_i^n>u_{i+1}^n$.
    (see Subcase \ref{sottocaso: ful=fur}).  In such case 
    \[
    \begin{split}
    -\left(G(\tilde{u}^n(x_{i+1/2}-,t))-G(\tilde{u}^n(x_{i+1/2}+,t))\right) & = -u_i^nf(u_i^n)+u_{i+1}^nf(u_{i+1}^n)+\int_{u_{i+1}^n}^{u_i^n} f(\xi)\,d\xi \\
    &= \int_{u_{i+1}^n}^{u_i^n} f(\xi)\,d\xi  -(u_i^n-u_{i+1}^n)f(u_i^n) \leq 0 
    \end{split}
    \]
    where the last inequality is due to the fact that $f$ is convex and $u_{i}^n> u_{i+1}^n$. \par
    Hence,  $- \Delta t  \sum_{i\in \mathbb{Z}}
         \left( G(\tilde{u}^n(x_{i+1/2}-,t))-G(\tilde{u}^n(x_{i-1/2}+,t)) \right) \leq 0$, so from \eqref{eq: discr2} we finally get 
    \begin{equation}\label{eq: discr3}
        \begin{split}
       -\frac{1}{2}\int_{\R} &\left(u^\Delta (x,t^{n+1})^2- u^\Delta (x,t^{n})^2\right)\,dx
         -\frac{1}{2} \int_{\R}\left(w^\Delta )(x,t^{n+1})^2- w^\Delta (x,t^{n})^2 \right)\, dx \\ &\geq a \int_\R |w^\Delta (x,t^{n+1})- w^\Delta (x,t^{n})|\,dx.
         \end{split}
    \end{equation}
    Let now consider $t\in\, ]0,T[$ such that $t\in [t^N,t^{N+1}[$ for some $N\in \N$. Then, summing \eqref{eq: discr3} over $n=0,\dots, N-1$, we get 
    \begin{equation}
        \begin{split}
        -\frac{1}{2}\int_{\R}& \left(u^\Delta(x,t)^2- u_0(x)^2+ w^\Delta(x,t)^2- w_0(x)^2\right) \, dx
        \\ & \geq a \sum_{n=0}^{N-1} \int_\R \left|w^\Delta (x,t^{n+1})- w^\Delta (x,t^{n})\right|\,dx =  a \left| \partial_t w^\Delta  \right|(\R\times\, ]0,t[).
         \end{split}
    \end{equation}
\end{proof}

\subsection{Convergence of approximate solutions: the existence theorem}
\begin{teo}\label{teo: esistenza}
     Let $T>0,$ $u_0,w_0 \in \BV(\R), |u_0(x)-w_0(x)| \leq a,$ and consider a sequence of meshes $\mathcal T_m$ and time steps $\Delta t_m$ satisfying the CFL condition \eqref{eq: CFL} for every $m\in \N$ and such that $\Delta x_m, \Delta t_m \to 0 $ as $m\to +\infty$. Then the sequence of approximate solutions $\{u_m\}_{m\in{\N}}:=\{u^{\Delta_m}\}_{m\in{\N}}$ and $\{w_m\}_{m\in{\N}}:=\{w^{\Delta_m}\}_{m\in{\N}}$ defined by the scheme \eqref{eq: schema2} and by \eqref{eq: approxsol}  converges in $\Lloc1(\R \times [0,T[)$ to $u$ and $w$ respectively, where $(u,w)$ is an entropy weak solution to \eqref{eq: intro2} according to Definition \ref{def: hweaksol}.
 \end{teo}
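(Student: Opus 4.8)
The plan is to follow the classical Lax--Friedrichs/Lax--Wendroff convergence argument for monotone finite volume schemes, extracting a limit by compactness and then verifying, one by one, the three requirements of Definition~\ref{def: hweaksol}; the genuinely new point is the passage to the limit in the weak hysteresis relation \eqref{eq: genweakhis}.

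\textbf{Step 1: compactness.} By Proposition~\ref{prop: linfty} the families $\{u_m\}$, $\{w_m\}$ are bounded in $\L\infty(\R\times[0,T[)$, by Proposition~\ref{prop: stimaBV} they are bounded in $\BV(\R\times[0,T[)$, and by Lemma~\ref{cor: contl1} they are $\L1$-equicontinuous in time uniformly in $m$. The compact embedding $\BV\hookrightarrow\Lloc1$ (Helly's theorem) plus a diagonal argument yield a subsequence, not relabelled, with $u_m\to u$, $w_m\to w$ in $\Lloc1(\R\times[0,T[)$ and a.e., and $u,w\in\C0([0,T[;\Lloc1(\R))$ inheriting the same $\L\infty$ bounds. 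Moreover, evaluating the discrete inequality behind Proposition~\ref{prop: whisdiscr} at times $t^n$ gives $\int_\R (u^{\Delta_m})^2(x,t)\,dx+\int_\R (w^{\Delta_m})^2(x,t)\,dx\le \int_\R u_0^2+\int_\R w_0^2$ uniformly in $m$ and $t$; hence by Fatou $u(\cdot,t),w(\cdot,t)\in\L2(\R)$ for a.e.\ $t$, as required in Definition~\ref{def: hweaksol}~iii).

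\textbf{Step 2: Kru\v zkov inequality.} Fix $(k,\hat k)\in\LM$ and $\phi\in\Cc1(\R\times[0,T[;\R^+)$. Multiply the discrete entropy inequality \eqref{eq: discrEntropy} by $\Delta x\,\phi(x_i,t^n)$, sum over $i\in\mathbb Z$ and $n\ge 0$, and apply discrete summation by parts. Using the consistency $g(k,k)=f(k)$ and the $L$-Lipschitz continuity of the Godunov flux $g$ in \eqref{eq: gGodunov}, the finite speed of propagation built into the CFL condition \eqref{eq: CFL}, the $\Lloc1$ convergence of $u_m,w_m$ and dominated convergence, each discrete term converges to the matching term of \eqref{eq: hweaksol}; the initial contribution comes from $u_i^0\to u_0$, $w_i^0\to w_0$ in $\L1$ together with $\Delta t_m\to 0$. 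The $\hat k$-terms pass to the limit for free, since $|w-\hat k|$ enters only through time differences. This step is standard, cf.\ the scalar case in \cite[Section~5.3]{FVM}.

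\textbf{Step 3: the hysteresis relation (main obstacle).} From \eqref{eq: comp} one has $|u^{\Delta_m}-w^{\Delta_m}|\le a$ a.e., so the a.e.\ limit gives \eqref{eq: dishis}. For \eqref{eq: genweakhis}, fix $t\in\,]0,T[$ that is a continuity point of $s\mapsto(u(\cdot,s),w(\cdot,s))$ in $\Lloc1$ (a.e.\ $t$) and pass to the limit in the inequality of Proposition~\ref{prop: whisdiscr}, namely $\tfrac12\int_\R((u^{\Delta_m})^2-u_0^2)+\tfrac12\int_\R((w^{\Delta_m})^2-w_0^2)\le -a\,|\partial_t w^{\Delta_m}|(\R\times]0,t[)$. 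The left-hand side converges because the uniform $\L\infty$ and $\L2$ bounds of Step~1, the $\Lloc1$ convergence and tail control from finite speed of propagation upgrade $u^{\Delta_m}(\cdot,t),w^{\Delta_m}(\cdot,t)$ to convergence in $\L2(\R)$. For the right-hand side, $w^{\Delta_m}\to w$ in $\Lloc1(\R\times]0,t[)$ forces $\partial_t w^{\Delta_m}\to\partial_t w$ in $\mathcal D'$, so by lower semicontinuity of the total variation on the open set $\R\times]0,t[$ we get $|\partial_t w|(\R\times]0,t[)\le\liminf_m|\partial_t w^{\Delta_m}|(\R\times]0,t[)$, which combines with the previous limit (this is the right direction, since $-a|\partial_t w|\ge -a\limsup_m|\partial_t w^{\Delta_m}|$) to give \eqref{eq: genweakhis} for a.e.\ $t$, hence for all $t\in\,]0,T[$ by the $\L1$ time-continuity of $w$. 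The delicate point, which is exactly what Section~3.4 is about, is that no total-variation mass of $\partial_t w^{\Delta_m}$ is lost in the limit --- neither across the cell centres $\{x_i\}$, where $\tilde w^n$ may change monotonicity on a $\partial_t\tilde w^n$-null set, nor at the time boundary --- and this is where Lemma~\ref{lemma: weakhisshock} and the entropic selection of shocks carried out in Section~\ref{S2} do the work.

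\textbf{Step 4: conclusion.} Steps~1--3 show that every $\Lloc1$-limit of a subsequence of $(u_m,w_m)$ is an entropy weak solution in the sense of Definition~\ref{def: hweaksol}; in particular, existence is established. Since by the stability estimate of Section~\ref{S6} the entropy weak solution is unique, any two subsequential limits coincide, so the whole sequence $(u_m,w_m)$ converges to $(u,w)$ in $\Lloc1(\R\times[0,T[)$, which completes the proof.
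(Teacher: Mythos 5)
Your proposal is correct and follows essentially the same route as the paper: Helly-type compactness from the $\L{\infty}$, $\BV$ and $\L1$-in-time estimates, a Lax--Wendroff passage to the limit in the discrete entropy inequality of Proposition~\ref{prop: entropiadiscr} to obtain \eqref{eq: hweaksol}, a.e.\ convergence for \eqref{eq: dishis}, and lower semicontinuity of the total variation of measures combined with Proposition~\ref{prop: whisdiscr} for \eqref{eq: genweakhis}. The only remarks worth making are that your closing worry in Step~3 about ``losing mass'' of $\partial_t w^{\Delta_m}$ is unnecessary --- lower semicontinuity points in exactly the favourable direction, and the genuinely delicate work is entirely in establishing the \emph{discrete} inequality (Lemma~\ref{lemma: weakhisshock}), which you correctly take as given --- and that your Step~4 (whole-sequence convergence via uniqueness) makes explicit a point the paper leaves implicit.
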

 \begin{proof} 
 It can be shown that $u_m,w_m$ converge in $\Lloc{1}(\R \times[0,T[)$ to a couple $u,w\in \C{0} ( [0,T[; \Lloc{1}(\R))$ and that this couple satisfies \eqref{eq: hweaksol}, by following classical arguments; which can be found in Appendix~\ref{appendice}. To conclude, we then have to show inequalities \eqref{eq: dishis} and \eqref{eq: genweakhis}.

Condition \eqref{eq: dishis} is true since it holds for $u_m,w_m$, see \eqref{eq: comp}, and the $\Lloc{1}(\R\times [0,T[)$ convergence implies (up to a subsequence) almost everywhere pointwise convergence. 
 
 Finally, also the weak hysteresis relationship \eqref{eq: genweakhis}, which holds for each approximate solution (see Proposition \ref{prop: whisdiscr}), passes to the limit, for almost every $t\in [0,T]$, due to the $\Lloc{1}(\R \times [0,T[)$ convergence and to the lower semicontinuity of the mass of measures (see e.g. \cite{AFP}). Moreover, since $u_0, w_0 \in \L{1}(\R) \cap \BV(\R)$, it can be checked that $u, w \in \L{1}(\R \times [0,T[)$, and not only locally. In addition, as $u, w \in \L{\infty}(\R \times [0,T[)$, we also have $u, w \in \L{2}(\R \times [0,T])$, which is indeed required by \eqref{eq: genweakhis}.

 \end{proof}

\begin{rmk}\label{rmk: linear} {\bf (The linear case)} The work
\cite{BFMS} considers problem \eqref{eq: intro2} with linear flux $f(u)=u$. The main result of such paper was the existence and uniqueness of a solution for the Cauchy problem with  $\BV$ initial data constructed via the wave-front tracking method. However, no investigation about  numerical schemes was carried out. 

 Reasoning as we did in this paper, it can be shown that in a cell $K_i^n$, the solution  $\tilde{u}^n$ of the linear equation satisfies the conservation law 
 \begin{equation}\label{eq: PDEwi}
    \partial_t u+ \partial_x\bar{f}_{w_i^{n}} (u)=0,
    \end{equation}
    where
    \begin{equation}\label{eq: ftilde}
        \Bar{f}_{w_i^n}= \begin{cases}
            \frac{1}{2} u+\frac{1}{2}(w_i^n-a) \quad& u\in (-\infty, w_i^n -a],\\
            u \quad &u\in [w_i^n-a, w_i^n+a],\\
            \frac{1}{2} u+\frac{1}{2}(w_i^n+a) \quad &u\in [ w_i^n +a,+\infty).
        \end{cases}
    \end{equation}
    Then, since waves have positive speed, the solution to the Riemann problem at $x=0$ coincides with the left datum $u_l$, so 
    \begin{equation}\label{eq: schemaLineare}
    \begin{cases}
         u_i^{n+1} = u_i^{n}-\frac{\Delta t}{\Delta x} \left(f_{w_i^n}(u_i^n)-f_{w_i^n}(u_{i-1}^n)\right), \quad& \forall i\in \mathbb Z,\\
         w_i^{n+1} = u_i^{n}+w_i^{n}-\frac{\Delta t}{\Delta x} (u^n_{i}-u^n_{i-1})-u_i^{n+1}, \quad &\forall i\in \mathbb{Z},\\
         u_i^0= \frac{1}{\Delta x} \int_{x_{i-1/2}}^{x_{i+1/2}} u_0(x) \, dx, \quad w_i^0 = \frac{1}{\Delta x} \int_{x_{i-1/2}}^{x_{i+1/2}} w_0(x) \, dx,\quad&\forall i\in \mathbb{Z},\\
         |u_0(x)-w_0(x)|\leq a, \quad & \forall x\in \R .
        \end{cases}
    \end{equation}
    The numerical solution associated to scheme \eqref{eq: schemaLineare} for the linear flux has the same properties of the more general one considered in this paper. In particular, the corresponding approximate solution converges to the theoretical solution constructed in \cite{BFMS} via a wave-front tracking algorithm, as the size of the mesh goes to $0$, due to the uniqueness of the entropy weak solution.

\end{rmk}

    \subsection{Some numerical examples}

    We present some examples of numerical solutions generated via the scheme \eqref{eq: schema2} to the Riemann problems \eqref{eq: intro2}, \eqref{eq: datiRP}. In particular, Figure~\ref{fig: rarefazioniNum} shows the three possible configurations of rarefaction wave solutions described in Section~\ref{caso: ul<ur}, while Figure~\ref{fig: shocksNum} shows cases involving shock waves, see Section~\ref{caso: ul>ur}. In all cases, we consider the hysteresis parameter $a=1$, the flux function $f(u)= \tfrac{1}{2} u^2$, whereas the other data and parameters are summarized in Table \ref{tab: datiSimulazioni} and $\Delta t$ is such that the CFL condition \eqref{eq: CFL} holds as an equality.
   The analytical solutions of some of these problems are illustrated in Figures \ref{fig: 0minoreulminoreur}--\ref{fig: sottocaso4fs}, Section \ref{S2}.
   In all cases, the numerical scheme captures well the exact profiles for both $u$ and $w$.

\begin{table}[H]
\centering
\begin{tabular}{|l||c c c c|}
\hline
\textbf{Figure}& $(u_l,w_l)$ & $(u_r,w_r)$ & $\Delta x$ & $t$\\
\hline
Figure \ref{fig: rarefazioniNum} (left)  & (1,0.5) & (3,3) & $10^{-3}$ & 0.25 \\
Figure \ref{fig: rarefazioniNum} (middle)  & (-3,-3) & (-1.5,-1)  & $10^{-3}$ & 0.25 \\
Figure \ref{fig: rarefazioniNum} (right) & (-2,-1.5) & (1,1.5) & $10^{-3}$ & 0.4 \\
\cline{1-1}
Figure \ref{fig: shocksNum} (left)  & (1.5,2) & (0.5,0) & $10^{-3}$ & 0.5 \\
Figure \ref{fig: shocksNum} (middle) & (-0.5,0) & (-1.5,-2) & $10^{-3}$ & 0.5 \\
Figure \ref{fig: shocksNum} (right) & (1.5,2) & (-1,-1) & $10^{-3}$ & 0.5 \\
\hline

\end{tabular}
\caption{Parameters used for the simulations.}
\label{tab: datiSimulazioni}
\end{table}

\begin{figure}
\centering
  \begin{minipage}{0.33\linewidth}
     \centering
     \includegraphics[trim=1.4cm 7.5cm 1.5cm 8cm, clip, width=1\linewidth,]{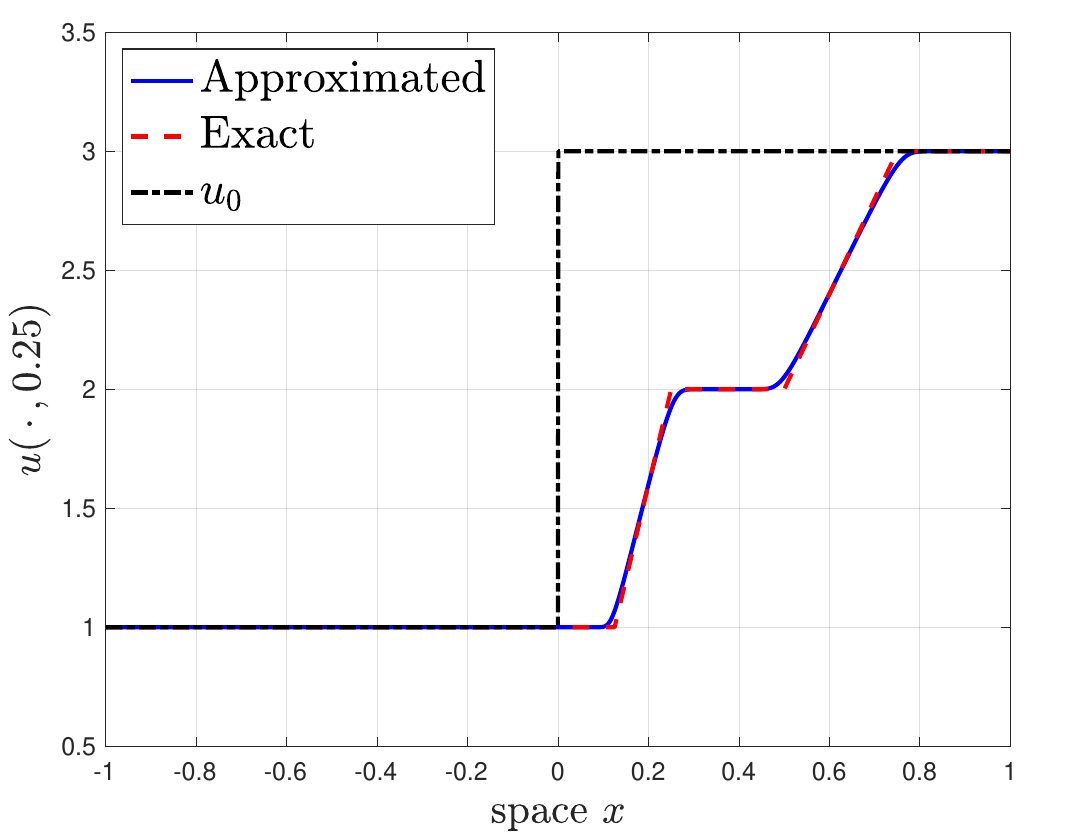}
  \end{minipage}\hfill
  \begin{minipage}{0.33\linewidth}
    \centering
    \includegraphics[trim=1.4cm 7.5cm 1.5cm 8cm, clip, width=1\linewidth]{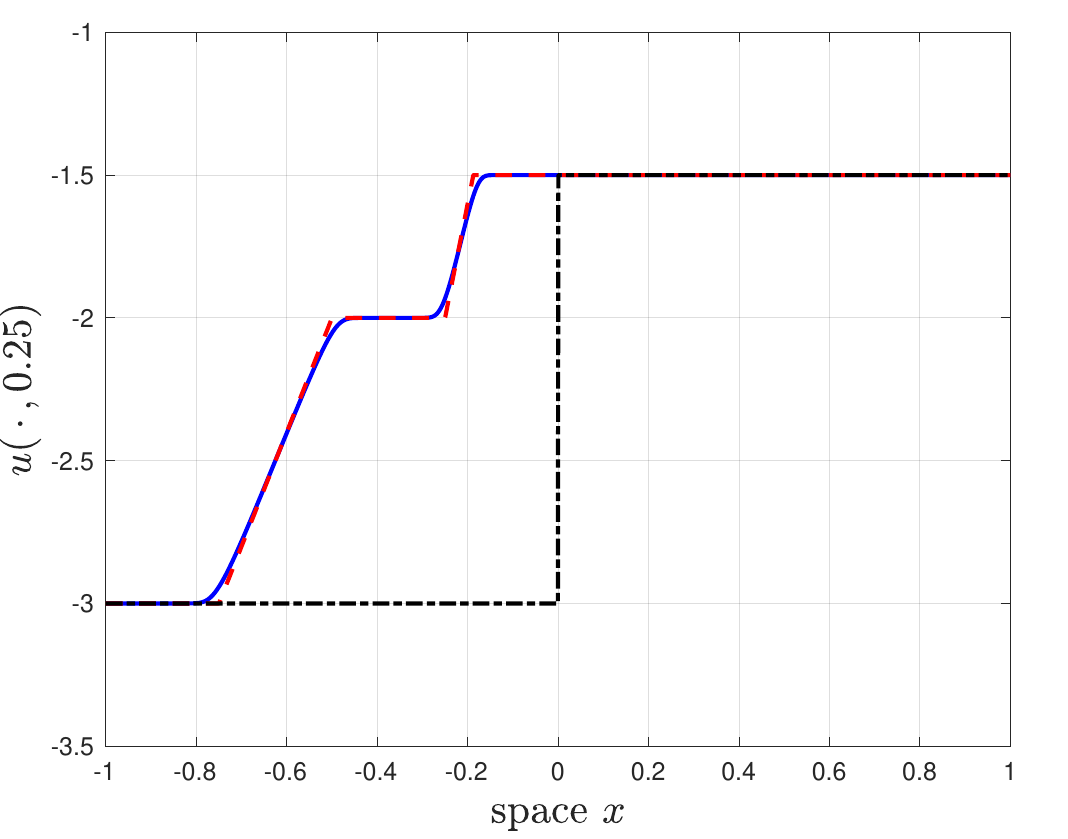}
  \end{minipage}\hfill
  \begin{minipage}{0.33\linewidth}
    \centering
    \includegraphics[trim=1.4cm 7.5cm 1.5cm 8cm, clip, width=1\linewidth]{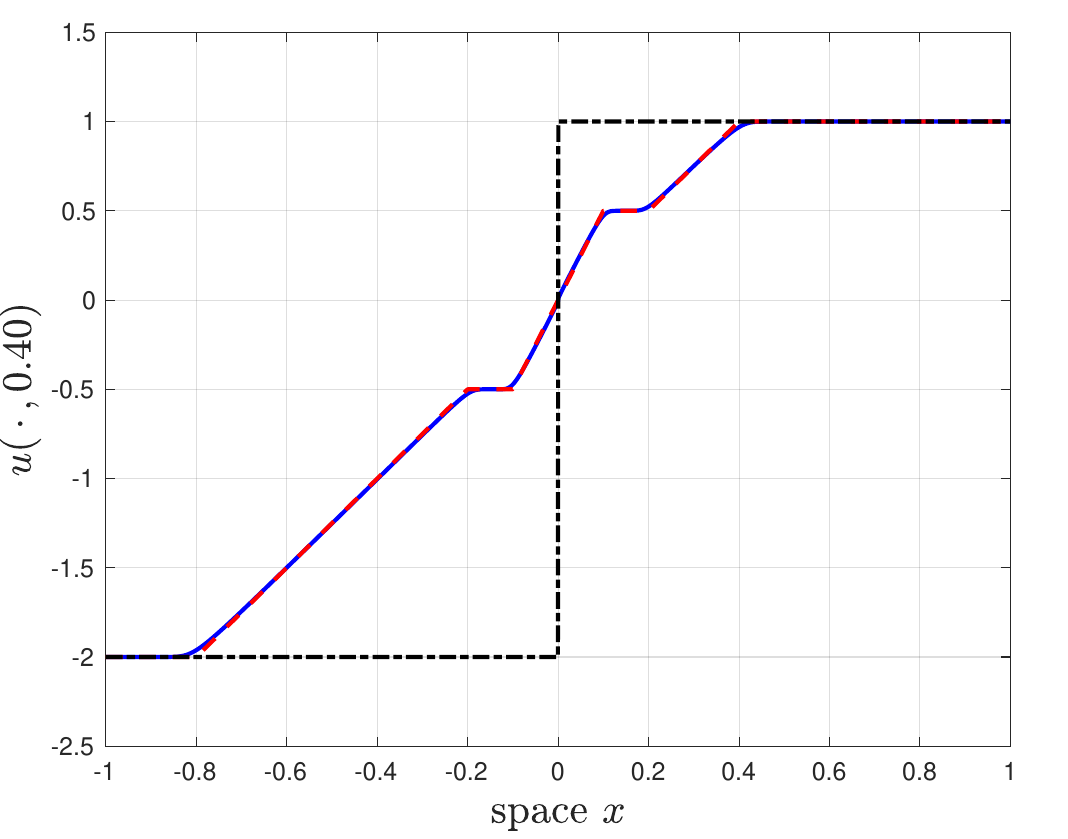}
  \end{minipage}

  \vspace{0 em} 

  \begin{minipage}{0.33\linewidth}
    \centering
    \includegraphics[trim=1.4cm 7.5cm 1.5cm 8cm, clip, width=1\linewidth]{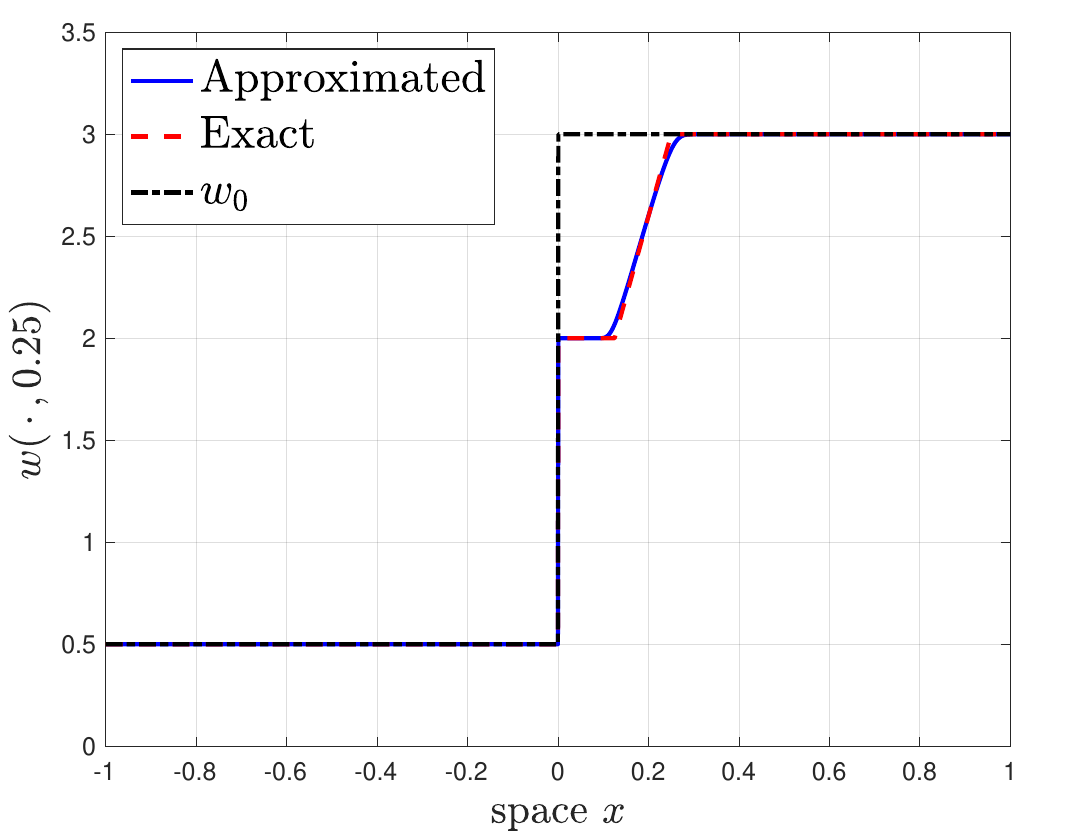}
  \end{minipage}\hfill
  \begin{minipage}{0.33\linewidth}
    \centering
    \includegraphics[trim=1.4cm 7.5cm 1.5cm 8cm, clip, width=1\linewidth]{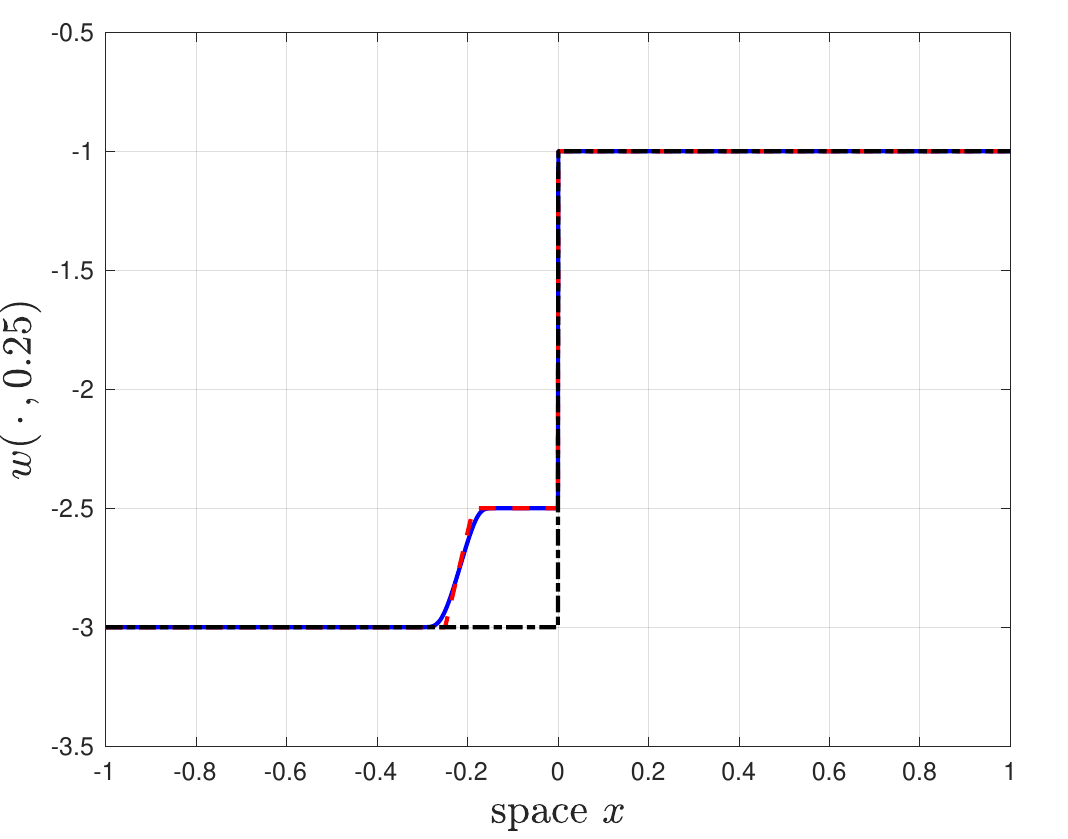}
  \end{minipage}\hfill
  \begin{minipage}{0.33\linewidth}
    \centering
    \includegraphics[trim=1.4cm 7.5cm 1.5cm 8cm, clip, width=1\linewidth]{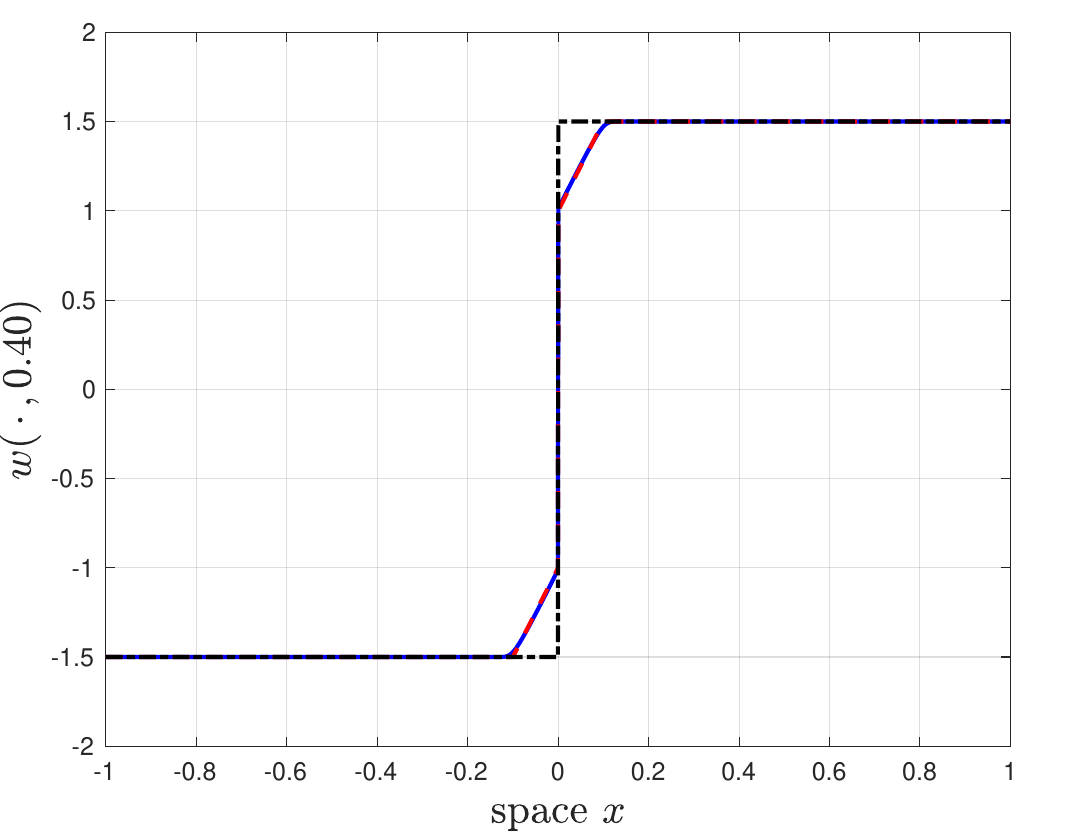}
  \end{minipage}
  \caption{Left: double rarefaction wave for $u$ propagating to the right, see Subcase \ref{sottocaso: 0<ul<ur}. Middle: double rarefaction wave propagating to the left, see Subcase \ref{sottocaso: ul<ur<0}. Right: centered rarefaction, see \ref{sottocaso: ul<0<ur}. }
  \label{fig: rarefazioniNum}
\end{figure}

\begin{figure}
\centering
  \begin{minipage}{0.33\linewidth}
     \centering
     \includegraphics[trim=1.4cm 7.5cm 1.5cm 8cm, clip, width=1\linewidth,]{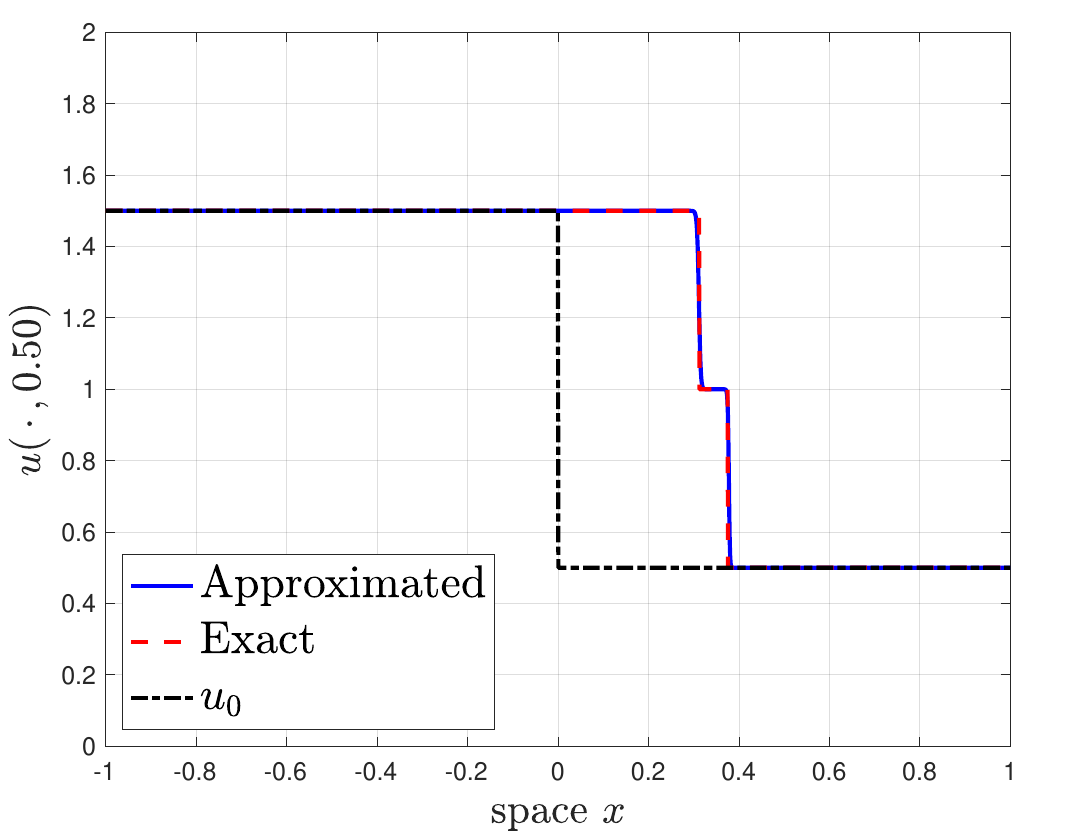}
  \end{minipage}\hfill
  \begin{minipage}{0.33\linewidth}
    \centering
    \includegraphics[trim=1.4cm 7.5cm 1.5cm 8cm, clip, width=1\linewidth]{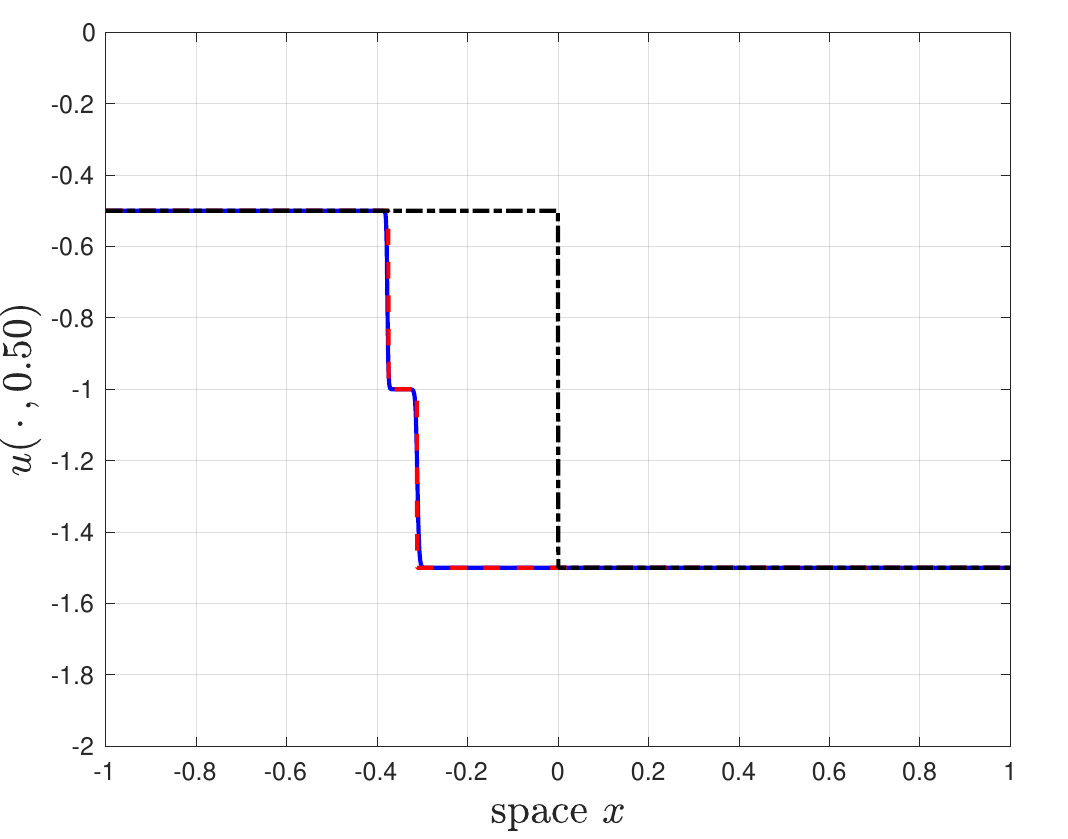}
  \end{minipage}\hfill
  \begin{minipage}{0.33\linewidth}
    \centering
    \includegraphics[trim=1.4cm 7.5cm 1.5cm 8cm, clip, width=1\linewidth]{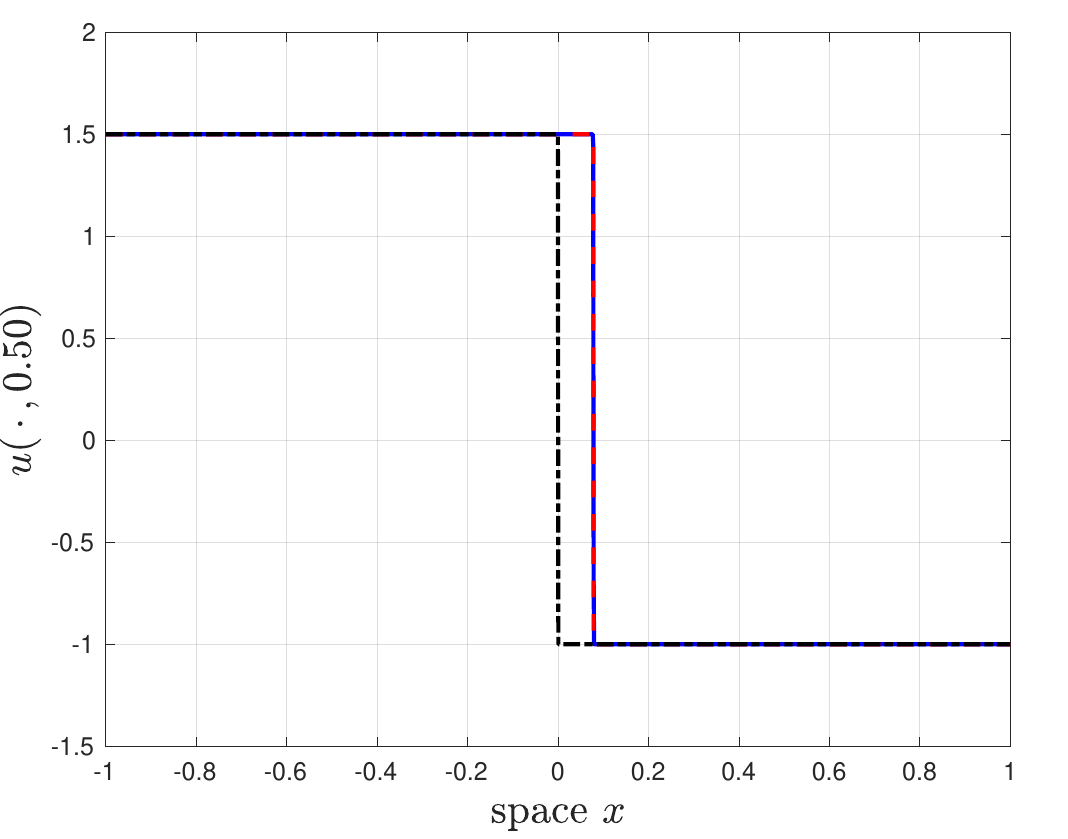}
  \end{minipage}

  \vspace{0 em} 

  \begin{minipage}{0.33\linewidth}
    \centering
    \includegraphics[trim=1.4cm 7.5cm 1.5cm 8cm, clip, width=1\linewidth]{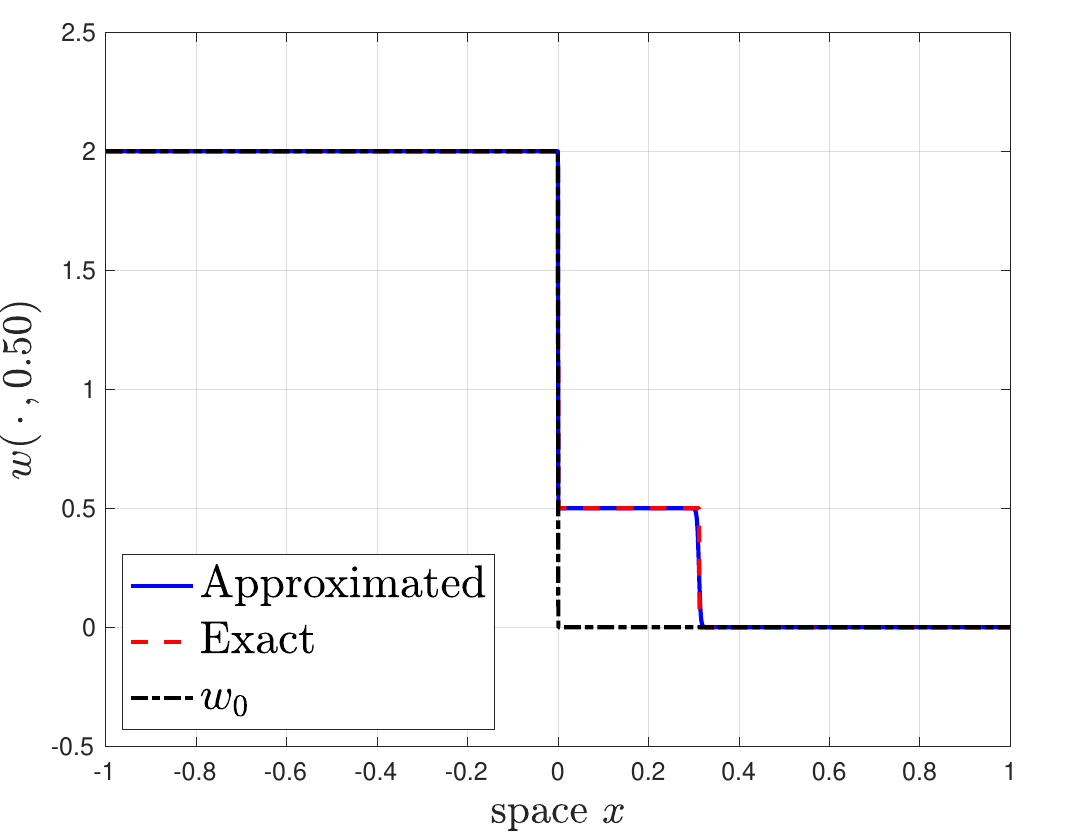}
  \end{minipage}\hfill
  \begin{minipage}{0.33\linewidth}
    \centering
    \includegraphics[trim=1.4cm 7.5cm 1.5cm 8cm, clip, width=1\linewidth]{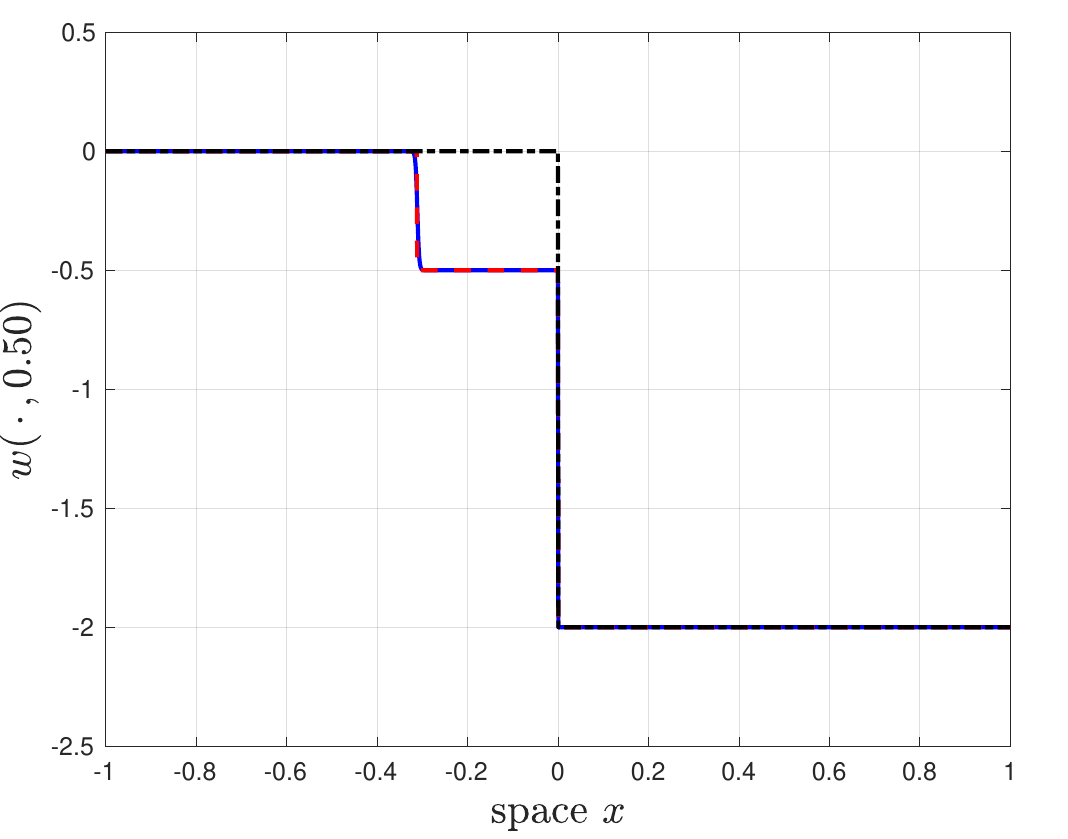}
  \end{minipage}\hfill
  \begin{minipage}{0.33\linewidth}
    \centering
    \includegraphics[trim=1.4cm 7.5cm 1.5cm 8cm, clip, width=1\linewidth]{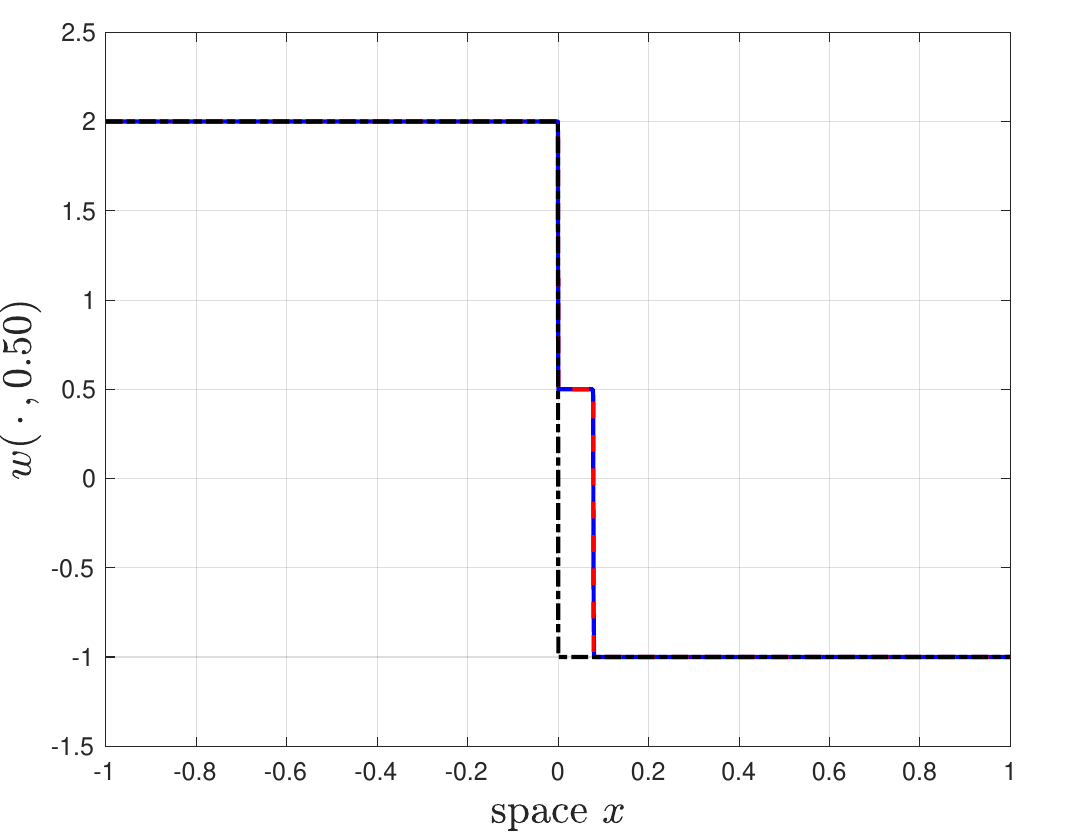}
  \end{minipage}
  \caption{Left: double shock wave for $u$ propagating to the right with $\mu_r \geq \mu_l$, see Subcase \ref{sottocaso: ful>fur}. Middle: double shock wave propagating to the left with $\nu_r \geq \nu_l$, see Subcase \ref{sottocaso: ful<fur}. Right: ``fast shock'' with with $\mu_r < \mu_l$, see again Subcase \ref{sottocaso: ful>fur}.} 
  \label{fig: shocksNum}
\end{figure}

We also compute the numerical solution $(u^\Delta, w^\Delta)$ of the Cauchy problem with a more general initial datum, namely $u_0 = e^{-x^2/2},$  with $u_0 = w_0$, the flux function $f(u)=\tfrac{1}{2}u^2$, $a=1$, $\Delta x = 10^{-3}$ and $\Delta t$ such that the CFL condition \eqref{eq: CFL} holds as an equality.  Figure \ref{fig: cauchyNum} shows the evolution of $u^\Delta(\cdot,t)$ and $w^\Delta(\cdot,t)$ at increasing time steps $t$.

In the same plots, $u^\Delta$ is compared with the solutions to the two equations without hysteresis 
\[
\partial_t u + \partial_x f(u) =0 \quad \text{and} \quad \partial_t u + \frac{1}{2} \partial_x f(u)=0
\]
which are depicted by the dashed red line and by the dot-dashed blue line, respectively. To compute such solutions we still used the scheme \eqref{eq: schema2}, with flux either $f(u)= u^2/2$ or $f(u) =u^2/4$, where, by setting the hysteresis parameter $a$ large enough and by also setting $u_0=w_0$, we ensure $\partial_t w=0$ for all $(x,t).$ In this way, by considering only the unknown $u,$ we are approximating the solutions of the two equations without hysteresis.

Referring again to Figure \ref{fig: cauchyNum}, we observe that the speed with which $u^\Delta$ propagates to the right is intermediate between those of the solutions of the two equations without hysteresis. This behaviour is expected as the equation with hysteresis is a combination of the non-hysteretic two, recall Remark \ref{rmk: nofastshock}. Moreover, the presence of hysteresis lowers the peak of $u^\Delta$, as large variations in $u(x,\cdot)$ lead to $(u(x,\cdot),w(x,\cdot))$ following the hysteresis boundary, where $\partial_t w \neq 0$. From the relationship $\partial_t u + \partial_x f(u) = -\partial_t w$, we see that the term $-\partial_t w$ acts as a source term damping large variations of $u$, since when $(u,w)$ is on the boundary of hysteresis regions $\sgn(\partial_t u) = - \sgn(-\partial_t w)$. 

In Table~\ref{tab: sommaNormeL2}, we also highlight the sum of the $\L{2}$ norms squared in space of $u^\Delta$ and $w^\Delta$, which decreases in time accordingly with \eqref{eq: genweakhis}.

\begin{figure}
\centering
  \begin{minipage}{0.25\linewidth}
     \centering
     \includegraphics[trim=1.4cm 7.5cm 1.5cm 8cm, clip, width=1\linewidth,]{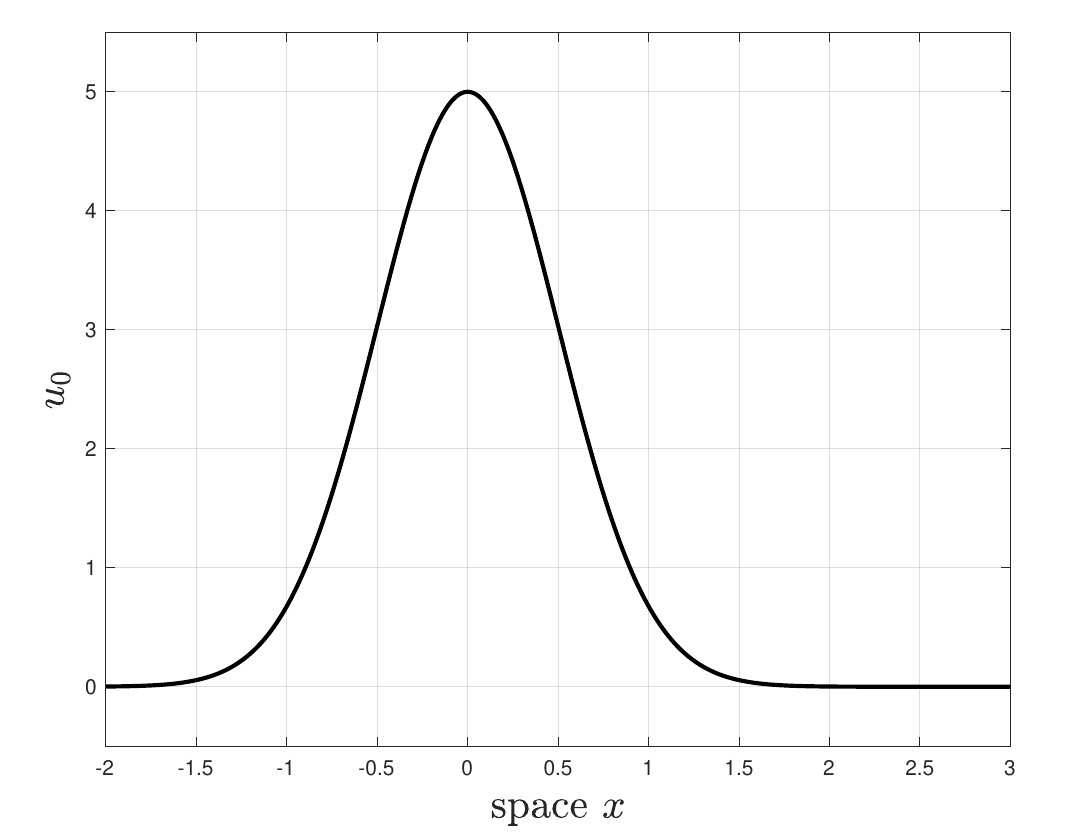}
  \end{minipage}\hfill
  \begin{minipage}{0.25\linewidth}
    \centering
    \includegraphics[trim=1.4cm 7.5cm 1.5cm 8cm, clip, width=1\linewidth]{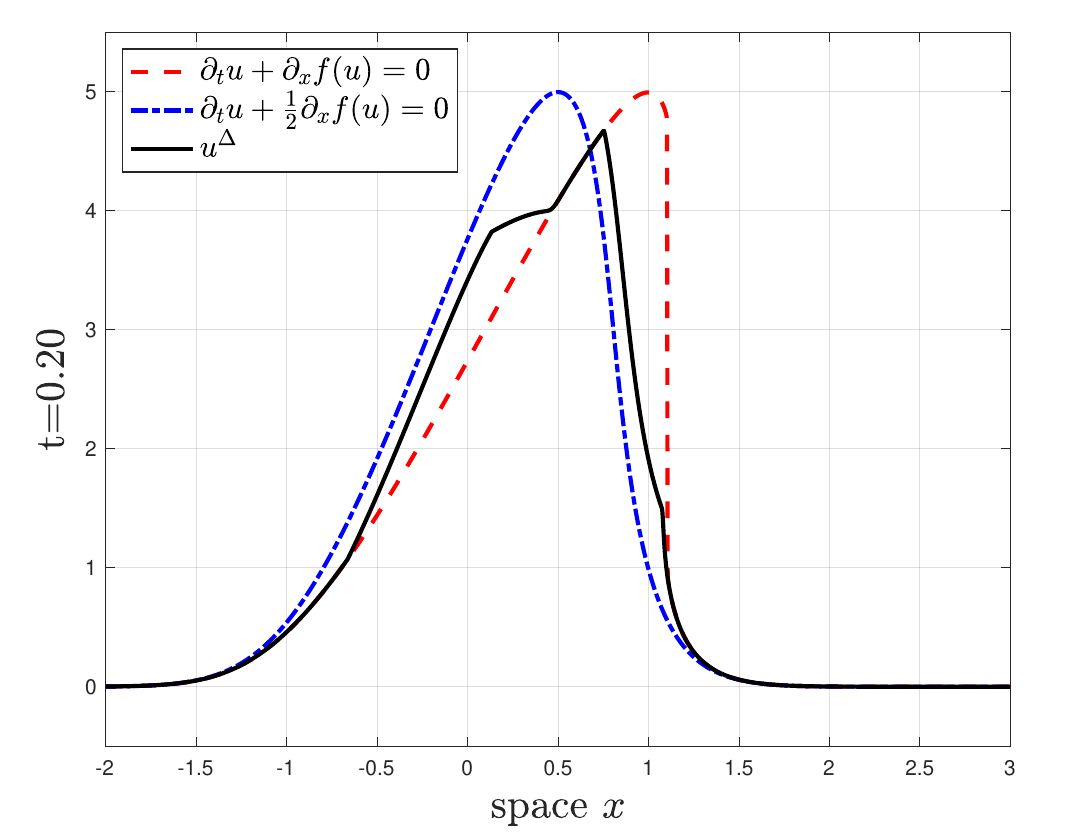}
  \end{minipage}\hfill
  \begin{minipage}{0.25\linewidth}
    \centering
    \includegraphics[trim=1.4cm 7.5cm 1.5cm 8cm, clip, width=1\linewidth]{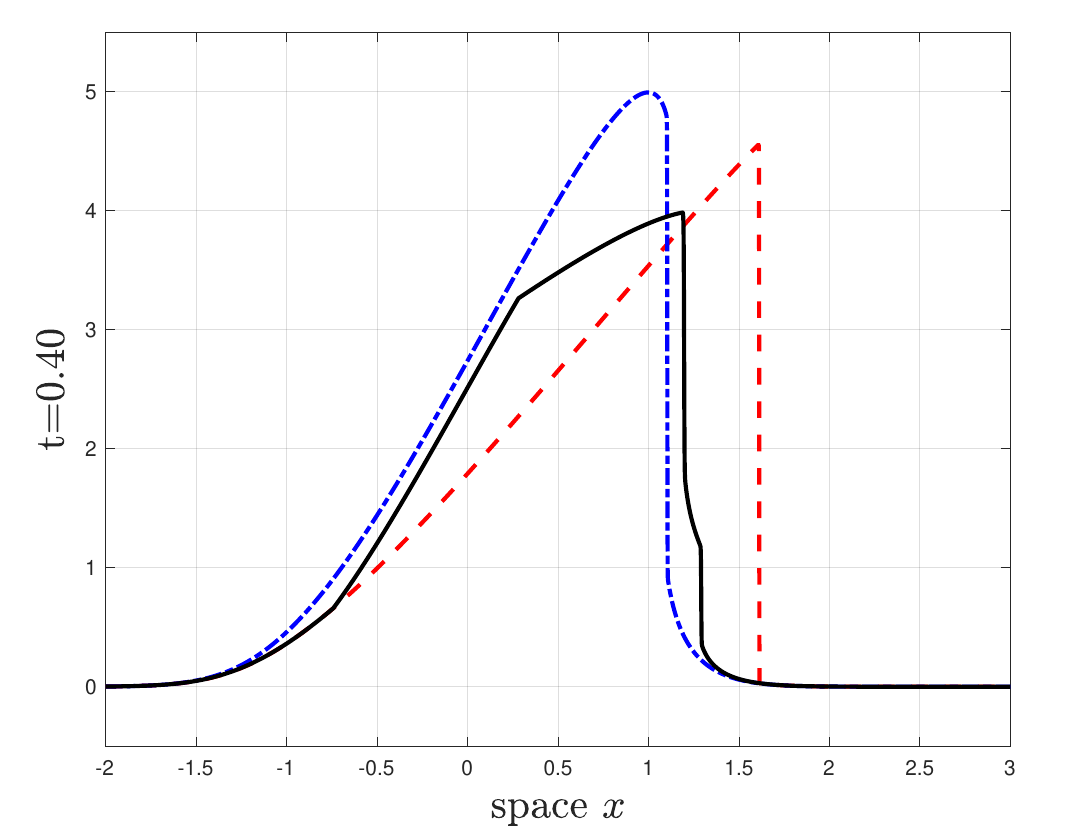}
  \end{minipage}\hfill
  \begin{minipage}{0.25\linewidth}
    \centering
    \includegraphics[trim=1.4cm 7.5cm 1.5cm 8cm, clip, width=1\linewidth]{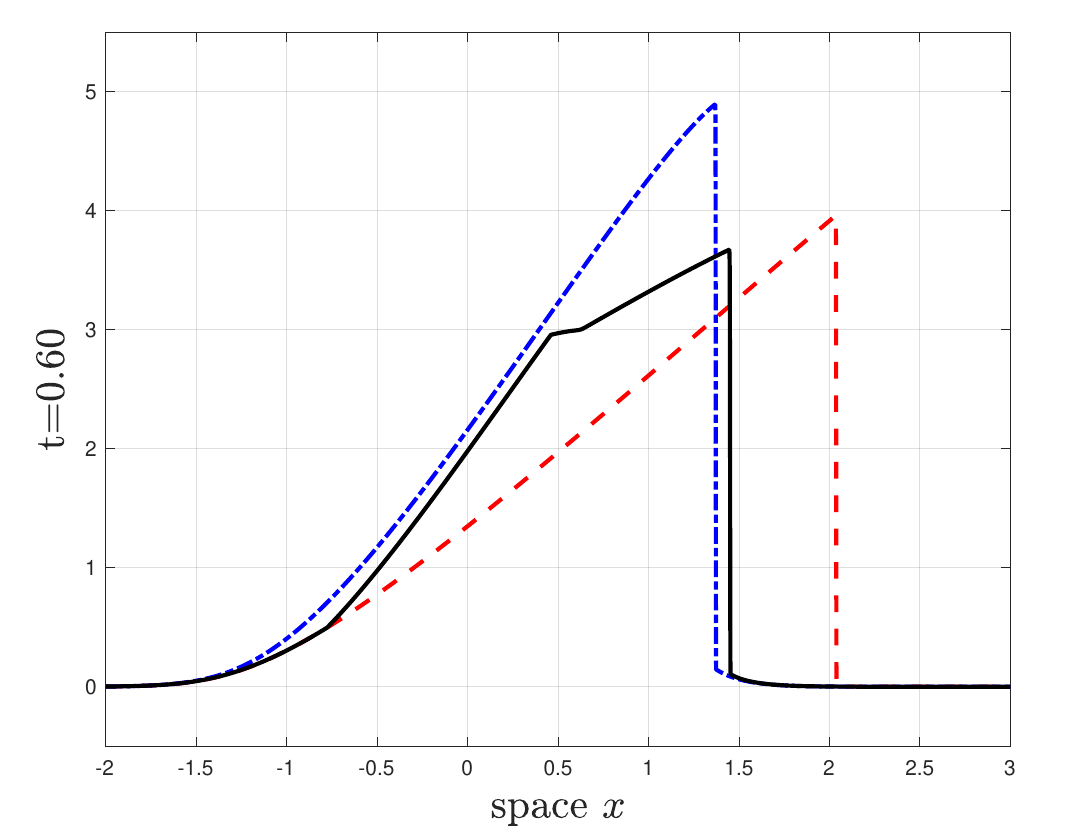}
  \end{minipage}

  \vspace{0 em} 
  
  \begin{minipage}{0.25\linewidth}
     \centering
     \includegraphics[trim=1.4cm 7.5cm 1.5cm 8cm, clip, width=1\linewidth,]{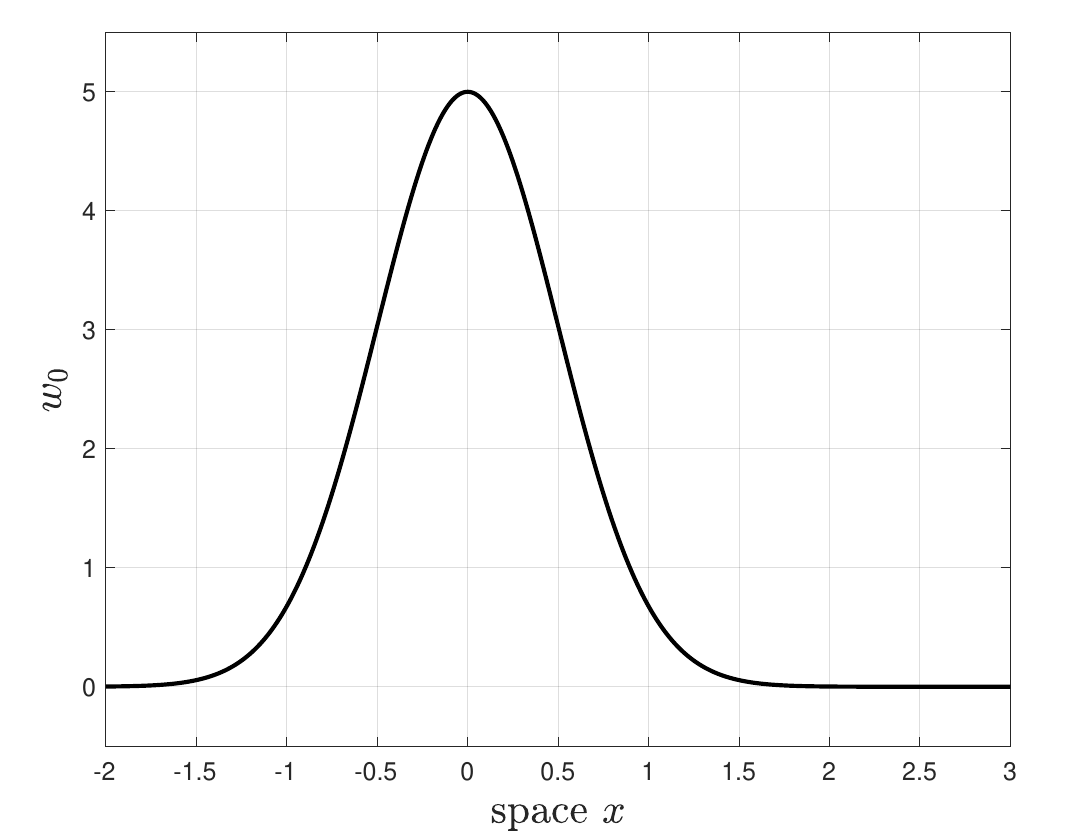}
  \end{minipage}\hfill
  \begin{minipage}{0.25\linewidth}
    \centering
    \includegraphics[trim=1.4cm 7.5cm 1.5cm 8cm, clip, width=1\linewidth]{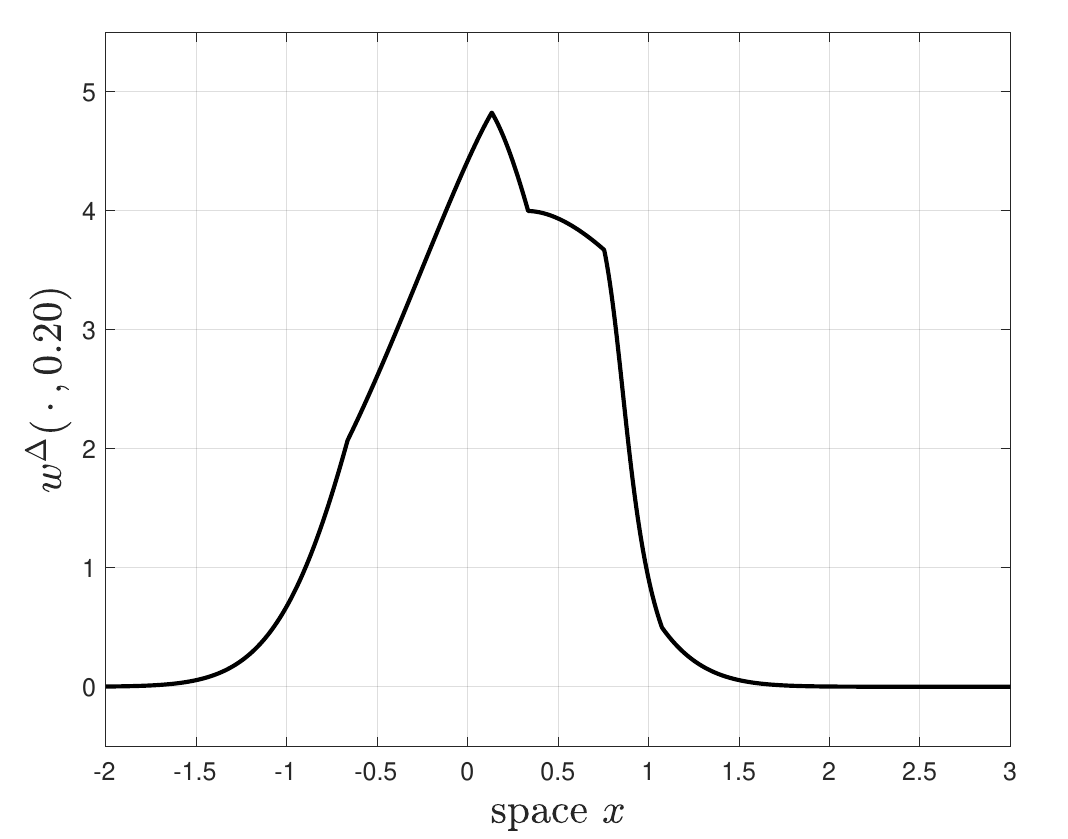}
  \end{minipage}\hfill
  \begin{minipage}{0.25\linewidth}
    \centering
    \includegraphics[trim=1.4cm 7.5cm 1.5cm 8cm, clip, width=1\linewidth]{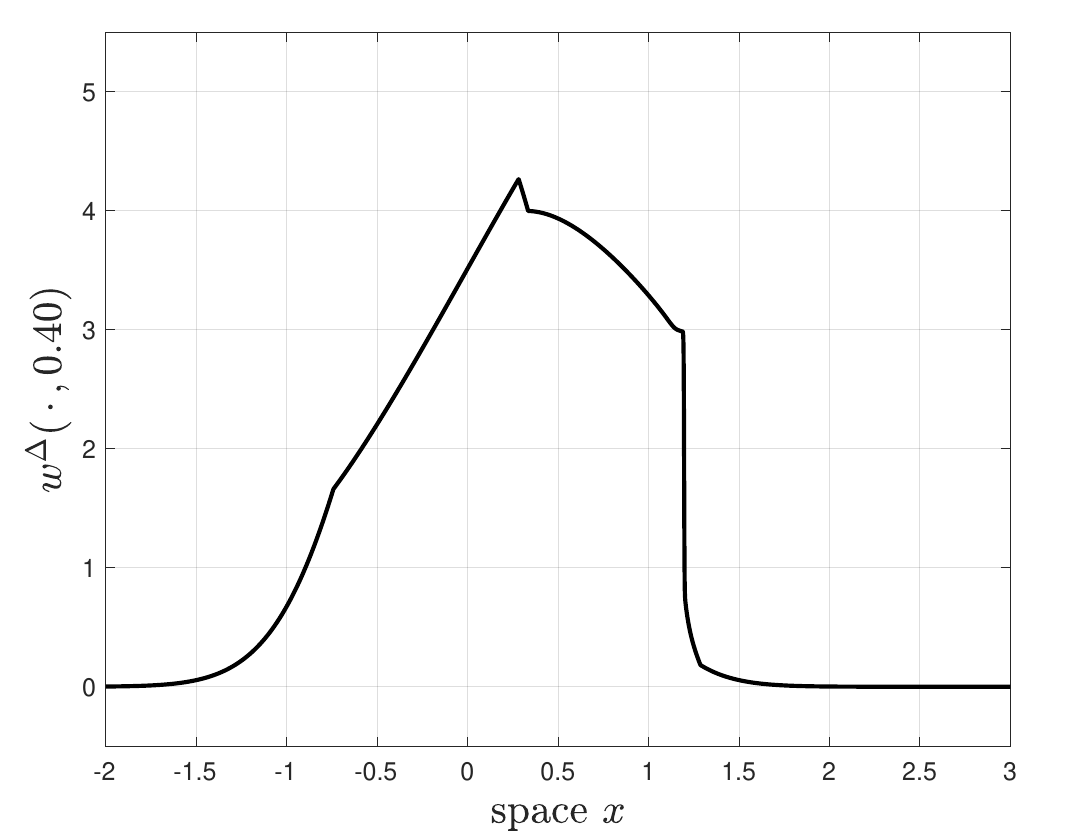}
  \end{minipage}\hfill
  \begin{minipage}{0.25\linewidth}
    \centering
    \includegraphics[trim=1.4cm 7.5cm 1.5cm 8cm, clip, width=1\linewidth]{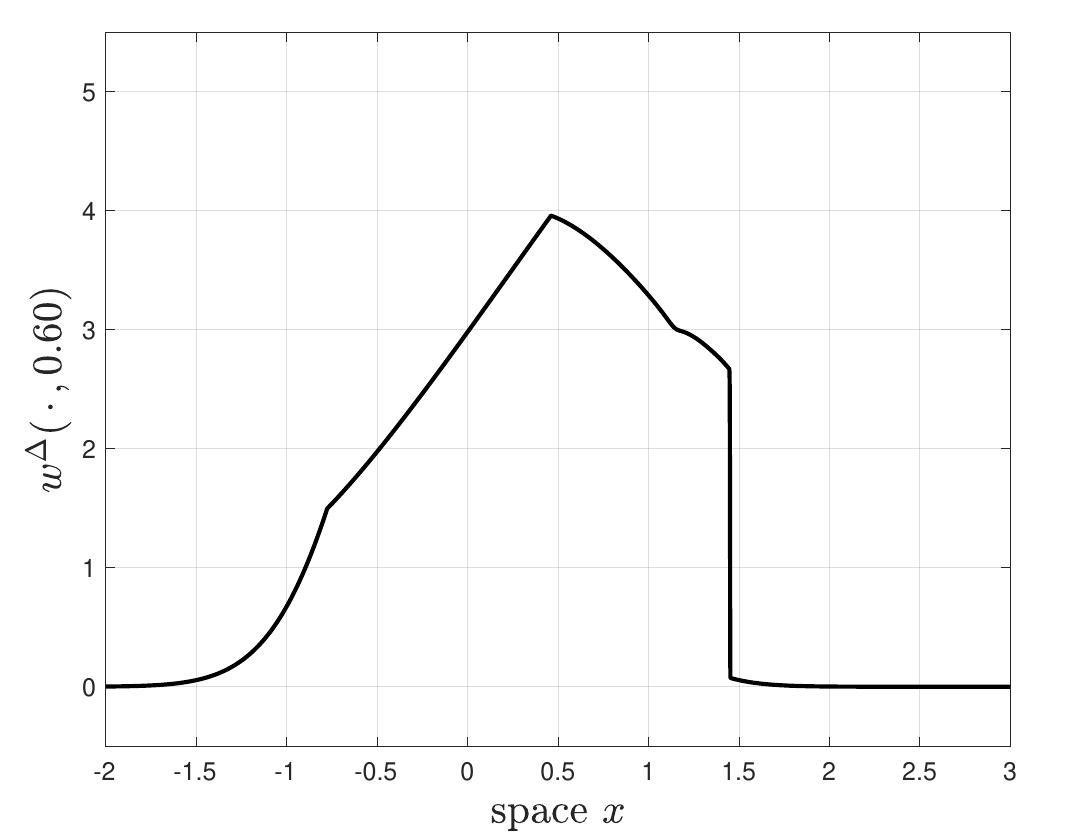}
  \end{minipage}
  \caption{Graphs of $u^\Delta(\cdot,t)$ (top row) and $u^\Delta(\cdot,t)$ (bottom row) computed at increasing times. The top plots include also the solutions to the equations without hysteresis $\partial_t u + \partial_x f(u)=0$ (dashed red) and $\partial_t u + \tfrac{1}{2}\partial_x f(u)=0$ (dot-dashed blue).}
  \label{fig: cauchyNum}
\end{figure}

\begin{table}[H]
\centering
\begin{tabular}{|c||c c c c |}
\hline
\textbf{$\L{2}$ norm square} & $t=0$ & $t=0.2$ & $t=0.4$ & $t=0.6$ \\
\hline
$||u^\Delta(\cdot\,,t)||_{\L{2}(\R)}^2$ & 22.1557 & 19.2978 &  17.0486 &  14.9098  \\
$||w^\Delta(\cdot\,,t)||_{\L{2}(\R)}^2$& 22.1557 & 22.5789 & 21.8581  &  21.0941  \\
$||u^\Delta(\cdot\,,t)||_{\L{2}(\R)}^2$ + $||w^\Delta(\cdot\,,t)||_{\L{2}(\R)}^2$ & 44.3114 &  41.8767 &  38.9067 &  36.0039 \\
\hline

\end{tabular}
\caption{The $\L{2}(\R)$ norm square of $u^\Delta(\cdot\,,t)$, $w^\Delta(\cdot\,,t)$ and their decreasing in time sum.}
\label{tab: sommaNormeL2}
\end{table}


\section{Stability}\label{S6}

Now we show how the condition \eqref{eq: hweaksol} in the definition of entropy weak solution characterizes the entropy-allowed shock discontinuities.

\begin{prop}\label{prop: entropia}
    Suppose a couple $(u,w)\in \C{0}([0,T[,\Lloc{1}(\R))$
    to have a shock discontinuity on a curve $(\sigma(t),t)$ between two constant states $(u_-,w_-)\not=(u_+,w_+)$ and to be an entropy weak solution away from that curve. Then this couple satisfies \eqref{eq: hweaksol} and is an entropy weak solution on the whole domain, i.e. this shock is entropy admissible, if and only if $\sigma'$ satisfies the \RH condition \eqref{eq: hrh}, $u_- \geq u_+$ and one of the following holds 
    \begin{enumerate}[i)]
         \item $w_-= w_+$;\label{casoi}
        \item $f(u_-)=f(u_+)$ and $\sigma' = 0$;\label{casoii}
        \item $w_- > w_+$, $f(u_-)>f(u_+)$, $w_- = u_- -a$ and $w_+ = u_+-a$;\label{casoiii}
        \item $w_- > w_+$, $f(u_-)>f(u_+)$, $w_- = u_- -a$ and $\mu_+ \leq \mu_-$ where \[\mu_+= \frac{f(w_++a)-f(u_+)}{(w_++a)-u_+}, \quad \text{and} \quad \mu_-= \frac{1}{2}\frac{f(u_-)-f(w_++a)}{u_--(w_++a)};\]\label{casoiv}
        \item $w_- > w_+$, $f(u_-)<f(u_+)$, $w_+ = u_+ +a$ and $w_- = u_- +a$;\label{casov}
        \item $w_- > w_+$, $f(u_-)<f(u_+)$, $w_+ = u_+ +a$ and $\nu_+ \leq \nu_-$ where \[\nu_-= \frac{f(u_-)-f(w_--a)}{u_--(w_--a)} , \quad \text{and} \quad \nu_+ = \frac{1}{2} \frac{f(w_--a)-f(u_+)}{w_--a-u_+}.\]\label{casovi}
        \end{enumerate}
\end{prop}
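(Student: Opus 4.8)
The plan is to reduce the whole statement to a single pointwise inequality along the discontinuity curve and then to run a finite case analysis. Localizing \eqref{eq: hweaksol} with nonnegative test functions supported in shrinking neighbourhoods of $(\sigma(t),t)$, and using that $(u,w)$ is already an entropy weak solution off the curve, one sees exactly as in the derivation of \eqref{eq: dis1RH}--\eqref{eq: dis2RH} — but now retaining all admissible $(k,\hat k)$ — that \eqref{eq: hweaksol} holds on the whole strip if and only if, for a.e.\ $t$,
\begin{equation}\label{eq: Ejump}
\sigma'(t)\Big(|u_+-k|+|w_+-\hat k|-|u_--k|-|w_--\hat k|\Big)\ \ge\ \sgn(u_+-k)\big(f(u_+)-f(k)\big)-\sgn(u_--k)\big(f(u_-)-f(k)\big)
\end{equation}
for every $(k,\hat k)\in\mathcal{L}$. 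Since the couple is an entropy weak solution off the curve, the constant states already satisfy $|u_\pm-w_\pm|\le a$, so \eqref{eq: dishis} is automatic; the single-jump form of \eqref{eq: genweakhis} will be checked at the very end once we know which case we are in. So everything comes down to characterising when \eqref{eq: Ejump} holds for all $(k,\hat k)$ in the strip.

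First I extract the structural constraints. Choosing $(k,\hat k)$ with both coordinates above (resp.\ below) all four values $u_\pm,w_\pm$ recovers \eqref{eq: dis1RH}--\eqref{eq: dis2RH}, hence the \RH relation \eqref{eq: hrh}. The crucial observation is then that, for fixed $\hat k$ and $k$ in the interval between $u_+$ and $u_-$, the left side of \eqref{eq: Ejump} is affine in $k$ while the right side equals $2f(k)+\mathrm{const}$, so $k\mapsto(\mathrm{LHS}-\mathrm{RHS})$ is concave; consequently \eqref{eq: Ejump} on that interval is equivalent to its validity at the endpoints, and by the same token the whole family reduces to finitely many corner pairs, obtained by letting $k\in\{u_+,u_-\}$, $\hat k\in\{w_+,w_-\}$, $|k-\hat k|=a$, and $k\in\{w_++a,\,w_--a\}$ when these fall in the interval $(u_+,u_-)$. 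Testing at $k$ strictly between $u_+$ and $u_-$ with $\hat k$ pushed outside $[\min\{w_-,w_+\},\max\{w_-,w_+\}]$ (possible except in the boundary configurations, handled directly) collapses \eqref{eq: Ejump} to the classical Kru\v zkov jump condition for $\partial_t u+\partial_x f(u)=0$, which, by \eqref{eq: hrh} and the strict convexity of $f$, forces $u_-\ge u_+$.

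Assume now \eqref{eq: hrh} and $u_-\ge u_+$. If $w_-=w_+$ the $w$-terms drop out and we are left with the scalar Kru\v zkov condition, which holds automatically (Oleinik for convex $f$); this is case \ref{casoi}. If $f(u_-)=f(u_+)$, then either \eqref{eq: hrh} gives $\sigma'=0$ directly, or $u_--u_++w_--w_+=0$ and evaluating \eqref{eq: Ejump} at $(k,\hat k)=(u_+,w_+)$ and at $(u_-,w_-)$ forces $\sigma'=0$ (the degenerate situation of Remark \ref{rmk: entropyshock1}); either way we are in case \ref{casoii}. In the remaining situation, $w_-\ne w_+$ and $f(u_-)\ne f(u_+)$ (so $\sigma'\ne0$): evaluating \eqref{eq: Ejump} at $\hat k=w_+$ with a suitable $k$ and combining with \eqref{eq: hrh} rules out every such configuration except those with $w_->w_+$ and the two states lying on one common horizontal boundary line of $\mathcal{L}$ — precisely $w_-=u_--a$ when $f(u_-)>f(u_+)$, and $w_+=u_++a$ when $f(u_-)<f(u_+)$. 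Finally, with $w_-=u_--a$ say, the last binding corner ($k=w_++a$, paired with $\hat k=w_+$) is either vacuous, exactly when $w_++a\notin(u_+,u_-)$, i.e.\ $w_+=u_+-a$, giving cases \ref{casoiii} and \ref{casov}; or, after rewriting $\sigma'$ from \eqref{eq: hrh} as the weighted average $\mu$ of Subcase \ref{sottocaso: ful>fur} (as in \eqref{eq: mu}), it becomes exactly the threshold $\mu_+\le\mu_-$ — cases \ref{casoiv} and, symmetrically via Subcase \ref{sottocaso: ful<fur}, \ref{casovi} with $\nu_+\le\nu_-$. The reverse implications are the same inequalities read backwards, convexity of $f$ guaranteeing \eqref{eq: Ejump} at all non-corner pairs once it holds at the corners. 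In each of \ref{casoi}--\ref{casovi} one has $|u_\pm-w_\pm|\le a$ by inspection, and the single-jump form of \eqref{eq: genweakhis} is either trivial (cases \ref{casoi}--\ref{casoii}) or is exactly the estimate proved in Lemma \ref{lemma: weakhisshock} for one shock, namely \eqref{eq: caso2eq1}--\eqref{eq: caso2eq3} for cases \ref{casoiii} and \ref{casov}, and \eqref{eq: fsu}--\eqref{eq: fswar} for cases \ref{casoiv} and \ref{casovi}.

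I expect the bookkeeping in \eqref{eq: Ejump} to be the main obstacle. Because $k$ and $\hat k$ are tied by $|k-\hat k|\le a$, the binding instances are not the naive ones, and one must pin down the correct finite list of corner pairs and verify that convexity of $f$ genuinely propagates the inequality from those corners to the whole strip $\mathcal{L}$. This is precisely where the non-classical thresholds $\mu_+\le\mu_-$ and $\nu_+\le\nu_-$ are produced, and it is also what separates the entropic fast shock from the Rankine--Hugoniot- and hysteresis-compatible but non-entropic one of Remark \ref{rmk: entropyshock2}.
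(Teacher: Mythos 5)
Your strategy coincides with the paper's: localize \eqref{eq: hweaksol} to the pointwise jump inequality \eqref{eq: hdisugentropy}, observe that the resulting function of $(k,\hat k)$ is separable, concave in $k$ on the interval between $u_+$ and $u_-$ and affine in $\hat k$ between $w_+$ and $w_-$, and then hunt for the binding points in $\LM$; you also correctly locate where the thresholds come from (the corner $(w_++a,w_+)$, resp.\ $(w_--a,w_-)$). The genuine gap is exactly where you yourself flag ``the main obstacle'': you assert, but do not prove, that validity on all of $\LM$ reduces to a finite list of corner pairs and that convexity of $f$ ``propagates the inequality from those corners to the whole strip''. That reduction \emph{is} the proof. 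In the paper it is carried out by substituting $\sigma'$ from the \RH condition to form the function $h$ of \eqref{eq: hkappa}, splitting the $(k,\hat k)$-plane into nine regions, showing $h$ is affine with a definite sign in the off-diagonal regions (which is what forces $w_-=u_--a$, resp.\ $w_+=u_++a$, so that the bad region misses $\LM$), and, in the central region, constructing the zero level set of $h$ as a graph $\hat k=\varphi(k)$ via the implicit function theorem, computing $\varphi''=(u_--u_++w_--w_+)f''/(f(u_-)-f(u_+))$, and using the convexity of $\varphi$ together with the sign of $h(w_++a,w_+)=(I_++2I_-)\left[(\mu-\mu_+)I_+-2(\mu-\mu_-)I_-\right]$ to show that the condition ``$\mu_+\le\mu_-$ or $w_+=u_+-a$'' is both necessary and sufficient. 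None of this appears in your proposal, and it is not routine bookkeeping: it is where the non-classical admissibility criterion is actually produced.

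One step would fail as written. You claim that taking $k$ strictly between $u_+$ and $u_-$ and pushing $\hat k$ outside the interval between $w_+$ and $w_-$ ``collapses'' the jump inequality to the classical Kru\v zkov condition and hence forces $u_-\ge u_+$. It does not: with such $\hat k$ the $w$-terms contribute the extra constant $\pm\sigma'(w_--w_+)$, the speed $\sigma'$ is the hysteretic one from \eqref{eq: hrh} rather than the classical Rankine--Hugoniot speed, and the constraint $|k-\hat k|\le a$ prevents this choice for all $k$ in the interval (for $k>w_++a$ you cannot take $\hat k\le w_+$). In the paper, excluding $u_-<u_+$ requires first showing $\sgn(w_--w_+)=\sgn(u_--u_+)$ by evaluating $h$ at $(u_\pm,w_\pm)$, and then the full central-region analysis to see that when $u_-<u_+$ the zero level set has the wrong concavity, so $\{h>0\}\cap\LM\neq\emptyset$ no matter where the states sit on the boundary of $\LM$. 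Finally, note that the paper's proof only verifies \eqref{eq: hweaksol}; your closing appeal to Lemma \ref{lemma: weakhisshock} for the single-jump form of \eqref{eq: genweakhis} is a sensible addition but is likewise only sketched.
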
 

\begin{rmk}
    Before proving the above proposition, let us underline that, when solving the Riemann problem in Case \ref{caso: ul>ur}, Section \ref{S2},  we only used entropy admissible shocks, see Remarks \ref{rmk: entropyshock1} and \ref{rmk: entropyshock2}.
\end{rmk}

\begin{proof}
     Integrating by parts \eqref{eq: hweaksol} and following the same argument as in \cite[Theorem 4.3]{AB3} we can rewrite the entropy condition along the discontinuity as \begin{equation}\label{eq: hdisugentropy}   
     \begin{split}
     \sigma'&\left[\left(|u_+-k|+|w_+-\hat{k}|\right)-
            \left(|u_--k|+|w_--\hat{k}|\right)\right]\\&-\left[\sgn(u_+-k)(f(u_+)-f(k))-\sgn(u_- -k)\left(f(u_-)-f(k)\right)\right]\geq 0.
        \end{split}
   \end{equation} 
   So the couple $(u,w)$ is an entropy solution if and only if \eqref{eq: hdisugentropy} holds  for every $(k,\hat{k}) \in \LM$. In particular, the \RH condition \eqref{eq: hrh}  can be deduced as a necessary condition for weak solutions. 

   First of all, let us notice that, if $w_- = w_+$, then \eqref{eq: hdisugentropy} reduces to the classical entropy condition on shocks and also the \RH condition \eqref{eq: hrh} becomes the classical one. In such case, it is well known that the necessary and sufficient condition for entropic shocks is $\sigma'$ to satisfy the \RH condition with $u_->u_+$; this deals with case \textit{\ref{casoi})}. 
   
   From now on, we suppose $w_-\not = w_+$.
We distinguish the following cases according to the relationship between $f(u_-)$, $ f(u_+)$, $u_-$ and $u_+$.

   \textbf{Case $\mathbf{f(u_-)=f(u_+)}$:} From \eqref{eq: hrh}, either $\sigma'=0$ or $u_--u_++w_--w_+=0$. But if the latter is true, it is sufficient to first choose in \eqref{eq: hdisugentropy}, $(k,\hat{k)}\in \LM$ with $k\geq \max(u_-,u_+)$ and $\hat{k}\leq \min(w_-,w_+)$, then choose instead $(k,\hat{k})\in \LM$ with $k\leq \min(u_-,u_+)$ and $\hat{k}\geq \max(w_-,w_+)$, to infer that 
   \[
   \sigma'  [u_--u_++w_+-w_-]=0,
   \] 
   implying $\sigma' = 0$. Notice that the choices of $(k,\hat{k})\in \LM$ are possible as $(u_-,w_-), (u_+,w_+)\in \LM$ with $u_--u_++w_--w_+=0$, moreover $u_--u_++w_+-w_-\not = 0$ since $u_--u_++w_--w_+=0$ and $(u_-,w_-)\not=(u_+,w_+).$ So, in any case when $f(u_-)=f(u_+)$, necessarily $\sigma'=0$, thus \eqref{eq: hdisugentropy} becomes 
   \begin{equation}\label{eq: casouguale}
       - \sgn(u_+-k)(f(u_+)-f(k))+ \sgn(u_--k)(f(u_-)-f(k)) \geq 0.
   \end{equation}
   Now it is easy to show, thanks to convexity of $f$ and since $f(u_-)=f(u_+)$, that \eqref{eq: casouguale} is true if and only if $u_-\geq u_+.$ So we just proved that under the \RH condition, when $f(u_-)=f(u_+)$, $(u,w)$ is an entropy solution if and only if $\sigma'=0$ and $u_-\geq u_+$; thus if and only if $u_-\geq u_+$ and case \textit{\ref{casoii})} holds. 
   
   Before proceeding with the other cases, we notice the latter case includes also the case when $u_-=u_+$ and does not pose any restriction on $w_-$ and $w_+$. Moreover, we can now also assume $u_--u_++w_--w_+\not=0$, as the opposite would imply $\sigma'=0$, so by the \RH condition \eqref{eq: hrh} $f(u_-)=f(u_+)$. This also allows us, by using again \eqref{eq: hrh}, to write $\sigma'$  as a function of $(u_-,w_-)$ and $(u_+,w_+)$ and substitute it in \eqref{eq: hdisugentropy}. Thus, we get the equivalent condition
    \begin{equation}\label{eq: fratta}
        \frac{h(k,\hat{k})}{u_--u_++w_--w_+} \geq 0,
    \end{equation} where \begin{equation}\label{eq: hkappa}
    \begin{split}
        h(k,\hat{k}):=&\left(f(u_-)-f(u_+)\right)\left[\left(|u_+-k|+|w_+-\hat{k}|\right)-
            \left(|u_--k|+|w_--\hat{k}|\right)\right]\\
            &-(u_--u_++w_--w_+)\left[\sgn(u_+-k)\left(f(u_+)-f(k)\right)\right.\\
            &\qquad\qquad\qquad\qquad\qquad\qquad-\sgn(u_- -k)\left.\left(f(u_-)-f(k)\right)\right],
        \end{split}
   \end{equation} 
   that must hold for every $(k,\hat{k})\in \LM.$ 
   (For sake of precision,  $h=h(k,\hat{k}; u_-,w_-,u_+,w_+)$,
   but for simplicity we will omit the explicit dependence on $(u_-,w_-)$ and $(
   u_+,w_+)$ in the notation.) We also notice that 
   \[
   h(u_-,w_-)= \left(f(u_-)-f(u_+)\right)(w_+-w_-)\left(\sgn(w_+-w_-)-\sgn(u_+-u_-)\right),
   \] 
   and 
   \[
   h(u_+,w_+)= -\left(f(u_-)-f(u_+)\right)(w_+-w_-)\left(\sgn(w_+-w_-)-\sgn(u_+-u_-)\right),
   \] 
   so, if $\sgn(w_+-w_-)\not = \sgn(u_+-u_-)$, then we would have $h(u_-,w_-)$ with opposite sign with respect to $h(u_+,w_+)$. As $h$ is continuous with respect to $(k,\hat{k})$ and $(u_-,w_-),(u_+,w_+)\in \LM$, then we could find multiple couples $(k,\hat{k})\in \LM$ either in neighbourhood of $(u_-,w_-)$ or $(u_+,w_+)$ so that \eqref{eq: fratta} does not hold. Consequently \eqref{eq: fratta} implies that either $u_->u_+$ and $w_+>w_-$ or $u_-<u_+$ and $w_- < w_+$. This allows us to reduce the study to only the following remaining $4$ cases:
   
   \textbf{Case $\mathbf{u_->u_+}$ and $\mathbf{f(u_-) > f(u_+)}$:} by the previous observation $u_--u_++w_--w_+>0$, hence \eqref{eq: fratta} holds if and only if $h(k,\hat{k})\geq0$ for every $(k,\hat{k})\in \LM$. To understand the sign of $h$, we split the $(k,\hat{k})-$plane in $9$ regions, representing the cases when $k\leq u_+$, $u_+\leq k \leq u_-$ or $u_- \leq k$ combined with the cases $\hat{k}\leq w_+$, $w_+\leq \hat{k} \leq w_-$ or $w_- \leq \hat{k}$, see Figure \ref{fig: segni}. We also denote the interior of each of these regions by $R_i$, numbered as shown in the figure.

   \begin{figure}
    \centering
    \begin{tikzpicture}[scale=0.6]

    \draw[->, gray] (-0.5,1.5) -- (10.7,1.5) node[right] {$k$};
    \draw[->, gray] (0.4,-0.1) -- (0.4,9.7) node[above] {$\hat{k}$};
    
    \draw[-,dashed](3,0)--(3,9);
    \draw[-,dashed](7,0)--(7,9);
    \draw[-,dashed](0,3)--(10,3);
    \draw[-,dashed](0,6)--(10,6);

    \node[text = black, left] (r) at (3,5.6) {$A$};
    \node[text = black, right] (r) at (7,5.6) {$B$};
    \node[text = black, left] (r) at (3,2.6) {$C$};
    \node[text = black, right] (r) at (7,2.6) {$D$};

    \filldraw [black] (7,3) circle (2.8pt);
    \filldraw [black] (3,6) circle (2.8pt);
    \filldraw [black] (3,3) circle (2.8pt);
    \filldraw [black] (7,6) circle (2.8pt);
    
    \node[text = black, below] (r) at (1,9) {\scriptsize{$R_1$}};
    \node[text = black, below] (r) at (5,9) {\scriptsize{$R_2$}};
    \node[text = black, below] (r) at (9,9) {\scriptsize{$R_3$}};
    \node[text = black, above] (r) at (1,0) {\scriptsize{$R_7$}};
    \node[text = black, above] (r) at (5,0) {\scriptsize{$R_8$}};
    \node[text = black, above] (r) at (9,0) {\scriptsize{$R_9$}};
    \node[text = black, above] (r) at (1,4) {\scriptsize{$R_4$}};
    \node[text = black, above] (r) at (5,4) {\scriptsize{$R_4$}};
    \node[text = black, above] (r) at (9,4) {\scriptsize{$R_6$}};

    \end{tikzpicture}
    \caption{Subdivision of $\mathbb{R}^2$ into the regions $R_i$. 
    The points are defined as follows: 
    $A = (\min(u_-, u_+), \max(w_-, w_+))$, 
    $B = (\max(u_-, u_+), \max(w_-, w_+))$, 
    $C = (\min(u_-, u_+), \min(w_-, w_+))$, and 
    $D = (\max(u_-, u_+), \min(w_-, w_+))$.}
    \label{fig: segni}
    \end{figure}
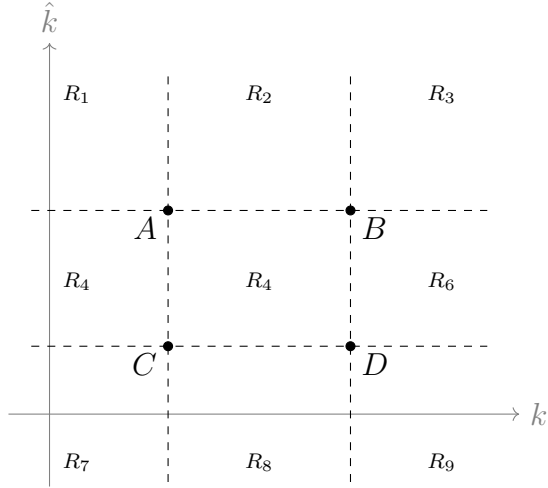

   It can be checked that
   \begin{equation}\label{eq: hesplicita}
       h(k,\hat{k}) = \begin{cases}
           2(f(u_-)-f(u_+))(w_--w_+) \,& \text{in } R_1,\\
           2[(f(u_-)-f(u_+))(k-u_-)+(f(u_-)-f(k))(u_--u_++w_--w_+)] \,& \text{in } R_2,\\
           0 \,& \text{in } R_3,\\
           2(f(u_-)-f(u_+))(\hat{k}-w_+) \,& \text{in } R_4,\\
            h_5(k,\hat{k}) \,& \text{in } R_5,\\
           -2(f(u_-)-f(u_+))(w_--\hat{k}) \,& \text{in } R_6,\\
           0 \,& \text{in } R_7,\\
           -2[(f(u_-)-f(u_+))(u_+-k)+(f(k)-f(u_+))(u_--u_++w_--w_+)] \,& \text{in } R_8,\\
           -2(f(u_-)-f(u_+))(w_--w_+)\,& \text{in } R_9.\\
       \end{cases}
   \end{equation}
   The explicit computation of $h$ in regions such as $R_2$, $R_4$, $R_5$, $R_6$, and $R_8$ is not really necessary; instead, we can deduce the properties of $h$ in this regions by analysing its derivatives, as we will see later, which are easier to compute.
   
   We see now that $h>0$ in $R_1$ and $h<0$ in $R_9$. Moreover, in $R_4$ and $R_6$ it is constant in $k$ and affine in $\hat{k}$, hence necessarily $h>0$ in $R_4$ and $h<0$ in $R_6$. In particular, if we want $h \geq 0$ for every $(k,\hat{k}) \in \LM$, it follows that $(\max(u_-,u_+),\max(w_-,w_+))=(u_-,w_+)$ must lie on the lower boundary of $\LM$, see Figure \ref{fig: regioniR2R5}, left. Therefore, a necessary condition is $w_- = u_- - a$. In $R_2$ instead, $h$ is constant in $\hat{k}$ and it holds
   \[
   \frac{\partial^2 h}{\partial^2 k} = -2(u_--u_++w_--w_+)f''(k) <0,
   \]
   so $k \mapsto h(k,\hat{k})$ is concave on $R_2$. Hence, since $h$ is positive on $R_1$ and zero on $R_3$, then we can conclude that $h\geq0$ on $R_2$. Notice that as $w_-=u_--a$, then $R_2\cap \LM \not= \emptyset,$ see again Figure \ref{fig: regioniR2R5}, left, so the positivity of $h$ on $R_2$ is needed in order to not contradict \eqref{eq: hkappa}. 

   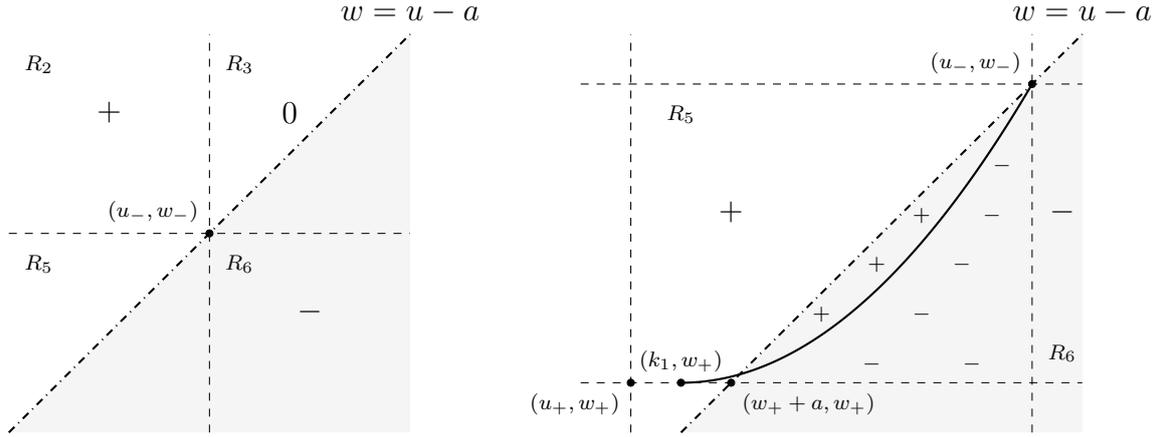
\begin{figure}
    \centering
    \begin{tikzpicture}[scale=1.32]

    \begin{scope}[shift={(0,0)}]

    \draw[-,dashed](1.5,-0.5)--(1.5,3.5);
    \draw[-,dashed](-0.5,1.5)--(3.5,1.5);
    \draw[dash dot, thick] (-0.5,-0.5)--(3.5,3.5)  node[above] {$w=u-a$};

    \draw[fill=black] (1.5,1.5) circle(1pt) node[above left] {\scriptsize{$(u_-,w_-)$}};

    \node[above] at (-0.2,3) {\scriptsize{$R_2$}};
    \node[above] at (-0.2,1) {\scriptsize{$R_5$}};
    \node[above] at (1.8,3) {\scriptsize{$R_3$}};
    \node[above] at (1.8,1) {\scriptsize{$R_6$}};

    \node[above] at (2.5,0.5) {$-$};
    \node[above] at (0.5,2.5) {$+$};
    \node[above] at (2.3,2.5) {$0$};

    \fill[gray, opacity=0.08] (-0.5,-0.5) -- (3.5,3.5) -- (3.5,-0.5) -- cycle;
    
    \end{scope}

    \begin{scope}[shift={(5.7,0)}]

    \draw[-,dashed](0,-0.5)--(0,3.5);
    \draw[-,dashed](4,-0.5)--(4,3.5);
    \draw[-,dashed](-0.5,0)--(4.5,0);
    \draw[-,dashed](-0.5,3)--(4.5,3);
    \draw[dash dot, thick] (0.5,-0.5)--(4.5,3.5)  node[above] {$w=u-a$};

    \draw[fill=black] (1,0) circle(1pt) node[below right,] {\scriptsize{$(w_++a,w_+)$}};
    \draw[fill=black] (0,0) circle(1pt) node[below left] {\scriptsize{$(u_+,w_+)$}};
    \draw[fill=black] (4,3) circle(1pt) node[above left] {\scriptsize{$(u_-,w_-)$}};
    \draw[fill=black] (0.5,0) circle(1pt) node[above] {\scriptsize{$(k_1,w_+)$}};

    \node[above] at (0.5,2.5) {\scriptsize{$R_5$}};
    \node[above] at (4.3,0.1) {\scriptsize{$R_6$}};
    
     \node[above] at (4.3,1.5) {$-$};

    \node[above] at (2.4,0) {\scriptsize{$-$}};
    \node[above] at (3.4,0) {\scriptsize{$-$}};
    \node[above] at (2.9,0.5) {\scriptsize{$-$}};
    \node[above] at (3.3,1) {\scriptsize{$-$}};
    \node[above] at (3.6,1.5) {\scriptsize{$-$}};
    \node[above] at (3.7,2) {\scriptsize{$-$}};

     \fill[gray, opacity=0.08] (0.5,-0.5) -- (4.5,3.5) -- (4.5,-0.5) -- cycle;

    \node[above] at (1,1.5) {$+$};

    \node[above] at (1.9,0.5) {\scriptsize{$+$}};
    \node[above] at (2.45,1) {\scriptsize{$+$}};
    \node[above] at (2.9,1.5) {\scriptsize{$+$}};

    \draw[domain=0.5:4, smooth, samples=100, variable=\x, thick] plot ({\x}, {(\x-0.5)*(\x-0.5)*3/12.25 });

    \end{scope}

    \end{tikzpicture}
    \caption{Left: focus on the regions $R_2, R_3$ and $R_6$ in the case $u_->u_+$ with $f(u_-)>f(u_+)$; since $(u_-,w_-)\in \LM$ and $h<0$ in $R_6$, then necessarily $(u_-,w_-)$ must lie on the lower boundary of $\LM$, so that $\LM \cap R_6 = \emptyset$. Right: focus on $R_5$, where we can see that, to have $\{h<0\}\cap R_5 \cap \LM = \emptyset$, we need to ensure $k_1 \geq (w_++a).$ The gray area represents the region $w < u-a $, which is not included in $\LM$.}
    \label{fig: regioniR2R5}
\end{figure}
   
   We focus now on $R_5$ where we have 
   \[
   \begin{split}
   h(k,\hat{k})=h_5(k,\hat{k})=&(f(u_-)-f(u_+))[-u_+-u_-+2k-w_+-w_-+2\hat{k}]\\&-(u_--u_++w_--w_+)[-f(u_+)-f(u_-)+2f(k)],
   \end{split}
   \] 
   and we study the level set $h(k,\hat{k})=0$. Now as before 
   \[
   \frac{\partial^2 h}{\partial^2 k} = -(u_--u_++w_--w_+)2f''(k)<0,
   \] 
   so the function $k\mapsto h(k,\hat{k})$ is concave, and since for $\hat{k}\in (w_+,w_-)$, $h(u_+,\hat{k})>0$ and $h(u_-,\hat{k})<0$,
   then for every $\hat{k}\in (w_+,w_-)$ fixed there exists one and only one $k$ such that $(k,\hat{k})\in R_5$ and $h(k, \hat{k})=0$. Let us consider the set $J=\{k\in\, ]u_-,u_+[\,, \,|\, \exists \hat{k}\,\hbox{s.t. } h(k,\hat{k})=0\}$ and denote by $k_1:=\inf J$ and by $k_2:=\sup J$. As 
   \[
   \frac{\partial h}{\partial \hat{k}} = 2(f(u_-)-f(u_+))> 0,
   \] 
   then also for each $k\in\, ]k_1,k_2[$  there exists exactly one $\hat{k}$ such that $h(k,\hat{k})=0$. We then can apply the implicit function theorem at each of such points, and get the existence of $\varphi:\, ]k_1,k_2[\, \to \,]w_+,w_-[$ of class $C^1$ such that $h(k, \varphi(k))=0$ for each $k\in\, ]k_1,k_2[$.  The idea is to apply such theorem locally on each point, then use the uniqueness of the zeros, to extend it globally on $]k_1,k_2[$. Moreover, the implicit function theorem also states that 
   \[
   \varphi'(k)= -\frac{h_k(k,\varphi(k))}{h_{\hat{k}}(k,\varphi(k))}= -\frac{2(f(u_-)-f(u_+))-2(u_--u_++w_--w_+)f'(k)}{2(f(u_-)-f(u_+))},
   \] consequently 
   \[
   \varphi''(k)= \frac{(u_--u_++w_--w_+)f''(k)}{f(u_-)-f(u_+)}>0,
   \] 
   meaning that $\varphi $ is convex. We can do the same reasoning for the point $(u_-,w_-)$ as $h(u_-,w_-)=0$ and get that there exists a function $\phi$ defined in a left neighbourhood of $u_-$ such that $h(k, \phi(k))=0$ and 
   \[
   \phi'(u_- -)=-\frac{ 2((f(u_-)-f(u_+))-2(u_--u_++w_-+w_+)f'(u_+)}{2(f(u_-)-f(u_+))}.
   \] 
   But since $(u_--u_++w_--w_+)> u_--u_+$ and $f'(u_+) \leq (f(u_-)-f(u_+))/(u_--u_+)$, by convexity of $f$, it can be shown that $\phi'(u_--)>0$ so actually, by a uniqueness arguments, $\phi$ coincides with $\varphi$ in a left neighbourhood of $u_-$. So we conclude that $k_2=u_-$, i.e. $\varphi:\,]k_1,u_-]\to \,]w_+,w_-],$ with $\varphi(u_-)=w_-.$ 
   Regarding  $k_1$ instead, using the notations $I_+=(w_++a)-u_+$, $I_-=u_--(w_++a)$ and \[\mu=\frac{f(u_-)-f(u_+)}{u_--u_++w_--w_+}= \frac{\mu_+ I_++2\mu_-I_-}{I_++2I_-},\] it can be checked that \[h(w_++a,w_+)=(I_++2I_-)[(\mu-\mu_+)I_+-(\mu-\mu_-)2I_-].\] Since $I_+\geq 0,I_->0$, then $\mu$ is between $\mu_+$ and $\mu_-$ with equality if and only if $\mu_+=\mu_-=\mu,$ and with $\mu=\mu_-$ when $I_+=0$. 
   If now $\mu_+>\mu_-$ and $I_+\not = 0$, then $\mu_+>\mu>\mu_-$ hence $h(w_++a,w_+)<0$. 
   If instead $\mu_+\leq \mu_-$, then $\mu_+\leq \mu \leq \mu_-$, which implies that $h(w_++a,w_+) \geq 0$. 
   Also if $I_+=0$ i.e. $u_+=w_+-a$, then $\mu_-=\mu$ so $h(w_++a,w_+) = 0$. From this and the properties of $h(k,w_+)$, we can  conclude that $k_1 < w_++a$ when $\mu_+>\mu_-\not=\mu$ and instead $k_1 \geq w_++a$ when either $\mu_+\leq \mu_-$ or $I_+=0$, that is $u_+=w_+-a$. By convexity of $\varphi$, $k_1 \geq w_++a$ is sufficient and necessary to have $h\geq0 $ in $R_5\cap \LM$, see Figure \ref{fig: regioniR2R5}, right. So, in the current case, $\mu_+\leq \mu_-$ or $u_+=w_+-a$ is required by the positivity of $h$ in $R_5.$ The only region left to study is $R_8$, which is constant in $\hat{k}$ and so in that region $h(k,\hat{k}) = h(k,w_+)$, so we can deduce again that conditions $\mu_+\leq \mu_-$ or $u_+=w_+-a$ are implied. Finally, by looking at Figure \ref{fig: hm1m2}, where we picture the sign of $h$ in $\LM$, we can conclude that in this case we get an entropy solution if and only if either $u_+=w_+-a$ or $\mu_+\leq \mu_-$, that is if and only if \textit{\ref{casoiii})} or \textit{\ref{casoiv})} hold.

   \begin{figure}
    \centering
    \begin{tikzpicture}[scale=0.69]

    \begin{scope}[shift={(11,0)}]
    
   \draw[-,dashed](3,0)--(3,9);
    \draw[-,dashed](7,0)--(7,9);
    \draw[-,dashed](0,3)--(10,3);
    \draw[-,dashed](0,6)--(10,6);
    \draw[dash dot, thick] (1,0)--(10,9)  node[above] {$w=u-a$};

    \draw[domain=3.5:7, smooth, samples=100, variable=\x, thick] plot ({\x}, {(\x-3.5)*(\x-3.5)*0.2449 + 3});
    \draw[-,thick] (3.5,3)--(3.5,0);

    \fill[gray, opacity=0.08] (1,0) -- (10,0) -- (10,9) -- cycle;

    \node[below] at (1.5,2) {$0$};
    \node[below] at (1.5,5) {$+$};
    \node[below] at (1.5,8) {$+$};
    \node[below] at (8.5,2) {$-$};
    \node[below] at (8.5,5) {$-$};
    \node[below] at (8.5,8.2) {$0$};
    \node[below] at (5.3,2) {$-$};
    \node[below] at (3.27,2) {$+$};
    \node[below] at (6.3,4) {$-$};
    \node[above] at (4.5,4.5) {$+$};
    \node[below] at (5,8) {$+$};

    \node[below] at (1,9) {\scriptsize{$R_1$}};
    \node[below] at (5,9) {\scriptsize{$R_2$}};
    \node[below] at (9,9) {\scriptsize{$R_3$}};
    \node[above] at (1,0) {\scriptsize{$R_7$}};
    \node[above] at (5,0) {\scriptsize{$R_8$}};
    \node[above] at (9,0) {\scriptsize{$R_9$}};
    \node[above] at (0,4) {\scriptsize{$R_4$}};
    \node[above] at (9.5,4) {\scriptsize{$R_6$}};
    \end{scope}

    \begin{scope}[shift={(0,0)}]
    
    \draw[-,dashed](3,0)--(3,9);
    \draw[-,dashed](7,0)--(7,9);
    \draw[-,dashed](0,3)--(10,3);
    \draw[-,dashed](0,6)--(10,6);
    \draw[dash dot, thick] (1,0)--(10,9)  node[above] {$w=u-a$};

    \draw[domain=5:7, smooth, samples=100, variable=\x, thick] plot ({\x}, {(\x-5)*(\x-5)*0.75 + 3});
    \draw[-,thick] (5,3)--(5,0);

    \fill[gray, opacity=0.08] (1,0) -- (10,0) -- (10,9) -- cycle;

    \node[below] at (1.5,2) {$0$};
    \node[below] at (1.5,5) {$+$};
    \node[below] at (1.5,8) {$+$};
    \node[below] at (8.5,2) {$-$};
    \node[below] at (8.5,5) {$-$};
    \node[below] at (8.5,8.2) {$0$};
    \node[below] at (6,2) {$-$};
    \node[below] at (4,2) {$+$};
    \node[below] at (6.5,4) {$-$};
    \node[above] at (5,4) {$+$};
    \node[below] at (5,8) {$+$};

    \node[below] at (1,9) {\scriptsize{$R_1$}};
    \node[below] at (5,9) {\scriptsize{$R_2$}};
    \node[below] at (9,9) {\scriptsize{$R_3$}};
    \node[above] at (1,0) {\scriptsize{$R_7$}};
    \node[above] at (6,0) {\scriptsize{$R_8$}};
    \node[above] at (9,0) {\scriptsize{$R_9$}};
    \node[above] at (0,4) {\scriptsize{$R_4$}};
    \node[above] at (9.5,4) {\scriptsize{$R_6$}};
    
    \end{scope}

    \end{tikzpicture}
    \caption{Sign of the function $h$ when $u_- > u_+$ and $f(u_-) > f(u_+)$. Left: case $\mu_+ \leq \mu_-$, where choosing $(u_-, w_+) = (u_-, u_- - a)$ ensures that $h \geq 0$ for all $(k, \hat{k}) \in \LM$. Right: case $\mu_+ > \mu_-$ and $w_-\not = u_+-a$, where even after imposing the necessary condition $w_+ = u_+ - a$, it does not hold $h \geq 0$ on $\LM$. The gray region represents the region $w<u-a$ which is not in $\LM$. From these pictures, we also deduce that $w_- = u_- - a$ together with $w_+ = u_+ - a$ would be enough to get an entropy solution.
    }
    \label{fig: hm1m2}
\end{figure}
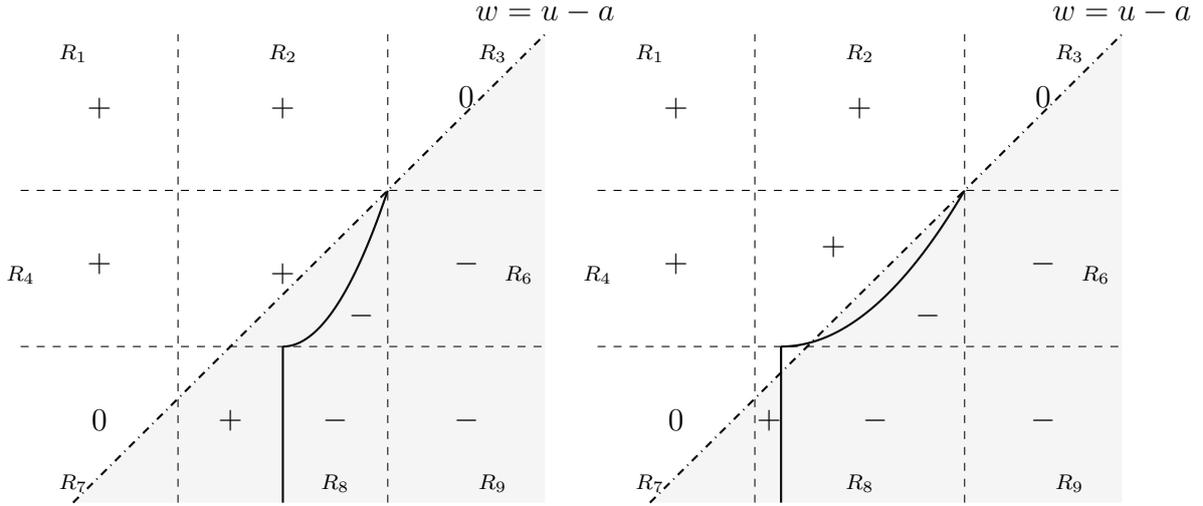

   \textbf{Case $\mathbf{u_- > u_+}$ and $\mathbf{f(u_-)<f(u_+)}$:} we can treat this case as the previous one with the  explicit form \eqref{eq: hesplicita} of $h$ remaining the same. The main difference is in the sign of $h$, since $\sgn(f(u_-)-f(u_+))$ is opposite with respect to the previous case, see Figure \ref{fig: hn1n2}. Under these assumptions, \eqref{eq: fratta} implies that $(u_+,w_+)$ must be on the upper boundary of $\LM$ and, moreover, in region $R_5$ we get that $h(k,\hat{k})=0$, seen as a graph over $k$, is concave. Then conditions $w_-=u_-+
   a$ or $\nu_-\geq \nu_+$ are deduced, so that $\{h <0\}\cap \LM =\emptyset $, see again Figure \ref{fig: hn1n2}. Therefore, in this case we get an entropy solution if and only if  either \textit{\ref{casov})} or \textit{\ref{casovi})} holds. 

    \begin{figure}
    \centering
    \begin{tikzpicture}[scale=0.69]

    \begin{scope}[shift={(11,0)}]
    
   \draw[-,dashed](3,0)--(3,9);
    \draw[-,dashed](7,0)--(7,9);
    \draw[-,dashed](0,3)--(10,3);
    \draw[-,dashed](0,6)--(10,6);
    \draw[dash dot, thick] (0,0)--(9,9)  node[above] {$w=u+a$};

    \draw[domain=3:6.5, smooth, samples=100, variable=\x, thick] plot ({\x}, {-(\x-6.5)*(\x-6.5)* 3/12.25+ 6});
    \draw[-,thick] (6.5,6)--(6.5,9);

    \fill[gray, opacity=0.08] (0,0) -- (9,9) -- (0,9) -- cycle;

    \node[below] at (1.3,2.2) {$0$};
    \node[below] at (1.5,5) {$-$};
    \node[below] at (1.5,8) {$-$};
    \node[below] at (8.5,2) {$+$};
    \node[below] at (8.5,5) {$+$};
    \node[below] at (8.5,8.2) {$0$};
    \node[below] at (5,2) {$+$};
    
    \node[above] at (5,4) {$+$};
    \node[below] at (5,8) {$-$};
    \node[above] at (3.5,4.8) {$-$};
    \node[below] at (6.75,8) {$+$};

    \node[below] at (1,9) {\scriptsize{$R_1$}};
    \node[below] at (5,9) {\scriptsize{$R_2$}};
    \node[below] at (9,9) {\scriptsize{$R_3$}};
    \node[above] at (1,0) {\scriptsize{$R_7$}};
    \node[above] at (5,0) {\scriptsize{$R_8$}};
    \node[above] at (9,0) {\scriptsize{$R_9$}};
    \node[above] at (0.5,4) {\scriptsize{$R_4$}};
    \node[above] at (9.5,4) {\scriptsize{$R_6$}};
    \end{scope}
    
 \begin{scope}[shift={(0,0)}]
    
    \draw[-,dashed](3,0)--(3,9);
    \draw[-,dashed](7,0)--(7,9);
    \draw[-,dashed](0,3)--(10,3);
    \draw[-,dashed](0,6)--(10,6);
    \draw[dash dot, thick] (0,0)--(9,9)  node[above] {$w=u+a$};

    \draw[domain=3:5, smooth, samples=100, variable=\x, thick] plot ({\x}, {-(\x-5)*(\x-5)* 0.75+ 6});
    \draw[-,thick] (5,6)--(5,9);

    \fill[gray, opacity=0.08] (0,0) -- (9,9) -- (0,9) -- cycle;

    \node[below] at (1.3,2.2) {$0$};
    \node[below] at (1.5,5) {$-$};
    \node[below] at (1.5,8) {$-$};
    \node[below] at (8.5,2) {$+$};
    \node[below] at (8.5,5) {$+$};
    \node[below] at (8.5,8.2) {$0$};
    \node[below] at (5,2) {$+$};
    
    \node[above] at (5,4) {$+$};
    \node[below] at (4,8) {$-$};
    \node[above] at (3.5,4.8) {$-$};
    \node[below] at (6,8) {$+$};

    \node[below] at (1,9) {\scriptsize{$R_1$}};
    \node[below] at (4,9) {\scriptsize{$R_2$}};
    \node[below] at (9,9) {\scriptsize{$R_3$}};
    \node[above] at (1,0) {\scriptsize{$R_7$}};
    \node[above] at (5,0) {\scriptsize{$R_8$}};
    \node[above] at (9,0) {\scriptsize{$R_9$}};
    \node[above] at (0.5,4) {\scriptsize{$R_4$}};
    \node[above] at (9.5,4) {\scriptsize{$R_6$}};
    \end{scope}

    \end{tikzpicture}
    \caption{Sign of $h$ when $u_->u_+$ and $f(u_-)<f(u_+)$. Left: admissible case when $\nu_-\geq\nu_+$. Right: non admissible case when $\nu_- < \nu_+$.}
    \label{fig: hn1n2}
    \end{figure}
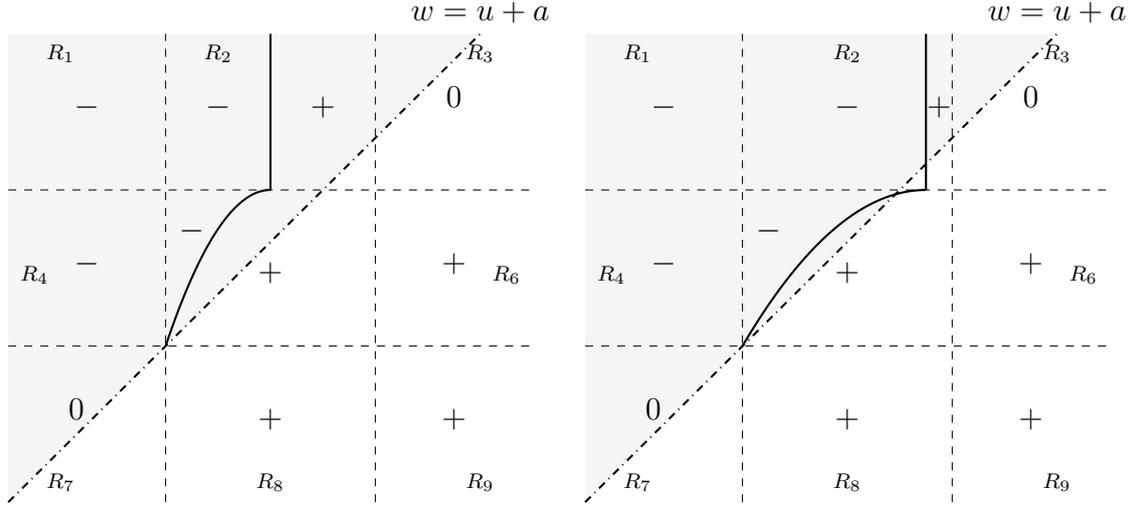

   \textbf{Case $\mathbf{u_-<u_+}$ and $\mathbf{f(u_-)>f(u_+)}$:} now $u_--u_++w_--w_+ <0$, so \eqref{eq: fratta} implies $h\leq 0$ in $\LM.$ Reasoning as before, we can check that in $R_3$ and $R_7$, $h\equiv0$, $h<0$ in $R_1$ and $h>0$ in $R_9$. Again,  $h$ is affine in $R_4$ and $R_6$, respectively negative and positive. As a result, we can still conclude that the point that is now $(u_+,w_+)$ must be on the lower boundary of $\LM$, that is $w_+=u_+-a$, otherwise $R_6 \cap \LM \not=\emptyset$, contradicting $h\geq 0 $. However, in $R_5$ we have
   \[\begin{split}
   h(k,\hat{k})=&(f(u_-)-f(u_+))[u_++u_--2k+w_++w_--2\hat{k}]\\&-(u_--u_++w_--w_+)[f(u_+)+f(u_-)-2f(k)],\end{split}\] and we can prove that there exists a $C^2$ function  $\varphi : [u_-,k_2[\,\to [w_-,w_+[$ such that $\varphi(u_-)=w_-$, $h(k,\varphi(k))=0$ and 
   \[\varphi''(k)= \frac{(u_--u_++w_--w_+)f''(k)}{f(u_-)-f(u_+)}<0.\] 
   So the graph of $\varphi$ is concave, meaning that, even in the best scenario, that is if $k_2=u_+$ and $\varphi(u_+)=w_+$, we would have $R_5 \cap \LM \cap \{h >0 \} \not = \emptyset$, see Figure \ref{fig: haltricasi}, left. So this case is not admitted by \eqref{eq: fratta}. 

   \textbf{Case $\mathbf{u_-<u_+}$ and $\mathbf{f(u_-)<f(u_+)}$:} also this case is incompatible with \eqref{eq: fratta}. Indeed,  we infer again that $(u_+,w_+)$ must lie on the upper boundary of $\LM$ and that instead in $R_5$ we have a convex level set $0$ of the function $h$. This makes it impossible to have $\{h \geq 0\}\cap \LM\not = \emptyset $, contradicting \eqref{eq: fratta}.

    \begin{figure}
    \centering
    \begin{tikzpicture}[scale=0.69]

    \begin{scope}[shift={(0,0)}]
    
   \draw[-,dashed](3,0)--(3,9);
    \draw[-,dashed](7,0)--(7,9);
    \draw[-,dashed](0,3)--(10,3);
    \draw[-,dashed](0,6)--(10,6);
    \draw[dash dot, thick] (1,0)--(10,9)  node[above] {$w=u-a$};

    \draw[domain=3:6, smooth, samples=100, variable=\x, thick] plot ({\x}, {-(\x-6)*(\x-6)* 0.3333+ 6});
    \draw[-,thick] (6,6)--(6,9);

    \fill[gray, opacity=0.08] (1,0) -- (10,0) -- (10,9) -- cycle;

    \node[below] at (1.5,2) {$0$};
    \node[below] at (1.5,5) {$-$};
    \node[below] at (1.5,8) {$-$};
    \node[below] at (8.5,2) {$+$};
    \node[below] at (8.5,5) {$+$};
    \node[below] at (8.5,8.2) {$0$};
    \node[below] at (5,2) {$+$};

    \node[above] at (5,4.5) {$+$};
    \node[above] at (3.7,4.8) {$-$};
    \node[below] at (5,8) {$-$};
    \node[below] at (6.5,8) {$+$};

    \node[below] at (1,9) {\scriptsize{$R_1$}};
    \node[below] at (5,9) {\scriptsize{$R_2$}};
    \node[below] at (9,9) {\scriptsize{$R_3$}};
    \node[above] at (1,0) {\scriptsize{$R_7$}};
    \node[above] at (5,0) {\scriptsize{$R_8$}};
    \node[above] at (9,0) {\scriptsize{$R_9$}};
    \node[above] at (0.5,4) {\scriptsize{$R_4$}};
    \node[above] at (9.5,4) {\scriptsize{$R_6$}};
    \end{scope}

    \begin{scope}[shift={(11,0)}]
    
    \draw[-,dashed](3,0)--(3,9);
    \draw[-,dashed](7,0)--(7,9);
    \draw[-,dashed](0,3)--(10,3);
    \draw[-,dashed](0,6)--(10,6);
    \draw[dash dot, thick] (0,0)--(9,9)  node[above] {$w=u+a$};

     \draw[domain=4:7, smooth, samples=100, variable=\x, thick] plot ({\x}, {(\x-4)*(\x-4)*0.333 + 3});
    \draw[-,thick] (4,3)--(4,0);

    \fill[gray, opacity=0.08] (0,0) -- (9,9) -- (0,9) -- cycle;

    \node[below] at (1.3,2.2) {$0$};
    \node[below] at (1.5,5) {$+$};
    \node[below] at (1.5,8) {$+$};
    \node[below] at (8.5,2) {$-$};
    \node[below] at (8.5,5) {$-$};
    \node[below] at (8.5,8.2) {$0$};
    \node[below] at (5,2) {$-$};
    \node[below] at (3.5,2) {$+$};
    \node[below] at (6.5,4) {$-$};
    \node[above] at (5,4) {$+$};
    \node[below] at (5,8) {$+$};

    \node[below] at (1,9) {\scriptsize{$R_1$}};
    \node[below] at (5,9) {\scriptsize{$R_2$}};
    \node[below] at (9,9) {\scriptsize{$R_3$}};
    \node[above] at (1,0) {\scriptsize{$R_7$}};
    \node[above] at (6,0) {\scriptsize{$R_8$}};
    \node[above] at (9,0) {\scriptsize{$R_9$}};
    \node[above] at (0.5,4) {\scriptsize{$R_4$}};
    \node[above] at (9.5,4) {\scriptsize{$R_6$}};
    
    \end{scope}

    \end{tikzpicture}
    \caption{Left:  sign of the function $h$ when $u_- < u_+$ and $f(u_-) > f(u_+)$; independently from $(u_-,w_-),(u_+,w_+)\in \LM$, $\LM \cap \{h>0\}\not=\emptyset$. Right: $u_- < u_+$ and $f(u_-) < f(u_+)$;  $\LM \cap \{h>0\}\not=\emptyset$, so when $u_- < u_+$ no shock wave solutions are admissible.}
    \label{fig: haltricasi}
    \end{figure}
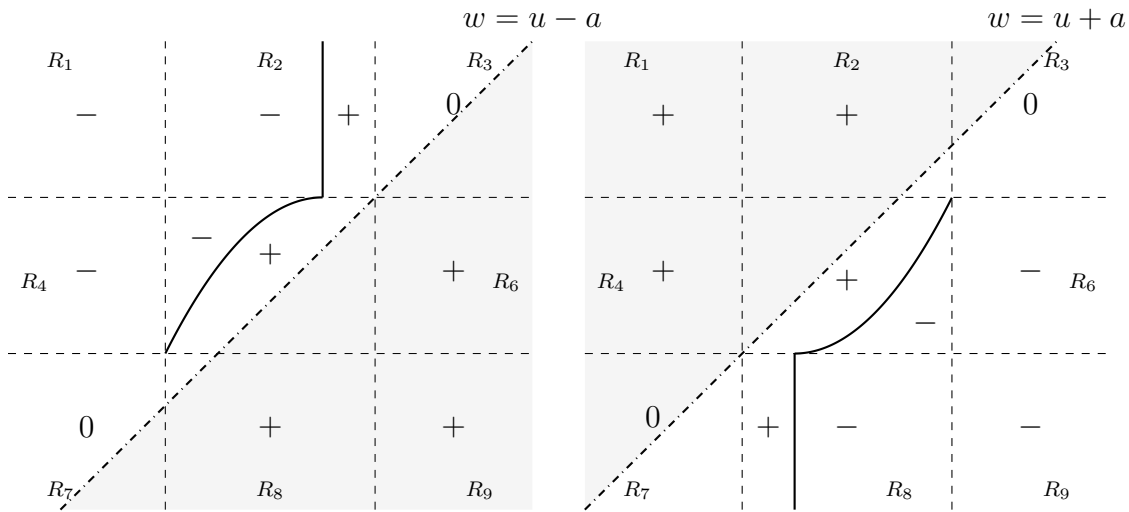

    To conclude we notice that if $u_- <u_+$, no entropy solution is admissible, so $u_-\geq u_+$ is a necessary condition.
\end{proof}

Then the following stability result holds, which implies uniqueness of entropy weak solutions. 

\begin{teo}
    Consider the Cauchy problems with initial conditions $(u_0^1,w_0^1)$ and $(u_0^2,w_0^2)$ respectively, where $u_0^i,w_0^i \in \L1(\R)\cap \BV(\R)$. Let us denote by $(u_1,w_1)$ and $(u_2,w_2)$ two entropy weak solutions of the corresponding Cauchy problems. Then, it holds  \begin{equation}
        \int_{-\infty}^{+\infty} \left(|u_1-u_2|(x,t)+|w_1-w_2|(x,t)\right)dx \leq \int_{-\infty}^{+\infty}\left(|u_0^1-u_0^2|+|w_0^1-w_0^2|\right)dx, 
    \end{equation} for almost every $t\in [0,T[$.
\end{teo}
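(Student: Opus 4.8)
\emph{Proof strategy.} The plan is to run Kru\v zkov's doubling-of-variables argument, the only new feature being the bookkeeping of the hysteresis variable $w$, which appears in the entropy inequality \eqref{eq: hweaksol} paired with $\partial_t\phi$ but carries no flux term; the constraint \eqref{eq: dishis} is exactly what makes this adaptation legitimate. First I would fix the two entropy weak solutions $(u_1,w_1)$, $(u_2,w_2)$ with data $(u_0^1,w_0^1)$, $(u_0^2,w_0^2)$ and a nonnegative test function $\psi=\psi(x,t,y,s)\in\Cc1(\R\times[0,T[\times\R\times[0,T[;\R^+)$ vanishing for $t$ or $s$ near $0$ and near $T$. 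In \eqref{eq: hweaksol} written for $(u_1,w_1)$ I would, for a.e.\ fixed $(y,s)$, take the pair $(k,\hat k)=(u_2(y,s),w_2(y,s))$ --- which belongs to $\LM$ precisely because $|u_2(y,s)-w_2(y,s)|\le a$ by \eqref{eq: dishis} --- and the test function $\psi(\cdot,\cdot,y,s)$, and then integrate in $(y,s)$; symmetrically I would write \eqref{eq: hweaksol} for $(u_2,w_2)$ with $(k,\hat k)=(u_1(x,t),w_1(x,t))$ and test function $\psi(x,t,\cdot,\cdot)$, and integrate in $(x,t)$. Adding the two inequalities, using $\sgn(u_2-u_1)(f(u_2)-f(u_1))=\sgn(u_1-u_2)(f(u_1)-f(u_2))$, and noting that the initial terms disappear since $\psi$ vanishes near $t=0$ and $s=0$, I am left with
\begin{multline*}
\iiiint\Big[\big(|u_1-u_2|+|w_1-w_2|\big)(\partial_t+\partial_s)\psi\\
+\sgn(u_1-u_2)\big(f(u_1)-f(u_2)\big)(\partial_x+\partial_y)\psi\Big]\,dx\,dt\,dy\,ds\ \geq\ 0,
\end{multline*}
where each $u_i$, $w_i$ is evaluated at $(x,t)$ or $(y,s)$ according to its slot.

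Next I would specialize to $\psi(x,t,y,s)=\phi\big(\tfrac{x+y}{2},\tfrac{t+s}{2}\big)\,\omega_\delta\big(\tfrac{x-y}{2}\big)\,\omega_\delta\big(\tfrac{t-s}{2}\big)$, with $\phi\in\Cc1(\R\times\,]0,T[\,;\R^+)$ and $\omega_\delta$ a symmetric mollifier, so that $(\partial_t+\partial_s)\psi$ and $(\partial_x+\partial_y)\psi$ only hit the derivatives of $\phi$. Letting $\delta\to0$ and using the regularity $u_i,w_i\in\C0([0,T[;\Lloc1(\R))$ together with the standard approximate-continuity estimate for the nonlinearities $|\cdot|$ and $\sgn(\cdot)\big(f(\cdot)-f(\cdot)\big)$ composed with the solutions, the quadruple integral collapses onto the diagonal and yields
\[
\iint\Big[\big(|u_1-u_2|+|w_1-w_2|\big)\partial_t\phi+\sgn(u_1-u_2)\big(f(u_1)-f(u_2)\big)\partial_x\phi\Big]\,dx\,dt\ \geq\ 0
\]
for every such $\phi$; equivalently, $\partial_t\big(|u_1-u_2|+|w_1-w_2|\big)+\partial_x\big[\sgn(u_1-u_2)(f(u_1)-f(u_2))\big]\leq0$ in $\D'(\R\times\,]0,T[)$.

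To conclude I would use finite speed of propagation. Since $u_i,w_i\in\L\infty$ (Proposition~\ref{prop: linfty} passed to the limit) and $f$ is $L$-Lipschitz on the range of the solutions, $|f(u_1)-f(u_2)|\le L|u_1-u_2|\le L\big(|u_1-u_2|+|w_1-w_2|\big)$, so the ``entropy'' $|u_1-u_2|+|w_1-w_2|$ has dissipative flux of speed at most $L$. Integrating the distributional inequality over a backward cone of slope $L$ and letting the bottom time tend to $0$, where $u_i(\cdot,t)\to u_0^i$ and $w_i(\cdot,t)\to w_0^i$ in $\Lloc1(\R)$ by the $\C0$-in-time regularity, I would obtain, for a.e.\ $t\in[0,T[$ and every $R>0$,
\[
\int_{|x|<R}\big(|u_1-u_2|+|w_1-w_2|\big)(x,t)\,dx\ \leq\ \int_{|x|<R+Lt}\big(|u_0^1-u_0^2|+|w_0^1-w_0^2|\big)\,dx.
\]
As $u_0^i,w_0^i\in\L1(\R)$, letting $R\to+\infty$ gives the stated estimate, and uniqueness follows on choosing $(u_0^1,w_0^1)=(u_0^2,w_0^2)$.

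\emph{Main obstacle.} The step I expect to require the most care is the doubling itself: one must check that $(u_2(y,s),w_2(y,s))$ and $(u_1(x,t),w_1(x,t))$ are legitimate test pairs in $\LM$ --- which hinges on \eqref{eq: dishis} and has no analogue in the classical scalar case --- and that, after symmetrizing the two inequalities, the $w$-contributions (which come with no flux) assemble correctly into the genuine entropy pair $\big(|u_1-u_2|+|w_1-w_2|,\ \sgn(u_1-u_2)(f(u_1)-f(u_2))\big)$ for the difference, so that the finite-speed machinery applies verbatim. Once this structural verification is in place, the mollification, the diagonal passage to the limit, and the cone argument are entirely routine.
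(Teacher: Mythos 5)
Your proposal is correct and follows exactly the route the paper indicates: the Kru\v zkov doubling-of-variables argument, where the only genuinely new point is that the test pairs $(u_2(y,s),w_2(y,s))$ and $(u_1(x,t),w_1(x,t))$ lie in $\LM$ thanks to \eqref{eq: dishis}, so that the restricted family of entropy pairs in \eqref{eq: hweaksol} suffices, after which the mollification, diagonal collapse and finite-speed cone argument are standard. The paper does not write out these details (it cites the adaptation as straightforward, referring to the analogous delayed-relay result), and your write-up supplies precisely the missing verification.
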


The proof is an adaptation of the standard doubling of variables method by Kruzkov \cite{Krukov}. In \cite{AVH1} an analogous result for the case of delayed-relay hysteresis is proven, see also \cite[Section~IV.1]{AVH} for the description of such operator. The adjustment to Play hysteresis is straightforward.

\appendix
\section{Details of the proof of the Theorem \ref{teo: esistenza}}\label{appendice}

We show that the sequences $u_m,w_m$ defined in Theorem \ref{teo: esistenza} converge in $\Lloc{1}(\R \times [0,T[)$ to a couple of functions $u,w\in \C{0} ( [0,T[; \Lloc{1}(\R))$ which satisfy \eqref{eq: hweaksol}.

\begin{proof}
 Using a classical argument (see e.g. \cite[Theorem 2.6]{AB3}), by Propositions \ref{prop: linfty} and \ref{prop: stimaBV}, Lemma \ref{cor: contl1} and as a consequence of Helly's compactness theorem, there exists $u,w\in \C{0} ( [0,T[; \Lloc{1}(\R))$ such that  (up to a subsequence) 
 \[
 u_m,w_m\to u,w \quad \text{in } \Lloc1(\R \times [0,T[), \text{ as } m \to+ \infty.
 \]
    
 Now, following the proof of \cite[Theorem 5.3]{FVM}, we show that $u,w$ satisfies \eqref{eq: hweaksol}. We consider $\{(u_i^{n},w_i^n) \,|\, i\in \mathbb{Z} \text{ and } n\in \N\}$ given by the scheme \eqref{eq: schema2} associated to $\Delta x_m$ and $\Delta t_ m$, and we fix $\phi \in \Cc{1}(\R \times [0,T[)$ non negative. Notice that $\forall i\in \mathbb{Z}, n\in \mathbb{N}$, $u_i^{n}$ and $w_i^n$ depend also on $m$, but to easy the notation, we will omit this index. From Proposition \ref{prop: entropiadiscr}, the discrete entropy condition \eqref{eq: discrEntropy} holds for every $m$.  Multiplying then this inequality by $\phi(x, t^n)$, integrating it over $x \in K_i$ and summing over $i$ and $n$, we get 
 \begin{equation}\label{eq: somma}
 A'_m + A''_m+ B_m  \leq 0,
\end{equation}
where
 \begin{equation*}
 \begin{split}
    &A'_m := \sum_{i\in \mathbb{Z}} \sum_{n\in \mathbb{N}} (|u_i^{n+1}-k|-|u_i^n-k|) \int_{K_i} \phi(x, t^n) \, dx,\\
    & A''_m := \sum_{i\in \mathbb{Z}} \sum_{n\in \mathbb{N}} (|w_i^{n+1}-\hat{k}|-|w_i^n-\hat{k}|) \int_{K_i} \phi(x,t^n) \, dx,\\
    & B_m := \sum_{i\in \mathbb{Z}} \sum_{n\in \mathbb{N}} (G_k(u_i^n,u_{i+1}^n)-G_k(u_{i-1}^n,u_{i}^n))\frac{\Delta t_m}{\Delta x_m} \int_{K_i} \phi(x, t^n) \, dx,
    \end{split}
 \end{equation*}
 with $G_k(\alpha,\beta):= g(\alpha \top k, \beta \top k)-g(\alpha \perp k, \beta \perp k),$ (recall \eqref{eq: gGodunov}).
 
 Regarding $A'_m$, we can see that
 \[\begin{split}
 -&\int_{\R}\int_{\Delta t_m}^T |u_m(x,t)-k| \partial_t \phi(x,t-\Delta t_m) \, dt dx - \sum_{i\in \mathbb{Z}} |u_i^0-k| \int_{K_i}\phi(x,0)\, dx
 \\&= - \sum_{n=1}^{+\infty}\sum_{i\in \mathbb{Z}} \iint_{K_i^n} |u_{i}^n-k| \partial_t\phi(x,t-\Delta t_m) \, dtdx - \sum_{i\in \mathbb{Z}} |u_i^0 -k|\int_{K_i}\phi(x,0)\, dx\\ 
 &= -\sum_{n=1}^{+\infty}\sum_{i\in \mathbb{Z}} \int_{K_i} |u_{i}^n-k| (\phi(x,t^n)-\phi(x, (n-1)\Delta t_m) \, dx - \sum_{i\in \mathbb{Z}} |u_i^0-k| \int_{K_i}\phi(x,0)\, dx 
 \\
 & = - \sum_{n\in \mathbb{N}}\sum_{i\in \mathbb{Z}} (|u_i^n-k|-|u_i^{n+1}-k|) \int_{K_i} \phi (x, t^n) \, dx =A'_m.
 \end{split}
 \]
 Hence, from this equality, as $u_m \to u$ in $\Lloc{1}(\R \times [0,T[)$, $\sum u_i^0 \1_{K_i} \to u_0$ in $\Lloc{1}(\R)$, $\phi$ has compact support and $\partial_t \phi(\cdot, \cdot- \Delta t_m) \to \partial_t \phi$ in $\L{\infty}(\R \times[0,T[)$ by uniform continuity, we get 
 \begin{equation}\label{eq: A11}
     \lim_{m\to \infty} A'_m = - \int_0^T \int_{-\infty}^{+\infty} |u-k| \partial_t \phi \, dx \, dt - \int_{-\infty}^{+\infty} |u_0(x)-k| \phi(x,0) \, dx.
 \end{equation}
 By the same reasoning, it also holds  
 \begin{equation}\label{eq: A12}
    \lim_{m\to \infty} A''_m = - \int_0^T \int_{-\infty}^{+\infty} |w-\hat{k}| \partial_t \phi \, dx \, dt - \int_{-\infty}^{+\infty} |w_0(x)-\hat{k}| \phi(x,0) \, dx.
 \end{equation}
 To compute instead the limit of $B_m$, we introduce the following quantity $B_{1,m}$ 
 \[
 B_{1,m}:=- \sum_{n\in\N} \int_{t^n}^{t^{n+1}} \int_{-\infty}^{+\infty} G_k(u_m,u_m) \partial_x\phi(x, t^n) \, dx dt. 
 \] 
By the strong $\Lloc{1}(\R\times[0,T[)$ convergence for $u_m$,  the Lipschitz continuity of $G_k$ (this is a consequence of the Lipschitz continuity of $g$ defined by \eqref{eq: gGodunov}) and  the convergence of $\partial_x \phi(\cdot, n\Delta t) \1_{[n\Delta t_m, (n+1) \Delta t_m[}(\cdot)$ in $\L{\infty}(\R \times [0,T[)$, we can immediately notice that 
 \[
 \lim_{m\to \infty} B_{1,m} = - \int_{0}^T \int_{-\infty}^{+\infty} G_k(u,u) \partial_x \phi  \, dx dt = - \int_{0}^T \int_{-\infty}^{+\infty} \sgn{(u-k)}(f(u)-f(k)) \partial_x \phi \, dx dt.
 \]
 We then have just to compare $B_m$ to $B_{1,m}$, so we rewrite the latter as follows
 \[
 \begin{split}
 B_{1,m} & = -  \Delta t_m \sum_{i\in \N} \sum_{i\in \mathbb{Z}} G_k(u_{i}^n,u_i^n)(\phi (x_{i+1/2}, t^n)-\phi (x_{i-1/2}, t^n))\\
 &=- \Delta t_m \sum_{i\in \N} \sum_{i\in \mathbb{Z}} (G_k(u_{i-1}^n,u_{i-1}^n)-G_k(u_{i}^n,u_{i}^n))\phi (x_{i-1/2}, t^n)\\
 & = \Delta t_m\sum_{i\in \N} \sum_{i\in \mathbb{Z}} (G_k(u_{i}^n,u_{i}^n)- G_k(u_{i-1}^n,u_i^n))\phi (x_{i-1/2}, t^n) + \\
 & ~~~\, + \Delta t_m \sum_{i\in \N} \sum_{i\in \mathbb{Z}} (G_k(u_{i-1}^n,u_{i}^n)- G_k(u_{i-1}^n,u_{i-1}^n))\phi (x_{i-1/2}, t^n)
 \end{split}
 \]
 Similarly, we rewrite
 \[
 \begin{split}
 B_{m}= &\sum_{i\in \mathbb{Z}} \sum_{n\in \mathbb{N}} (G_k(u_i^n,u_{i+1}^n)- G_k(u_i^n,u_i^n)+G_k(u_i^n,u_i^n)-G_k(u_{i-1}^n,u_{i}^n)) \frac{\Delta t_m}{\Delta x_m}\int_{K_i} \phi(x,t^n) \, dx \\ 
 & =  \sum_{i\in \mathbb{Z}} \sum_{n\in \mathbb{N}} (G_k(u_i^n,u_{i}^n)-G_k(u_{i-1}^n,u_{i}^n))\frac{\Delta t_m}{\Delta x_m}\int_{K_i} \phi(x, t^n) \, dx +\\
 &~~~\, + \sum_{i\in \mathbb{Z}} \sum_{n\in \mathbb{N}} (G_k(u_{i-1}^n,u_{i}^n)-G_k(u_{i}^n,u_{i}^n)) \frac{\Delta t_m}{\Delta x_m}\int_{K_i} \phi(x, t^n) \, dx.
 \end{split}
 \]
 Then
 \[
 |B_{1,m}- B_m| \leq C_1 C_2 \Delta t_m \Delta x_m \sum_{i=i_0} ^{i_1} \sum_{n=0}^{N} | u_i^n-u_{i-1}^n|,
 \]
 where $i_0,i_1$ and $N$ (all dependent on $m$) are the indices for which the support of $\phi$ is contained in $[i_0 \Delta x_m, i_1 \Delta x_m]\times[0, (N+1) \Delta t_m[$, $C_1$ is a constant given by the Lipschitz continuity of $G_k$ on $[U_m,U_M]$ and $C_2$ is a constant given as a consequence of the $\C{1}$ regularity of $\phi.$ Indeed,  $C_2$ uniform in $i$, $n$ and $m$, can be found such that \[
 \phi(x_{i-1/2}, t^n) - C_2 \Delta x_m \leq \frac{1}{\Delta x_m} \int_{K_i} \phi(x,t^n)\,dx \leq \phi(x_{i-1/2}, t^n) + C_2 \Delta x_m.
 \]
  In particular, by Frechet-Kolmogorov's compactness theorem for $\L{p}$ spaces (see e.g. \cite{BREZIS}), taking as translation parameter $h_m:=\Delta x_m$, we have, as $\Delta x_m \to 0$, that 
 \begin{equation*}
     0=\lim_{m \to \infty} \iint_{\text{spt}(\phi)} |u_m(x,t)-u_m(x-\Delta x_m, t)| \, dxdt = \lim_{m \to \infty} \Delta t_m \Delta x_m \sum_{i=i_0} ^{i_1} \sum_{n=0}^{N} | u_i^n-u_{i-1}^n|,
 \end{equation*}
 meaning that 
 \begin{equation}\label{eq: B1}
     \lim_{m\to \infty} B_m = \lim_{m\to \infty} B_{1,m} = - \int_{0}^T \int_{-\infty}^{+\infty} \sgn{(u-k)}(f(u)-f(k)) \partial_x \phi \, dx dt.
 \end{equation}
 Finally, taking the limit as $m$ goes to infinity in  \eqref{eq: somma}, from \eqref{eq: A11}, \eqref{eq: A12} and \eqref{eq: B1} we obtain \eqref{eq: hweaksol}.
 
 \end{proof}

\section*{Acknowledgments}
This work was mainly written while Stefan Moreti was visiting the ACUMES team at Inria Centre at Universit\'e C\^ote d'Azur in Sophia Antipolis, France.
SM was partially supported by the INdAM - GNAMPA Project, CUP E53C25002010001: ``Analisi e controllo per alcuni problemi di evoluzione''. He also gratefully acknowledges the Inria financial support during his research stay, as well as the hospitality of the ACUMES team.


{ \small
	\bibliography{biblio}

@misc{ADBA1,
 author = {Amadori, D. and Bressan, A. and Shen, W.},
 title = {Conservation {Laws} with {Discontinuous} {Gradient}-{Dependent} {Flux}: the {Unstable} {Case}},
 year = {2024},
 howpublished = {Preprint, {arXiv}:2411.13444 },
}

@article{ADBA2,
 author = {Amadori, D. and Bressan, A. and Shen, W.},
 title = {Conservation laws with discontinuous gradient-dependent flux: the stable case},
 fjournal = {M\(^3\)AS. Mathematical Models \& Methods in Applied Sciences},
 journal = {Math. Models Methods Appl. Sci.},
 issn = {0218-2025},
 volume = {35},
 number = {6},
 pages = {1421--1469},
 year = {2025},
 doi = {10.1142/S0218202525500216},
}

@book{AFP,
 author = {Ambrosio, L. and Fusco, N. and Pallara, D.},
 title = {Functions of bounded variation and free discontinuity problems},
 fseries = {Oxford Mathematical Monographs},
 series = {Oxford Math. Monogr.},
 isbn = {0-19-850245-1},
 year = {2000},
 publisher = {Oxford: Clarendon Press},
}

@article{BFMS,
    AUTHOR = {Bagagiolo, Fabio and Moreti, Stefan},
     TITLE = {Wave-front tracking for a quasi-linear scalar conservation law
              with hysteresis},
   JOURNAL = {J. Math. Anal. Appl.},
  FJOURNAL = {Journal of Mathematical Analysis and Applications},
    VOLUME = {543},
      YEAR = {2025},
    NUMBER = {2},
     PAGES = {Paper No. 128900, 25},
      ISSN = {0022-247X,1096-0813},
   MRCLASS = {35L65 (35L03)},
  MRNUMBER = {4803782},
MRREVIEWER = {Richard\ De la cruz},
       DOI = {10.1016/j.jmaa.2024.128900},
       URL = {https://doi.org/10.1016/j.jmaa.2024.128900},
}

@article{BFMS2,
author = {Fabio Bagagiolo and Stefan Moreti},
      title = {Wave-front tracking for a quasi-linear scalar conservation law with hysteresis {II}: the case of {Preisach}},
        JOURNAL = {J. Math. Anal. Appl.},
     FJOURNAL = {Journal of Mathematical Analysis and Applications},
     volume = {558},
     number = {1},
     pages = {130345},
     year = {2026},
     issn = {0022-247X},
      doi =   {https://doi.org/10.1016/j.jmaa.2025.130345},
     url = {https://www.sciencedirect.com/science/article/pii/S0022247X25011266},
    
}

@book{BREZIS,
 author = {Brezis, H.},
 title = {Functional analysis, {Sobolev} spaces and partial differential equations},
 isbn = {978-0-387-70913-0},
 year = {2011},
 publisher = {New York, NY: Springer},
}

@Book{AB3,
    AUTHOR = {Bressan, Alberto},
     TITLE = {Hyperbolic systems of conservation laws},
    SERIES = {Oxford Lecture Series in Mathematics and its Applications},
    VOLUME = {20},
      NOTE = {The one-dimensional Cauchy problem},
 PUBLISHER = {Oxford University Press, Oxford},
      YEAR = {2000},
     PAGES = {xii+250},
      ISBN = {0-19-850700-3},
   MRCLASS = {35-02 (35B35 35L65)},
  MRNUMBER = {1816648},
MRREVIEWER = {Denis\ Serre},
}

@Book{RF2,
 Author = {Brokate, M. and Sprekels, J.},
 Title = {Hysteresis and phase transitions},
 Year = {1996},
 Publisher = {New York Springer},
 isbn = {0-387-94763-9},
}

@article{CF1,
author = {Corli, A. and Fan, H.},
title = {Hysteresis and stop-and-go waves in traffic flows},
journal = {Mathematical Models and Methods in Applied Sciences},
volume = {29},
number = {},
pages = {2637-2678},
year = {2019},
doi = {10.1142/S0218202519500568},
}

@article{CF2,
author = {Corli, A. and Fan, H.},
title = {String stability in traffic flows},
journal = {Applied Mathematics and Computation},
volume = {443},
 pages = {24},
year = {2023},
 doi = {10.1016/j.amc.2022.127775},
}

@article{CF3,
 author = {Corli, A. and Fan, H.},
 title = {The hysteretic {Aw}-{Rascle}-{Zhang} model},
 fjournal = {Studies in Applied Mathematics},
 journal = {Stud. Appl. Math.},
 volume = {153},
 number = {4},
  pages = {38},
 year = {2024},
 doi = {10.1111/sapm.12769},
}

@InCollection{FVM,
 Author = {Eymard, R. and Gallou{\"e}t, T. and Herbin, R.},
 Title = {Finite volume methods},
 BookTitle = {Solution of equations in \(\mathbb{R}^n\) (Part 3). Techniques of scientific computing (Part 3)},
  EDITOR = {J. L. Lions and Philippe Ciarlet},
  PUBLISHER = {{Elsevier}},
  SERIES = {Handbook of Numerical Analysis},
  VOLUME = {7},
  PAGES = {713-1020},
  YEAR = {2000}
}

@book{EV,
  address = {Providence},
  author = {Evans, L. C.},
  publisher = {American Mathematical Society},
  title = {Partial differential equations},
  year = {2010},
 isbn = {978-0-8218-4974-3},
edition = {2nd ed.},
}

@article{F1,
title = {Conservation laws with hysteretic fluxes},
journal = {Journal of Differential Equations},
volume = {394},
pages = {1-30},
year = {2024},
issn = {},
author = {H. Fan},
 doi = {10.1016/j.jde.2024.02.030},
}

@Book{HH,
 Author = {Holden, H. and Risebro, N. H.},
 Title = {Front tracking for hyperbolic conservation laws},
 Year = {2015},
edition = {2nd edition},
 Publisher = {Berlin: Springer},
 isbn = {978-3-662-47506-5},
}

@article{KOP1,
author = {J. Kopfov{\'a}},
title = {{Entropy condition for a quasilinear hyperbolic equation with hysteresis}},
volume = {18},
journal = {Differential and Integral Equations},
number = {},
publisher = {},
pages = {451 -- 467},
year = {2005},
doi = {},
URL = {}
}

@article{KOR1,
doi = {},
year = {2006},
month = {},
publisher = {},
volume = {55},
number = {},
pages = {135},
author = {P. Kordulová},
title = {Quasilinear hyperbolic equations with hysteresis},
journal = {Journal of Physics: Conference Series},
doi = {10.1088/1742-6596/55/1/013},
}

@book{CORR1,
title={Systems with Hysteresis},
  author={Krasnosel'skii, M.A. and Pokrovskii, A.V.},
  year={1989},
  publisher={Berlin Springer}
}

@Article{RF1,
 Author = {Krejci, P. and Laurencot, P.},
 Title = {Generalized variational inequalities},
 FJournal = {Journal of Convex Analysis},
 Journal = {J. Convex Anal.},
 Volume = {9},
 Number = {},
 Pages = {159--183},
 Year = {2002},
 doi = {10.1088/1742-6596/55/1/013},
}

@article{Krukov,
    AUTHOR = {Kru\v zkov, S. N.},
     TITLE = {First order quasilinear equations with several independent
              variables},
   JOURNAL = {Mat. Sb. (N.S.)},
  FJOURNAL = {Matematicheski\u i\ Sbornik. Novaya Seriya},
    VOLUME = {81(123)},
      YEAR = {1970},
     PAGES = {228--255},
      ISSN = {0368-8666},
   MRCLASS = {35.37},
  MRNUMBER = {267257},
MRREVIEWER = {Z.\ Zielezny},
}

@article{MR,
author = {Peszynska, M. and Showalter, R.E.},
title = {Approximation of Scalar Conservation Law with Hysteresis},
journal = {SIAM Journal on Numerical Analysis},
volume = {58},
pages = {962-987},
year = {2020},
 doi = {10.1016/j.cam.2020.113356},
}

@article{Simile,
author = { Peszyńska, M. and Showalter. R.E.},
title = {{A transport model with adsorption hysteresis}},
volume = {11},
journal = {Differential and Integral Equations},
number = {},
publisher = {},
pages = {327 -- 340},
year = {1998},
doi = {},
URL = {}
}

@book{LEVEQUE,
 author = {LeVeque, R. J.},
 title = {Numerical methods for conservation laws.},
 edition = {2nd ed.},
 year = {1992},
 publisher = {Basel: Birkh{\"a}user},
 isbn = {3-7643-2464-3},
}

@article{marchesin,
author = {Plohr, B. and Marchesin, D. and Bedrikovetsky, P. and Krause, P.},
title = {Modeling hysteresis in porous media flow via a relaxation},
volume = {5},
journal = {Comput. Geosci.},
number = {},
publisher = {},
pages = {225 -- 256},
year = {2001},
 doi = {10.1023/A:1013759420009},
}

@article{RV,
   AUTHOR = {Recupero, Vincenzo},
     TITLE = {B{V}-extension of rate independent operators},
   JOURNAL = {Math. Nachr.},
  FJOURNAL = {Mathematische Nachrichten},
    VOLUME = {282},
      YEAR = {2009},
    NUMBER = {1},
     PAGES = {86--98},
      ISSN = {0025-584X,1522-2616},
   MRCLASS = {47H30 (34C55 47J30 47N20 74N30)},
  MRNUMBER = {2473132},
MRREVIEWER = {P.\ P.\ Zabre\u iko},
       DOI = {10.1002/mana.200610723},
       URL = {https://doi.org/10.1002/mana.200610723},
}

@book{TORO,
 author = {Toro, E. F.},
 title = {Riemann solvers and numerical methods for fluid dynamics. {A} practical introduction},
 edition = {3rd ed.},
 year = {2009},
 publisher = {Berlin: Springer},
isbn = {978-3-540-25202-3},
}

@Article{AVH1,
 Author = {Visintin, A.},
 Title = {Quasilinear first-order {PDEs} with hysteresis},
 FJournal = {},
 Journal = {J. Math. Anal. Appl.},
 ISSN = {},
 Volume = {312},
 Number = {},
 Pages = {401--419},
 Year = {2005},
 Language = {},
 DOI = {},
 Keywords = {},
 zbMATH = {},
 Zbl = {},
doi = {10.1016/j.jmaa.2005.03.048},
}

@Book{AVH,
 Author = {Visintin, A.},
 Title = {Differential models of hysteresis},
 FSeries = {},
 Series = {},
 Volume = {},
 Year = {1994},
 Publisher = {Berlin: Springer},
isbn = {3-540-54793-2},
}

@article{ZHANG,
title = {A mathematical theory of traffic hysteresis},
journal = {Transportation Research Part B: Methodological},
volume = {33},
number = {1},
pages = {1-23},
year = {1999},
issn = {0191-2615},
doi = {https://doi.org/10.1016/S0191-2615(98)00022-8},
url = {https://www.sciencedirect.com/science/article/pii/S0191261598000228},
author = {Zhang, Hongjun Michael},
keywords = {Traffic hysteresis, Phase transitions, Speed-concentration models},
abstract = {This paper presents a mathematical theory for modeling the hysteresis phenomenon observed in traffic flow. It proposes that acceleration, deceleration and equilibrium flow should be distinguished in obtaining speed-concentration and/or occupancy relationships, such that the phase transitions from one phase to another can be correctly identified. The analysis shows that the speed–concentration curves obtained following this approach are hysteresis loops, as predicted by the theory. The paper also gives a discussion of the general properties of the proposed modeling equations and examines the relationship between traffic hysteresis and stop-start waves observed in traffic flow.}
}
	\bibliographystyle{abbrv}
}

\end{document}